\theoremstyle{plain}
\newtheorem{theorem}{Theorem}[section]
\newtheorem{corollary}[theorem]{Corollary}
\newtheorem{lemma}[theorem]{Lemma}
\newtheorem{proposition}[theorem]{Proposition}
\newtheorem{fact}[theorem]{Fact}
\newtheorem*{theorem*}{Theorem}
\newtheorem{example}[theorem]{Example}
\theoremstyle{definition}
\newtheorem{definition}[theorem]{Definition}
\theoremstyle{remark}
\newtheorem{remark}[theorem]{Remark}
\newtheorem{exercise}{Exercise}
\newtheorem{claim}{Claim}
\newcommand\Q{\mathbb{Q}}
\newcommand\N{\mathbb{N}}
\newcommand\Z{\mathbb{Z}}
\newcommand\F{\mathbb{F}}
\newcommand\K{\mathbb{K}}
\newcommand\MM{\mathbb{M}}
\def\cP{\mathcal{P}}
\def\cH{\mathcal{H}}
\def\cF{\mathcal{F}}
\def\cU{\mathcal{U}}
\def\Th{\operatorname{Th}}
\newcommand\LL{\mathscr{L}}
\newcommand\LLr{\mathscr{L}_{\mathrm{ring}}}
\newcommand{\ACF}{\mathrm{ACF}}
\newcommand{\acl}{\mathrm{acl}}
\newcommand{\dcl}{\mathrm{dcl}}
\newcommand{\cl}{\mathrm{cl}}
\newcommand{\Id}{\mathrm{Id}}
\newcommand{\alg}{\mathrm{alg}}
\newcommand{\tp}{\mathrm{tp}}
\newcommand{\dv}{\mathrm{div}}
\newcommand{\opp}{\mathrm{opp}}
\newcommand{\Graph}{\mathrm{Graph}}
\newcommand{\RG}{\mathrm{RG}}
\newcommand{\st}{\mathrm{st}}
\newcommand{\ld}{\mathrm{ld}}
\newcommand{\Sk}{\mathrm{Sk}}
\newcommand{\Av}{\mathrm{Av}}
\newcommand{\Circle}{\mathrm{Circle}}
\def\seq{\subseteq}
\newcommand{\set}[1]{\left\{ {#1} \right\}}
\newcommand{\vect}[1]{\langle {#1} \rangle}
\newcommand{\abs}[1]{\lvert {#1} \rvert}
\DeclareMathOperator{\Aut}{Aut}
\DeclareMathOperator{\trdeg}{Trdeg}
\DeclareMathOperator{\EM}{EM}
\DeclareMathOperator{\cf}{cf}
\DeclareMathOperator{\sq}{sq}
\newcommand{\kappabar}{\bar{\kappa}}
\definecolor{airforceblue}{rgb}{0.36, 0.54, 0.66}
\def\IND{\setbox0=\hbox{$x$}\kern\wd0\hbox to 0pt{\hss$\mid$\hss}
\lower.9\ht0\hbox to 0pt{\hss$\smile$\hss}\kern\wd0}
\def\NotIND{\setbox0=\hbox{$x$}\kern\wd0\hbox to 0pt{\mathchardef\nn=12854\hss$\nn$\kern1.4\wd0\hss}\hbox to 0pt{\hss$\mid$\hss}\lower.9\ht0 \hbox to 0pt{\hss$\smile$\hss}\kern\wd0}
\def\ind{\mathop{\mathpalette\IND{}}}
\def\nind{\mathop{\mathpalette\NotIND{}}}
\def\indi#1{\mathop{\mathpalette\IND{}^{\!\!\!\!\rlap{$\scriptstyle{#1}$}\,\,\,\,}}}
\def\nindi#1{\mathop{\mathpalette\NotIND{}^{\!\!\!\rlap{$\scriptstyle{#1}$}\,\,\,}}}
\newcommand{\setword}[2]{%
  \phantomsection
  #1\def\@currentlabel{\unexpanded{#1}}\label{#2}%
}
\renewcommand{\models}{\vDash}
\begin{document}

\title{Axiomatic Theory of Independence Relations in Model Theory}

\author[C. d'Elb\'{e}e]{Christian d\textquoteright Elb\'ee}
\address{Mathematisches Institut der Universität Bonn\\
Office 4.004, Endenicher Allee 60\\ 53115 Bonn\\ Germany}
\urladdr{\href{http://choum.net/\textasciitilde chris/page\textunderscore perso/}{http://choum.net/\textasciitilde chris/page\textunderscore perso/}}

\date{\today}

\maketitle

\setcounter{tocdepth}{2}
\tableofcontents	

{\color{blue} 
Changes since last arXiv version are in BLUE.
\begin{enumerate}
    \item Exercise 63.
    \item Added Conant's reference \cite{conant2022separation}
    \item a whole new section: ``A second criterion for NSOP$_4$ theories".
    \item Removed the section ``A few words on Kim-Pillay style results".
    \item Added a section on ``naive monotonisation"
\end{enumerate}

Erratum:
\begin{enumerate}
    \item In Proposition 4.1.12 replaced $C$ by $F$
    \item In Lemma 2.2.8: removed the superfluous condition: ``$a\equiv_C a'$".
\end{enumerate}

}

\chapter*{Introduction}

These notes originate from a neostability course held during the summer semester of 2023 at the University of Bonn. I am grateful to my students, who graciously endured my ramblings and persevered until the end of the term.

This course is an introduction to the fruitful links between model theory and a combinatoric of sets given by independence relations. An independence relation on a set is a ternary relation between subsets, usually denoted $\ind$. Properties, or axioms, satisfied by such a relation --sometimes related with an ambient closure operator, or an ambient first-order theory-- can be a witness of nice combinatorial behaviour, or tameness.  A motivating example of this phenomenon is the Kim-Pillay theorem, which is a characterisation of simple theories by the existence of an independence relation satisfying a certain set of axioms. We distinguish between three sorts of axioms for independence relations, those that can be stated in a purely set-theoretic framework (monotonicity, base monotonicity, transitivity) those that necessitate an ambient closure operator (closure, anti-reflexivity) and those that necessitate an ambient theory to be stated (extension, the independence theorem). For clarity, those axioms will be stated separately, but a complete list of axioms is given on p. 6.

Chapter \ref{chapter:1} should be considered as an introductory chapter. It does not mention first-order theories nor formulas and introduces independence relations in a naive set theory framework. Its main goal is to get the reader familiar with the basic axioms of independence relations (those that do not need an ambient theory to be stated) as well as introducing closure operators and pregeometries. 

Chapter \ref{chapter:2} introduces the model-theoretic context. The two main examples (algebraically closed fields and the random graph) are described as well as independence relations in those examples. 

Chapter \ref{chapter:3} gives the axioms of independence relations in a model-theoretic context and introduces what I believe is nowadays called ``forking calculus" (although forking will be defined in Chapter $4$). It introduces the general toolbox of the model-theorists (indiscernible sequences, Ramsey/Erd\"os-Rado and compactness) as well as the independence relations $\indi h$ and $\indi u$ of heirs/coheirs with two main applications: Adler's theorem of symmetry (how symmetry emerges from a weaker set of axioms, which is rooted in the work of Kim and Pillay \cite{KP97}) and a criterion for NSOP$_4$ using stationary independence relations in the style of Conant \cite{gabefreeamalgamation}. Independence relations satisfying Adler's theorem of symmetry are here called \textit{Adler independence relations} (or AIR).

Chapter \ref{chapter:4} treats forking and dividing as abstract independence relations $\indi f$, $\indi d$. It is proved that $\indi d$ is always stronger than any AIR (even though it is not an AIR in general) as well as an abstract connection between the independence theorem and forking independence, which holds in all generality and is based on Kim-Pillay's approach.
Then, simplicity is defined and the interesting direction of the Kim-Pillay theorem (namely that the existence of an Adler independence relation satisfying the independence theorem yields simplicity) is deduced from earlier results.


The main inspirations for this course are as follows: \cite{TZ12} and \cite{C11} for basics and simplicity theory, Adler's thesis \cite{adlerthesis} for axiomatic theory theory of independence relations and \cite{CR16,KR17} (and subsequent works on NSOP$_1$ theories) for the modern exposition of independence theories. I believe the complete list of prerequisites in model theory for this course is the following: languages, sentences, theories, formulas, types, structures, definable sets, substructures, elementary substructures, models, elementary maps, elementary bijections and automorphism of models. As for set theory, I only assume the existence and basic properties of cardinals and ordinals.\\


\include{justabunchofsillyaxioms}

\chapter{Independence relations in a set-theoretic setting}\label{chapter:1}

\section{Independence relations and notations}
In this course, the symbol $\ind$ (sometimes indexed $\indi 0, \indi i$, etc...) will always denote a ternary relation on the powerset of an ambient set. We will sometimes call $\ind$ an \textit{independence relation} \footnote{Unlike other authors \cite{A09}, we do not reserve the denomination ``independence relation" for certain ternary relation satisfying a fixed set of axioms, we rather use it freely for ternary relations that will satisfy some axioms.}. We start with an easy set of axioms, where $A,B,C, \ldots$ are subsets of the ambiant set. We often denote by juxtaposition $AB$ the union $A\cup B$.

\begin{definition}[Axioms of independence relations, part 1]\label{def:axioms_part1}~
\begin{enumerate}[$(1)$]
\item (\setword{finite character}{FIN}) If $a\ind_C B$ for all finite $a\seq A$, then $A\ind_C B$.
  \item (\setword{existence}{EX}) $A\ind_C C$ for any $A$ and $C$.
  \item (\setword{symmetry}{SYM}) If $A\ind_C B$ then $B\ind_C A$.
  \item (\setword{local character}{LOC}) For all $A$ there is a cardinal $\kappa = \kappa(A)$ such that for all $B$ there is $B_0\seq B$ with $\abs{B_0} <\kappa$ with $A\ind_{B_0} B$.
  \item (right \setword{normality}{NOR}) If $A\ind_C B$ then $A\ind_C BC$.
\item (right \setword{monotonicity}{MON}) If $A\ind_C BD$ then $A\ind_C B$.
\item (right \setword{base monotonicity}{BMON}) Given $C\seq B\seq D$ if $A\ind_C D$ then $A\ind_{B} D$.
  \item (right \setword{transitivity}{TRA}) Given $C\seq B\seq D$, if $A\ind_{C} B$ and $A\ind_B D$ then $A\ind_C D$.
\end{enumerate}
\end{definition}

Every property with a ``right ---" prefix has a symmetric counterpart ``left ---". For instance, left \ref{NOR} is $A\ind_C B\implies AC\ind _C B$. We will often omit the left/right prefix when the context is clear, for instance if the relation is symmetric or if it is clear which side we are refering to.\\

\noindent\textbf{Some terminology}. If $A\ind_C B$, we say that \textit{$A$ is independent from $B$ over $C$}, or \textit{$A$ and $B$ are independent over $C$}. We call $C$ the \textit{base set} of the instance $A\ind_C B$. Given two independence relations $\ind,\indi 0$, we say that $\ind$ is \textit{stronger than $\indi 0$}, or $\indi 0$ is \textit{weaker than $\ind$}, denoted $\ind\to\indi 0$, if $\ind\seq \indi 0$, in other words, for all $A,B,C$ we have $A\ind_C B\implies A\indi 0 _C B$.

\begin{exercise}
Let $S$ be any set and define the relation $\ind$ by $A\ind_C B$ if and only if $A\cap B\seq C$. Prove that $\ind$ satisfies all properties above.
\end{exercise}

\begin{exercise}
    Check that $\ind$ satisfies right \ref{MON} if and only if for all $A,C$ and $B\seq D$, we have $A\ind_C D\implies A\ind_C B$.
\end{exercise}

\begin{exercise}
    Prove that if $\ind$ satisfies (right) \ref{NOR}, (right) \ref{MON} and (right) \ref{TRA}, then $\ind$ satisfies the following stronger version of \ref{TRA}:
    \[A\ind_C B\text{ and } A\ind_{BC} D \implies A\ind_C BD.\]
\end{exercise}

\begin{exercise}
    Prove that if $\ind$ satisfies (right) \ref{NOR}, (right) \ref{MON} and (right) \ref{BMON}, then $\ind$ satisfies the following stronger version of \ref{BMON}:
    \[A\ind_C BD \implies A\ind_{CD} B.\]
\end{exercise}

\begin{exercise}\label{exo:loccharactereasy}
    Assume that $\ind$ satisfies \ref{FIN}, \ref{BMON} and the following weak version of locality: if $a$ is finite, for all $B$ there exists a finite $b\seq B$ with $a\ind_b B$. Prove that $\ind$ satisfies \ref{LOC}.
\end{exercise}

\begin{exercise}
    If $\ind$ satisfies \ref{LOC} and \ref{BMON} then $\ind$ satisfies \ref{EX}.
\end{exercise}

\section{Independence relations in pregeometries} We now define a purely set-theoretic and combinatorial context where two independence relations appear very naturally. 

\begin{definition}
    A \textit{(finitary) closure operator} $\cl$ on a set $S$ is a function:
    \[\cl : \cP(S)\rightarrow \cP(S)\]
    which satisfies the following properties:
    \begin{enumerate}
        \item (Reflexivity) $A\seq \cl(A)$;
        \item (Monotonicity) if $A\seq B$ then $\cl(A)\seq \cl(B)$;
        \item (Transitivity) $\cl(\cl(A)) = \cl(A)$;
        \item (Finite Character) $\cl(A) = \bigcup_{ \text{finite}\ a\seq A} \cl(a)$.
    \end{enumerate}
    If $\cl$ also satisfies:
    \begin{enumerate}\setcounter{enumi}{4}
        \item (Exchange) if $a\in \cl(Ab)\setminus \cl(A)$ then $b\in \cl(Aa)$.
    \end{enumerate}
    then $(S,\cl)$ is called a \textit{pregeometry}.
\end{definition}

\begin{example}
Let $(G,+,0)$ be an abelian group. For $A\seq G$, we denote by $\vect{A}$ the group span by $A$ and $\vect{A}^\dv $ the divisible closure of $\vect{A}$ in $G$, that is
\[\vect{A}^\dv = \set{g\in G\mid ng\in \vect{A} \text{ for some $n\in \N$}}\]
For any abelian group $(G,+,0)$, the map $A\mapsto \vect{A}$ is a finitary closure operator. It does not satisfy exchange in general (think of $1$ and $2$ in $\Z$). If $G$ is divisible, then $A\mapsto \vect{A}^\dv$ is a pregeometry.
\end{example}

\begin{exercise}
    Assume that $\cl$ is a finitary closure operator on a set $S$. Let $B\seq S$. We define:
    \begin{itemize}
        \item $\cl_B(A):= \cl(A\cup B)$ for all $A\seq S$ (\textit{Relativisation});
        \item $\cl_{\upharpoonright B} (A) = \cl(A)\cap B$ for all $A\seq S$ (\textit{Restriction}).
    \end{itemize}
    Prove that $\cl_B$ (resp. $\cl_{\upharpoonright B}$) is a finitary closure operator on $S$ (resp. on $B$). Similarly, if $\cl$ is a pregeometry, so are $\cl_B$ and $\cl_{\upharpoonright
    B}$.
\end{exercise}

\begin{definition}
    Given a closure operator $\cl$ on a set $S$ we define $A\indi a_C B$ if $\cl(AC)\cap \cl(BC) = \cl(C)$.
\end{definition}

\begin{exercise}\label{exo:propofindia}
    Check that if $\cl$ is a finitary closure operator, then $\indi a$ satisfies all properties of Definition \ref{def:axioms_part1} except \ref{BMON}.
\end{exercise}

\begin{exercise}\label{exo:pregeom_BMON_algebraic_dim1}
    Assume that $\cl$ is a pregeometry on $S$. Prove that if $a\notin \cl(B)$, then $a\indi a _C B$ for all $C\seq B$. 
\end{exercise}

\begin{definition}
    Let $\cl$ be a closure operator on $S$. Let $A_0, A, B\seq S$.
\begin{itemize}
    \item We say that $A_0$ is \textit{independent} over $B$ if $a\notin \cl(B\cup (A_0\setminus\set{a}))$ for all $a\in A_0$.
    \item We say that $A_0$ \textit{generates} $A$ over $B$ if $A\seq \cl(BA_0)$.
\end{itemize}
\end{definition}

\begin{exercise}
    Prove that $A$ is independent over $B$ for $\cl$ if and only if $A$ is independent over $\emptyset $ for $\cl_B$. Prove that $A_0$ generates $A$ over $B$ if and only if $A_0$ generates $AB$ over $\emptyset$ for $\cl_B$.
\end{exercise}

\begin{proposition}\label{prop:pregeom_maximal_independent}
Let $\cl$ be a closure operator on $S$. Let $A_0\seq A, B\seq S$ such that $A_0$ is independent over $B$. Then, there exists a set $A_1\seq \cl(A)$ containing $A_0$ which is maximal among the subsets of $A$ containing $A_0$ that are independent over $B$.
\end{proposition}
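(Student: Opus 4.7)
The plan is to apply Zorn's lemma to the collection
\[\mathcal{F} = \set{X\seq A \mid A_0\seq X \text{ and } X \text{ is independent over } B},\]
partially ordered by inclusion. The collection is nonempty since $A_0\in\mathcal{F}$ by hypothesis, so the only real content is to show that every chain in $\mathcal{F}$ admits an upper bound in $\mathcal{F}$; a maximal element then exists by Zorn, and it lies in $A$ (hence in $\cl(A)$ by reflexivity), contains $A_0$, and is maximal with the stated property.

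The main step is the chain argument. Given a chain $(X_i)_{i\in I}$ in $\mathcal{F}$, set $X = \bigcup_{i\in I} X_i$. Clearly $A_0\seq X\seq A$, so it remains to verify that $X$ is independent over $B$, i.e.\ that for every $x\in X$,
\[x \notin \cl(B\cup(X\setminus\set{x})).\]
Here is where the finite character of $\cl$ is essential: suppose for contradiction that $x\in \cl(B\cup (X\setminus\set{x}))$. By finite character there is a finite set $F\seq B\cup (X\setminus \set{x})$ with $x\in \cl(F)$. The finitely many elements of $F\cap X$ together with $x$ all lie in finitely many of the $X_i$, and since $(X_i)$ is a chain, there is a single index $i^*$ with $\set{x}\cup (F\cap X) \seq X_{i^*}$. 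Then $F\seq B\cup (X_{i^*}\setminus \set{x})$ and monotonicity of $\cl$ gives $x\in \cl(B\cup (X_{i^*}\setminus \set{x}))$, contradicting the independence of $X_{i^*}$ over $B$. Therefore $X\in \mathcal{F}$.

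The potential obstacle is purely notational rather than mathematical: one must be careful when passing from ``$x$ is witnessed to be in the closure by a finite subset of $B\cup (X\setminus\set{x})$'' to ``a single $X_{i^*}$ absorbs everything'', because $B$-elements appearing in $F$ play no role in locating $i^*$; only the finitely many elements of $F\cap X$ matter, and those are handled by the chain property. Once the chain argument is in place, Zorn's lemma delivers a maximal element $A_1\in \mathcal{F}$, which satisfies $A_0\seq A_1\seq A\seq \cl(A)$ and is maximal among subsets of $A$ containing $A_0$ that are independent over $B$, as required.
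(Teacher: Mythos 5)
Your proof is correct and follows essentially the same route as the paper: apply Zorn's lemma to the family of independent-over-$B$ subsets of $A$ containing $A_0$, using finite character of $\cl$ to show unions of chains stay independent. The only (inessential) difference is that the paper first reduces to $B=\emptyset$ by passing to the relativised operator $\cl_B$, whereas you carry $B$ along explicitly and handle the finite witness set directly, which is equally fine.
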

\begin{proof}
First, by working with $\cl_B$, we may assume that $B = \emptyset$. 
Let $\cF$ be the set of independent (over $\emptyset$) subsets of $A$ containing $A_0$. Let $(X_n)_n$ be a chain of elements in $\cF$ and $X = \bigcup_n X_n$. We check that $X$ is independent. If $x\in X$, then $x\in X_n$ for cofinitely many $n$. If $x\in \cl(X\setminus \set{x})$ then by finite character, $x\in \cl(X_n\setminus\set{x})$ for cofinitely many $n$, hence there is $n$ such that $x\in X_n$ and $x\in \cl(X_n\setminus\set{x})$, which contradicts that $X_n$ are independent. By Zorn's lemma, $\cF$ admits a maximal element. 
\end{proof}

\begin{proposition}[and Definition]\label{prop:pregeom_existence_basis}
Let $\cl$ be a pregeometry on $S$. Let $A_0,A, B\seq S$ with $A_0$ independent over $B$. Then, there exists a set $A_1\seq \cl(A)$ containing $A_0$ such that $A_1$ is independent over $B$ and $A_1$ generates $A$ over $B$.
Such $A_1$ is called a \textit{basis of $A$ over $B$}. For all bases $A_1,A_2$ of $A$ over $B$ we have $\abs{A_1} = \abs{A_2}$. We call $\abs{A_1}$ the dimension of $A$ over $B$, denoted $\dim(A/B)$.
\end{proposition}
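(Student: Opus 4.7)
The plan is to first prove existence of a basis by invoking Proposition \ref{prop:pregeom_maximal_independent}, and then derive invariance of the cardinality by splitting into the finite and infinite cases. A preliminary reduction in both parts is to assume $B=\emptyset$ by replacing $\cl$ with $\cl_B$, which is again a pregeometry.

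For existence, I would apply Proposition \ref{prop:pregeom_maximal_independent} to obtain $A_1\seq A$ containing $A_0$ and maximal among independent subsets of $A$. The point is then that $A_1$ generates $A$: given any $a\in A\setminus A_1$, maximality forces $A_1\cup\set{a}$ to be dependent, so either $a\in \cl(A_1)$ --- which is what we want --- or some $x\in A_1$ satisfies $x\in \cl((A_1\setminus\set{x})\cup\set{a})$. In the latter case, independence of $A_1$ ensures $x\notin \cl(A_1\setminus\set{x})$, so the exchange axiom yields $a\in \cl(A_1)$.

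For the invariance of cardinality, let $A_1,A_2$ be two bases. If $A_1$ is infinite, for each $y\in A_2$ finite character produces a finite $F_y\seq A_1$ with $y\in \cl(F_y)$; the set $A_1':=\bigcup_{y\in A_2} F_y\seq A_1$ still generates $A$, since $A\seq \cl(A_2)\seq \cl(A_1')$. But then $A_1'$ cannot be a proper subset of $A_1$: any $x\in A_1\setminus A_1'$ would lie in $\cl(A_1')\seq \cl(A_1\setminus\set{x})$, contradicting independence. Hence $A_1=\bigcup_{y\in A_2} F_y$, which forces $A_2$ to be infinite (lest $A_1$ be a finite union of finite sets) and gives $\abs{A_1}\leq \abs{A_2}$; symmetry completes this case. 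The finite case would be treated by the classical Steinitz exchange: starting from $A_1$, successively introduce the elements $y_1,y_2,\ldots$ of $A_2$, each time using exchange to swap some $x_{i_{k+1}}$ out of the current basis $B_k=\set{y_1,\ldots,y_k}\cup(A_1\setminus\set{x_{i_1},\ldots,x_{i_k}})$ for $y_{k+1}$. The main delicate step --- essentially the only nontrivial bookkeeping in the whole proof --- is verifying that the key condition $y_{k+1}\in \cl(B_k)\setminus \cl(\set{y_1,\ldots,y_k})$ (the former because $B_k$ is a basis, the latter from independence of $A_2$) ensures that some genuine original $x$ is available to be exchanged at each stage, so that if $\abs{A_2}>\abs{A_1}$ one reaches after $\abs{A_1}$ steps the contradiction that $\set{y_1,\ldots,y_{\abs{A_1}}}$ is a basis while $y_{\abs{A_1}+1}\notin \cl(\set{y_1,\ldots,y_{\abs{A_1}}})$.
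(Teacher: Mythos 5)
Your proposal is correct and follows essentially the same route as the paper: reduce to $B=\emptyset$ via $\cl_B$, get existence from Proposition \ref{prop:pregeom_maximal_independent} together with the exchange axiom, and prove invariance of cardinality by the same finite/infinite dichotomy (Steinitz exchange in the finite case, finite character in the infinite case). The only difference is organisational: the paper proves the slightly stronger inequality $\abs{X}\leq \abs{Y}$ for $X$ independent and $Y$ generating, running the finite exchange so as to preserve independence, whereas you compare two bases directly and run the exchange on the generating side; both are the classical exchange argument and your sketched "delicate step" is handled exactly by choosing a minimal subset of $B_k$ witnessing $y_{k+1}\in\cl(B_k)$.
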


\begin{proof} Similarly as above, if $(S,\cl)$ is a pregeometry, then so is the closure operator $\cl_B$ on $S$. This implies that we may assume that $B = \emptyset$. We start with a claim.
\begin{claim}\label{claim:pregeom_extension_independentset} If $X$ is independent and $a\notin \cl(X)$. Then $X\cup\set{a}$ is again independent\end{claim}
\begin{proof}[Proof of the claim]
Otherwise, either $a\in\cl(X)$ which is excluded or $x\in \cl(Xa\setminus\set{x})$ for some $x\in X$. By assumption $x\notin \cl(X\setminus \set{x})$ hence \[x\in \cl((X\setminus\set{x})a)\setminus \cl(X\setminus \set{x}).\]
By exchange, $a\in \cl(X\setminus\set{x}\cup \set{x}) = \cl(X)$, a contradiction. 
\end{proof}
Let $A_1$ be a maximal independent subset of $A$ extending $A_0$. If $A_1$ does not generates $A$, there exists $a\in A\setminus \cl(A_1)$. By Claim \ref{claim:pregeom_extension_independentset}, $A_1\cup\set{a}$ is again independent, which contradicts the maximality of $A_1$. 

For the second part, it is enough to show that if $X$ is independent in $A$ and $Y$ is generating in $A$, then $\abs{X}\leq \abs{Y}$. 
\begin{claim}\label{claim:pregeom_exchangebasis}
    If $x\in X\setminus Y$ there exists $y\in Y\setminus X$ such that $(X\setminus\set{x})\cup\set{y}$ is independent.
\end{claim}
\begin{proof}[Proof of the claim]
    As $X$ is independent, $x\notin\cl(X\setminus\set{x})$ hence $Y$ is not included in $\cl(X\setminus\set{x})$, otherwise $\cl(X\setminus\set{x})$ would be all $A$ by transitivity and the fact that $Y$ is generating. Then for any $y\in Y\setminus \cl(X\setminus\set{x})$ we have $(X\setminus\set{x})\cup \set{y}$ is independent by Claim \ref{claim:pregeom_extension_independentset}.
\end{proof}
Assume that $X$ is finite. Then using iteratively Claim \ref{claim:pregeom_exchangebasis}, we can replace elements of $X$ by elements of $Y$ to get an independent family, hence there is a subset of $Y$ with the same cardinality as $X$ which is independent, hence $\abs{X}\leq \abs{Y}$. 

If $X$ is infinite, let $X'$ be a basis of $A$ extending $X$. Then by finite character, for every $y\in Y$ there is a finite tuple $x_y$ from $X'$ such that $y\in \cl(x_y)$. We have $Y\seq \bigcup_{y\in Y} x_y$ hence $\bigcup_{y\in Y} x_y$ is a generating subset of $X'$. Assume that $\bigcup_{y\in Y} x_y\subsetneq X'$, then there exists $x\in X'\setminus \bigcup_{y\in Y} x_y$. Hence $\bigcup_{y\in Y} x_y\seq X'\setminus\set{x}$. As $X'$ is independent, $X'\setminus\set{x}$ is not a generating set (since $x\notin\cl(X'\setminus\set{x})$). This contradicts that $\bigcup_{y\in Y} x_y$ is generating. It follows that $X' = \bigcup_{y\in Y} x_y$. As each $x_y$ is finite, this implies that $Y$ is infinite and hence $\abs{\bigcup_{y\in Y} x_y }\leq \abs{Y}$. We conclude that $\abs{X}\leq \abs{X'} =\abs{\bigcup_{y\in Y} x_y}\leq  \abs{Y}$.
\end{proof}

\begin{exercise}
    Let $\cl$ be a closure operator on $S$. Let $A$ be a finite subset of $S$, prove that there exists a finite set $A_0\seq \cl(A)$ which is a basis of $A$. 
\end{exercise}

The following example shows that the previous exercise does not generalise to arbitrary sets $A$.
\begin{example}\label{exo:frederickgebert}
    If $(S,\cl)$ is a closure operator which is not a pregeometry, then infinite subsets may not admit a basis. Here is an example cooked up by Frederick Gebert. Take $S = \N$ and $\cl(A) = [0,\max A]$. Then $\cl$ is a closure operator on $\N$. It is easy to check that $(\N,\cl)$ is not a pregeometry and that $A\seq \N$ is an independent set if an only if $A = \set{a}$. Therefore $\N$ do not admit a basis. In this example, every finite set $A_0$ admit a basis: $\max A_0$.
\end{example}

We denote $\dim(A) = \dim(A/\emptyset)$. 
\begin{lemma}\label{lm:pregeom_dimension}
    Let $(S,\cl)$ be a pregeometry and $A,B\seq S$. Then
    \begin{enumerate}
        \item $\dim(AB) = \dim(A/B)+\dim(B)$;
        \item If $C\seq B$ then $\dim(A/B)\leq \dim(A/C)$;
        \item $\dim(AB)+\dim(A\cap B)\leq \dim A + \dim B$, for $A,B$ closed.
    \end{enumerate}
\end{lemma}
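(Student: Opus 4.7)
The plan is to prove $(1)$ directly via bases, deduce $(2)$ from a minimality comparison, and derive $(3)$ formally from $(1)$ and $(2)$ using cardinal arithmetic.

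For $(1)$, the approach is to start from a basis $B_1$ of $B$, so $|B_1| = \dim(B)$. I would then apply Proposition~\ref{prop:pregeom_existence_basis} to the independent set $B_1 \seq AB$ to extend it to a basis $X$ of $AB$, so $|X| = \dim(AB)$, and claim that $A_0 := X \setminus B_1$ is a basis of $A$ over $B$. Generation is immediate: $A \seq AB \seq \cl(X) = \cl(B_1 A_0) \seq \cl(BA_0)$. For independence of $A_0$ over $B$, the key point is that $B_1$ generates $B$, so $\cl(B) = \cl(B_1)$, hence for every $a \in A_0$ one has $\cl(B(A_0 \setminus \{a\})) = \cl(X \setminus \{a\})$, and $a \notin \cl(X \setminus \{a\})$ because $X$ is independent. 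Since $X = B_1 \sqcup A_0$, cardinal addition gives $\dim(AB) = |X| = |B_1| + |A_0| = \dim(B) + \dim(A/B)$.

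For $(2)$, given a basis $A_0$ of $A$ over $C$, the inclusion $C \seq B$ yields $A \seq \cl(CA_0) \seq \cl(BA_0)$, so $A_0$ is generating over $B$. The inequality ``$|X| \leq |Y|$ whenever $X$ is independent in $A$ and $Y$ is generating in $A$'' proved inside Proposition~\ref{prop:pregeom_existence_basis}, applied now to the pregeometry $\cl_B$, bounds any set independent over $B$ in $A$ by $|A_0|$. Applied to a basis of $A$ over $B$, this gives $\dim(A/B) \leq |A_0| = \dim(A/C)$.

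For $(3)$, apply $(1)$ to obtain $\dim(AB) = \dim(A/B) + \dim(B)$, and apply $(1)$ to the pair $A \cap B \seq A$ (noting $A(A \cap B) = A$) to obtain $\dim(A) = \dim(A / A \cap B) + \dim(A \cap B)$. By $(2)$ with $C = A \cap B \seq B$ one has $\dim(A/B) \leq \dim(A/A \cap B)$. Combining these with associativity and commutativity of cardinal sum yields
\[
\dim(AB) + \dim(A \cap B) = \dim(A/B) + \dim(B) + \dim(A \cap B) \leq \dim(A/A \cap B) + \dim(A \cap B) + \dim(B) = \dim(A) + \dim(B).
\]
The main obstacle is the verification in $(1)$ that $A_0$ is independent \emph{over $B$} rather than merely over $\emptyset$; the remaining work is essentially bookkeeping, and the argument for $(3)$ does not actually use the ``closed'' assumption on $A,B$ (though that hypothesis is what makes $A \cap B$ the natural object to consider).
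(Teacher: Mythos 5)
Your proof is correct, but it takes a partly different route from the paper's, so let me compare. For (1) the paper argues bottom-up: it fixes a basis $B_0$ of $B$ and a basis $A_0$ of $AB$ over $B$ and shows by a direct exchange computation (the delicate case being $x\in B_0$, handled with a minimal tuple from $A_0$) that $A_0\cup B_0$ is independent, hence a basis of $AB$. You argue top-down: extend a basis $B_1$ of $B$ to a basis $X$ of $AB$ and check that $X\setminus B_1$ works over $B$; here the verification is purely formal, using only $\cl(B)=\cl(B_1)$, with all appeals to exchange delegated to Proposition \ref{prop:pregeom_existence_basis} — arguably the slicker decomposition. Two points worth making explicit: choose $B_1\seq B$ (the maximal-independent-subset construction in the paper provides this), so that the extension takes place inside $AB$; and note that $X\setminus B_1\seq\cl(AB)$ need not lie in $\cl(A)$, so it is not literally a ``basis of $A$ over $B$'' in the sense of the paper's definition — its cardinality is nevertheless $\dim(A/B)$, since any set independent over $B$ and generating $A$ over $B$ has the cardinality of a basis by the independent-versus-generating comparison in the localised pregeometry $\cl_B$, which is exactly the identification the paper itself makes when it computes $\dim(A/B)$ from a basis of $AB$ over $B$. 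For (2) the paper gives no details, and your comparison argument (a basis over $C$ remains generating over $B\supseteq C$) is fine. For (3) the paper counts directly: with $C=A\cap B$, a basis of $C$ together with bases of $A$ and $B$ over $C$ generate $AB$, and the bound is then phrased with a slightly informal cardinal subtraction; you instead deduce (3) formally from (1) and (2), which avoids subtraction (cleaner when the dimensions are infinite) and, as you correctly observe, shows the inequality requires no closedness hypothesis — that hypothesis only becomes relevant for the modular-law equality discussed afterwards.
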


\begin{proof}
\textit{(1)} Let $B_0$ be a basis of $B$ over $\emptyset$ using Proposition \ref{prop:pregeom_existence_basis}. Using again Proposition \ref{prop:pregeom_existence_basis}, let $A_0$ be a basis of $AB$ over $B$. We check that $A_0B_0$ is a basis of $AB$ over $\emptyset$. As $A_0$ is independent over $B_0$, in particular, $A_0\cap \cl(B) = \emptyset$ hence $A_0\cap B_0 = \emptyset$. Let $x\in A_0\cup B_0$. If $x\in A_0$, then $x\notin \cl(B\cup (A_0\setminus\set{x})) = \cl(A_0B_0\setminus\set{x})$ as $A_0\cap B_0 = \emptyset$ hence we are done. If $x\in B_0$ and $x\in \cl(A_0\cup (B_0\setminus \set{x}))$. Let $(a_1,\ldots,a_n)$ be a tuple of elements of $A_0$ of minimal size such that $x\in \cl(\set{a_1,\ldots,a_n}\cup (B_0\setminus \set{x}))$. Thus $x\notin \cl(\set{a_2,\ldots,a_n}\cup (B_0\setminus \set{x}))$. By exchange, $a_1\in \cl(\set{a_2,\ldots,a_n}\cup B_0)$, which contradicts the fact that $a_i$ are elements of $A_0$. It follows that $\dim(AB) = \abs{A_0}+\abs{B_0}$. By definition, $\abs{B_0} = \dim B$ and $\dim(A/B) = \abs{A_0}$ so we are done. $(2)$ Easily follows from (1), but it is also clear by definition. $(3)$ Let $C = A\cap B$ and $C_0$ a basis of $C$. Let $A_0$ a basis of $A$ over $C$ and $B_0$ a basis of $B$ over $C$. Then certainly $A_0B_0C_0$ is a generating set for $AB$. $\dim(AB)\leq \abs{A_0}+\abs{B_0}+\abs{C_0}$. As $\abs{C_0} = \dim C$, $\abs{A_0}+\abs{C_0} = \dim A$, $\abs{B_0}+\abs{C_0} = \dim B$, we get $\dim(AB)\leq \dim A+\dim B - \dim C$, as requested. 
\end{proof}

\begin{definition}
    In a pregeometry, we denote $A\indi\cl_C B$ if for all finite subsets $A_0\seq A$ we have $\dim(A_0/BC) = \dim(A_0/C)$.
\end{definition}

Equivalently, $A\indi\cl_C B$ if and only if every finite $A_0\seq A$ which is independent over $C$ stays independent over $BC$.

\begin{exercise}\label{exo:relativised_independence_relation}
    Prove that for all $A,B,C $ $A\indi\cl_C B$ if and only if $A\indi{\cl_C}_\emptyset B$.
\end{exercise}

\begin{definition}[Axioms of independence relations, part 2]\label{def:axioms_part2} With an ambient closure operator $\cl$.
\begin{enumerate}\setcounter{enumi}{8}
    \item (\setword{anti-reflexivity}{AREF}) If $a\ind_C a$ then $a\in \cl(C)$;
    \item (right \setword{closure}{CLO}) $A\ind_C B \implies  A\ind _C \cl(B)$.
    \item (\setword{strong closure}{SCLO}) $A\ind_C B \iff  \cl(AC)\ind _{\cl(C)} \cl(BC)$.
\end{enumerate}
\end{definition}

\begin{remark}
    Note that if $\ind$ satisfies two-sided \ref{MON}, \ref{AREF} and \ref{SCLO} then $\ind\rightarrow \indi a$. 
\end{remark} 

\begin{exercise}
    Prove that if $\ind$ satisfies \ref{SYM}, \ref{NOR}, \ref{BMON} and \ref{CLO}, then $\ind$ satisfies the ``$\Rightarrow$" direction of \ref{SCLO}. What about the ``$\Leftarrow$" direction?
\end{exercise}

\begin{exercise}
    Check that if $\cl$ is a finitary closure operator, then $\indi a$ satisfies further \ref{AREF} and \ref{SCLO}.
\end{exercise}

\begin{theorem}\label{thm:pregeom_axiomsfordimindependence}
    Let $(S,\cl)$ be a pregeometry. Then, $\indi\cl$ satisfies \ref{SYM}, \ref{FIN}, \ref{EX},  \ref{NOR}, \ref{MON}, \ref{BMON}, \ref{TRA}, \ref{AREF}, \ref{CLO}, \ref{SCLO}, \ref{LOC}.
\end{theorem}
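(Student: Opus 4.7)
The plan is to verify each axiom by a direct dimension computation, combining Lemma \ref{lm:pregeom_dimension} (especially the additivity formula $\dim(XY) = \dim(X/Y) + \dim(Y)$ and monotonicity of $\dim(\cdot/-)$ in the base), the finite character of $\cl$, and Exercise \ref{exo:relativised_independence_relation} to reduce the base to $\emptyset$ whenever convenient. A useful preliminary observation is that idempotence of $\cl$ gives $\cl_{\cl(X)}(\cdot) = \cl_X(\cdot)$, so the dimension over $X$ and over $\cl(X)$ are literally equal.

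Most axioms are essentially tautological. \ref{FIN} is immediate because the defining condition already quantifies over finite subsets of $A$. \ref{EX} and \ref{NOR} follow from the trivial equalities $\dim(A_0/CC) = \dim(A_0/C)$ and $\dim(A_0/BCC) = \dim(A_0/BC)$. For \ref{MON}, \ref{BMON}, and \ref{TRA}, I would chase the chain $\dim(A_0/C) \ge \dim(A_0/X) \ge \dim(A_0/Y)$ valid whenever $C \seq X \seq Y$: if the two extremes agree by hypothesis then so does the middle. \ref{AREF} is the case $A_0 = \{a\}$. Finally, \ref{CLO} is immediate from the idempotence observation above.

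The two genuinely infinite axioms are \ref{SYM} and \ref{SCLO}, and both hinge on the finite character of $\cl$. For \ref{SYM}, after relativising to $C = \emptyset$, I would first treat the finite-on-both-sides case: the twin identity
\[\dim(A_0) + \dim(B_0/A_0) = \dim(A_0 B_0) = \dim(B_0) + \dim(A_0/B_0),\]
together with $\dim(A_0) = \dim(A_0/B) \le \dim(A_0/B_0) \le \dim(A_0)$, forces $\dim(B_0/A_0) = \dim(B_0)$ for all finite $A_0 \seq A$ and $B_0 \seq B$. The passage to infinite $A$ is the delicate step: given a basis $B_1$ of $B_0$ over $A$, finite character of $\cl$ produces a finite $A^* \seq A$ over which $B_1$ is still independent and still generates $B_0$, so $|B_1| = \dim(B_0/A^*) = \dim(B_0)$. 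For \ref{SCLO}, the idempotence observation reduces the statement to the equivalence of its content on finite subsets of $A$ versus finite subsets of $\cl(AC)$; the nontrivial direction picks, by finite character again, a finite $A_1 \seq A$ and $C_1 \seq C$ with $A_0 \seq \cl(A_1 C_1)$, and then applying additivity to $\dim(A_0 A_1/X)$ for $X = C$ and $X = BC$ reduces the equality $\dim(A_0/BC) = \dim(A_0/C)$ to the hypothesis applied to $A_1$.

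For \ref{LOC}, having \ref{FIN} and \ref{BMON} in hand, Exercise \ref{exo:loccharactereasy} reduces the task to the weak form for finite $a$. I would pick a basis $a_1$ of $a$ over $B$, use finite character of $\cl$ to find, for each $x \in a \setminus a_1$, a finite $b_x \seq B$ with $x \in \cl(a_1 b_x)$, and take $b := \bigcup_{x \in a \setminus a_1} b_x$. Then $a_1$ remains independent over $b$ and generates $a$ over $b$, so $\dim(a/b) = |a_1| = \dim(a/B)$, and a further use of additivity, combined with the trivial inequalities coming from $b \seq B$, propagates this equality from $a$ to every subset $a_0 \seq a$, yielding $a \indi\cl_b B$. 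The main obstacle is really \ref{SYM} in the infinite case, where one has to approximate a basis of $B_0$ over $A$ by a basis over a finite $A^* \seq A$; once that approximation is settled, everything else is an exercise in the dimension lemma.
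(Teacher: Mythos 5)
Your proposal is correct and follows essentially the same route as the paper: relativise the base to $\emptyset$, run the dimension calculus of Lemma \ref{lm:pregeom_dimension} (the twin additivity identity for \ref{SYM}, chains in the base for \ref{MON}, \ref{BMON}, \ref{TRA}), and prove \ref{LOC} by taking a basis of $a$ over $B$, using finite character of $\cl$ to shrink $B$, and invoking Exercise \ref{exo:loccharactereasy}. The only differences are organizational: you pass to infinite $A$ in \ref{SYM} by a direct basis-approximation with finite character where the paper first establishes right \ref{FIN} and reduces to the finite case via \ref{MON}, and your finite-subset additivity argument for \ref{SCLO} (and the propagation-to-subsets step in \ref{LOC}) spells out details the paper compresses into the identity $\dim(A/D)=\dim(\cl(AD)/\cl(D))$.
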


\begin{proof}
(\ref{FIN} left and right). Left \ref{FIN} is clear by definition. For right \ref{FIN}, assume that $A\nindi\cl_C B$ then there is a finite subset of $A$ which is independent over $C$ and which is not independent over $BC$. In particular, it is not independent over $B_0C$ for a finite subset $B_0\seq B$, by finite character, hence $A\nindi\cl _C B_0$. 
(\ref{MON} left and right) Left \ref{MON} is clear by definition. For right \ref{MON}, observe that for any finite $A$ we have $\dim(A/C) \geq \dim(A/BC)\geq \dim(A/BDC)$ hence if $\dim(A/C) =\dim(A/BDC)$ then $\dim(A/C) = \dim(A/BC)$. It follows that $A\indi\cl_C BD\implies A\indi\cl_C B$. 
    (\ref{SYM}) By Exercise \ref{exo:relativised_independence_relation}, we may assume that $C = \emptyset$ and by two-sided \ref{FIN} and \ref{MON} that $A,B$ are finite. By Lemma \ref{lm:pregeom_dimension}, we have 
    \begin{align*}
        \dim(AB) &= \dim(A/B)+\dim(B)\\
        &=\dim(B/A)+\dim(A)
    \end{align*}
    Hence, if $A\indi\cl B$, then $\dim(A/B) = \dim(A)$, so $\dim(B/A) = \dim(B)$ and $B\indi\cl A$.
 (\ref{EX}, \ref{NOR}) are trivial. Note that by left finite character we may assume that $A$ is finite in what follows. (\ref{BMON}) Similarly as for \ref{MON}, $\dim(A/C) \geq \dim(A/CD)\geq \dim(A/BDC)$ hence if $\dim(A/C)= \dim(A/BDC)$ then $\dim(A/CD) = \dim(A/BCD)$. It follows that $A\indi\cl_C BD\implies A\indi\cl_{CD} B$. (\ref{TRA}) Assume that $A\indi\cl_C B$ and $A\indi\cl _B D$, with $C\seq B\seq D$. Then $\dim(A/C) = \dim(A/B)$ and $\dim(A/B) = \dim(A/D)$, hence $\dim(A/C) = \dim(A/D)$ so $A\indi\cl_C D$. (\ref{AREF}) If $\dim(a/Ca) = \dim(a/C)$, then $\dim(a/C) = 0$, so $\cl(C) = \cl(Ca)$ i.e. $a\in \cl(C)$. (\ref{CLO}, \ref{SCLO}) We prove only \ref{SCLO} as \ref{CLO} is proved similarly. By definition, for any $A,D$, $\dim(A/D) = \dim(\cl(AD)/\cl(D))$ hence $\dim(A/C) = \dim(A/BC)$ if and only if $\dim(\cl(AC)/\cl(C)) = \dim(\cl(ABC)/\cl(BC)) = \dim(\cl(AC)/\cl(BC))$, so \[A\indi\cl_C B\iff \cl(AC)\indi\cl_{\cl(C)} \cl(BC).\]
 
    (\ref{LOC}) Assume that $A$ is finite and $B$ is any set. Let $X$ be a basis of $A$ over $B$. For each singleton $a\in A$, there exists a finite set $b_a\seq B$ such that $a\in \cl(Xb_a)$. Let $B_0 = \bigcup_{a\in A}b_a$. Then $X$ is a basis of $A$ over $B_0$, hence $\dim(A/B) = \dim(A/B_0)$, so $A\indi\cl_{B_0} B$. We proved that: for all finite $A$ and any $B$, there exists a finite $B_0\seq B$ such that $A\indi\cl_{B_0} B$. We conclude by Exercise \ref{exo:loccharactereasy}.
\end{proof}

\begin{exercise}
    Prove that $A\indi \cl_C B$ if and only if all subsets
$A_0 \seq A$ and $B_0 \seq B$ which are both independent over $C$ are disjoint and their
union is again independent over $C$. 
\end{exercise}

Of course, we have $\indi \cl\rightarrow \indi a$. In terms of properties they satisfy, the two relations $\indi\cl$ and $\indi a$ only differ by the property \ref{BMON}. Recent results in model theory make apparent a tension associated with the presence or the absence of the property \ref{BMON} for different notions of forking. As we will see later it is the turning point between simple and NSOP$_1$ theories when considering the independence relation associated to Kim-forking. In a pregeometry, $\indi a$ and $\indi \cl$ are ``close" to each other, as we will see now, and forcing the \ref{BMON} axiom on $\indi a$ preserves most of the axioms mentioned above. This is not true in general, as we will see later. 

\subsection{Forcing the right base monotonicity axiom}

\begin{definition}\label{def:monotonisation}
    Let $\ind$ be a ternary relation. We associate the \textit{monotonisation $\indi M$ of $\ind$} which is defined as the following:
    \[A\indi M_C B\iff A\ind_X B \text{ for all $X$ with $C\seq X\seq \cl(BC)$}.\]
\end{definition}

\begin{proposition}\label{prop:monotonisation}
  The relation $\indi M $ satisfies right \ref{BMON}. \begin{itemize}
      \item If $\ind$ satisfies left or right \ref{MON}, left or right \ref{CLO}, left \ref{NOR}, so does $\indi M$. If $\ind$ further satisfies right \ref{NOR} or left \ref{TRA}, then so does $\indi M$. 
      \item If $\ind$ satisfies \ref{AREF} or (left or right) \ref{CLO} then so does $\indi M$.
      \item If $\indi 0\to \ind$ and $\indi 0$ satisfies the right-sided instance of: \ref{NOR}, \ref{MON}, \ref{CLO} and \ref{BMON}, then $\indi 0\rightarrow \indi M$.
  \end{itemize}
\end{proposition}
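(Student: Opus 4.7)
The proof of each item unfolds by tracking how the universal quantifier in the definition of $\indi M$ interacts with each axiom; most checks reduce to applying the corresponding axiom of $\ind$ pointwise at each intermediate base $X$ in the range $[C,\cl(BC)]$. The preliminary claim that $\indi M$ satisfies right \ref{BMON} is the cleanest illustration: when $C\seq B\seq D$ we have $\cl(BD)=\cl(D)=\cl(CD)$, so the set of admissible sliding bases for $A\indi M_B D$ is $[B,\cl(D)]\seq [C,\cl(D)]$, which is exactly the quantifier range defining $A\indi M_C D$. Thus the stronger hypothesis gives the weaker conclusion for free.

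For the first bullet, I would treat each property by fixing $X\in[C,\cl(BC)]$ and invoking the relevant axiom of $\ind$ at base $X$. Right \ref{MON} transfers because $[C,\cl(BC)]\seq [C,\cl(BDC)]$, so $A\indi M_C BD$ specialises at each $X$ to $A\ind_X BD$, and right \ref{MON} of $\ind$ gives $A\ind_X B$. Right \ref{CLO} is immediate from $\cl(\cl(B)C)=\cl(BC)$, so the quantifier ranges coincide; the left sided statements of \ref{MON}, \ref{CLO}, \ref{NOR} transfer pointwise using that $C\seq X$ keeps the left-hand configurations compatible. The ``further'' clause is handled by composing two axioms at each $X$: for right \ref{NOR}, apply right \ref{NOR} of $\ind$ at base $X$ to obtain $A\ind_X BX$ from $A\ind_X B$, then descend to $A\ind_X BC$ using right \ref{MON}; for left \ref{TRA}, apply left \ref{TRA} of $\ind$ at each $X$ with the same tower $C\seq B\seq D$.

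For the second bullet, \ref{AREF} is immediate on plugging $X=C$: from $a\indi M_C a$ we recover $a\ind_C a$, and \ref{AREF} for $\ind$ gives $a\in\cl(C)$. The \ref{CLO} preservation is as in the first bullet.

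The third bullet is the main content and runs entirely on $\indi 0$. Given $A\indi 0_C B$, apply right \ref{NOR} and then right \ref{CLO} of $\indi 0$ to lift the right-hand side: $A\indi 0_C \cl(BC)$. For any $X$ with $C\seq X\seq \cl(BC)$, right \ref{BMON} for $\indi 0$ yields $A\indi 0_X \cl(BC)$, and right \ref{MON} for $\indi 0$ yields $A\indi 0_X B$. Since $\indi 0\to\ind$, we get $A\ind_X B$ for every admissible $X$, i.e. $A\indi M_C B$, as required. The main obstacle is purely bookkeeping: at each step one must verify that the interval $[C,\cl(BC)]$ behaves correctly when $B$ or $C$ is replaced by a related set; this is what makes the argument long rather than technically deep.
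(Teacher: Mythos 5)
Most of what you write coincides with the paper's own proof: the right \ref{BMON} argument, the pointwise treatment of \ref{MON}, the \ref{AREF} observation, the composition right \ref{NOR} $+$ right \ref{MON} for right \ref{NOR}, and the whole third bullet (lift the right-hand side with right \ref{NOR} and right \ref{CLO} of $\indi 0$, slide the base with right \ref{BMON}, descend with right \ref{MON}, then use $\indi 0\to\ind$) are exactly the computations in the paper. A small caveat on right \ref{CLO}: observing that $\cl(\cl(B)C)=\cl(BC)$ only matches the quantifier ranges; you still need to apply right \ref{CLO} of $\ind$ at each intermediate base $X$ to pass from $A\ind_X B$ to $A\ind_X \cl(B)$. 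That is within your announced pointwise strategy, so it is only a compression of wording.

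The genuine gap is the preservation of left \ref{TRA}. You say ``apply left \ref{TRA} of $\ind$ at each $X$ with the same tower $C\seq B\seq D$'', but this does not produce what is needed. To get $D\indi M_C A$ you must show $D\ind_X A$ for \emph{every} $X$ with $C\seq X\seq\cl(AC)$. Applying left \ref{TRA} of $\ind$ with the tower $C\seq B\seq D$ combines the instances $B\ind_C A$ and $D\ind_B A$ and yields only $D\ind_C A$, i.e.\ the single case $X=C$. For a general $X$ the available hypotheses are $B\ind_X A$ (from $B\indi M_C A$, since $X\seq\cl(AC)$) and $D\ind_Y A$ only for $Y$ with $B\seq Y\seq\cl(AB)$ (from $D\indi M_B A$); since $X$ need neither contain nor be contained in $B$, there is no tower with bottom $X$ to which left \ref{TRA} applies directly, and your step fails. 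The paper's argument repairs exactly this mismatch: from $B\ind_X A$, left \ref{NOR} gives $BX\ind_X A$; since $X\seq\cl(AC)\seq\cl(AB)$, the set $XB$ lies in the admissible range for $D\indi M_B A$, so $D\ind_{XB} A$ and, by left \ref{NOR}, $DXB\ind_{XB} A$; now left \ref{TRA} along the auxiliary tower $X\seq XB\seq DXB$ gives $DXB\ind_X A$, and left \ref{MON} yields $D\ind_X A$. This is precisely why left \ref{NOR} and left \ref{MON} are among the standing hypotheses of that bullet; without this detour the pointwise scheme does not go through.
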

\begin{proof}
 Let $A,C\seq B\seq D$ be such that $A\indi M_C D$. Then for all $X$ such that $C\seq X\seq \cl(D)$ we have $A\ind_X D$. In particular, for all $Y$ such that $B\seq Y\seq \cl(BD) = \cl(D)$ we have $A\ind_Y D$ so $A\indi M_B D$. 

We turn to the first item. Left \ref{MON}, \ref{CLO} and \ref{NOR} are clearly preserved. For right \ref{MON}: if $A\indi M _C BD$ then in particular for all $X$ with $C\seq X\seq \cl(CB)$ we have $A\ind_X BD$ hence $A\ind_X B$ by right \ref{MON}, hence $A\indi M _C B$. For right \ref{NOR}, assume that $A\indi M_C B$. Then for any $X$ with $C\seq X\seq \cl(BC)$ we have $A\ind_X B$ hence $A\ind_X BX$ by right \ref{NOR} of $\ind$ and hence $A\ind_X BC$ by right \ref{MON} of $\ind$. We conclude that $A\indi M _C BC$. We assume left \ref{TRA} and assume that $B\indi M _C A$ and $D\indi M _B A$ for $C\seq B\seq A$. To get $D\indi M _C A$, let $X$ be such that $C\seq X\seq \cl(AC)$. Using $B\indi M _C A$ we get $B\ind_X A$ and by left \ref{NOR}, we get $BX\ind_X A$ $(*)$. Also, $B\seq XB\seq \cl(AB)$ hence using $D\indi M _B A$ we get $D\ind_{XB} A$ and by left \ref{NOR} we have $DXB \ind_{XB} A$ $(**)$. By putting $(*)$ and $(**)$ together with left \ref{TRA} for $\ind$ we get $DXB\ind_X A$. By left \ref{MON}, we get $D\ind_X A$.

We prove the second item. Preservation of \ref{AREF} is clear since $\indi M$ is stronger than $\ind$. Preservation of left \ref{CLO} is clear. Assume that $\ind$ satisfies right \ref{CLO}, and $A\indi M _C B$. Then $\cl(CB) = \cl(C\cl(B))$ hence for any $C\seq X\seq \cl(C\cl(B))$ we have $A\ind_X B$ hence $A\indi M _C \cl(B)$.

We prove the last item. If $A\indi 0 _C B$, then by \ref{NOR} and \ref{CLO}, we have $A\indi 0 _C \cl(BC)$. Then, by \ref{BMON}, for all $D$ with $C\seq D\seq \cl(BC)$ we have $A\indi 0 _D \cl(BC)$ so $A\indi 0 _D B$ by \ref{MON}. As $\indi 0\to \ind$, we get $A\ind_D B$ . We conclude that $A\indi M_C B$. 
\end{proof}

\begin{exercise}
    If $\ind$ satisfies left \ref{BMON} does $\indi M$ satisfies left \ref{BMON}? 
\end{exercise}

\begin{theorem}\label{thm:pregeom_forcingbmononalg}
Let $(S,\cl)$ be a pregeometry then $\indi{a}^M = \indi\cl$. In particular, $\indi\cl$ is minimal (for $\to$) among the relations stronger than $\indi a$ which satisfy the right version of: \ref{NOR}, \ref{MON}, \ref{CLO} and \ref{BMON}.
\end{theorem}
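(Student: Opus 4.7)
The plan is to prove the equality $\indi a^M = \indi\cl$; the minimality clause will then follow from the last bullet of Proposition~\ref{prop:monotonisation}. For the direction $\indi\cl \to \indi a^M$, I intend to apply that same bullet with $\indi 0 := \indi\cl$ and $\ind := \indi a$: the hypothesis $\indi\cl\to\indi a$ was recorded just after Theorem~\ref{thm:pregeom_axiomsfordimindependence}, and the right-hand instances of \ref{NOR}, \ref{MON}, \ref{CLO}, \ref{BMON} for $\indi\cl$ are part of that same theorem. The minimality claim then follows in the same breath: any $\indi 0$ stronger than $\indi a$ and satisfying those four right axioms fulfils $\indi 0 \to (\indi a)^M = \indi\cl$.

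The substantive work is the direction $\indi a^M \to \indi\cl$, which I will prove directly. Suppose $A \indi a^M_C B$. Using the equivalent formulation of $\indi\cl$ (every finite subset of $A$ independent over $C$ remains so over $BC$) together with left \ref{MON} of $\indi a^M$ (preserved under monotonisation by Proposition~\ref{prop:monotonisation}), I may assume $A$ is finite and independent over $C$, and aim to show $A$ remains independent over $BC$. Assume for contradiction it does not; then some $a\in A$ satisfies $a\in\cl(BC\tilde A)\setminus\cl(C\tilde A)$ with $\tilde A := A\setminus\{a\}$. By finite character of $\cl$, fix a tuple $(b_1,\dots,b_k)$ from $B$ of minimal length with $a\in\cl(b_1\cdots b_k C\tilde A)$; necessarily $k\geq 1$, and $a\notin\cl(b_1\cdots b_{k-1}C\tilde A)$.

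The key step is the choice
\[
X := \cl(b_1\cdots b_{k-1}C)\cap \cl(BC),
\]
which is a closed set with $C\seq X\seq \cl(BC)$, so the hypothesis $A\indi a^M_C B$ instantiated at this $X$ gives $A\indi a_X B$. I will contradict this using $b_k$. Exchange applied to $a$ over $b_1\cdots b_{k-1}C\tilde A$ yields $b_k\in\cl(b_1\cdots b_{k-1}CA)$; since $b_1,\dots,b_{k-1}$ and $C$ all lie in $X$, this places $b_k\in\cl(AX)$. Clearly $b_k\in B\seq\cl(BC)$. Finally $b_k\notin X$, for otherwise $b_k\in\cl(b_1\cdots b_{k-1}C)$ would combine with $a\in\cl(b_1\cdots b_kC\tilde A)$ to force $a\in\cl(b_1\cdots b_{k-1}C\tilde A)$, contradicting the minimality of $k$. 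Hence $\cl(AX)\cap \cl(BC)\not\seq X$, contradicting $A\indi a_X B$. The main obstacle is precisely this choice of $X$: naive candidates such as $X=\cl(C)$ (which only recovers the weaker $A\indi a_C B$) or $X=\cl(CA)\cap\cl(BC)$ (for which $A\indi a_X B$ holds automatically) do not produce a witnessing element, whereas the minimal tuple $(b_1,\ldots,b_k)$ supplies the right set.
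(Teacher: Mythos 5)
Your proof is correct. The easy direction and the minimality clause are handled exactly as the paper does, via the last bullet of Proposition \ref{prop:monotonisation} together with Theorem \ref{thm:pregeom_axiomsfordimindependence}, so the only real difference lies in the direction $\indi a ^M\to\indi\cl$. The paper argues the contrapositive after normalising with \ref{SCLO}, then invokes \ref{SYM} of $\indi\cl$ to find a finite $B_0\seq B$ independent over $C$ but dependent over $A$; the element $b\in B_0$ then directly witnesses $A\nindi a_{CD} B$ for $D = B_0\setminus\set{b}$, a base lying between $C$ and $\cl(BC)$. You instead stay on the $A$-side: from a finite $A$ independent over $C$ but not over $BC$ you extract a minimal tuple $b_1,\ldots,b_k$ from $B$ with $a\in\cl(b_1\cdots b_kC(A\setminus\set{a}))$ and apply exchange by hand to move the dependence onto $b_k$, which then lies in $\cl(AX)\cap\cl(BC)$ but not in $\cl(X)$. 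Both arguments therefore end by exhibiting a single element of $B$ refuting an instance $A\indi a_X B$ with $C\seq X\seq\cl(BC)$, and both ultimately rest on exchange (in the paper it is hidden inside the proof of \ref{SYM} for $\indi\cl$); what your version buys is that the hard direction uses only the exchange axiom, left \ref{MON} of $\indi a^M$, and the finite-character reformulation of $\indi\cl$, rather than the \ref{SYM}/\ref{SCLO} machinery of Theorem \ref{thm:pregeom_axiomsfordimindependence}, at the cost of redoing a small exchange computation. One simplification: you can take $X = Cb_1\cdots b_{k-1}$ outright; it sits between $C$ and $\cl(BC)$, minimality of $k$ gives $b_k\notin\cl(X)$, and the same contradiction with $A\indi a_X B$ goes through, so intersecting with $\cl(BC)$ in your definition of $X$ (and the closedness of $X$) is not actually needed.
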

\begin{proof}
    The direction $\indi\cl\rightarrow \indi a ^M$ always holds since $\indi\cl\rightarrow \indi a$ and $\indi\cl$ satisfies \ref{BMON} by Theorem \ref{thm:pregeom_axiomsfordimindependence}. We prove the contrapositive of the converse. Assume that $A\nindi\cl_C B$ for some $A,B,C$. Using \ref{SCLO} we may assume that $C\seq A\cap B$ and $A,B,C$ are $\cl$-closed. By \ref{SYM}, we have $B\nindi\cl _C A$, hence there exists a finite set $B_0\seq B$ independent over $C$ such that $B_0$ is not independent over $A$. Hence there exists $b\in B_0$ such that $b\in \cl(A\cup (B_0\setminus\set{b}))$. Let $D = B_0\setminus \set{b}$. As $B_0$ is independent over $C$, we have $b\notin\cl(CD)$. However $b\in \cl(AD)\cap \cl(B)$, hence $A\nindi a_{CD} B$. In particular $A\nindi a ^M_C B$. 
\end{proof}

This gives a general definition of $\indi \cl$ even if there is no notion of dimension (i.e. if $\cl$ does not satisfy exchange).

\begin{example}
    Assume that $\cl$ is a closure operator on a set $S$. If $\cl$ satisfies exchange, then $\indi{a}^M $ is symmetric. Is the converse true? The answer is no, we can see this in Example \ref{exo:frederickgebert}. One checks that $A\indi a _C B$ if and only if $\cl(A)\seq \cl(C)$ or $\cl(B)\seq \cl(C)$ if and only if $\sup A\leq \sup C$ or $\sup B\leq \sup C $. It is easy to check that this relation satisfies \ref{BMON}.
\end{example}

\begin{proposition}
    The following are equivalent:
    \begin{enumerate}
        \item If $x\in \cl(AB)$, then there exist singletons $a\in \cl(A)$ and $b\in \cl(B)$ such that $x\in \cl(ab)$;
        \item $\indi a$ satisfies \ref{BMON};
        \item $\indi a = \indi\cl$;
        \item For all $A,B$, $A\indi\cl _{\cl(A)\cap \cl(B)} B$;
        \item (Modular law) $\dim(AB)+\dim(A\cap B) = \dim A +\dim B$, for all $\cl$-closed $A,B$.
    \end{enumerate}
    We say that $\cl$ is \textit{modular} if one of those statements hold.
\end{proposition}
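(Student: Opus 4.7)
The plan is to prove the cycle $(2) \Leftrightarrow (3) \Leftrightarrow (4) \Leftrightarrow (5) \Leftrightarrow (1)$. The first link $(2) \Leftrightarrow (3)$ is immediate from what has been developed: $\indi\cl$ satisfies \ref{BMON} by Theorem \ref{thm:pregeom_axiomsfordimindependence}, and $\indi{a}^M = \indi\cl$ by Theorem \ref{thm:pregeom_forcingbmononalg}; hence $\indi a$ satisfies \ref{BMON} if and only if $\indi a = \indi{a}^M = \indi\cl$.

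For $(3) \Leftrightarrow (4)$: one always has $A \indi a_{\cl(A) \cap \cl(B)} B$ straight from the definition of $\indi a$, so (3) upgrades this to (4). Conversely, since $\indi\cl \to \indi a$ is automatic, it suffices to derive $A \indi\cl_C B$ from $A \indi a_C B$; setting $A' = \cl(AC)$ and $B' = \cl(BC)$, the hypothesis gives $\cl(A') \cap \cl(B') = \cl(C)$, so (4) yields $A' \indi\cl_{\cl(C)} B'$, from which $A \indi\cl_C B$ follows via \ref{MON} and \ref{SCLO}. The equivalence $(4) \Leftrightarrow (5)$ is a direct computation with Lemma \ref{lm:pregeom_dimension}(1): in the direction $(4) \Rightarrow (5)$, $A \indi\cl_{A \cap B} B$ unfolds straight into the modular law; conversely, given a finite $A_0 \seq A$, the closed set $A_1 = \cl(A_0 \cup (A \cap B))$ satisfies $A_1 \cap B = A \cap B$, and the modular law for $(A_1, B)$ rearranges into $\dim(A_0/B) = \dim(A_0/(A \cap B))$.

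The substantive equivalence is $(1) \Leftrightarrow (5)$. For $(5) \Rightarrow (1)$, reduce via finite character to the case where $A, B$ are closed and finite-dimensional and $x \in \cl(AB) \setminus (A \cup B)$; comparing the modular law for $(A, B)$ with that for $(\cl(Ax), B)$, using $\cl(AxB) = \cl(AB)$ and $\dim\cl(Ax) = \dim A + 1$, one finds $\dim(\cl(Ax) \cap B) = \dim(A \cap B) + 1$, so some $b \in (\cl(Ax) \cap B) \setminus \cl(A \cap B)$ exists; since $b \notin A$, exchange gives $x \in \cl(Ab)$, and iterating the argument with $\cl(b)$ in place of $B$ produces a singleton $a \in A$ with $x \in \cl(ab)$. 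For $(1) \Rightarrow (5)$, take a basis $C_0$ of $A \cap B$ and extend to bases $A_0 \supseteq C_0$ of $A$ and $B_0 \supseteq C_0$ of $B$ (a cardinality count forces $A_0 \cap B_0 = C_0$); one then shows $A_0 \cup B_0$ is independent, for a putative dependence $x \in \cl((A_0 \cup B_0) \setminus \{x\})$ would yield via (1) singletons $u \in \cl(A_0 \setminus \{x\})$ and $v \in \cl(B_0 \setminus \{x\})$ with $x \in \cl(uv)$, and exchange then traps $v \in \cl(C_0 \setminus \{x\}) \seq \cl(A_0 \setminus \{x\})$, contradicting the independence of $A_0$. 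Consequently $\dim\cl(AB) = \abs{A_0} + \abs{B_0} - \abs{C_0} = \dim A + \dim B - \dim(A \cap B)$.

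The main obstacle is the implication $(5) \Rightarrow (1)$: the dimension count only extracts a single singleton $b \in \cl(B)$ with $x \in \cl(Ab)$, so the argument must be repeated once more (with $\cl(b)$ in the role of $B$) to obtain the matching singleton $a \in \cl(A)$. This demands careful bookkeeping to ensure $b \notin A$ at the first step so that exchange is applicable, together with separate treatment of the degenerate situations where $x \in \cl(b)$ or one of the sets collapses to $\cl(\emptyset)$.
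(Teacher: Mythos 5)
Your architecture is genuinely different from the paper's. The paper proves the one-way cycle $(1)\Rightarrow(2)\Rightarrow(3)\Rightarrow(4)\Rightarrow(5)\Rightarrow(1)$, so it never needs $(5)\Rightarrow(4)$ or $(4)\Rightarrow(3)$, and its $(5)\Rightarrow(1)$ goes through a claim (for $A$ closed and $b$ a singleton, $x\in\cl(Ab)$ yields $a\in A$ with $x\in\cl(ab)$) followed by an induction on $\dim(A_0B_0)$. Your $(4)\Rightarrow(3)$ via \ref{SCLO}, your $(1)\Rightarrow(5)$ via independence of $A_0\cup B_0$, and your $(5)\Rightarrow(1)$ by comparing the modular law for $(A,B)$ with that for $(\cl(Ax),B)$ are correct, and the last is arguably slicker than the paper's induction. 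Two small repairs: in the second application of that comparison you must put $A$ in the position of the set you intersect with, i.e.\ compare $(\cl(b),A)$ with $(\cl(bx),A)$ --- ``iterating with $\cl(b)$ in place of $B$'' as written only returns another element of $\cl(b)$, not the desired $a\in A$; and in $(1)\Rightarrow(5)$ the case $x\in C_0$ needs one extra use of exchange to get $v\in\cl(C_0\setminus\set{x})$ (if $v\in\cl(C_0)\setminus\cl(C_0\setminus\set{x})$ then $x\in\cl((C_0\setminus\set{x})v)\seq\cl(B_0\setminus\set{x})$, contradicting independence of $B_0$); also $A_0\cap B_0=C_0$ is an independence argument rather than a cardinality count. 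These are cosmetic.

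The one genuine gap is $(5)\Rightarrow(4)$. From the modular law for $(A_1,B)$ with $A_1=\cl(A_0\cup(A\cap B))$ you ``rearrange'' to $\dim(A_0/B)=\dim(A_0/A\cap B)$, which amounts to cancelling $\dim B+\dim(A\cap B)$ on both sides of a cardinal equation. When $\dim B$ (or $\dim(A\cap B)$) is infinite, the equation $\dim(A_1B)+\dim(A\cap B)=\dim A_1+\dim B$ holds for trivial reasons and carries no information about the finite relative dimensions, so the step fails exactly in the infinite-dimensional instances that statement $(4)$ also covers; and since your auxiliary set $A_1$ contains $A\cap B$, you cannot reduce to the finite-dimensional case the way you do in $(5)\Rightarrow(1)$. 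A fix that stays in your framework: by \ref{SCLO} assume $A,B$ closed; by the remark after the definition of $\indi\cl$ it suffices to show that a finite $A_0\seq A$ independent over $A\cap B$ stays independent over $B$. If not, by finite character $A_0$ is dependent over $\cl(B_0)$ for some finite $B_0\seq B$; the modular law for the finite-dimensional closed pair $(\cl(A_0),\cl(B_0))$ rearranges (legitimately, all dimensions finite) to $\dim(A_0/\cl(B_0))=\dim(A_0/\cl(A_0)\cap\cl(B_0))$, so $A_0$ is dependent over $\cl(A_0)\cap\cl(B_0)\seq A\cap B$, hence over $A\cap B$ by Lemma \ref{lm:pregeom_dimension}(2), a contradiction. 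Alternatively, drop $(5)\Rightarrow(4)$ altogether and close your chain as the paper closes its cycle, via $(5)\Rightarrow(1)$ and $(1)\Rightarrow(2)$.
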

\begin{proof}
    $(1\implies 2)$ Assume that $A\indi a _C BD$, we show $A\indi a_{CD} B$. Using (1), if $x\in \cl(ACD)\cap \cl(BCD)$, there exists $a\in\cl(AC)$ and $d\in\cl(D)$ such that $x\in \cl(ad)$. If $x\in \cl(D)$ we are done hence we may assume otherwise. By exchange, this implies $a\in \cl(xd)\seq \cl(BCD)$. Also, $a\in \cl(AC)$ hence by $A\indi a_C BD$ we get $a\in \cl(C)$ so $x\in \cl(ad)\seq \cl(CD)$. We have proved that $\cl(ACD)\cap\cl(BCD) = \cl(CD)$ hence $A\indi a_{CD} B$.

    $(2\implies 3)$ By Theorem \ref{thm:pregeom_forcingbmononalg} and Proposition \ref{prop:monotonisation}.

    $(3\implies 4)$ By definition, $\cl(A)\indi a_{\cl(A)\cap\cl(B)} \cl(B)$ for all $A,B$. By two-sided \ref{MON}, we get $A\indi a_{\cl(A)\cap \cl(B)} B$ hence $A\indi\cl_{\cl(A)\cap \cl(B)} B$ by hypothesis.

    $(4\implies 5)$ Let $A,B$ be closed sets. By Lemma \ref{lm:pregeom_dimension} \textit{(1)}, we have $\dim(AB) = \dim(A/B)+\dim(B)$. As $A\indi\cl_{A\cap B} B$, we have $\dim(A/B) = \dim(A/A\cap B)$. Using again Lemma \ref{lm:pregeom_dimension} \textit{(1)}, we get $\dim(A) = \dim(A/A\cap B) + \dim A\cap B$. We conclude by putting together the two equations.

    $(5\implies 1)$ We start with a claim.
    \begin{claim}
        Let $A$ be closed and $b$ be a singleton. If $x\in \cl(Ab)$ then there exists $a\in A$ such that $x\in \cl(ab)$.
    \end{claim}
    \begin{proof}[Proof of the claim]
        We may assume that $x,b\notin A$. As $x\in \cl(Ab)$ (hence $b\in \cl(Ax)$) we have $\dim(xb/A) = 1$. Also we may assume $\dim(xb) = 2$. By the modular law, we have
        \[\dim(Abx) +\dim(A\cap \cl(bx)) = \dim(A)+\dim(bx). \quad (*)\]
        By Lemma \ref{lm:pregeom_dimension} \textit{(1)}, we also have 
        \[\dim(Abx) = \dim(bx/A)+\dim A\quad (**)\]
        By putting together $(*)$ and $(**)$ with the above we conclude $\dim(A\cap \cl(bx)) = 1$. Let $a$ be a basis of $A\cap \cl(bx)$. We have $a,b\notin\cl(\emptyset)$ hence as $b\notin\cl(a)$ we have $a\notin\cl(b)$ by exchange. As $a\in \cl(xb)$ we conclude again by exchange that $x\in \cl(ab)$.
        \end{proof}
    We may assume that $A = \cl(A_0)$ and $B = \cl(B_0)$ for finite bases $A_0,B_0$ of $A,B$ (respectively). We prove it by induction on $\dim(A_0B_0)$. Let $b\in B_0$ be such that $x\notin\cl(A_0B_0\setminus\set{b})$ and $b\notin\cl(A_0B_0\setminus\set{b})$. Then by the claim, there exists $y\in \cl(A_0B_0\setminus\set{b})$ such that $x\in \cl(yb)$. As $\dim(A_0(B_0\setminus\set{b}))<\dim(A_0B_0)$, we get by induction hypothesis that there exists $a\in A$ and $b'\in \cl(B_0\setminus\set{b})$ such that $y\in \cl(ab')$, hence $x\in \cl(abb')$. Using again the claim (as $a$ is a singleton), there exist $b''\in \cl(bb')\seq B$ such that $x\in \cl(ab'')$.
\end{proof}

\begin{exercise}
    Let $(V,+,0)$ be a vector space over some field $K$. For $A\seq V$ define $\vect{A}$ to be the vector span of $A$.
    \begin{enumerate}
        \item Prove that $(V,\vect{\cdot})$ is a pregeometry.
        \item Prove that $\indi a = \indi \cl$, hence the pregeometry is modular.
    \end{enumerate}
\end{exercise}

\begin{example}
    Let $(K,+,\cdot,0,1)$ be an algebraically closed field of infinite transcendence degree. Define for any subset $A\seq K$ the closure operator $A^\alg$ to be the relative algebraic closure in $K$ of the field generated by $A$. By Steinitz exchange principle, $A\mapsto A^\alg$ defines a pregeometry on subsets of $K$. This pregeometry is not modular, as $\indi a$ does not satisfy \ref{BMON}. To see this, let $F$ be an algebraically closed subfield of $K$, such that $\trdeg(K/F)$ is large enough. Let $x,y,z$ be algebraically independent over $F$ and $t = xz+y$. Then consider $A = F(x,y)^\alg$ and $B = F(z,t)^\alg$. We have $A\cap B = F$ hence $xy\indi a _F zt$ (this necessitates some checking, left as an exercise). However we do not have $xy\indi a_{Fz} t$ since $t\notin F(z)^\alg$ (again left as an exercise).
\end{example}

\color{blue}

\subsection{Naive monotonisation and forcing the right closure axiom}

\begin{definition}\label{def:naivemonotonisation}
    Let $\ind$ be a ternary relation. We associate the \textit{naive monotonisation $\indi{m}$ of $\ind$} which is defined as the monotonisation with respect to the trivial closure operator:
    \[A\indi m_C B\iff A\ind_D B \text{ for all $D$ with $C\seq D\seq BC$}.\]
\end{definition}

\begin{proposition}\label{prop:naivemonotonisation}
  The relation $\indi{m} $ satisfies right \ref{BMON}. 
  \begin{itemize}
      \item If $\ind$ satisfies left or right \ref{MON}, left \ref{NOR}, so does $\indi m$. If $\ind$ further satisfies right \ref{NOR} or left \ref{TRA}, then so does $\indi m$. 
      \item If $\indi 0\to \ind$ and $\indi 0$ satisfies the right-sided instance of: \ref{NOR}, \ref{MON} and \ref{BMON}, then $\indi 0\rightarrow \indi m$.
  \end{itemize}
\end{proposition}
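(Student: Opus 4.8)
The plan is to recognise the naive monotonisation as a particular instance of the monotonisation of Definition~\ref{def:monotonisation}. Let $\cl_0$ be the \emph{trivial} closure operator $\cl_0(X)=X$ on the ambient set. First I would check that $\cl_0$ is indeed a finitary closure operator: reflexivity, monotonicity, transitivity and finite character are all immediate for the identity map. Since $\cl_0(BC)=BC$, the defining condition for $A\indi m_C B$ in Definition~\ref{def:naivemonotonisation} is word-for-word the defining condition for $A\indi{M}_C B$ from Definition~\ref{def:monotonisation} computed with respect to $\cl_0$; in other words $\indi m=\indi{M}$ for this choice of ambient closure operator. Once this identification is in place, every assertion of the proposition can be read off from Proposition~\ref{prop:monotonisation}.

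Concretely, the statement that $\indi m$ satisfies right \ref{BMON} is the first clause of Proposition~\ref{prop:monotonisation}. For the first bullet, preservation of left/right \ref{MON} and left \ref{NOR}, and---in the presence of the relevant one-sided \ref{MON}---preservation of right \ref{NOR} and left \ref{TRA}, is exactly the content of the first clause of Proposition~\ref{prop:monotonisation}; the \ref{CLO}-hypotheses appearing there are vacuous in our setting, since with $\cl=\cl_0$ \emph{every} ternary relation satisfies two-sided \ref{CLO} (the implication $A\ind_C B\Rightarrow A\ind_C\cl_0(B)$ is literally $A\ind_C B\Rightarrow A\ind_C B$), so they can be deleted from both the hypotheses and the conclusions. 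The second bullet of Proposition~\ref{prop:monotonisation}, on preservation of \ref{AREF} and of \ref{CLO}, has no counterpart here and is simply omitted, \ref{AREF} not even being meaningful without a non-trivial ambient closure operator. For the last bullet I would invoke the third clause of Proposition~\ref{prop:monotonisation} with $\cl=\cl_0$: the extra requirement there that $\indi 0$ satisfy right \ref{CLO} is automatic, so $\indi 0\to\ind$ together with the right-sided instances of \ref{NOR}, \ref{MON} and \ref{BMON} for $\indi 0$ already gives $\indi 0\to\indi{M}=\indi m$.

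There is essentially no obstacle: all the mathematical content already lives in Proposition~\ref{prop:monotonisation}, and the only thing to do is the bookkeeping that the \ref{CLO}-flavoured clauses trivialise for the identity closure operator. Should a self-contained proof be wanted, one just transcribes the three short arguments from the proof of Proposition~\ref{prop:monotonisation}, replacing $\cl(\cdot)$ by the identity throughout---for instance, for right \ref{BMON}: if $A\indi m_C D$ and $C\seq B\seq D$, then for every $Y$ with $B\seq Y\seq BD=D$ one has $C\seq Y\seq D=CD$, whence $A\ind_Y D$, so $A\indi m_B D$; the arguments for right \ref{MON}, right/left \ref{NOR} and left \ref{TRA}, and for the last bullet, are copied mutatis mutandis.
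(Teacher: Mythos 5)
Your proposal is correct and follows essentially the same route as the paper, which also obtains right \ref{BMON} and the first bullet by applying Proposition \ref{prop:monotonisation} with the trivial closure operator. The only cosmetic difference is in the last bullet: the paper rewrites the short argument directly (since its statement omits the \ref{CLO} hypothesis), whereas you observe that right \ref{CLO} is vacuous for the identity closure operator and invoke the third clause of Proposition \ref{prop:monotonisation} — a valid and equivalent shortcut.
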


\begin{proof}
The beginning and the first item are obtained by applying Proposition \ref{prop:monotonisation} with the trivial closure operator. We prove the last item. If $A\indi 0 _C B$, then by \ref{NOR}, we have $A\indi 0 _C BC$. Then, by \ref{BMON}, for all $D$ with $C\seq D\seq BC$ we have $A\indi 0 _D BC$ so $A\indi 0 _D B$ by \ref{MON}. As $\indi 0\to \ind$, we get $A\ind_D B$ . We conclude that $A\indi m_C B$. 
\end{proof}

\begin{definition}\label{def:rightclosure}
    Let $\ind$ be a ternary relation. We associate the \textit{(right) closure extension $\indi{c}$ of $\ind$} which is defined as :
    \[A\indi{c}_C B\iff A\ind_C \cl(BC).\]
\end{definition}

We naturally have the following:
\begin{proposition}\label{prop:forcingclosure}
    Let $\ind$ be a ternary relation. Then $\indi c$ satisfies right \ref{CLO} and right \ref{NOR}. If $\ind$ satisfies right \ref{MON}, then $\indi c\to \ind$. \begin{itemize}
        \item If $\ind$ satisfies left or right \ref{MON}, left \ref{NOR}, so does $\indi c$.
        \item If $\indi 0\to \ind$ and $\indi 0$ satisfies right \ref{NOR} and right \ref{CLO}, then $\indi 0 \to \indi c$.
        \item If $\ind$ satisfies right \ref{BMON} then so does $\indi c$.
    \end{itemize}
\end{proposition}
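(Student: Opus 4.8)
The plan is to unfold the definition $A\indi{c}_C B \iff A\ind_C \cl(BC)$ in each clause and reduce it to an elementary property of $\cl$ (reflexivity, monotonicity, idempotency) together with the hypothesised property of $\ind$. The book-keeping facts I will use repeatedly, all immediate for a finitary closure operator, are: $\cl(\cl(B)\cup C) = \cl(BC)$, $\cl(BCC) = \cl(BC)$, and $\cl(DC) = \cl(D)$ whenever $C\seq D$.

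First I would dispatch right \ref{CLO} and right \ref{NOR} for $\indi c$, which need no hypothesis on $\ind$ at all: $A\indi c_C \cl(B)$ is by definition $A\ind_C \cl(\cl(B)C)$, and $A\indi c_C BC$ is $A\ind_C \cl(BCC)$, and by the identities above both coincide with $A\ind_C \cl(BC) = A\indi c_C B$. For $\indi c\to\ind$ when $\ind$ has right \ref{MON}: since $B\seq \cl(BC)$, the reformulation of right \ref{MON} (a subset of an independent set on the right is again independent) turns $A\ind_C\cl(BC)$ into $A\ind_C B$.

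Next come the three bullets, each of which transfers coordinate-wise. For the first: for left \ref{MON} and left \ref{NOR} I apply the corresponding property of $\ind$ with the right-hand side $\cl(BC)$ frozen; for right \ref{MON}, from $A\indi c_C BD$, i.e.\ $A\ind_C\cl(BDC)$, and $\cl(BC)\seq\cl(BDC)$ one gets $A\ind_C\cl(BC)$, i.e.\ $A\indi c_C B$. For the second bullet, from $A\indi 0_C B$ I apply right \ref{NOR} of $\indi 0$ to reach $A\indi 0_C BC$, then right \ref{CLO} of $\indi 0$ to reach $A\indi 0_C\cl(BC)$; since $\indi 0\to\ind$ this gives $A\ind_C\cl(BC)$, i.e.\ $A\indi c_C B$. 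For the third bullet, given $C\seq B\seq D$ and $A\indi c_C D$, i.e.\ $A\ind_C\cl(DC)=\cl(D)$, I note that $C\seq B\seq\cl(D)$ and apply right \ref{BMON} of $\ind$ to obtain $A\ind_B\cl(D)=\cl(DB)$, i.e.\ $A\indi c_B D$.

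The only point requiring any attention is keeping track of which iterated closures and which unions ($BC$, $DC$, $BCC$, $\cl(B)C$) collapse; once those identities are recorded, every clause is a one-line deduction, so I expect no \emph{genuine} obstacle here.
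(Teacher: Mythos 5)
Your proof is correct and follows essentially the same route as the paper's: unfold $A\indi c_C B \iff A\ind_C\cl(BC)$ in each clause and reduce it to the standard closure-operator identities ($\cl(\cl(B)C)=\cl(BC)$, etc.) together with the assumed property of $\ind$ (or of $\indi 0$), using the subset reformulation of right \ref{MON} exactly as the paper does. No gaps.
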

\begin{proof}
    For right \ref{CLO}: if $A\indi c_C B$ we have $A\ind_C \cl(BC)$. As $\cl(\cl(BC)) = \cl(BC)$ we have $A\ind_C \cl(\cl(BC))$ i.e. $A\indi c _C \cl(BC)$. For right \ref{NOR}: if $A\indi c_C B$ then $A\ind_C \cl(BC)$ so $A\indi c_C BC$. If $\ind$ satisfies right \ref{MON}, then $A\indi c_C B$ implies $A\ind_C \cl(BC)$ which implies $A\ind_C B$.

    For the first item: the left-sided properties are trivial. If $\ind$ satisfies right \ref{MON}, then $A\indi c_C BD$ implies $A\ind_C \cl(CBD)$ which implies $A\ind_C \cl(BC)$ so $A\indi c _C B$. 

    For the second item, if $A\indi 0_C B$ then by right \ref{NOR} and right \ref{CLO} we have $A\indi 0_C \cl(BC)$ which implies $A\ind_C \cl(BC)$ so $A\indi c_C B$.

    For the third item, assume that $A\indi c _C B$ and $C\seq D\seq B$. Then $A\ind_C \cl(B)$ hence by \ref{BMON} $A\ind_D \cl(B)$ so $A\indi c _D B$.
\end{proof}

\begin{proposition}\label{prop:naivetomonviac}
    Assume that $\ind$ satisfies right \ref{NOR} and right \ref{MON} then 
    \[(\indi m)^c\to \indi M \]
    If $\ind$ further satisfies right \ref{CLO} then $(\indi m)^c = \indi M$.
\end{proposition}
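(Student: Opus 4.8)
The plan is to unfold both sides into a common shape and then shuttle between the statements ``$A\ind_D\cl(BC)$'' and ``$A\ind_D B$'' using the one-sided axioms. First I would record the bookkeeping: since $C\seq BC\seq\cl(BC)$, we have $\cl(BC)\cup C=\cl(BC)$, so by Definition \ref{def:naivemonotonisation} the statement $A(\indi m)^c_C B$ unfolds as $A\indi m_C\cl(BC)$, which says precisely that $A\ind_D\cl(BC)$ holds for every $D$ with $C\seq D\seq\cl(BC)$. On the other hand, $A\indi M_C B$ says that $A\ind_D B$ holds for every $D$ in the same interval $C\seq D\seq\cl(BC)$. Thus both relations quantify a condition on $D$ over the identical interval $[C,\cl(BC)]$, and the whole proposition reduces to comparing, for such $D$, the conditions $A\ind_D\cl(BC)$ and $A\ind_D B$.

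For the inclusion $(\indi m)^c\to\indi M$: fix $D$ with $C\seq D\seq\cl(BC)$ and assume $A\ind_D\cl(BC)$. Since $B\seq BC\seq\cl(BC)$, the equivalent form of right \ref{MON} (namely $B'\seq D'$ and $A\ind_C D'$ imply $A\ind_C B'$) gives $A\ind_D B$. As $D$ was arbitrary in the interval, $A\indi M_C B$. Note that this direction uses right \ref{MON} only.

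For the converse, assume in addition that $\ind$ satisfies right \ref{CLO}, and suppose $A\indi M_C B$. Fix $D$ with $C\seq D\seq\cl(BC)$, so that $A\ind_D B$. Apply right \ref{NOR} to obtain $A\ind_D BD$, and then right \ref{CLO} to obtain $A\ind_D\cl(BD)$. Now $BC\seq BD$ because $C\seq D$, while $BD\seq\cl(BC)$ because both $B$ and $D$ lie in $\cl(BC)$; hence $\cl(BC)\seq\cl(BD)\seq\cl(\cl(BC))=\cl(BC)$, so $\cl(BD)=\cl(BC)$ and therefore $A\ind_D\cl(BC)$. Since $D$ was arbitrary in $[C,\cl(BC)]$, we conclude $A\indi m_C\cl(BC)$, that is, $A(\indi m)^c_C B$.

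The argument is essentially all bookkeeping, so there is no serious obstacle; the only point requiring care is checking that the interval of base sets over which $\indi m$ applied to $\cl(BC)$ quantifies is exactly the interval $[C,\cl(BC)]$ appearing in $\indi M$. This is forced by $\cl$ being increasing and idempotent, which gives both $\cl(BC)\cup C=\cl(BC)$ and $\cl(BD)=\cl(BC)$ for every $D$ with $C\seq D\seq\cl(BC)$. If one prefers not to invoke the equality $\cl(BD)=\cl(BC)$, one can instead close the converse with a second application of right \ref{MON}, deducing $A\ind_D\cl(BC)$ from $A\ind_D\cl(BD)$ via $\cl(BC)\seq\cl(BD)$.
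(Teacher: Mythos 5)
Your proof is correct, but it takes a genuinely different route from the paper. You unfold the definitions directly: after observing that $A(\indi m)^c_C B$ and $A\indi M_C B$ both quantify over the same interval $C\seq D\seq \cl(BC)$ (using $C\seq BC\seq\cl(BC)$, so $\cl(BC)\cup C=\cl(BC)$), the forward implication is a single application of right \ref{MON} ($B\seq\cl(BC)$), and the converse is the chain $A\ind_D B\Rightarrow A\ind_D BD\Rightarrow A\ind_D\cl(BD)$ via right \ref{NOR} and right \ref{CLO}, closed off by $\cl(BD)=\cl(BC)$ (or a second use of right \ref{MON}). The paper instead argues abstractly: for $(\indi m)^c\to\indi M$ it checks via Propositions \ref{prop:naivemonotonisation} and \ref{prop:forcingclosure} that $(\indi m)^c$ satisfies the right-sided \ref{NOR}, \ref{MON}, \ref{BMON}, \ref{CLO} and is stronger than $\ind$, then invokes the minimality clause of Proposition \ref{prop:monotonisation}; for the converse it shows $\indi M$ satisfies those axioms and applies the analogous minimality clauses of Propositions \ref{prop:naivemonotonisation} and \ref{prop:forcingclosure} to get $\indi M\to\indi m$ and then $\indi M\to(\indi m)^c$. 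Your computation is self-contained and even sharpens the statement slightly (the forward direction needs only right \ref{MON}); the paper's argument is longer to unwind but reuses the general preservation/minimality machinery and makes visible that both $(\indi m)^c$ and $\indi M$ are characterised by the same universal property, which is the conceptual reason the equality holds.
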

\begin{proof}
    First, $\indi m$ satisfies the right-sided version of \ref{NOR}, \ref{MON}, \ref{BMON}, by Proposition \ref{prop:naivemonotonisation}. By Proposition \ref{prop:forcingclosure} we have $\indi m ^c$ satisfies the right-sided version of \ref{NOR}, \ref{MON}, \ref{BMON} and \ref{CLO}. Again by Proposition \ref{prop:forcingclosure} $\indi m ^c\to \ind$ hence by Proposition \ref{prop:monotonisation} we have $\indi m ^c \to \indi M$.

    Assume that $\ind$ satisfies right \ref{CLO}, then by Proposition \ref{prop:monotonisation}, $\indi M$ satisfies the right-sided instances of \ref{NOR} \ref{MON}, \ref{BMON} and \ref{CLO}. As $\indi M\to \ind$, from Proposition \ref{prop:naivemonotonisation} we have $\indi M\to \indi m$. From $\indi M\to \indi m$ and Proposition \ref{prop:forcingclosure} we get $\indi M\to \indi m ^c$.
\end{proof}

The essential difference between $\indi M $ and $\indi m$ is that the former preserves right \ref{CLO} and the other one does not a priori. However this distinction does not appear in a pregeometric theory (\cite[Remark 2.19]{conant2023surprising}):

\begin{proposition}\label{fact:monpregeom}
    Let $(S,\cl)$ be a pregeometry, then  $\indi a ^M = \indi a ^m$.
\end{proposition}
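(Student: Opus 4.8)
The plan is to establish the two inclusions $\indi a^M\to\indi a^m$ and $\indi a^m\to\indi a^M$ separately; only the second carries any content. For the first, note that $\indi a^M$, being the monotonisation of $\indi a$ with respect to $\cl$, is defined by quantifying over all $X$ with $C\seq X\seq\cl(BC)$, whereas $\indi a^m$ quantifies only over the sub-collection of $X$ with $C\seq X\seq BC$ (recall $BC\seq\cl(BC)$); so $A\indi a^M_C B$ trivially entails $A\indi a^m_C B$. Alternatively, one may apply the last item of Proposition \ref{prop:naivemonotonisation} to $\indi 0=\indi a^M=\indi\cl$, which is stronger than $\indi a$ and satisfies right \ref{NOR}, right \ref{MON} and right \ref{BMON} by Theorem \ref{thm:pregeom_axiomsfordimindependence}.

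For the reverse inclusion I would use Theorem \ref{thm:pregeom_forcingbmononalg}, which identifies $\indi a^M$ with $\indi\cl$, so it suffices to prove $\indi a^m\to\indi\cl$, and I would do so by contraposition, adapting the argument in the proof of Theorem \ref{thm:pregeom_forcingbmononalg}. Suppose $A\nindi\cl_C B$. Since $(S,\cl)$ is a pregeometry, $\indi\cl$ is symmetric (Theorem \ref{thm:pregeom_axiomsfordimindependence}), so $B\nindi\cl_C A$; hence there is a finite $B_0\seq B$ that is independent over $C$ but not over $AC$, i.e. some $b\in B_0$ satisfies $b\in\cl\bigl(AC\cup(B_0\setminus\{b\})\bigr)$. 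Set $D:=C\cup(B_0\setminus\{b\})$, so that $C\seq D\seq BC$. Then $b\in\cl(AD)$ because $AD=AC\cup(B_0\setminus\{b\})$, and $b\in\cl(BD)$ because $b\in B_0\seq B$, while $b\notin\cl(D)=\cl\bigl(C\cup(B_0\setminus\{b\})\bigr)$ precisely because $B_0$ is independent over $C$. Thus $\cl(AD)\cap\cl(BD)\neq\cl(D)$, i.e. $A\nindi a_D B$, and since $C\seq D\seq BC$ this witnesses $A\nindi a^m_C B$, as required.

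I do not expect a serious obstacle here; the one subtlety — and also the reason the two monotonisations genuinely coincide in a pregeometry rather than merely in examples — is that one cannot invoke the \ref{SCLO}-style normalisation to $\cl$-closed sets with $C\seq A\cap B$ used in the proof of Theorem \ref{thm:pregeom_forcingbmononalg}, since replacing $B$ by $\cl(BC)$ could push the witnessing base $D$ into the interval $[\,C,\cl(BC)\,]$ while escaping $[\,C,BC\,]$, which is exactly the difference between $\indi a^M$ and $\indi a^m$. Running the argument on the original $A,B,C$ and choosing $D=C\cup(B_0\setminus\{b\})\seq BC$ sidesteps this; exchange enters only through the symmetry of $\indi\cl$ and the identity $\indi a^M=\indi\cl$.
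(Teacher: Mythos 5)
Your proof is correct, but it takes a genuinely different route from the paper's. The paper reduces the statement, via Proposition \ref{prop:naivetomonviac}, to showing that $\indi a ^m$ already satisfies right \ref{CLO} in a pregeometry, i.e.\ $(\indi a ^m)^c = \indi a ^m$, and proves the nontrivial inclusion by a two-step exchange argument converting a witness $u\in(\cl(AD)\cap\cl(BC))\setminus\cl(D)$, for some base $D$ with $C\seq D\seq\cl(BC)$, into a witness $b\in B$ over a base of the form $CB_0\seq BC$. You instead close the cycle $\indi a ^M\to\indi a ^m\to\indi\cl\to\indi a ^M$: the first arrow is immediate from the definitions, the last is the easy half of Theorem \ref{thm:pregeom_forcingbmononalg}, and the middle one you prove by contraposition, extracting from \ref{SYM} of $\indi\cl$ a finite $B_0\seq B$ independent over $C$ but not over $AC$ and taking the base $D=C\cup(B_0\setminus\set{b})\seq BC$ directly; the verification $b\in\cl(AD)\cap\cl(BD)\setminus\cl(D)$, hence $A\nindi a _D B$ and $A\nindi a ^m_C B$, checks out. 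What each approach buys: yours is shorter and pushes every use of exchange into results already established (symmetry of $\indi\cl$ and Theorem \ref{thm:pregeom_forcingbmononalg}), while the paper's hands-on argument yields the additional fact that $\indi a ^m$ is closed under right \ref{CLO} in a pregeometry, which is exactly the point where the naive and the genuine monotonisation could differ. Your closing remark, that the \ref{SCLO}-normalisation used in the proof of Theorem \ref{thm:pregeom_forcingbmononalg} must be avoided because it would move the witnessing base into $[\,C,\cl(BC)\,]\setminus[\,C,BC\,]$, is accurate and is indeed the one subtlety of the adaptation.
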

\begin{proof}
    By Proposition \ref{prop:naivetomonviac}, it is enough to prove that $(\indi a ^m) ^c = \indi a ^m$. By Proposition \ref{prop:naivemonotonisation}, $\indi a ^m$ satisfies right \ref{MON} (as $\indi a$ does by Exercise \ref{exo:propofindia}) hence $(\indi a ^m)^c\to \indi a ^m$. It remains to prove the converse. Assume $A\nindi a ^m _C \cl(BC)$ hence there exists $C\seq D\seq \cl(BC)$ and a singleton $u\in (\cl(AD)\cap \cl(BC))\setminus \cl(D)$. Let $A_0\cup\set{a}$ be a finite minimal subset of $A$ witnessing $u\in \cl(aA_0D)$. Then $u\notin \cl(A_0D)$ hence by exchange $a\in\cl(uA_0D)\setminus\cl(A_0D)$. As $u\in \cl(BC)$ we have $a\in \cl(A_0BC)$ hence there is a minimal subset $B_0\cup \set{b}$ of $B$ such that $a\in \cl(A_0B_0bC)\setminus \cl(A_0B_0C)$. Note that $B_0\cup\set{b}\neq \emptyset$ since $a\notin \cl(A_0D)$. Using exchange, we have that $b\in \cl(A_0aB_0C)\setminus \cl(A_0B_0C)$ hence in particular $b\in \cl(AB_0C)\cap \cl(BC)\setminus \cl(B_0C)$ so $A\nindi a _{CB_0} B$. As $B_0\seq B$ we conclude $A\nindi a ^m_C B$.
\end{proof}

\color{black}

\chapter{Independence relations in theories}\label{chapter:2}

\section{Model-theoretic setting.}

\subsection{Special models.} We assume known the notions of languages, sentences, theories, formulas, types, structures, definable sets, substructures, elementary substructures, models, elementary maps, elementary bijections and automorphism of models. 
We are given a complete theory $T$ in a language $\LL$ which is at most countable. By convenience, we assume that $T$ has only infinite models.

\begin{definition}
We say that a model $M$ of $T$ is:
\begin{itemize}
    \item ($\kappa$-universal): every model $N$ of $T$ with $\abs{N}<\kappa$ elementary embeds in $M$;
    \item ($\kappa$-saturated): for all $A\seq M$ with $\abs{A}<\kappa$, every type over $A$ is realized in $M$;
    \item (strongly $\kappa$-homogeneous): for all (partial) elementary bijection $f:A\to B$, for $A,B\seq M$ with $\abs{A} = \abs{B}<\kappa$, there is an automorphism $\sigma$ of $M$ extending $f$.
\end{itemize}
\end{definition}

The following is classical, we will not cover it in the course. Proofs of such results can be found in e.g. \cite[Chapter 10]{H08}. A more set-theoretic approach is given in \cite[Chapter 6]{TZ12}.
\begin{fact}
For any uncountable cardinal $\kappa$ there exists a model of $T$ which is $\kappa^+$-universal, $\kappa$-saturated and strongly $\kappa$-homogeneous.
\end{fact}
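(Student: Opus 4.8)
The plan is to realize the required model as a \emph{special model} of a suitably chosen cardinality, after first disposing of the cheap reductions. The property ``$\kappa^+$-universal'' is a consequence of $\kappa$-saturation alone: given $N\models T$ with $\abs N\le\kappa$, fix an enumeration of $N$ and build an elementary embedding $N\hookrightarrow M$ by transfinite recursion, at each step choosing the image of the next element so as to realize the image of a type over fewer than $\kappa$ parameters. I may also assume $\kappa$ is regular: for singular $\kappa$, a model witnessing the statement for the regular cardinal $\kappa^+$ is in particular $\kappa$-saturated, strongly $\kappa$-homogeneous and $\kappa^+$-universal. So I fix a regular uncountable $\kappa$.

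The one analytic input to the construction is the standard compactness lemma that every structure $P$ has a $\lambda^+$-saturated elementary extension, which can moreover be taken of cardinality at most $2^{\abs P+\lambda+\abs\LL}$ (realize all types over all subsets of $P$ of size $\le\lambda$ by a single compactness argument, and iterate $\lambda^+$ times, using L\"owenheim--Skolem to keep the size down). Starting from a countable $M_0$, I would construct a continuous elementary chain $(M_i)_{i<\kappa}$ with $M_{i+1}$ an $\abs{M_i}^+$-saturated elementary extension of $M_i$, and set $M=\bigcup_{i<\kappa}M_i$ and $\mu=\abs M$. The cardinals $\abs{M_i}$ are strictly increasing and cofinal in $\mu$, so $\cf(\mu)=\kappa$ and $M$ is \emph{special}: a union, indexed by $\cf(\mu)$, of an increasing elementary chain of models whose saturation degrees are cofinal in $\mu$. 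That $M$ is $\kappa$-saturated is then immediate --- a subset of $M$ of size $<\kappa$ lies in some $M_j$ by regularity of $\kappa$, and all types over it are realized in $M_{j+1}\preceq M$.

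For strong $\kappa$-homogeneity I would invoke two classical facts about special models: (a) any two special models of the same cardinality are isomorphic --- via a ``block'' back-and-forth between $M=\bigcup_i M_i$ and $M'=\bigcup_i M'_i$ that extends the partial isomorphism by whole models $M_i$ at a time, so it never reaches cardinality $\mu$, each step being handled by saturation on the opposite side; and (b) if $M$ is special of cardinality $\mu$ and $A\seq M$ with $\abs A<\cf(\mu)$, then $(M,a)_{a\in A}$ is again special of cardinality $\mu$ in the language $\LL\cup\set{c_a:a\in A}$ (reindex the chain so that $\abs A\le\abs{M_i}$ for all $i$; then each $M_{i+1}$ stays $\abs{M_i}^+$-saturated in the expanded language, since a type there over $\le\abs{M_i}$ parameters is a type over $\le\abs{M_i}$ parameters in $\LL$). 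Granting (a) and (b): given a partial elementary bijection $f\colon A\to B$ with $\abs A=\abs B<\kappa=\cf(\mu)$, the structures $(M,a)_{a\in A}$ and $(M,f(a))_{a\in A}$ are elementarily equivalent (via $f$) and, by (b), both special of cardinality $\mu$ in $\LL\cup\set{c_a:a\in A}$; by (a) they are isomorphic, and such an isomorphism is an automorphism $\sigma$ of $M$ with $\sigma(a)=f(a)$ for all $a\in A$, i.e.\ $\sigma\supseteq f$.

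The genuinely substantive point is this last one: strong $\kappa$-homogeneity cannot be obtained by a naive iteration of saturation, for a $\kappa$-saturated model may well fail to be strongly $\kappa$-homogeneous --- it can be ``inhomogeneous at cardinality $\abs M$'', the back-and-forth producing an automorphism of $M$ having to run through all of $M$ and hence eventually to realize types over parameter sets of size $\abs M$. Special models circumvent this via their uniqueness theorem, which upgrades ``$A$ and $B$ realize the same type'' to ``some automorphism of $M$ carries $A$ onto $B$''; the real work is thus concentrated in facts (a) and (b), together with the elementary cardinal bookkeeping ($\abs{M_i}$ strictly increasing, $\cf(\mu)=\kappa$, sizes dominated by $2^{\abs{M_i}}$) needed to carry the construction out within ZFC.
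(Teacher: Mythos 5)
The paper does not actually prove this statement: it is recorded as a Fact and delegated to the references (Hodges, Ch.~10; Tent--Ziegler, Ch.~6), so there is no internal proof to compare against. Your argument is precisely the classical route taken in such references --- reduce to regular $\kappa$, build a continuous elementary chain $(M_i)_{i<\kappa}$ with $M_{i+1}$ an $\abs{M_i}^+$-saturated extension, observe that the union $M$ is special of cardinality $\mu$ with $\cf(\mu)=\kappa$, get $\kappa$-saturation from regularity, $\kappa^+$-universality from $\kappa$-saturation, and strong $\kappa$-homogeneity from the uniqueness theorem for special models applied to the expansions $(M,a)_{a\in A}$ and $(M,f(a))_{a\in A}$ --- and it is correct. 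Two small points are worth making explicit. In your fact (b), the condition ``$\abs A\le\abs{M_i}$ for all $i$'' is not by itself enough: you also need $A\seq M_i$ for every model in the (re-indexed) chain, which is where $\abs A<\cf(\mu)=\kappa$ and regularity of $\kappa$ are really used (each $a\in A$ lies in some $M_{i_a}$ and $\sup_a i_a<\kappa$, so pass to a tail of the chain). And in (a), when you extend the partial map by a whole block $M_i$, the single model on the opposite side into which you embed it must be chosen with saturation degree above $\abs{M_i}$ plus the size of the current domain; this is exactly what cofinality of the saturation degrees in $\mu$ provides, and it is the same point that makes the domain stay of size $<\mu$ throughout. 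With those remarks, the proof is complete modulo the two classical facts about special models, whose proofs you sketch correctly.
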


A \textit{saturated} model is a model which is saturated in its own cardinality. Under some set-theoretic assumption, such models always exist, but we will not worry about those for now. If a theory is stable, then saturated models exist. The interested reader might consult \cite{halevi2021saturated} for an interesting note on those matters.

\subsection{Monster models.} 
 We work in a so-called \textit{monster model}\footnote{The concept of a monster model is a rhetorical subterfuge invented by model theorists to avoid statements of theorems that starts by ``Let $A\seq M$ where $M$ is $\beth^{2^{\abs{A}^+}}$-saturated...". It allows to avoid taking successive extensions of models to realize types or have automorphisms. This strategy is very similar to the strategy taken in classical algebraic geometry (not the scheme language, before that!) where one would work in a big algebraically closed field of infinite transcendence degree, in order to have the existence of generics of a variety, and automorphisms by infinite Galois theory.} $\MM$ of $T$, that is a model $\MM\models T$ which is $\kappabar^+$-universal, $\kappabar$-saturated and strongly $\kappabar$-homogeneous \textbf{for some big enough $\kappabar$}. We fix once and for all a cardinal $\kappabar $ and a monster model $\MM$ of $T$. Subsets $A,B,C,\ldots$ and tuples $a,b,c,\ldots$ of $\MM$ are \textit{small} if their cardinality is strictly smaller than $\kappabar$. For instance, a small model $M$ of $T$ has an isomorphic copy in $\MM$ by $\kappabar$-universality, hence we will just consider that $M\prec \MM$. We will often omit the mention of $\kappabar$, or $\MM$ by just considering small models, sets and tuples.

\subsection{Types.} For a small set $C\seq \MM$, we denote by $\LL(C)$ the set of $\LL$-formulas with parameters in $C$. If $\phi\in \LL(C)$ this implicitely means that there is a tuple $x = (x_1,\ldots,x_n)$ of variables which are the free variables of $\phi$. We write $S_n(C)$ for the set of complete $n$-types over $C$, i.e. maximal consistent sets of $\LL(C)$-formulas in variables $x_1,\ldots,x_n$. For us, a type is always complete (i.e. maximal), and we will call \textit{partial type} a consistent set of formula that may not be complete. Note that a single formula may be considered as a partial type. By $\kappabar$-saturation, being a consistent type over a small set is equivalent to having a realisation in the monster. Types may be in an infinite number of variables: fix an enumeration $(x_i)_{i<\alpha}$ of variables along some ordinal $\alpha<\kappabar$. A type $p(x_i)_{i<\alpha}$ is just a maximal consistent set of formulas $\phi((x_i)_{i\in S})$ where $S$ is a finite subset of $\alpha$. In this case, we write $p\in S_\alpha(C)$. Often the number of variables in the type will not matter, hence we will just write $S(C)$ for the set of types over $C$ in a small number of variables. For any small tuple $a$ we write $\tp(a/C)$ for the complete type $\set{\phi(x)\in \LL(C)\mid \MM\models \phi(a)}$. This defines a map $\MM^\alpha \rightarrow S_\alpha(C)$ via $a\mapsto \tp(a/C)$. This map is surjective by saturation. This implies that we may always consider a type (over a small set) via its realisations, and that proving that a type is consistent is equivalent to find a realisation of it.

\subsection{Topology on the space of types.} The ``$S$" in $S(C)$ stands for Stone, as $S(C)$ is the Stone space of the boolean algebra $\LL(C)$ of formulas, identified with definable subsets of $\MM$. It is the set of ultrafilters on $\LL(C)$ and is a topological space. The topology on $S(C)$ is given by basic clopen sets (=closed and open) of the form 
\[ [\phi] = \set{p\in S(C)\mid p\models \phi}\] for $\phi\in \LL(C)$. 
\begin{exercise}
    Check that the set $\set{[\phi]\mid \phi\in \LL(C)}$ is a basis of topology. Check that $[\phi]$ are clopen for this topology and that the topology is Hausdorff.
\end{exercise}

The cornerstone of model theory is the so-called compactness theorem:
\begin{fact}[Compactness]
    $S(C)$ is a compact topological space.
\end{fact}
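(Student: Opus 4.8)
The plan is to deduce this from the model-theoretic compactness theorem (``finitely satisfiable implies satisfiable'') together with the $\kappabar$-saturation of $\MM$, which lets one realise consistent small types \emph{inside} $\MM$ — exactly the dictionary set up in the subsection on types, where it was noted that $a\mapsto\tp(a/C)$ is surjective and that proving a small type over a small set is consistent is the same as finding a realisation of it. Concretely, I would rephrase topological compactness as the finite intersection property and translate it into a statement about partial types over $C$.

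\textbf{Step 1 (reduction to basic clopen sets).} Since the sets $[\phi]$ for $\phi\in\LL(C)$ form a basis of clopen sets and this family is closed under complementation ($[\phi]^c=[\neg\phi]$ because types are complete), every closed subset of $S(C)$ is an intersection of sets of the form $[\phi]$. So, given any family $\cF$ of closed sets with the finite intersection property, write each $F\in\cF$ as $\bigcap_{\phi\in\Phi_F}[\phi]$ and set $\cG=\set{[\phi]: F\in\cF,\ \phi\in\Phi_F}$. Then $\bigcap\cG=\bigcap\cF$, and $\cG$ still has the finite intersection property: a finite subfamily $[\phi_1],\dots,[\phi_n]$ with $\phi_j\in\Phi_{F_j}$ contains $F_1\cap\dots\cap F_n\neq\emptyset$. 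Hence it suffices to show that any family $\set{[\phi_i]:i\in I}$ of basic clopen sets with the finite intersection property has non-empty intersection.

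\textbf{Step 2 (translation) and Step 3 (realisation).} Put $p=\set{\phi_i:i\in I}$, a set of $\LL(C)$-formulas in a fixed small tuple of variables. The finite intersection property says precisely that for every finite $I_0\seq I$ there is a type in $S(C)$ containing $\bigwedge_{i\in I_0}\phi_i$; by surjectivity of $a\mapsto\tp(a/C)$ this conjunction is realised in $\MM$, so $p$ is a partial type over $C$ that is finitely satisfiable in $\MM$. Since $\LL$ is countable and $C$ is small, there are fewer than $\kappabar$ many $\LL(C)$-formulas, so $p$ is a small partial type over the small set $C$. A routine Lindenbaum/Zorn argument extends $p$ to a maximal set $\tilde p$ of $\LL(C)$-formulas still finitely satisfiable in $\MM$; maximality makes $\tilde p$ complete, so $\tilde p\in S(C)$, and being finitely satisfiable in $\MM\models T$ it is consistent. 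By $\kappabar$-saturation it is realised by some $a\in\MM$, and then $\MM\models\phi_i(a)$ for all $i\in I$, so $\tp(a/C)\in\bigcap_{i\in I}[\phi_i]$; hence $\bigcap\cF\neq\emptyset$.

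\textbf{Main obstacle.} There is no deep obstacle here — the statement \emph{is} the first-order compactness theorem in topological clothing — so the only real points of care are avoiding circularity and the bookkeeping. The passage ``$p$ finitely satisfiable in $\MM$ $\Rightarrow$ $p$ realised in $\MM$'' in Step 3 is exactly where model-theoretic compactness (to obtain \emph{some} model of the completed type) and saturation of the monster (to pull the realisation back \emph{into} $\MM$) are invoked, and this is precisely what was established in the discussion of types; the other mildly fiddly point is the reduction in Step 1 of an arbitrary closed set to an intersection of basic clopen sets.
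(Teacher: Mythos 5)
Your argument is correct. Bear in mind, though, that the paper does not prove this statement at all: it is recorded as a \emph{Fact}, i.e.\ as the classical first-order compactness theorem assumed from the course prerequisites, and the surrounding text only spells out the dictionary between topological compactness of $S(C)$ and finite satisfiability of partial types --- which is precisely the content of your Steps 2 and 3. What you supply beyond that sketch is Step 1 (the reduction of an arbitrary family of closed sets with the finite intersection property to a family of basic clopen sets, which is valid since every closed subset of $S(C)$ is an intersection of sets $[\phi]$, the basis being clopen and closed under complementation via $[\neg\phi]$) and the Lindenbaum/Zorn completion of the finitely satisfiable set of formulas to a complete type, realised in $\MM$ by $\kappabar$-saturation. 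Both steps are sound, and there is no circularity, since the course takes the ``finitely satisfiable implies satisfiable'' form of compactness as a known input; the one point you treat informally --- realising a type in a small but possibly infinite tuple of variables via $\kappabar$-saturation --- is glossed in exactly the same way by the paper, so it is not a gap.
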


A more practical statement of the compactness theorem is the following.
\begin{center}
    \textit{Let $\pi(x)$ be a partial type over $C$, then $\pi(x)$ is consistent if and only if it is finitely satisfiable ($\pi_0$ admits a realisation for all finite subsets $\pi_0\seq \pi$).}
\end{center}

In our setting, this is equivalent to: 
\begin{center}
    \textit{$\pi(x)$ admits a realisation $a\models \pi$ if and only if for all finite $\pi_0\seq \pi$, there exist $a_0\models \pi_0$.}
\end{center} 

To see that this statement is equivalent to the compactness of $S(C)$: observe that finite satisfiability of $\pi$ is equivalent the family $\set{[\phi]\mid \phi\in \pi}$ having the finite intersection property: $a\models \phi_1\wedge\ldots\wedge\phi_n$ if and only if $\tp(a/C)\in [\phi_1]\cap\ldots\cap [\phi_n]$. Compactness of $S(C)$ is then equivalent to saying that there exist $p\in\cap_{\phi\in \pi} [\phi]$ hence any realisation $a$ of $p$ will satisfy $\pi$.

\subsection{Automorphisms and the Galois approach.} The set of automorphisms of $\MM$ is denoted $\Aut(\MM)$. Those are the bijections of $\MM$ which satisfy $\MM\models \phi(a_1,\ldots,a_n)$ if and only if $\MM \models \phi(\sigma(a_1),\ldots,\sigma(a_n))$ for all $\LL$-formula $\phi(x_1,\ldots,x_n)$, $a_1,\ldots,a_n\in \MM$. Of course, $\Aut(\MM)$ is a group for the composition of maps. For a small set $C\seq \MM$, an automorphism \textit{over $C$} is an automorphism $\sigma$ that fixes the set $C$ pointwise ($\sigma(c) = c \ \forall c\in C$). We write $\Aut(\MM/C)$ for the subgroup of $\Aut(\MM)$ consisting of all automorphisms over $C$. The group $\Aut(\MM/C)$ acts on $\MM$ and fixes the set $p(\MM) = \set{a\in \MM\mid a\models p}$ setwise, for any $p\in S(C)$. Conversely, by strong $\kappabar$-homogeneity, if $a,b\models p$ and $p\in S(C)$, then there is an automorphism $\sigma\in \Aut(\MM/C)$ such that $\sigma(a) = b$.
\begin{exercise}
    Check that $\tp(a/C) = \tp(b/C)$ if and only if there is $\sigma\in \Aut(\MM/C)$ such that $\sigma(a) = b$.
\end{exercise}
\noindent We denote $\tp(a/C) = \tp(b/C)$ by $a\equiv_C b$. We then have
 \[a\equiv_C b\iff \sigma(a) = b\text{ for some $\sigma\in \Aut(\MM/C)$}.\]
Thus we may and will identify a types over a small sets $C\seq \MM$ with an orbit under the automorphism group $\Aut(\MM/C)$. The group $\Aut(\MM)$ also acts on the set $S(C)$ via the mapping $p\mapsto p^\sigma$ where $p^\sigma = \set{\phi(x,\sigma(c))\mid \phi(x,c)\in p}$. For this action, $\Aut(\MM/C)$ fixes $S(C)$ pointwise.\\

Before we continue, we recall notations and definitions that we just introduces and that we will use until the end of this text.

\begin{itemize}
    \item $A,B,C,\ldots$ are small subsets of $\MM$, $a,b,c,\ldots$ are small tuples from $\MM$ maybe of infinite length, for instance indexed by some ordinal $\alpha$;
    \item For a formula $\phi(x)$ and $a$ a tuple from $\MM$ such that $\abs{a} = \abs{x}$ ($a$ and $x$ are indexed by the same ordinal), then $a\models \phi(x)$ stands for $\MM\models \phi(a)$, sometimes simply denoted $\models\phi(a)$. $\phi(\MM)$ is the set of realisations of $\phi$ in $\MM$. Those notations extends to partial types: $a\models \pi(x)$, $\models \pi(a)$, $\pi(\MM)$.
    \item For two partial types in the same variable $\pi(x), \Sigma(x)$, we denote $T\models \forall x (\Sigma(x)\to \pi(x))$ or equivalently $\Sigma(\MM)\seq \pi(\MM)$) by $\Sigma\models \pi$. If $\Sigma$ is closed under finite conjunctions, this is equivalent to $\pi(x)\seq \Sigma(x)$ (as sets of formulas). Also if $\phi\in p$ we denote $p\models \phi$;
    \item $\LL$ is the language of $T$, $\LL(C)$ denotes formulas with parameters in a set $C$, sometimes $\LL_n(C)$ denotes formulas in variables $x_1,\ldots,x_n$;
    \item $S_\alpha(C)$ the space of (complete) types over $C$ in variables indexed by an ordinal $\alpha$, $S(C)$ stands for $S_\alpha(C)$ for some $\alpha$; 
    \item $\Aut(\MM/C)$ is the group of automorphism of $\MM$ fixing $C$ pointwise;
    \item $a\equiv_C b$ iff $\tp(a/C) = \tp(b/C)$, where $\tp(a/C) = \set{\phi(x)\mid \models\phi(a)\text{ for } \phi(x)\in \LL(C)}$;
    \item For small sets, $A\equiv_C B$ stands for ``there exists an elementary bijection $f:A\to B$ fixing $C$ pointwise". By strong $\kappabar$-homogeneity, this is equivalent to ``there exists an automorphism $\sigma\in \Aut(\MM/C)$ such that $\sigma(A) = B$ (as sets)". This is equivalent to saying that for any enumeration $a = (a_i)_{i<\alpha}$ of $A$, there exists and enumeration $b = (b_i)_{i<\alpha}$ of $B$ such that $a\equiv_C b$.
\end{itemize}

\subsection{The algebraic closure and the definable closure.}
\begin{lemma}\label{lm:topological_separation}
    Let $X$ be a topological space, $Y_1,Y_2\seq X$ be two compact subsets. Let $\cH$ be a set of clopen in $X$ such which is closed by finite union and intersections. The following are equivalent:
    \begin{enumerate}
        \item There exists $B\in \cH$ such that $Y_1\seq B$ and $Y_2\cap B = \emptyset$.
        \item for all $y_1\in Y_1$, $y_2\in Y_2$, there exists $B\in \cH$ such that $y_1\in Y_1$ and $y_2\notin Y_2$.
    \end{enumerate}
\end{lemma}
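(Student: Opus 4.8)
The plan is to prove the nontrivial direction $(2)\implies(1)$, since $(1)\implies(2)$ is immediate: if $B\in\cH$ separates $Y_1$ from $Y_2$ in the sense of (1), then the same $B$ works for every pair $(y_1,y_2)$ (note that the statement of (2) as written is slightly garbled, but the intended meaning is clearly ``$y_1\in B$ and $y_2\notin B$''). So assume (2). First I would fix an arbitrary point $y_1\in Y_1$ and, for each $y_2\in Y_2$, use (2) to pick $B_{y_1,y_2}\in\cH$ with $y_1\in B_{y_1,y_2}$ and $y_2\notin B_{y_1,y_2}$. Then the complements $X\setminus B_{y_1,y_2}$ are open (since the $B$'s are clopen) and cover the compact set $Y_2$, so finitely many of them suffice: there are $y_2^{(1)},\dots,y_2^{(k)}$ with $Y_2\subseteq\bigcup_{j} (X\setminus B_{y_1,y_2^{(j)}})$. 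Setting $B_{y_1}:=\bigcap_{j=1}^k B_{y_1,y_2^{(j)}}$, which lies in $\cH$ because $\cH$ is closed under finite intersection, we get $y_1\in B_{y_1}$ and $B_{y_1}\cap Y_2=\emptyset$.

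Next I would run the dual compactness argument on $Y_1$. The sets $B_{y_1}$, as $y_1$ ranges over $Y_1$, are open (clopen, in fact) and cover $Y_1$, which is compact, so finitely many suffice: $Y_1\subseteq B_{y_1^{(1)}}\cup\dots\cup B_{y_1^{(m)}}$ for some points $y_1^{(1)},\dots,y_1^{(m)}\in Y_1$. Now set $B:=\bigcup_{i=1}^m B_{y_1^{(i)}}$. This $B$ belongs to $\cH$ because $\cH$ is closed under finite unions, it contains $Y_1$ by construction, and it is disjoint from $Y_2$ since each $B_{y_1^{(i)}}$ is disjoint from $Y_2$. Hence (1) holds.

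There is essentially no hard part here: the whole argument is the standard ``compactness eliminates one quantifier at a time'' trick, applied twice, once to $Y_2$ to produce the point-to-set separator $B_{y_1}$ and once to $Y_1$ to produce the set-to-set separator $B$. The only points that require a small amount of care are bookkeeping ones: making sure that the finite intersections and finite unions stay inside $\cH$ (which is exactly why both closure hypotheses on $\cH$ are needed — intersections for the first reduction, unions for the second), and using clopenness of the elements of $\cH$ so that both the $B$'s and their complements are open and thus usable as members of open covers of the compact sets $Y_1$ and $Y_2$. I would also remark that this lemma is the topological skeleton behind the usual ``separation of types'' arguments, with $X=S(C)$, $\cH$ the algebra of clopen sets coming from a fixed parameter set, and $Y_1,Y_2$ closed (hence compact) sets of types; it is invoked later to pass from a local, pointwise separating formula to a single uniform one.
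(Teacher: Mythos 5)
Your proof is correct and follows essentially the same route as the paper's: first use compactness of $Y_2$ and closure of $\cH$ under finite intersections to separate each point of $Y_1$ from all of $Y_2$, then use compactness of $Y_1$ and closure under finite unions to obtain a single $B\in\cH$. Your reading of the garbled condition (2) as ``$y_1\in B$ and $y_2\notin B$'' also matches the intended statement used in the paper.
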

\begin{proof}
    $(1\implies 2)$ is trivial. We assume that $(2)$ holds.
    \begin{claim}
        For each $y_1\in Y_1$ there exist $B(y_1)\in \cH$ such that $y_1\in B(y_1)$ and $Y_2\cap B(y_1) = \emptyset$.
    \end{claim} 
    \begin{proof}[Proof of the claim] Let $y_1\in Y_1$ and consider the set $\cH(y_1)$ consisting of those $B\in \cH$ such that $y_1\in B$. By $(2)$, for all $y_2\in Y_2$ there exists $B\in \cH(y_1)$ such that $y_2\in X\setminus B$. It follows that \[Y_2\seq \bigcup_{B\in \cH(y_1)} X\setminus B \]
    As $\cH$ is a set of clopen, $X\setminus B$ are open and hence by compactness of $Y_2$ there exists $B_1,\ldots,B_n\in \cH(y_1)$ such that $Y_2\seq (X\setminus B_1)\cup\ldots\cup (X\setminus B_n) = X\setminus (B_1\cap\ldots\cap B_n)$. As $\cH$ is closed under positive boolean combination we have $B(y_1) := B_1\cap\ldots \cap B_n\in \cH$. Also clearly $y_1\in B(y_1)$. \end{proof}
    By the claim, we have
    \[Y_1\seq \bigcup_{y_1\in Y_1} B(y_1)\]
    As $B(y_1)$ is open, again by compactness, there exists $B^1,\ldots,B^n\in \set{B(y_1)\mid y_1\in Y_1}$ such that $Y_1\seq B^1\cup\ldots \cup B^n$. Let $B = B^1\cup\ldots \cup B^n$. As $\cH$ is closed under unions, $B\in \cH$. As $Y_2\cap B^i = \emptyset$ for each $i$ we have $Y_2\cap B = \emptyset$.
\end{proof}

\begin{proposition}\label{prop:clopen_definable}
    Let $X$ be a clopen of $S(C)$, then $X = [\phi]$ for some $\phi(x)\in \LL(C)$.
\end{proposition}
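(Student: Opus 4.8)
The statement is exactly the kind of thing Lemma \ref{lm:topological_separation} was designed for, so the plan is to invoke it with $Y_1 = X$, $Y_2 = S(C)\setminus X$, and $\cH = \set{[\phi]\mid \phi\in \LL(C)}$ the collection of basic clopen sets. First I would check that $\cH$ satisfies the hypotheses of the lemma: each $[\phi]$ is clopen (this was the content of an earlier exercise), and $\cH$ is closed under finite unions and intersections because $[\phi]\cup[\psi] = [\phi\vee\psi]$ and $[\phi]\cap[\psi] = [\phi\wedge\psi]$, both of which are immediate from the definition $[\phi] = \set{p\mid p\models\phi}$.

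Next I would verify that $Y_1$ and $Y_2$ are compact. Since $S(C)$ is compact (the Compactness theorem, stated above) and $X$ is clopen, both $X$ and its complement are closed in $S(C)$, hence compact. Then I would check condition $(2)$ of the lemma: given $y_1 = p\in Y_1 = X$ and $y_2 = q\in Y_2 = S(C)\setminus X$, we have $p\neq q$, so there is a formula $\phi\in \LL(C)$ with $\phi\in p$ and $\phi\notin q$ (this is the same observation that makes the Stone space Hausdorff); thus $p\in[\phi]$ and $q\notin[\phi]$, as required.

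Applying Lemma \ref{lm:topological_separation}, I obtain $B = [\phi]\in\cH$ with $X\seq [\phi]$ and $Y_2\cap[\phi] = \emptyset$. The second condition says $[\phi]\seq S(C)\setminus Y_2 = X$, so combined with $X\seq[\phi]$ we get $X = [\phi]$, which is the claim.

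I do not expect any real obstacle here: every step is routine bookkeeping once the right instance of Lemma \ref{lm:topological_separation} is identified. The only point that deserves a line of justification is the separation step $(2)$, i.e.\ that two distinct types disagree on some formula, which is immediate from the definition of a type as a maximal consistent set of formulas. (One could of course also argue directly: $X$ open gives $X = \bigcup_{\phi\in\pi}[\phi]$ for some set $\pi$ of formulas, and $X$ closed in the compact space $S(C)$ gives a finite subcover $X = [\phi_1]\cup\cdots\cup[\phi_n] = [\phi_1\vee\cdots\vee\phi_n]$; but routing through the already-proved lemma is cleaner.)
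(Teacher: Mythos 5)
Your proof is correct and follows exactly the paper's own route: apply Lemma \ref{lm:topological_separation} with $Y_1 = X$, $Y_2 = S(C)\setminus X$ and $\cH = \set{[\phi]\mid \phi\in\LL(C)}$, using that distinct types are separated by a formula and that both pieces are compact. The verification steps and the conclusion $X = [\phi]$ match the paper's argument in all essentials.
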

\begin{proof}
    Let $\cH$ be the set of all $[\phi]$ for $\phi\in \LL(C)$. Clearly, $\cH$ is closed by positive boolean combinations (even by complement). If $X$ is clopen, then $X$ and $(S(C)\setminus X)$ are closed hence compact. For each $p\in X$ and $q\in S(C)\setminus X$ we have $p\neq q$ hence there exists $\phi\in p$ such that $\phi\notin q$. Then $p\in [\phi]$ and $q\notin [\phi]$. By Lemma \ref{lm:topological_separation} there exists $\phi\in \LL(C)$ such that $X\seq [\phi]$ and $(S(C)\setminus X)\seq S(C)\setminus [\phi]$ hence $[\phi]\seq X$, so $X = [\phi]$. 
\end{proof}

\begin{exercise}\label{exo:topo_restriction_continuous}
    Let $C\seq B$ and consider the restriction map $\pi: S(B)\rightarrow S(C)$ defined by \[\pi(p) = p\upharpoonright C = \set{\theta\in \LL(C)\mid \theta\in p}.\]
    Prove that $\pi$ is continuous and onto.
\end{exercise}

\begin{lemma}\label{lm:invariant-definable-equivalent}
    Let $X$ be a definable set and $C$ a small set. The following are equivalent:
    \begin{enumerate}
        \item $X$ is definable over $C$;
        \item $\sigma(X) = X$ for all $\sigma\in \Aut(\MM/C)$.
    \end{enumerate}
\end{lemma}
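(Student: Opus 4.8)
The plan is to prove $(1)\Rightarrow(2)$ by a one-line computation and to obtain $(2)\Rightarrow(1)$ by transporting the situation to the Stone space over $C$ and applying Proposition \ref{prop:clopen_definable}.

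For $(1)\Rightarrow(2)$: if $X=\phi(\MM)$ with $\phi(x)\in\LL(C)$ and $\sigma\in\Aut(\MM/C)$, then $\sigma$ fixes the parameters of $\phi$ pointwise, so $\phi^\sigma=\phi$; hence $a\in X\iff\MM\models\phi(a)\iff\MM\models\phi(\sigma(a))\iff\sigma(a)\in X$, i.e.\ $\sigma(X)=X$.

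For $(2)\Rightarrow(1)$ I would argue as follows. Since $X$ is definable, write $X=\psi(\MM)$ for a formula $\psi(x)$ and let $B$ be the (finite, hence small) set of parameters of $\psi$ together with $C$, so that $\psi\in\LL(B)$ and $C\seq B$. Consider the restriction map $\pi\colon S(B)\to S(C)$, which is continuous and onto by Exercise \ref{exo:topo_restriction_continuous}. The clopen sets $[\psi]$ and $[\neg\psi]$ of $S(B)$ are compact, so their images $\pi([\psi])$ and $\pi([\neg\psi])$ are compact, hence closed, in $S(C)$; and since $\pi$ is onto and $[\psi]\cup[\neg\psi]=S(B)$, these images cover $S(C)$. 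The crucial point is that they are disjoint: if some $p\in S(C)$ lay in both, choose $q_1\in[\psi]$ and $q_2\in[\neg\psi]$ restricting to $p$, together with realisations $a\models q_1$ and $b\models q_2$; then $a\in X$, $b\notin X$, but $a\equiv_C b$, so strong $\kappabar$-homogeneity provides $\sigma\in\Aut(\MM/C)$ with $\sigma(a)=b$, contradicting $\sigma(X)=X$. Therefore $\pi([\psi])$ is clopen in $S(C)$, and Proposition \ref{prop:clopen_definable} yields $\phi(x)\in\LL(C)$ with $\pi([\psi])=[\phi]$.

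Finally I would check $X=\phi(\MM)$. For $a\in X$ we have $\tp(a/B)\in[\psi]$, so $\tp(a/C)=\pi(\tp(a/B))\in\pi([\psi])=[\phi]$ and $a\in\phi(\MM)$. Conversely, if $a\in\phi(\MM)$ then $\tp(a/C)\in[\phi]=\pi([\psi])$, so some $q\in[\psi]$ restricts to $\tp(a/C)$; realising $q$ by $b$ gives $b\in X$ with $b\equiv_C a$, whence $a\in X$ by invariance. The only steps that genuinely use hypothesis $(2)$ are the disjointness of the two images and this last equality; everything else is routine manipulation of compactness, the restriction map, and Proposition \ref{prop:clopen_definable}, so I do not anticipate any serious obstacle.
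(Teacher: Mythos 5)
Your proof is correct and follows essentially the same route as the paper: push $[\psi]$ forward along the restriction map $\pi\colon S(B)\to S(C)$, use invariance (via realisations and strong homogeneity) to see that this image is clopen, and then apply Proposition \ref{prop:clopen_definable}. Your disjointness argument for $\pi([\psi])$ and $\pi([\neg\psi])$ is just a repackaging of the paper's claim that $[\phi]=\pi^{-1}(Y)$, so there is no substantive difference.
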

\begin{proof}
    $(1\implies 2)$ is clear: if $a\in X = \phi(\MM,c)$ for $\phi(x,c)\in\LL(C)$. Then $\models \phi(\sigma(a),c)$ since $\sigma$ fixes $C$ pointwise, hence $\sigma(a)\in X$. 

    $(2\implies 1)$ Assume that $X$ is defined by a formula $\phi$ over $B$ and that $C\seq B$ (you can change $B$ to $B\cup C$). Consider the restriction map $\pi: S(B)\rightarrow S(C)$ such that $\pi(p) = p\upharpoonright C$.
    By Exercise \ref{exo:topo_restriction_continuous} $\pi$ is continuous and onto. Let $Y\seq S(C)$ be the image of $[\phi]$ under $\pi$. Then $Y$ is closed as the direct image of a compact by a continuous function. 
    \begin{claim}
        $[\phi] = \pi^{-1} (Y)$.
    \end{claim}
    \begin{proof}
        As $Y = \pi([\phi])$, we get $[\phi]\seq \pi^{-1}(Y)$. Assume that $p\in \pi^{-1}(Y)$, so that $p\upharpoonright C =: q\in Y$. As $q\in Y$ there exist $p'\in [\phi]$ such that $p'\upharpoonright C = q = p\upharpoonright C$. Consider $a\models p,a'\models p'$. As $X = \phi(\MM)$, $a'\in X$. Also, $a\equiv_C a'$ hence by invariance, $a\in X$. It follows that $\phi\in p$ hence $p\in [\phi]$.
    \end{proof}
    As $\pi $ is onto, we get from the claim that $S(C)\setminus Y = \pi(S(B)\setminus [\phi]) = \pi([\neg \phi])$ hence $Y$ is open. By Proposition \ref{prop:clopen_definable} we conclude that $Y = [\psi]$ for some $\psi\in \LL(A)$. We conclude: $a\in X \iff \tp(a/B)\in [\phi]\iff \pi(\tp(a/B)) = \tp(a/C)\in [\psi]\iff a\models \psi$ hence $X = \psi(\MM)$.
\end{proof}

\begin{definition} Let $A$ be a small set and $b$ a singleton.
\begin{itemize}
    \item (Algebraic closure) We say that $b$ is \textit{algebraic over $A$} if there exists an $\LL(A)$-formula $\phi(x)$ such that $b\models \phi$ and $\phi(\MM)$ is finite. We denote by $\acl(A)$ the set of all $b\in \MM$ which are algebraic over $A$. 
    \item (Definable closure) We say that $b$ is \textit{definable over $A$} if there exists an $\LL(A)$-formula $\phi(x)$ such that $\phi(\MM) = \set{b}$. We denote by $\dcl(A)$ the set of all $b\in \MM$ which are definable over $A$. 
\end{itemize}
\end{definition}

    A partial type $\pi(x)$ with finitely many realisations ($\pi(\MM)$ is finite) is called \textit{algebraic}. If $\pi = \set{\phi}$, the formula $\phi(x)$ is called \textit{algebraic}.
    
\begin{exercise}
Prove that for elementary extensions $M\prec N$, $\phi(M)$ is finite if and only if $\phi(N)$ is finite. 
\end{exercise}

\begin{exercise}[*]
    Prove that $\acl$ and $\dcl$ are closure operators on $\MM$.
\end{exercise}

\begin{proposition}\label{prop:algeclodefclo_automorphism} Let $C$ be a small set and $a$ a singleton.
    \begin{enumerate}
        \item $a\in \acl(C)$ if and only if the orbit of $a$ under $\Aut(\MM/C)$ is finite.
        \item $a\in \dcl(C)$ if and only if $\sigma(a) = a$ for all $\sigma\in \Aut(\MM/C)$.
    \end{enumerate}
\end{proposition}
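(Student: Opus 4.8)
The plan is to treat both statements in parallel along a common pattern: the ``only if'' directions are immediate from the fact that a set definable over $C$ is fixed setwise by every $\sigma \in \Aut(\MM/C)$, while the ``if'' directions reduce to Lemma \ref{lm:invariant-definable-equivalent} once we identify the $\Aut(\MM/C)$-orbit of $a$ with the realisation set $\tp(a/C)(\MM)$.

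For the ``only if'' direction of $(1)$: if $a \in \acl(C)$, pick $\phi(x) \in \LL(C)$ with $a \models \phi$ and $\phi(\MM)$ finite. For $\sigma \in \Aut(\MM/C)$ and $b \models \phi$ we have $\models \phi(\sigma(b))$ because $\sigma$ fixes the parameters of $\phi$, so $\sigma$ maps $\phi(\MM)$ into itself; in particular the orbit of $a$ under $\Aut(\MM/C)$ is contained in the finite set $\phi(\MM)$, hence finite. The same argument with $\phi(\MM)$ a singleton gives the ``only if'' direction of $(2)$.

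For the ``if'' direction of $(1)$: suppose the orbit $O$ of $a$ under $\Aut(\MM/C)$ is finite, say $O = \set{a_1, \ldots, a_n}$ with $a = a_1$. First observe $O = \tp(a/C)(\MM)$: if $b \models \tp(a/C)$ then $a \equiv_C b$, so by strong $\kappabar$-homogeneity some $\sigma \in \Aut(\MM/C)$ sends $a$ to $b$, i.e. $b \in O$; conversely each $a_i$ equals $\sigma(a)$ for some $\sigma \in \Aut(\MM/C)$, so $a_i \equiv_C a$, i.e. $a_i \models \tp(a/C)$. Now $O$ is a finite subset of $\MM$, hence a definable set (defined by $\bigvee_{i=1}^n x = a_i$), and it is setwise $\Aut(\MM/C)$-invariant by construction; by Lemma \ref{lm:invariant-definable-equivalent} it is definable over $C$, say $O = \phi(\MM)$ with $\phi(x) \in \LL(C)$. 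Then $a \models \phi$ and $\phi(\MM)$ is finite, so $a \in \acl(C)$. For the ``if'' direction of $(2)$: if $\sigma(a) = a$ for every $\sigma \in \Aut(\MM/C)$, then the orbit of $a$ is $\set{a}$, which is a definable, $\Aut(\MM/C)$-invariant set, hence by Lemma \ref{lm:invariant-definable-equivalent} defined by some $\phi(x) \in \LL(C)$, exhibiting $a \in \dcl(C)$.

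The only step that is not pure bookkeeping is the passage ``finite and $\Aut(\MM/C)$-invariant $\implies$ definable over $C$'', and this is precisely what Lemma \ref{lm:invariant-definable-equivalent} supplies; so I would present the argument above as the main line. Should one prefer to avoid that lemma, a direct compactness argument also works: if no $\phi \in \tp(a/C)$ had a finite solution set, then $\tp(a/C)(x) \cup \set{x \neq a_1, \ldots, x \neq a_n}$ would be finitely satisfiable, hence realised by some $b$, contradicting $\tp(a/C)(\MM) = O$.
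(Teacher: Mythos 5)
Your proof is correct and follows essentially the same route as the paper: the easy direction by noting that $\Aut(\MM/C)$ permutes the (finite) solution set of an algebraic $\LL(C)$-formula, and the converse by observing that the finite orbit of $a$ is a definable, $\Aut(\MM/C)$-invariant set and invoking Lemma \ref{lm:invariant-definable-equivalent} to get a defining formula over $C$. The extra identification of the orbit with $\tp(a/C)(\MM)$ and the alternative compactness remark are harmless additions but not needed beyond what the paper's argument already does.
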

\begin{proof}
    \textit{(1)} If $a\in \acl(C)$, let $\phi(x)$ be a formula over $C$ such that $\phi(x)$ has only finitely many realisations: $a_1,a_2,\ldots,a_n$ with $a = a_1$. If $\sigma\in \Aut(\MM/C)$, then for all $b\models\phi$ we have $\sigma(b)\models \phi$ hence the orbit of $a$ under $\Aut(\MM/C)$ is contained in $\set{a_1,\ldots,a_n}$. Conversely, let $a_1,\ldots,a_n$ be the set of all conjugates of $a$ over $C$, say with $a_1 = a$. Then the set $\set{a_1,\ldots,a_n}$ is invariant under $\Aut(\MM/C)$ hence this set is $C$-definable using Lemma \ref{lm:invariant-definable-equivalent}. 
    \textit{(2)} Similarly as above, $a\in \dcl(C)$ if and only if the set $\set{a}$ is definable over $C$ which is equivalent to $a$ being invariant under $\Aut(\MM/C)$ by Lemma \ref{lm:invariant-definable-equivalent}.
\end{proof}

\begin{proposition}\label{proposition:elementarymapextendstoalgebraicclosure}
    Let $f:A\to B$ be an elementary bijection, for some small subsets $A,B$. Then $f$ extends to an elementary bijection $f':\acl(A)\to \acl(B)$.
\end{proposition}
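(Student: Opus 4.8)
The plan is to extend $f$ step by step along a well-ordering of $\acl(A)$, at each stage extending an elementary bijection by a single algebraic element, and then take a union at limit stages. The key point that makes each successor step work is the following: if $g:A'\to B'$ is an elementary bijection between small sets and $a\in\acl(A')$, then I want to find $b\in\acl(B')$ with $g\cup\{(a,b)\}$ elementary. To do this, let $\phi(x,\bar{c})$ be an $\LL(A')$-formula witnessing $a\in\acl(A')$, say with exactly $n$ realisations and $n$ minimal among all such formulas in $p:=\tp(a/A')$. Since $g$ is elementary, $\phi(x,g(\bar{c}))$ also has exactly $n$ realisations in $\MM$; pick any realisation $b$ of $g(p):=\{\psi(x,g(\bar{c}))\mid \psi(x,\bar{c})\in p\}$ — this set is consistent because $g$ is elementary, and any realisation of it is algebraic over $B'$, so $b\in\acl(B')$. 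I then need to check $g\cup\{(a,b)\}$ is elementary, i.e. $\tp(a/A') $ corresponds to $\tp(b/B')$ under $g$; this is where the minimality of $n$ is used, via the standard fact that a type over $A'$ containing an algebraic formula with $n$ realisations, $n$ minimal, is isolated by that formula (so $g(p)\vdash\tp(b/B')$ and symmetrically), hence $g(p)=\tp(b/B')$.

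Concretely I would organise the induction as follows. Fix an enumeration $(a_i)_{i<\lambda}$ of $\acl(A)$ with $a_i\notin A$ for all $i$ (the elements of $A$ are already handled by $f$). Define an increasing chain of elementary bijections $f_i: A\cup\{a_j: j<i\}\to B\cup\{b_j:j<i\}$ by: $f_0=f$; at a successor stage $i+1$, apply the single-step lemma above to $f_i$ and $a_i\in\acl(A)\subseteq\acl(\dom f_i)$ to produce $b_i$ and set $f_{i+1}=f_i\cup\{(a_i,b_i)\}$; at a limit stage $i$, set $f_i=\bigcup_{j<i}f_j$, which is elementary since a union of a chain of elementary maps is elementary (each formula involves only finitely many parameters, all lying in some $\dom f_j$). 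Finally $f':=\bigcup_{i<\lambda}f_i$ is an elementary map with domain $\acl(A)$, and its image is contained in $\acl(B)$ since each $b_i\in\acl(B\cup\{b_j:j<i\})\subseteq\acl(\acl(B))=\acl(B)$, using that $\acl$ is a closure operator (transitivity).

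It remains to see that $f'$ is a bijection onto $\acl(B)$. It is injective as an increasing union of injective maps. For surjectivity, apply the same construction to $f^{-1}:B\to A$ to get an elementary map $h:\acl(B)\to\acl(A)$; then $h\circ f'$ and $f'\circ h$ are elementary self-maps of $\acl(A)$, resp. $\acl(B)$, extending the identity on $A$, resp. $B$. An elementary self-map of $\acl(A)$ fixing $A$ pointwise is surjective — one way to see this is that it restricts to a permutation of each finite conjugacy class $\{a'\in\MM: a'\equiv_A a\}$ (these classes are finite and invariant for any element of $\acl(A)$, and the map sends such a class into itself), hence is a bijection of $\acl(A)$; so $h\circ f'$ is onto $\acl(A)$, forcing $f'$ itself onto $\acl(B)$. (Alternatively, by strong $\kappabar$-homogeneity, extend $f'$ to an automorphism $\sigma\in\Aut(\MM)$ and note $\sigma$ permutes $\acl(B)$ by Proposition~\ref{prop:algeclodefclo_automorphism}, with $\sigma\upharpoonright\acl(A)=f'$ and $\sigma^{-1}$ playing the role of $h$.)

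The main obstacle is the single-step lemma, specifically verifying that the chosen $b$ makes $g\cup\{(a,b)\}$ elementary: one must argue that an algebraic formula $\phi(x,\bar c)\in p$ with a minimal number $n$ of realisations in fact isolates $p$ over $A'$, so that transporting it by $g$ pins down $\tp(b/B')$ completely. The minimality argument is the classical one: if some $\psi\in p$ cut down $\phi(\MM,\bar c)$ to a proper nonempty subset, then $\phi\wedge\psi$ would be an algebraic formula in $p$ with fewer than $n$ realisations, contradicting minimality; hence $\phi\wedge\psi$ and $\phi$ have the same (all $n$) realisations for every $\psi\in p$, which gives $\phi\vdash p$. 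Everything else — limits, the union being elementary, injectivity, the bookkeeping with $\acl$ being a closure operator — is routine.
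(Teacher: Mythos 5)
Your proof is correct, but it takes a genuinely different route from the paper. The paper's argument is a two-line appeal to the monster model: by strong $\kappabar$-homogeneity extend $f$ to a global automorphism $\sigma\in\Aut(\MM)$, observe that $\sigma(\acl(A))\seq\acl(B)$ (and likewise for $\sigma^{-1}$), and take $f'=\sigma\upharpoonright\acl(A)$ --- essentially the alternative you mention in parentheses at the end. Your main construction instead builds $f'$ by transfinite recursion, extending one algebraic element at a time using the classical fact that an algebraic formula with a minimal number of realisations isolates the type; this is the standard ``local'' proof, and it buys something the paper's proof does not: it never uses homogeneity (and, thanks to the isolation, not even saturation, since the transported isolating formula already has realisations), so it shows the statement for elementary maps between subsets of an arbitrary structure, at the cost of being longer and of having to handle surjectivity separately. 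Two small remarks on your write-up: the minimality/isolation step is not strictly needed for the single extension step in the monster, since the image $g(p)$ of a complete type under an elementary bijection is again a complete consistent type, so any realisation of it works (isolation is what lets you dispense with saturation); and in the surjectivity argument the inference ``$h\circ f'$ onto $\acl(A)$ forces $f'$ onto $\acl(B)$'' needs the injectivity of $h$ (or, more directly, apply your finite-orbit argument to $f'\circ h$, which you had already introduced), e.g.\ via Proposition \ref{prop:algeclodefclo_automorphism} style reasoning on the finite sets of realisations of types over $B$.
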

\begin{proof}
    By strong $\kappabar$-homogeneity, there exists an automorphism $\sigma$ extending $f$. For all $a\in \acl(A)$ we have $\sigma(a)\in \acl(B)$  hence $f' = \sigma\upharpoonright\acl(A)$ extends $f$ and is an elementary bijection.
\end{proof}

\begin{exercise}
    Let $a\in \MM$ be algebraic over $C$. Prove that there exists a formula $\phi(x)\in \LL(C)$ such that $\phi(\MM)$ is the orbit of $a$ over $\Aut(\MM/C)$. Prove that $\phi(\MM) = p(\MM)$ where $p(x) = \tp(a/C)$. Such a formula $\phi(x)$ is said to \textit{isolates} the type of $a$ over $C$.
\end{exercise}

\begin{exercise}[*]
    Prove that if $a\equiv_C b$ then there exists $a'\equiv_C a$ such that $ab\equiv_C ba'$.
\end{exercise}

\section{Theories: working examples}

\subsection{A first example: algebraically closed fields}
    Let $\LLr$ be the language of rings, i.e. $\LLr = \set{+,-,\cdot,0,1}$ and let $T$ be the (incomplete) theory of fields, that is $(K,+,-,\cdot,0,1)\models T$ if an only if $T$ is a field. Note that, somehow traditionally, we do not include the function $x\mapsto x^{-1}$, hence an $\LL$-substructure of a model of $T$ is an integral domain. We consider the expansion ACF of $T$ defined by adding to $T$ the following axiom-scheme:
    \[\forall x_0\ldots \forall x_n \exists y (x_n\neq 0 \to x_0+x_1y+\ldots+x_ny^n = 0)\]
    for all $n\in \N$, so that $K\models \ACF$ if and only if $K$ is an algebraically closed field.

We recall the following classical criterion for proving quantifier elimination:

\begin{fact}
An $\LL$-theory $T$ has quantifier elimination if and only if for all $M,N\models T$ with a common substructure $A$ and for all primitive existential formula $\exists x\phi(x)$ where $\phi\in \LL(A)$ is a conjunction of atomic and negatomic, \[M\models \exists x\phi(x)\implies N\models \phi(x).\]  
We may assume that $N$ is $\abs{A}$-saturated.
\end{fact}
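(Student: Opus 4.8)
The plan is to prove the two implications separately; $(\Rightarrow)$ is routine and $(\Leftarrow)$ is the substance. For $(\Rightarrow)$: suppose $T$ has quantifier elimination. An instance as in the statement is $\exists x\,\phi(x,\ol a)$ with $\ol a$ a tuple from $A$ and $\phi(x,\ol y)$ a conjunction of literals of $\LL$; quantifier elimination gives a quantifier-free $\psi(\ol y)$ over $\LL$ with $T\models\forall\ol y\,(\exists x\,\phi(x,\ol y)\leftrightarrow\psi(\ol y))$. Since quantifier-free $\LL$-formulas are absolute between a structure and any substructure containing the relevant parameters, and $A$ is a common substructure of $M$ and $N$, we get $M\models\psi(\ol a)\iff A\models\psi(\ol a)\iff N\models\psi(\ol a)$, so $M\models\exists x\,\phi(x)\iff N\models\exists x\,\phi(x)$ --- in fact an equivalence, and the saturation clause is not needed here.

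For $(\Leftarrow)$ I would first reduce to the primitive existential case: it suffices to show that every $\exists x\,\bigwedge_i\psi_i(x,\ol y)$ with the $\psi_i$ literals is $T$-equivalent to a quantifier-free formula. Granting this, one proves by induction on the number of quantifier occurrences (in prenex form) that every $\LL$-formula is $T$-equivalent to a quantifier-free one: at the innermost quantifier one may assume it is existential (using $\forall x\equiv\neg\exists x\,\neg$), put the quantifier-free matrix in disjunctive normal form, distribute $\exists x$ over the disjunction, replace each resulting primitive existential block by its quantifier-free $T$-equivalent, and substitute back, having deleted one quantifier.

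Now fix $\phi(x,\ol y)=\bigwedge_i\psi_i(x,\ol y)$ and let $\Psi(\ol y)$ be the set of quantifier-free $\LL$-formulas $\psi(\ol y)$ with $T\models\forall\ol y\,(\exists x\,\phi(x,\ol y)\to\psi(\ol y))$. The key claim is that, for fresh constants $\ol c$, $T\cup\Psi(\ol c)\models\exists x\,\phi(x,\ol c)$; compactness then yields $\psi_1,\dots,\psi_k\in\Psi$ with $T\models\forall\ol y\,(\bigwedge_j\psi_j(\ol y)\to\exists x\,\phi(x,\ol y))$, so $\exists x\,\phi$ is $T$-equivalent to $\bigwedge_j\psi_j$ (if $\exists x\,\phi$ is $T$-inconsistent for all instantiations, it is $T$-equivalent to $\ol y\ne\ol y$). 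To prove the claim, take $M\models T\cup\Psi(\ol a)$ with $\ol a$ interpreting $\ol c$, and let $A_0\seq M$ be the substructure generated by $\ol a$. First, $T\cup\Diag(A_0)\cup\{\phi(d,\ol a)\}$ (with $\Diag$ the quantifier-free diagram and $d$ new) is consistent: otherwise, since every element of $A_0$ is an $\LL$-term in $\ol a$, compactness yields a single quantifier-free $\LL$-formula $\rho(\ol y)$ with $M\models\rho(\ol a)$ and $T\models\forall\ol y\,(\exists x\,\phi(x,\ol y)\to\neg\rho(\ol y))$, whence $\neg\rho\in\Psi$, contradicting $M\models\Psi(\ol a)$. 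So there are $N\models T$ and an embedding $A_0\hookrightarrow N$ with $N\models\exists x\,\phi(x,\ol a)$; identifying $A_0$ as a common substructure of $M$ and $N$, the hypothesis (with the roles of $M$ and $N$ swapped) gives $M\models\exists x\,\phi(x,\ol a)$, as wanted.

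For the ``$N$ may be assumed $\abs{A}$-saturated'' clause: given arbitrary $M,N\models T$ with common substructure $A$, take $N'\succeq N$ that is $\abs{A}^+$-saturated; then $A$ is a common substructure of $M$ and $N'$, and $M\models\exists x\,\phi(x,\ol a)$ implies $N'\models\exists x\,\phi(x,\ol a)$ implies $N\models\exists x\,\phi(x,\ol a)$ since $\ol a\in A\seq N\preceq N'$. The main obstacle I anticipate is the compactness bookkeeping in the core step --- passing from inconsistency of $T\cup\Diag(A_0)\cup\{\phi(d,\ol a)\}$ to a quantifier-free $\rho\in\Psi$, which hinges on $A_0$ being generated by $\ol a$ so that its diagram consists of quantifier-free $\LL$-formulas in $\ol c$ --- together with stating the prenex/DNF induction of the reduction step carefully.
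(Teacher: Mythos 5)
The paper states this criterion as a classical Fact and gives no proof (it points the reader to standard references), so there is no in-paper argument to compare against; judged on its own, your proof is correct and is the standard textbook argument. Both directions are handled properly: the reduction of quantifier elimination to primitive existential formulas via prenex form and disjunctive normal form; the key compactness step, where you isolate $\Psi(\ol y)$, pass to the substructure $A_0$ generated by $\ol a$ so that its diagram rewrites as quantifier-free $\LL$-formulas in $\ol a$, and then apply the hypothesis to the pair $(N,M)$ with common substructure $A_0$ (legitimate, since the criterion quantifies over all pairs); and the saturation clause, which you correctly treat as a weakening of the hypothesis only, recovered by replacing $N$ with an $\abs{A}^+$-saturated elementary extension $N'\succeq N$ and using that the relevant formula has parameters in $A\seq N$. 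Two cosmetic points only: the Fact's conclusion ``$N\models\phi(x)$'' should read $N\models\exists x\,\phi(x)$, which you silently and correctly fix; and the fallback ``$T$-equivalent to $\ol y\neq\ol y$'' needs the usual convention (allow $\top/\bot$, or a nonempty variable tuple) to cover the degenerate case of an empty parameter tuple --- a matter of convention, not a gap.
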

    
\begin{theorem}[Tarski]
ACF has quantifier elimination.
\end{theorem}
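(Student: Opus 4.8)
The plan is to apply the stated quantifier-elimination criterion (the \texttt{Fact} just above) to $T = \ACF$. So I fix two algebraically closed fields $M, N \models \ACF$ with a common substructure $A$, and I may assume $N$ is $\abs{A}$-saturated. Since an $\LLr$-substructure of a field is an integral domain, $A$ is an integral domain, and I replace it by its fraction field $K = \Frac(A)$ inside both $M$ and $N$ (this is legitimate since the fraction field is generated by $A$ and embeds canonically into any field containing $A$, compatibly on both sides). Then I must show: for any primitive existential $\exists x\, \phi(x)$ with $\phi \in \LLr(K)$ a conjunction of atomic and negated atomic formulas, if $M \models \exists x\, \phi(x)$ then $N \models \exists x\, \phi(x)$.

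The key step is to unwind what such a $\phi$ says. An atomic $\LLr(K)$-formula in the single variable $x$ is an equation $p(x) = 0$ with $p \in K[x]$, and a negated atomic one is $q(x) \neq 0$ with $q \in K[x]$. So $\phi(x)$ asserts $\bigwedge_i p_i(x) = 0 \wedge \bigwedge_j q_j(x) \neq 0$. I split into two cases. \emph{Case 1: some $p_i$ is a nonzero polynomial.} Then a witness $b \in M$ is algebraic over $K$, so its minimal polynomial $m(x) \in K[x]$ divides each $p_i$ and $m(b) = 0$. Since $N$ is algebraically closed, $m$ has a root $b' \in N$; I then need $q_j(b') \neq 0$ for all $j$. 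If some $q_j(b') = 0$ then $m \mid q_j$ in $N[x]$, hence (by uniqueness of division, or because $m$ is the minimal polynomial over $K$) $m \mid q_j$ in $K[x]$, so $m \mid q_j$ in $M[x]$ and $q_j(b) = 0$, contradicting that $b$ witnesses $\phi$. So $b'$ works. \emph{Case 2: all $p_i$ are the zero polynomial} (equivalently there are no nontrivial equations). Then $\phi(x)$ is just $\bigwedge_j q_j(x) \neq 0$ with each $q_j \neq 0$ in $K[x]$, and I need a single element of $N$ avoiding the finitely many roots of the $q_j$'s. Since $N$ is algebraically closed it is infinite, while $\bigcup_j \{x : q_j(x) = 0\}$ is finite, so such an element exists.

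The main obstacle — really the only subtle point — is the divisibility transfer in Case 1: I want to conclude from $m(x) \mid q_j(x)$ over the big field that the same holds over $K$, so that the witness $b$ in $M$ already violates $\phi$. The clean way is: $m$ is irreducible over $K$, so $q_j \in K[x]$ is either coprime to $m$ over $K$ (hence coprime over any extension, so $q_j(b') \neq 0$) or divisible by $m$ over $K$ (hence $q_j(b) = 0$ in $M$, contradiction). This uses only that polynomial gcd's are stable under field extension, which follows from the Euclidean algorithm. Once this is in hand, the criterion is satisfied and quantifier elimination follows; I would also remark that one does not even need $\abs{A}$-saturation of $N$ here, since algebraic closedness already provides all the roots needed, but invoking it does no harm.
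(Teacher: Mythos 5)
Your proof is correct, and while it starts from the same quantifier-elimination criterion and the same unwinding of atomic formulas into polynomial equations and inequations in the single variable $x$, the core of your argument is genuinely different from the paper's. The paper extends the isomorphism of the fraction fields of $A$ to their relative algebraic closures using Steinitz's uniqueness theorem, thereby reducing to the case $b\notin F_1$, where $\phi$ becomes a conjunction of inequations; it then produces a witness transcendental over $A$, which is where the $\abs{A}$-saturation of the second model (infinite transcendence degree) is used. You instead split according to whether the witness satisfies a nontrivial equation: in the algebraic case you transfer the witness via its minimal polynomial $m$ over $K=\Frac(A)$, using stability of polynomial gcd's under field extension (equivalently, irreducibility of $m$ over $K$) to rule out accidental vanishing of the $q_j$'s; in the remaining case you only need $N$ to be infinite to avoid the finitely many roots of the $q_j$'s. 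Your route avoids both Steinitz's theorem and any use of saturation, as you correctly observe, so it is more elementary and self-contained; what the paper's route buys is a cleaner structural picture --- extend the partial isomorphism as far as algebraic data allows, then realize the remaining, purely transcendental condition generically --- which is the pattern that recurs later in the notes (e.g.\ in the proofs of \ref{EXT} and \ref{STAT} for $\indi\alg$ in ACF and in the random-graph arguments), whereas your divisibility argument is specific to one variable and one irreducible polynomial at a time. Both are complete proofs; the only stylistic caveat in yours is the implicit observation that, given a witness in $M$, any nonzero $p_i$ automatically has positive degree and every $q_j$ is a nonzero polynomial, which you use silently but which is immediate.
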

\begin{proof}
    Let $K_1,K_2$ be two algebraically closed fields, with $K_2$ $\abs{A}$-saturated. In particular the transcendence degree of $K_2$ over $A$ is infinite. Let $\phi(x)$ be a conjunction of atomic and negatomic formulas with parameters in $A$. Then $\phi(x)$ is a conjunction of polynomial equations and inequation with parameters in $A$. Assume that $b\in K_1$ is such that $K_1\models \phi(b)$. It is enough to find $b'\in K_2$ such that $K_2\models \phi(b')$. The fraction field $A_1$ of $A$ in $K_1$ and $A_2$ of $A$ in $K_2$ are isomorphic via a function $f:A_1\to A_2$, hence if $b\in A_1$ then we can choose $b' = f(b)$. We assume $b\notin A_1$. Let $F_i$ be the algebraic closure of $A_i$ in $K_i$. By Steinitz Theorem (unicity of algebraic closure) there exists a field isomorphism $g:F_1\to F_2$ extending $f$, hence similarly, we may assume that $b\notin F_1$. It follows that $\phi(x)$ is a conjunction of polynomial inequations, hence we conclude by taking $b'$ transcendental over $A$.
\end{proof}

Recall that a theory $T$ is \textit{model-complete} if for all $M,N\models T$ we have \[M\seq N\implies M\prec N.\]
Every theory with quantifier elimination is model-complete.
\begin{corollary}
    ACF is model-complete.
\end{corollary}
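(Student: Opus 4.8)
The plan is essentially to quote the two results that immediately precede the statement. Tarski's theorem gives that $\ACF$ has quantifier elimination, and the remark recalled just before the corollary states that any theory with quantifier elimination is model-complete; chaining these two facts proves the corollary, so the proof is one line.

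For completeness I would also recall the (routine) argument that quantifier elimination implies model-completeness, since that remark is stated without proof. Let $M,N\models\ACF$ with $M\seq N$, an inclusion of $\LL$-structures. By the definition of elementary substructure, to show $M\prec N$ it suffices to check that for every $\LL$-formula $\phi(x_1,\ldots,x_n)$ and every tuple $\bar a$ from $M$ we have $M\models\phi(\bar a)$ iff $N\models\phi(\bar a)$. By Tarski's theorem there is a quantifier-free $\LL$-formula $\psi(x_1,\ldots,x_n)$ with $\ACF\models\forall x_1\ldots\forall x_n(\phi\leftrightarrow\psi)$. Since $\psi$ is a boolean combination of atomic formulas, the truth value of $\psi$ at a tuple from $M$ is the same whether evaluated in $M$ or in $N$: an inclusion of $\LL$-structures preserves and reflects atomic formulas (here one uses that the language of fields has no relation symbols and $M$ is closed under the ring operations). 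Hence $M\models\phi(\bar a)\iff M\models\psi(\bar a)\iff N\models\psi(\bar a)\iff N\models\phi(\bar a)$, as required.

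The only point needing a moment's care is the absoluteness of quantifier-free formulas between a structure and a substructure in both directions, which is the standard fact that an $\LL$-embedding preserves atomic and negated atomic formulas. There is no real obstacle here: all of the mathematical content sits in Tarski's quantifier-elimination theorem, which has already been established above.
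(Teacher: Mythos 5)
Your proof is correct and is essentially the paper's own argument: the corollary is stated as an immediate consequence of Tarski's quantifier-elimination theorem together with the fact, recalled just before, that quantifier elimination implies model-completeness. Your expanded verification that quantifier-free formulas are preserved and reflected between a model and a submodel is the standard routine argument and matches what the paper leaves implicit.
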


\begin{exercise}
    For $p$ a prime number or $0$, we denote ACF$_p$ the expansion of ACF expressing that the characteristic is $p$. Check that $\Q^\alg$ and $\F_p^\alg$ are prime models and deduce that any completion of ACF is of the form  ACF$_p$ for some $p$ prime or $0$.
\end{exercise}

Let $\K$ be a monster model for $\ACF_p$, for $p\in \mathbb P \cup\set{0}$.
\begin{corollary}Let $A$ be a small subset of $\K$.
\begin{enumerate}
    \item Every definable subset of $\K$ is either finite or cofinite;
    \item $\acl(A)$ is the field theoretic algebraic closure of the field generated by $A$, denoted $A^\alg$;
    \item (p=0) $\dcl(A)$ is the field generated by $A$, $\Q(A)$;
    \item (p>0) $\dcl(A)$ is the perfect hull of $\F_p(A)$, i.e. $\dcl(A) = \set{b\in \K\mid b^{p^n}\in \F_p(A),\ n\in \N^{>0}}$.
\end{enumerate}
\end{corollary}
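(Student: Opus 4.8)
The plan is to deduce all four statements from Tarski's quantifier elimination for $\ACF$, from the Galois-theoretic characterisations of $\acl$ and $\dcl$ in Proposition~\ref{prop:algeclodefclo_automorphism}, and from the fact (already used in the proof of quantifier elimination) that the monster $\K$ has transcendence degree at least $\kappabar$ over every small subfield. For (1), I would observe that by quantifier elimination a definable subset $X\seq\K$ is a finite boolean combination of sets of the form $\set{x\mid P(x)=0}$, where the coefficients of $P$ lie among the parameters; such a set is finite (at most $\dg P$ points if $P\neq 0$) or equal to $\K$ (if $P=0$), its complement is cofinite or empty, and the finite-or-cofinite subsets of an infinite set form a boolean algebra, so $X$ is finite or cofinite.

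For (2), the inclusion $A^\alg\seq\acl(A)$ is the easy half: if $b$ is algebraic over the field $k$ generated by $A$, I would take a nonzero polynomial over $k$ witnessing this and clear denominators to obtain a formula $\phi(x)\in\LL(A)$ with $\phi(\K)$ finite and $b\models\phi$. For the converse I would argue by contraposition: if $b\notin A^\alg$, then since $F:=A^\alg$ is algebraically closed, $b$ is transcendental over $F$; for any $t\in\K$ transcendental over $F$ the field isomorphism $F(b)\to F(t)$ fixing $F$ pointwise is partial elementary by quantifier elimination, hence extends by strong $\kappabar$-homogeneity to an automorphism in $\Aut(\K/F)\seq\Aut(\K/A)$, so that $t\equiv_A b$; as $F$ is small, $\trdeg(\K/F)\geq\kappabar$, so infinitely many such $t$ exist and the $\Aut(\K/A)$-orbit of $b$ is infinite, whence $b\notin\acl(A)$ by Proposition~\ref{prop:algeclodefclo_automorphism}(1).

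For (3) and (4), I would set $k=\Q(A)$ if $p=0$ and $k=\F_p(A)$ if $p>0$. Then $k\seq\dcl(A)$ by explicit rational formulas, and $\dcl(A)\seq\acl(A)=A^\alg=k^\alg$ by (2). Since automorphisms preserve the ring operations, $\Aut(\K/A)=\Aut(\K/k)$; moreover each such automorphism maps $k^\alg=\acl(k)$ to itself, and conversely every field automorphism of $k^\alg$ over $k$ extends to an automorphism of $\K$ over $k$ (extend by the identity on a transcendence basis of $\K$ over $k^\alg$, then to the algebraic closure, and note that a field automorphism of $\K$ is an $\LLr$-automorphism). Hence, by Proposition~\ref{prop:algeclodefclo_automorphism}(2), $\dcl(A)$ is exactly the fixed field of the group of field automorphisms of $k^\alg$ over $k$; by classical (possibly infinite) Galois theory this is the purely inseparable closure of $k$ in $k^\alg$, which is $k=\Q(A)$ when $p=0$ and $\set{b\in\K\mid b^{p^n}\in\F_p(A),\ n\in\N^{>0}}$ when $p>0$.

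The step I expect to be the main obstacle is the ``$\acl(A)\seq A^\alg$'' direction of (2), together with the analogous identification of the fixed field in (3)--(4): one has to be confident that ``transcendental over $F$'' already pins down a complete type — which combines quantifier elimination with strong homogeneity — and that $\K$ genuinely contains enough transcendental elements over every small base. The underlying purely field-theoretic input, namely that the fixed field of an absolute Galois group is the purely inseparable closure of the base, is classical but worth recalling explicitly.
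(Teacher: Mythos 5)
Your proposal is correct and follows essentially the same route as the paper: quantifier elimination for (1) and (2), and for (3)–(4) the automorphism characterisation of $\dcl$ together with the extension of automorphisms of $F^\alg$ over $F$ to global automorphisms of $\K$. The only differences are presentational — you spell out the ``clear by quantifier elimination'' step of (2) in full, and you quote the classical fact that the fixed field of $\Aut(k^\alg/k)$ is the purely inseparable closure, where the paper rederives it on the spot via the normal closure of $F(b)$ and the ``unique root of the minimal polynomial'' argument.
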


\begin{proof} \textit{(1)} By quantifier elimination, every definable set is a boolean combination of polynomial equations. Every polynomial equation defines a finite set and a boolean combination of finite sets is finite or cofinite.\\
\textit{(2)} Clear by quantifier elimination. \\
\textit{(3) and (4)} Let $F$ be $\Q(A)$ or $\F_p(A)$ and $b\in \dcl(F)$. If $b$ is transcendental over $F$, then it has many conjugates over $F$ by Proposition \ref{prop:algeclodefclo_automorphism} so we may assume that $b\in F^\alg$. Let $K$ be the normal closure of $F(b)$. Any automorphism of $F^\alg$ over $F$ extends to a global automorphism of $\K$, which, by Proposition \ref{prop:algeclodefclo_automorphism} fixes $b$. In particular, by Galois theory, $b$ is the only root of its minimal polynomial over $F$ hence $K = F(b)$ and $b$ is purely inseparable over $F$.
\end{proof}

\begin{exercise}
    Let $a$ be an element algebraic over a field $C$. Prove that the equation $m(x) = 0$ isolates $\tp(a/C)$, where $m(X)$ is the minimal polynomial of $a$ over $C$.
\end{exercise}

\begin{exercise}[*]\label{exo:typesinfields}
    Prove that for small subfields $A,B\seq \K$, we have that any field isomorphism $f:A\to B$ is a partial elementary map in $\K$. In particular, $A\equiv B$ if and only if $A$ and $B$ are isomorphic as fields. 
\end{exercise}

\subsection{Digression: strongly minimal theories}

\begin{definition}
    A theory $T$ is \textit{strongly minimal} if $\phi(\MM)$ is finite or cofinite,  for all $\phi(x)\in \LL_1(C)$, for all (small) $C$.
\end{definition}

\begin{remark}
    In the definition above, it is important to look at realisations of formulas in a saturated model: consider $(\N,<)$ then every definable set is finite or cofinite however in elementary extensions of $(\N,<)$ there exists infinite co-infinite definable sets, so $\Th(\N,<)$ is not strongly minimal.
\end{remark}

\begin{lemma}\label{lm:stronglyminimal_unique_nonalgextension}
    Let $T$ be strongly minimal and $p\in S_1(C)$ be a non-algebraic type. Then for all $C\seq B$ there exists a unique non-algebraic type $q\in S_1(B)$ extending $p$.
\end{lemma}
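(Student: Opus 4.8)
The plan is to treat existence and uniqueness separately, in each case reducing everything to the defining feature of strong minimality: over the small set $B$, every formula $\phi(x)\in\LL_1(B)$ is either algebraic (i.e. $\phi(\MM)$ finite) or co-algebraic (i.e. $\neg\phi(\MM)$ finite). A small preliminary observation I would record first is that a \emph{complete} type $q\in S_1(B)$ is non-algebraic if and only if it contains no algebraic formula: one direction is immediate, and for the other, if $q(\MM)=\set{a_1,\dots,a_k}$ is finite, then the partial type $q(x)\cup\set{x\neq a_1,\dots,x\neq a_k}$ is inconsistent, so by compactness some finite conjunction $\phi$ of formulas of $q$ has $\phi(\MM)\seq\set{a_1,\dots,a_k}$, and $\phi\in q$ is algebraic.

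For existence, I would consider the partial type over $B$
\[\pi(x) = p(x)\cup\set{\neg\psi(x)\mid \psi(x)\in\LL_1(B),\ \psi(\MM)\text{ finite}}.\]
First I would check $\pi$ is consistent by compactness: a finite subset consists of part of $p$ together with finitely many algebraic formulas $\psi_1,\dots,\psi_n$; since $p$ is non-algebraic $p(\MM)$ is infinite, while $\bigcup_i\psi_i(\MM)$ is finite, so $p(\MM)\setminus\bigcup_i\psi_i(\MM)\neq\emptyset$ and any of its elements realises the finite subset. By $\kappabar$-saturation $\pi$ has a realisation $a$; set $q=\tp(a/B)$. Then $q\supseteq p$, and $q$ is non-algebraic: if $q$ contained an algebraic formula $\psi$, then $\neg\psi\in\pi\seq q$ as well, contradicting completeness. (Here I use the preliminary observation.)

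For uniqueness, suppose $q_1,q_2\in S_1(B)$ are both non-algebraic; in fact the hypothesis that they extend $p$ is not needed. If $q_1\neq q_2$, pick $\phi\in\LL_1(B)$ with $\phi\in q_1$ and $\neg\phi\in q_2$. By strong minimality $\phi(\MM)$ is finite or cofinite. If $\phi(\MM)$ is finite, then $\phi$ is an algebraic formula lying in $q_1$; if $\phi(\MM)$ is cofinite, then $\neg\phi$ is an algebraic formula lying in $q_2$. In either case one of $q_1,q_2$ contains an algebraic formula, hence is algebraic by the observation above — a contradiction. Therefore $q_1=q_2$.

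The only step that is not an immediate invocation of compactness, saturation, or the definition of strong minimality is the preliminary equivalence ``$q$ non-algebraic $\iff$ $q$ contains no algebraic formula'', which requires the little compactness argument sketched above; once that is in place there is no real obstacle, and the rest is routine.
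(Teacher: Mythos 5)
Your proof is correct and takes essentially the same route as the paper: both arguments rest on the strong-minimality dichotomy (every formula in $\LL_1(B)$ is algebraic or co-algebraic) together with compactness, the paper simply defining $q$ outright as the set of all cofinite formulas over $B$, whereas you realise $p$ together with the negations of all algebraic $\LL_1(B)$-formulas and take the type of the realisation. Your preliminary observation that a complete type is non-algebraic iff it contains no algebraic formula is proved correctly and merely makes explicit a point the paper leaves implicit in its ``unique by definition'' step.
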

\begin{proof}
    If $p\in S_1(C)$ is non algebraic then for every formula $\phi(x)\in \LL_1(C)$ we have $\phi\in p$ if and only if $\phi(\MM)$ is cofinite. This gives an intuition of what $q$ should be: we define 
    \[q := \set{\psi\in \LL(B)\mid \psi(\MM) \text{ is cofinite}}.\]
    $q$ is clearly consistent by compactness: any finite number of cofinite sets always intersect. $q$ is unique by definition.
\end{proof}

\begin{proposition}\label{prop:stronglyminimal:morleysequence}
    Let $T$ be strongly minimal. 
    Then we have the following: 
    \begin{itemize}
        \item (full existence) for all $a, C\seq B$ where $\abs{a} = 1$ there exists $a'\equiv_C a$ with $a'\notin \acl(B)$;
        \item (stationarity) for all $a,a',C\seq B$ with $C = \acl(C)$ and $\abs{a} = \abs{a'} = 1$, if $a\notin \acl(B)$ and $a'\notin\acl(B)$ then $a\equiv_B a'$.
    \end{itemize} Further, for every $a\notin\acl(C)$ and ordinal $\alpha$, there exists an infinite sequence $(a_i)_{i<\alpha}$ such that $a_i\equiv_C a$, $a_0 = a$ and $a_i\notin\acl( C a_{<i})$ for all $i<\alpha$. For any other such sequence $(b_i)_{i<\beta}$ starting with some $b\notin \acl(C)$ we have    \[a_{i_1}\ldots a_{i_n}\equiv_C b_{j_1}\ldots b_{j_n}\]
    for all $(i_1<\ldots<i_n),(j_1<\ldots<j_n)$ tuples of distinct elements in $\alpha, \beta$ respectively.
\end{proposition}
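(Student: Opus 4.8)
The plan is to extract everything from one observation: since $T$ is strongly minimal, over \emph{every} small set $D$ there is a unique non-algebraic type $r_D\in S_1(D)$, namely $\{\psi\in\LL(D):\psi(\MM)\text{ cofinite}\}$. This holds because strong minimality makes each $\psi(x)\in\LL_1(D)$ finite or cofinite, a non-algebraic complete type cannot contain a finite-solution formula, and $\{\psi:\psi(\MM)\text{ cofinite}\}$ is a consistent, complete, non-algebraic type (finitely many cofinite sets meet). Equivalently, for a singleton $e$: $e\notin\acl(D)\iff\tp(e/D)=r_D$. With this I would dispatch \emph{full existence} (this can hold only for $a\notin\acl(C)$: otherwise every $C$-conjugate of $a$ lies in $\acl(C)\seq\acl(B)$) by taking $a'\models r_B$: then $a'\notin\acl(B)$, and since $r_B\upharpoonright C$ is again non-algebraic it equals $r_C=\tp(a/C)$, so $a'\equiv_C a$; alternatively one may just cite Lemma \ref{lm:stronglyminimal_unique_nonalgextension}. \emph{Stationarity} is then immediate: $a,a'\notin\acl(B)$ forces $\tp(a/B)=r_B=\tp(a'/B)$.

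Next I would build the sequence $(a_i)_{i<\alpha}$ by transfinite recursion: set $a_0=a$, and at stage $j<\alpha$ apply full existence to $a$ (which is not in $\acl(C)$) and the pair $C\seq Ca_{<j}$ --- the latter being small --- to obtain $a_j\equiv_C a$ with $a_j\notin\acl(Ca_{<j})$; limit stages need no special care. Note the conditions then force $a_i\notin\acl(C)$ for all $i$, as $\acl(C)\seq\acl(Ca_{<i})$.

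For the final ``indiscernibility'' assertion I would argue by induction on $n\ge1$: for any two sequences $(c_i),(d_i)$ of the described kind and any increasing $i_1<\dots<i_n$, $j_1<\dots<j_n$, one has $c_{i_1}\dots c_{i_n}\equiv_C d_{j_1}\dots d_{j_n}$; applying this to $(a_i)$ and $(b_i)$ finishes. For $n=1$, both $c_{i_1},d_{j_1}\notin\acl(C)$, so both realise $r_C$. For the step, the inductive hypothesis yields $\sigma\in\Aut(\MM/C)$ with $\sigma(c_{i_k})=d_{j_k}$ for $k\le n$; since $i_1,\dots,i_n<i_{n+1}$ we get $c_{i_{n+1}}\notin\acl(Cc_{i_1}\dots c_{i_n})$, hence $\sigma(c_{i_{n+1}})\notin\acl(Cd_{j_1}\dots d_{j_n})$, and likewise $d_{j_{n+1}}\notin\acl(Cd_{j_1}\dots d_{j_n})$; so both realise $r_{Cd_{j_1}\dots d_{j_n}}$, giving $\tau\in\Aut(\MM/Cd_{j_1}\dots d_{j_n})$ with $\tau\sigma(c_{i_{n+1}})=d_{j_{n+1}}$, and then $\tau\sigma\in\Aut(\MM/C)$ transports $c_{i_1}\dots c_{i_{n+1}}$ to $d_{j_1}\dots d_{j_{n+1}}$.

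I expect no real obstacle: the content is routine once uniqueness of the non-algebraic $1$-type is established. The one place to be careful is the inductive step, where an automorphism matching the first $n$ coordinates must be upgraded to one matching $n+1$ coordinates --- this works precisely because strong minimality keeps the non-algebraic $1$-type over the growing parameter set $Cd_{j_1}\dots d_{j_n}$ unique, so that $\sigma(c_{i_{n+1}})$ and $d_{j_{n+1}}$ are automatically conjugate over it. Everything else is bookkeeping with compactness and strong homogeneity.
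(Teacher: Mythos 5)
Your proof is correct and follows essentially the same route as the paper's: everything rests on the uniqueness of the non-algebraic $1$-type over each small parameter set (Lemma \ref{lm:stronglyminimal_unique_nonalgextension}), the sequence is built by iterating full existence, and the final claim is proved by the same automorphism-plus-stationarity induction over the enlarged base. The only cosmetic differences are that you prove the two-arbitrary-subsequences statement directly (where the paper reduces to comparing an initial segment of $(a_i)$ with a subsequence of $(b_i)$) and that you note, correctly, that full existence as stated only makes sense for $a\notin\acl(C)$.
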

\begin{proof}
    The two items are a rewriting of Lemma \ref{lm:stronglyminimal_unique_nonalgextension}. If $a\in \acl(C)$ then take $a' = a$. Otherwise, take $a'$ to satisfy the (unique) extension $q$ to $B$ of $\tp(a/C)$. We have $a'\equiv_C a$ and $a'\notin \cl(B)$. Assume the hypotheses of the second item. If $a\in \acl(C) = C$ then $a' = a$ hence $a\equiv_B a'$. Otherwise, $\tp(a/C)$ is non-algebraic and has a unique extension $q$ over $B$. By hypotheses, we also have $a\notin \acl(B)$ and $a'\notin \acl(B)$ hence $a$ and $a'$ both satisfy $q$, so $a\equiv_B a'$. For the reminder, take $a_0 = a\notin \acl(C)$. By full existence, take $a_1\equiv_C a_0$ with $a_1\notin\acl(Ca_0)$. Then $a_2\equiv_C a_0$ with $a_2\notin\acl(Ca_0 a_1)$ and by induction $a_i\equiv_C a$ with $a_i\notin\acl(C a_{<i})$. To finish the proof it is enough to prove that $a_0\ldots a_n\equiv_C b_{i_0}\ldots b_{i_n}$ for all $i_0<\ldots<i_n<\beta$. For $n = 0$ this is by Lemma \ref{lm:stronglyminimal_unique_nonalgextension} (for $B = C$). Assume by induction that $a_0\ldots a_{n-1}\equiv_C b_{i_0}\ldots b_{i_{n-1}}$ with $i_n>i_{n-1}>\ldots >i_0$. Let $\sigma$ be an automorphism over $C$ which maps $ b_{i_0}\ldots b_{i_{n-1}}$ to $a_0\ldots a_{n-1}$ and let $b' = \sigma(b_{i_n})$. As  $b_{i_n}\notin\acl(C b_{<i_n})$, we have $b_{i_n}\notin\acl(C b_{i_0}\ldots b_{i_{n-1}})$. Applying $\sigma$, we have $b'\notin\acl(C a_{0}\ldots a_{n-1})$. Also, $a_n\notin\acl(C a_0\ldots a_{n-1})$. Using the property (stationarity) we have $b'\equiv_{Ca_0\ldots a_{n-1}} a_n$ hence $ a_0\ldots a_{n-1} b' \equiv_C a_0 \ldots a_n$ so by applying $\sigma^{-1}$ we get $b_{i_0}\ldots b_{i_n} \equiv_C a_0\ldots a_n$.
    \end{proof}

\begin{remark}
    The second item is false in general if $C$ is not algebraically closed. For a counterexample, take $a,a'$ two distinct conjugates over $C$ and $B = \acl(C)$.
\end{remark}

\begin{corollary}
    Let $T$ be strongly minimal, then $A\mapsto \acl(A)$ defines a pregeometry.
\end{corollary}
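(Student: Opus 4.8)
The plan is to verify the five axioms of a closure operator plus exchange for the operator $A\mapsto \acl(A)$ on the monster model $\MM$ of a strongly minimal theory $T$. Reflexivity, monotonicity, transitivity and finite character are formal consequences of the definition of algebraic closure and do not use strong minimality; in fact they have already been recorded in the starred exercise stating that $\acl$ is a closure operator. So the only real content is the exchange property: if $a\in\acl(Cb)\setminus\acl(C)$ then $b\in\acl(Ca)$. This is where strong minimality enters, and it is the step I expect to be the main obstacle.

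For exchange, I would argue as follows. Suppose $a\in\acl(Cb)$, witnessed by an $\LL(Cb)$-formula $\phi(x,b)$ with $\phi(\MM,b)$ finite, say of size $n$, and suppose toward a contradiction that $b\notin\acl(Ca)$. Since $b\notin\acl(C)$ would make $a\in\acl(Cb)$ behave like $a\in\acl$ of a non-algebraic point over $C$, the key case is $b\notin\acl(C)$; handle $b\in\acl(C)$ separately (then $\acl(Cb)=\acl(C)$, contradicting $a\notin\acl(C)$). Now consider the formula $\psi(y):=\exists^{=n}x\,\phi(x,y)$ over $C$ (using that ``$\phi(\MM,y)$ has exactly $n$ elements'' is expressible); we may shrink $\phi$ so that $b\models\psi$ and $\psi(\MM)$ is cofinite, because by strong minimality over $C$ the set $\psi(\MM)$ is finite or cofinite and $b\notin\acl(C)$ forces it cofinite. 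Since $b\notin\acl(Ca)$, again by strong minimality the set of $y$ with $\phi(a,y)$ holding, intersected appropriately, must be cofinite as well once we know $a$ satisfies something over $C$; the point is to count. Fix the $n$ realisations of $\phi(\MM,b)$, one of which is $a$; as $y$ ranges over the cofinite set $\psi(\MM)$, each such $y$ contributes $n$ solutions $x$, and by a standard counting/compactness argument (the function $y\mapsto\phi(\MM,y)$ taking cofinitely many values each of size $n$, together with $b\notin\acl(Ca)$ giving infinitely many $y$ with $a\in\phi(\MM,y)$) one derives that $a$ lies in $\phi(\MM,y)$ for infinitely many $y$, yet each fibre has bounded size $n$, forcing $\{y : a\in\phi(\MM,y)\}$ to be infinite and co-infinite or to produce infinitely many distinct $x$ over $C$ — contradicting strong minimality over $C$ applied to the formula $\phi(x,b')$ for a suitable generic $b'$, or more directly contradicting that $a$ has only finitely many conjugates.

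A cleaner route, which I would actually prefer to write, is the conjugate-counting argument: enumerate $\phi(\MM,b)=\{a_1,\dots,a_n\}$ with $a=a_1$, and let $b_1=b,b_2,\dots$ be conjugates of $b$ over $Ca$; since $b\notin\acl(Ca)$ there are infinitely many. For each $b_j$ there is an automorphism over $Ca$ sending $b$ to $b_j$, hence $a\in\phi(\MM,b_j)$, so $a$ belongs to each of infinitely many sets $\phi(\MM,b_j)$, each of size $n$. Choosing $n+1$ of the $b_j$ and applying automorphisms, one transports this to show $b$ has at most $n$ conjugates over $Ca$ after all — more precisely, the relation ``$x\in\phi(\MM,y)$ and $y\in\phi(\MM,x)$-analogue'' is symmetric enough that the $b_j$ fall into finitely many $\acl(C a_i)$-classes, and since $a=a_1$ is fixed, $b\in\acl(Ca)$, contradiction. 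I will then conclude that exchange holds, and since $\acl$ is already a finitary closure operator, $(\MM,\acl)$ is a pregeometry. The delicate point throughout is making the counting rigorous with the ``$\exists^{=n}$'' quantifiers and ensuring strong minimality is applied to formulas over the correct base (namely $C$, not $Cb$), which is exactly what the hypothesis and the remark after Proposition~\ref{prop:stronglyminimal:morleysequence} are warning about.
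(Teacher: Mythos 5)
Your reduction to the exchange property is the right skeleton (reflexivity, monotonicity, transitivity and finite character are indeed the starred exercise and need no strong minimality), but the exchange step itself --- the only place where strong minimality is used --- is not actually proved in either of your two versions. In the first version the contradiction you point to is not a contradiction: if $b\notin\acl(Ca)$ then the $Ca$-definable set $\{y:\phi(a,y)\wedge\psi(y)\}$ containing $b$ is infinite, hence \emph{cofinite} by strong minimality, and there is nothing wrong with that; likewise ``$a$ has only finitely many conjugates'' is false over $C$ (we assumed $a\notin\acl(C)$) and true over $Cb$, so neither reading yields the desired clash. Your second, ``cleaner'' route is worse: the observation that $a$ lies in infinitely many fibres $\phi(\MM,b_j)$ each of size $n$ is not by itself contradictory (bounded fibres of a definable family can still have a common point lying in infinitely many of them), and the sentence ``the $b_j$ fall into finitely many $\acl(Ca_i)$-classes'' is exactly the exchange-type symmetry you are trying to establish, so the argument is circular at its crucial point.

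What is missing is the classical counting step that turns your ingredients into a proof. With $\phi(x,y)$ over $C$, $\models\phi(a,b)$, $|\phi(\MM,b)|=n$, and $\psi(y):=\exists^{=n}x\,\phi(x,y)$, use $b\notin\acl(Ca)$ and strong minimality to get that $\{y:\phi(a,y)\wedge\psi(y)\}$ is cofinite, with complement of some size $m$; then the formula $\theta(x)$ over $C$ saying ``all but at most $m$ elements $y$ satisfy $\phi(x,y)\wedge\psi(y)$'' holds of $a$. Now show $|\theta(\MM)|\leq n$: any $n+1$ realisations of $\theta$ have cofinite fibre-sets, hence a common element $b'$ with $\models\psi(b')$, giving $n+1$ distinct solutions of $\phi(x,b')$, contradicting $|\phi(\MM,b')|=n$. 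Hence $a\in\acl(C)$, contradicting the hypothesis. Had you carried this out, your proof would be correct but genuinely different from the paper's: the paper avoids all counting and instead derives exchange from the uniqueness of the non-algebraic extension via the sequences of Proposition \ref{prop:stronglyminimal:morleysequence} (building $a_0=b,a_1=a,\dots,a_\omega$ and comparing $\tp(a_0/a_\omega)$ with $\tp(a_0/a_1)$), a route that reuses machinery already established there, whereas the counting argument is more self-contained and syntactic. As written, though, the proposal has a genuine gap at exchange.
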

\begin{proof}
    By considering the localised closure operator, it is enough to prove that if $a,b\notin \acl(\emptyset)$ and $b\in \acl(a)$ then $a\in \acl(b)$. Equivalently, we assume that $a\notin \acl(b)$ and we prove that $b\notin \acl(a)$. Let $a_0 = b$, $a_1 = a$. Proposition \ref{prop:stronglyminimal:morleysequence} let $a_2\notin\acl(a_0a_1)$ with $a_2\equiv a_0$, and iteratively $a_n\notin\acl( a_{<n})$ with $a_n\equiv a_0$, for all $n\leq \omega$. As $a\notin \acl(b)$, the sequence satisfies $a_i\notin\acl(a_{<i})$ for all $i<\omega$. By Proposition \ref{prop:stronglyminimal:morleysequence} we have $a_0a_1\equiv a_ia_j$ for all $i<j\leq \omega$. On one side we have $a_0 a_\omega\equiv a_ia_\omega$ hence $\tp(a_0/a_\omega) = \tp(a_i/a_\omega)$ for all $i<\omega$. As the sequence is infinite, this implies that $\tp(a_0/a_\omega)$ is non-algebraic. On the other side, as $a_0a_1\equiv a_0a_\omega$,  we have $a_0\notin \acl(a_1)$ i.e. $b\notin \acl(a)$.
\end{proof}

\begin{remark}
    Consider $\indi a$ in $T$ for $\cl = \acl$ i.e. 
    \[A\indi a_C B \iff \acl(AC)\cap \acl(BC) = \acl(C).\]
    As $\acl$ is a pregeometry, $a\notin \acl(B)$ is equivalent to $a\indi a ^M B$ hence $a\indi a _C B$ for all $C\seq B$. By Proposition \ref{prop:stronglyminimal:morleysequence}, $\indi a$ satisfies the following: 
    \begin{itemize}
        \item (full existence) for all $a, C\seq B$ where $\abs{a} = 1$ there exists $a'\equiv_C a$ with $a'\indi a _C B$;
        \item (stationarity) for all $a,a',C\seq B$ with $C = \acl(C)$ and $\abs{a} = \abs{a'} = 1$, if $a\equiv_C a'$, $a\indi a_C B$ and $a'\indi a _C B$ then $a\equiv_B a'$.
    \end{itemize} 
\end{remark}

\begin{remark}[Zilber's trichotomy conjecture]
    We have seen essentially 3 types of pregeometries: trivial ones (where $\cl(X) = X$), modular ones (as in vector spaces or divisible groups) and the one in ACF which is not modular. Zilber's trichotomy, which dates back to the 70's, essentially says that those are the only ones in a strongly minimal theory: 
    \begin{center} If $T$ is strongly minimal, then the geometry induced by the pregeometry $\acl$ is either the trivial geometry ($\acl(X) = X$ for all $X$), either an affine or projective geometry over a division ring, or $T$ interprets an algebraically closed field.
    \end{center}
    The first two cases are often translated into a single notion which corresponds to the case where the pregeometry is called \textit{locally modular}. So the conjecture is often restated as: \textit{every non locally modular strongly minimal theory interprets an algebraically closed field.} It was shown later that the conjecture fails, in 1993 Hrushovski introduced special forms of Fraïssé limits --now called \textit{Hrushovski constructions}-- to construct a counterexample to Zilber's conjecture. Nonetheless, the idea of classifying what sorts of pregeometry appear in reducts or in interpretable structures persisted and have been the source of many applications of model theory to other branches of mathematics. Those are called \textit{restricted trichotomy conjectures} and are shown to hold in several theories. For instance, a form of the trichotomy conjecture proved for generic difference fields was used by Hrushovski to prove the Manin-Mumford conjecture in algebraic geometry in 2001. This field of research is still active today, in 2022 Castle proved the following restricted trichotomy, which was long overdue: if $M$ is a non locally modular strongly minimal structure interpretable in an algebraically closed field $K$ of characteristic $0$ then $M$ interprets $K$.
\end{remark}

\begin{exercise}\label{exo:stronglyminimaliteratingalgeindependence}
    Assume that $a_1,\ldots,a_n$ are such that $a_i\notin\acl(Ca_{<i})$ for all $1\leq i\leq n$, then $a_1,\ldots,a_n$ are independent over $C$.
\end{exercise}

\begin{exercise}[Tarski's Test]\label{exo:domainofelementarysubstructure}
Let $S\seq \MM$. The following are equivalent:\begin{itemize}
\item $\phi(\MM)\cap S\neq\emptyset$, for all $\phi(x)\in \LL_1(S)$ with $\phi(\MM)\neq \emptyset$;
 \item $S$ is the domain of an elementary substructure of $\MM$.
 \end{itemize}
\end{exercise}

In a strongly minimal theory, we denote by $\dim(S)$ the dimension of any subset of $\MM$, relatively to the pregeometry $\acl$.

\begin{corollary}
Let $T$ be strongly minimal.
\begin{enumerate}
\item Every infinite algebraically closed set $S = \acl(S)$ is the domain of an elementary substructure of $\MM$.
\item Two models $M$ and $N$ are isomorphic if and only if $\dim(M) = \dim(N)$. 
\item For all uncountable cardinal $\kappa$, $T$ is \textit{$\kappa$-categorical}: if $M,N\models T$ and $\abs{M} = \abs{N} = \kappa$ then $M\cong N$.
\end{enumerate} 
\end{corollary}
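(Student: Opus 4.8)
For part (1), I would invoke Tarski's test (Exercise~\ref{exo:domainofelementarysubstructure}): it suffices to show that every $\phi(x)\in\LL_1(S)$ with $\phi(\MM)\neq\emptyset$ has a realisation in $S$. By strong minimality $\phi(\MM)$ is finite or cofinite. If it is finite, each of its elements is algebraic over $S$ and hence lies in $\acl(S)=S$. If it is cofinite, then $\neg\phi(\MM)$ is finite, so $S\setminus\phi(\MM)$ is finite; since $S$ is infinite, $S\cap\phi(\MM)\neq\emptyset$. In either case Tarski's test is satisfied, so $S$ is the domain of an elementary substructure of $\MM$.

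For part (2), I would first record that any model of $T$ is algebraically closed: up to isomorphism we may take $M\prec\MM$, and if $\phi(x)\in\LL(M)$ has exactly $k<\omega$ solutions in $\MM$ then by elementarity it has exactly $k$ solutions in $M$, so all of its solutions already lie in $M$. Consequently $\dim(M)$ is well defined (a basis of $M$ over $\emptyset$ exists because $\acl$ is a pregeometry), and for any such basis $B_M$ we have $M=\acl(B_M)$. The forward implication is immediate since an isomorphism $M\to N$ preserves $\acl$ and thus sends a basis of $M$ to a basis of $N$. For the converse, suppose $\dim(M)=\dim(N)=\lambda$. If $\lambda=0$ then $M=\acl(\emptyset)=N$; otherwise I would fix bases $B_M$, $B_N$ and enumerate them as $(a_i)_{i<\lambda}$, $(b_i)_{i<\lambda}$. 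Since a basis element lies outside the algebraic closure of the remaining basis elements, $a_i\notin\acl(a_{<i})$ and $b_i\notin\acl(b_{<i})$ for all $i$, and all the $a_i$ (resp.\ $b_i$) realise the unique non-algebraic $1$-type over $\emptyset$. The last part of Proposition~\ref{prop:stronglyminimal:morleysequence} then yields $a_{i_1}\dots a_{i_n}\equiv b_{i_1}\dots b_{i_n}$ for every increasing tuple $i_1<\dots<i_n<\lambda$, so the bijection $f_0\colon B_M\to B_N$ given by $a_i\mapsto b_i$ is elementary. Finally I would extend $f_0$ via Proposition~\ref{proposition:elementarymapextendstoalgebraicclosure} to an elementary bijection $\acl(B_M)\to\acl(B_N)$, that is $M\to N$, which is in particular an isomorphism.

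Part (3) I would deduce from (2) by a cardinality count. Let $\abs{M}=\abs{N}=\kappa>\aleph_0$ and let $B_M$ be a basis of $M$. Since $\LL$ is countable, $\abs{\acl(A)}\le\abs{A}+\aleph_0$ for every $A$; from $M=\acl(B_M)$ and $\abs{M}=\kappa>\aleph_0$ this forces $\abs{B_M}\ge\kappa$, while $\abs{B_M}\le\abs{\acl(M)}=\abs{M}=\kappa$, so $\dim(M)=\kappa$. Symmetrically $\dim(N)=\kappa$, and (2) gives $M\cong N$.

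The one genuinely delicate point is the elementarity of the base-matching bijection $f_0$ in part (2) — in effect, carrying a back-and-forth between $(a_i)$ and $(b_i)$ through limit stages — but this is exactly the content of the ``indiscernible-sequence'' clause of Proposition~\ref{prop:stronglyminimal:morleysequence}, so in this write-up it reduces to a citation; the rest is bookkeeping with Tarski's test and cardinal arithmetic.
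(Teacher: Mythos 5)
Your proof is correct and follows essentially the same route as the paper: part (1) via Tarski's test and the finite/cofinite dichotomy, part (2) by matching bases through the indiscernibility clause of Proposition~\ref{prop:stronglyminimal:morleysequence} and extending to the algebraic closures via Proposition~\ref{proposition:elementarymapextendstoalgebraicclosure}, and part (3) by the cardinality count $\abs{\acl(A)}\le\abs{A}+\aleph_0$. The extra remarks (models are $\acl$-closed, the $\dim=0$ case) are harmless refinements of the same argument.
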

\begin{proof}
\textit{(1)} Let $\phi(x)\in \LL_1(S)$. If $\phi(\MM)$ is finite it consists of elements algebraic over $S$ hence $\phi(\MM)\seq \acl(S)=S$. If $\phi(\MM)$ is infinite, it is cofinite in $\MM$ hence intersects any infinite set, in particular $S$. We conclude by Exercise \ref{exo:domainofelementarysubstructure}. 

\textit{(2)} Let $M$ and $N$ be two small models of $T$ of the same dimension $\kappa$. Let $a = (a_i)_{i<\kappa}$ be a basis of $M$ and $b = (b_i)_{i<\kappa}$ a basis of $N$. Then $a_i\indi a a_{<i}$ and $b_i\indi a b_{<i}$ for all $i<\kappa$ hence $a \equiv b$ by Proposition \ref{prop:stronglyminimal:morleysequence}. Let $\sigma$ be an automorphism of $\MM$ sending $a$ to $b$, it restricts to an elementary bijection between $\acl(a)$ and $\acl(b)$ (Proposition \ref{proposition:elementarymapextendstoalgebraicclosure}). We have $\acl(a) = M$ and $\acl(b) = N$. Any elementary bijection between two structures is an isomorphism, hence we get $M\cong N$. The converse implication is trivial.

\textit{(3)} Observe that $\abs{\acl(S)} \leq \max\set{\aleph_0,\abs{S}}$ for all $S$. In particular, if $\abs{M}\geq \aleph_1$ then $\dim(M) = \abs{M}$ so we conclude by \textit{(2)}.
\end{proof}

\begin{definition}
We say that $T$ has \textit{uniform finiteness} or \textit{eliminates $\exists^\infty$} if for all $\phi(x,y)\in \LL_{1+k}(\emptyset)$ with $\abs{x} = 1$ and $\abs{y} = k$ there is $n\in \N$ such that for all $c\in \MM^k$ if $\phi(\MM,c)$ is finite, then $\abs{\phi(\MM,c)}\leq n$.
\end{definition}

\begin{exercise}
Prove that every strongly minimal theory has uniform finiteness. 
\end{exercise}

\begin{exercise}[*]
    Let $M$ be a structure such that every definable subsets of $M$ is finite or cofinite ($M$ is called \textit{minimal}). Prove that $\Th(M)$ is strongly minimal if and only if $\Th(M)$ has uniform finiteness.
\end{exercise}

\begin{exercise}
Let $M$ be an $\omega$-saturated model. Prove that $T$ has uniform finiteness if and only if for all $\phi(x)\in \LL_1(M)$ there is $n\in \N$ such that $\phi(M)$ is either infinite or $\abs{\phi(M)}\leq n$.
\end{exercise}

\subsection{Back to algebraically closed fields}

Let $T$ be any completion of ACF and $\MM = \K$. Let $\F$ be the prime field ($\Q$ or $\F_p$ for some prime $p$).

\begin{definition}[Algebraic disjointness]
    A tuple $a = (a_1,\ldots,a_n)$ is \textit{(algebraically) dependent} over some set $C$ if there exists a nontrivial polynomial $P(X_1,\ldots,X_n)$ over $\F(C)$ (the field generated by $C$) such that $P(a_1,\ldots,a_n) = 0$. For small subsets $A,B,C$ of $\K$ we define the following relation \[A\indi \alg _C \ B \iff \text{every finite tuple from $\F(AC)$ which is dependent over $BC$ is dependent over $C$}\]
\end{definition}
\begin{exercise}
    Check that $\indi\alg\  = \indi \cl = \indi a ^M$ for $\cl = \acl = (\cdot)^\alg$.
\end{exercise}  
The dimension in this context is usually called the \textit{transcendence degree}. By Theorem \ref{thm:pregeom_axiomsfordimindependence}, we get 

\begin{proposition}\label{prop:ACFalgebraicsatisfiesbasics}
    The relation $\indi\alg\ $ satisfies \ref{SYM}, \ref{FIN}, \ref{EX},  \ref{NOR}, \ref{MON}, \ref{BMON}, \ref{TRA}, \ref{AREF}, \ref{CLO}, \ref{SCLO}, \ref{LOC}.
\end{proposition}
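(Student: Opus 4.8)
The plan is to reduce the whole statement to Theorem~\ref{thm:pregeom_axiomsfordimindependence}, which already records that for a pregeometry $(S,\cl)$ the dimension-independence relation $\indi\cl$ satisfies precisely the list of axioms appearing in the proposition. So the work splits into three small pieces: identify the ambient closure operator, check it is a pregeometry, and match $\indi\alg$ with $\indi\cl$.

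First I would fix $\cl = \acl = (\cdot)^\alg$ on the monster model $\K$, i.e. $\cl(A)$ is the relative algebraic closure in $\K$ of the subfield $\F(A)$ generated by the prime field $\F$ together with $A$. The closure-operator axioms (reflexivity, monotonicity, idempotence, finite character) are immediate from the corresponding facts about field-theoretic algebraic closure, and the Steinitz exchange principle --- uniqueness of the algebraic closure together with the statement that if $a$ is algebraic over $\F(Ab)$ but not over $\F(A)$ then $b$ is algebraic over $\F(Aa)$ --- gives that $(\K,\cl)$ is a pregeometry, whose dimension function is the transcendence degree over $\F$.

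Next, the preceding exercise identifies $\indi\alg$ with $\indi\cl$ (equivalently with $\indi a^M$, using Theorem~\ref{thm:pregeom_forcingbmononalg}): unwinding the definition, $A\indi\alg_C B$ says that every finite tuple from $\F(AC)$ that is algebraically dependent over $BC$ is already dependent over $C$, which is exactly the condition that every finite $A_0\seq A$ independent over $C$ for $\acl$ stays independent over $BC$, i.e. $A\indi\cl_C B$. Since $(\K,\cl)$ is a pregeometry, Theorem~\ref{thm:pregeom_axiomsfordimindependence} applied to $\indi\cl$ then yields at once that $\indi\alg$ satisfies \ref{SYM}, \ref{FIN}, \ref{EX}, \ref{NOR}, \ref{MON}, \ref{BMON}, \ref{TRA}, \ref{AREF}, \ref{CLO}, \ref{SCLO}, \ref{LOC}.

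The only genuine mathematical content --- hence the main, rather mild, obstacle --- is the exchange property for $\acl$ in $\ACF$, that is the Steinitz exchange principle for transcendence degree; everything else is bookkeeping. One subtlety to flag is that the relevant closure operator must be taken over the prime field $\F$ (this is the point of the erratum replacing $C$ by $F$ in the definition of algebraic dependence), so that $\cl(\emptyset)=\F^\alg$ and the construction genuinely falls within the abstract pregeometry framework of Chapter~\ref{chapter:1}.
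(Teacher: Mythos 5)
Your proof is correct and follows essentially the same route as the paper: the paper likewise identifies $\indi\alg\ $ with $\indi\cl$ for $\cl=\acl=(\cdot)^\alg$ (a pregeometry by Steinitz exchange, with dimension the transcendence degree) and then simply invokes Theorem~\ref{thm:pregeom_axiomsfordimindependence}. One side remark is misattributed, though it does not affect the argument: the erratum replacing $C$ by $F$ concerns the proposition on dividing in fields in Chapter~\ref{chapter:4}, not the definition of algebraic dependence, which is stated over $\F(C)$ from the outset.
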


\begin{definition}[Linear disjointness]
  A tuple $ (a_1,\ldots,a_n)$ is \textit{linearly dependent} over some set $C$ if there exists a nontrivial tuple $(c_1,\ldots,c_n)$ from $\F(C)$ such that $\sum_i a_ic_i = 0$. For small subsets $A,B,C$ we define 
  \[A\indi \ld _C \ B \iff \text{every finite tuple from $\F(AC)$ which is linearly dependent over $BC$ is linearly dependent over $C$}.\]
\end{definition}

\begin{exercise}\label{exo:relationbetweenalgebraicdisjointandlinearlydisjoint}
    Check that $\indi\ld\rightarrow \indi\alg\ $. Also $A\indi\ld_C B$ implies $\F(AC)\cap \F(BC) = \F(C)$ werease $A\indi\alg_C\  B$ implies $(AC)^\alg\cap (BC)^\alg = C^\alg$, where $\F(C)$ is the field generated by $C$, for $\F$ the prime field.
\end{exercise}

\begin{exercise}
    The relation $\indi \ld$ satisfies \ref{MON}, \ref{BMON}, \ref{TRA}, \ref{SYM}, \ref{LOC} (\textit{hard}).
\end{exercise}

Recall that for a field $C$ and two (unital) commutative $C$-algebras $A$ and $B$ we may form the tensor product $A\otimes_C B$ which is again a $C$-algebra which satisfy the universal property: 
\begin{center} \begin{tikzcd}[column sep=tiny]
& A \ar[dr] \ar[drr, "f", bend left=20]
&
&[1.5em] \\
C \ar[ur] \ar[dr]
&
& A\otimes_C B  \ar[r, "h",dashed]
& D \\
& B \ar[ur ]\ar[urr, "g"', bend right=20]
&
&
\end{tikzcd}\\
{\footnotesize \textit{To read as:} for all $C$-algebra homomorphisms $f$ and $g$ from $A$ (resp. $B$) to another $C$-algebra $D$ there is a $C$-algebra homomorphism $h$ from $A\otimes_C B$ to $D$ which commutes with the canonical embeddings $A\to A\otimes_C B$ (mapping $a$ to $a\otimes 1$) and $B\to A\otimes_C B$ (mapping $b$ to $1\otimes b$).}\end{center}
The main facts we will use concerning linear disjointness are the following:
\begin{fact}\label{fact:lineardisjointnesstensor}
  Let $A$, $B$ and $C$ be fields with $C\subseteq A\cap B$.
  Let $A\otimes_C B\to A[B]$ be the canonical mapping $a\otimes b\mapsto ab$.
  This map is an isomorphism iff $A\indi\ld _C B$.
\end{fact}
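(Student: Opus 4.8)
The plan is to prove both directions by unwinding the universal property of the tensor product and the definition of linear disjointness. Write $\mu\colon A\otimes_C B\to A[B]$ for the canonical $C$-algebra map $a\otimes b\mapsto ab$; it is surjective by construction, so the content of the statement is that $\mu$ is injective if and only if $A\indi\ld_C B$. Fix a $C$-basis $(b_i)_{i\in I}$ of $B$ (as a $C$-vector space). Then every element of $A\otimes_C B$ has a unique expression $\sum_i a_i\otimes b_i$ with $a_i\in A$ almost all zero, and $\mu\left(\sum_i a_i\otimes b_i\right)=\sum_i a_ib_i\in A[B]$. So $\mu$ is injective precisely when: for every finite family $(a_i)$ from $A$, if $\sum_i a_ib_i=0$ in $\MM$ then all $a_i=0$. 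In other words, $\mu$ is injective iff every $C$-linearly independent family in $B$ remains $A$-linearly independent.

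First I would show this last condition is equivalent to $A\indi\ld_C B$. Recall $A\indi\ld_C B$ says every finite tuple from $A$ (here $AC=A$ since $C\seq A$) that is linearly dependent over $B$ is already linearly dependent over $C$; by symmetry of linear disjointness (Exercise stating $\indi\ld$ satisfies \ref{SYM}) this is the same as: every finite tuple from $B$ linearly dependent over $A$ is linearly dependent over $C$ — equivalently, contrapositively, every $C$-linearly independent tuple from $B$ stays $A$-linearly independent. That is exactly the injectivity criterion for $\mu$ derived above. One should be slightly careful that ``linearly dependent over $B$" in the definition means with coefficients from the field $\F(B)=B$ (as $B$ is already a field containing $\F$), matching the vector-space statement; this bookkeeping is routine.

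For the converse direction, assuming $\mu$ is an isomorphism: take any finite tuple $(a_1,\ldots,a_n)$ from $A$ that is linearly dependent over $C$-linearly independent... more precisely, suppose $\sum_j \lambda_j a_j=0$ with $\lambda_j\in B$ not all zero; expand each $\lambda_j=\sum_i c_{ij}b_i$ in the fixed $C$-basis, regroup as $\sum_i\left(\sum_j c_{ij}a_j\right)b_i=0$, apply the injectivity of $\mu$ to conclude $\sum_j c_{ij}a_j=0$ for each $i$, and since some $\lambda_j\neq 0$ some $c_{ij}\neq 0$, yielding a nontrivial $C$-linear dependence among the $a_j$. Hence $A\indi\ld_C B$.

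The main obstacle, such as it is, is purely notational rather than mathematical: one must pin down once and for all the convention that for field extensions, ``dependent over a set $S$'' uses coefficients in the generated field $\F(S)$, which coincides with $S$ itself when $S$ is already a field containing $\F$; and one must invoke the symmetry of $\indi\ld$ (already recorded as an exercise) to match the asymmetric-looking definition with the symmetric tensor condition. No deep algebra is needed — just the fact that $A\otimes_C B$ has basis $\{1\otimes b_i\}$ as an $A$-module when $(b_i)$ is a $C$-basis of $B$, which is standard.
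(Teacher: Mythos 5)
Your proof is correct, and it is exactly the ``unravelling of the definition'' that the paper intends: the paper gives no proof of this fact (it is explicitly left as an exercise), and your argument --- fix a $C$-basis of $B$, use that $A\otimes_C B$ is then a free $A$-module on $1\otimes b_i$, and translate injectivity of the multiplication map into preservation of linear independence --- is the standard way to do it. The only point worth tightening is the left-to-right direction's appeal to symmetry of $\indi{\ld}$: since the tensor characterisation is itself the usual route to that symmetry, you should either cite a direct proof of symmetry or replace that step by a short induction (use the $C$-dependence of $(a_1,\dots,a_n)$ to eliminate one $a_i$ from the relation $\sum_i a_ib_i=0$ and keep the new $B$-coefficients $C$-independent), noting that your converse direction is already symmetry-free.
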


\begin{fact}\label{fact:lineardisjointnessandalgebraicdisjointness}
If $C$ is algebraically closed, then for all $A,B$ we have $A\indi\ld _C B$ if and only if $A\indi\alg_C\ B$.
\end{fact}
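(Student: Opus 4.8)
The direction $A\indi\ld_C B \Rightarrow A\indi\alg_C B$ is Exercise~\ref{exo:relationbetweenalgebraicdisjointandlinearlydisjoint} and uses no hypothesis on $C$, so the plan is to prove the converse assuming $\acl(C)=C$. Since $\acl(C)=C$ means $C$ is an algebraically closed subfield of $\K$ containing the prime field $\F$, we have $\F(AC)=C(A)$ and $\F(BC)=C(B)$; write $K=C(A)$ and $L=C(B)$. By Fact~\ref{fact:lineardisjointnesstensor}, the relation $A\indi\ld_C B$ is equivalent to $K$ and $L$ being linearly disjoint over $C$, while unwinding $\indi\alg=\indi\cl$ for $\cl=(\cdot)^\alg$ shows $A\indi\alg_C B$ is equivalent to $K$ and $L$ being algebraically independent over $C$ (a transcendence basis of $K/C$ stays algebraically independent over $L$). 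So the statement reduces to the classical assertion that, over an algebraically closed field, two subextensions placed in a common field are linearly disjoint iff they are algebraically independent. This really does use $\acl(C)=C$: for $C=\Q$ and $K=L=\Q(\sqrt 2)$ placed equal inside $\K$, the two are algebraically independent but not linearly disjoint over $\Q$.

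By the finite character of linear disjointness it suffices to show that a finite $C$-linearly independent tuple from $K$ stays $L$-linearly independent; such a tuple lies in a finitely generated subfield of $K$ that is still algebraically independent from $L$ over $C$, so we may assume $K=C(a_1,\dots,a_m)$ is finitely generated over $C$. Since $C$ is algebraically closed it is perfect, so $K/C$ is separably generated: fix a separating transcendence basis $\bar t=(t_1,\dots,t_r)$ of $K/C$ and write $K=C(\bar t)(\theta)$ by the primitive element theorem, with $f\in C(\bar t)[Y]$ the irreducible separable minimal polynomial of $\theta$ over $C(\bar t)$, of degree $d=[K:C(\bar t)]$.

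By the tower property of linear disjointness applied to $C\seq C(\bar t)\seq K$, it is enough to prove: (i) $C(\bar t)$ and $L$ are linearly disjoint over $C$; and (ii) $K=C(\bar t)(\theta)$ and the compositum $L(\bar t)$ are linearly disjoint over $C(\bar t)$. For (i): algebraic independence of $K$ and $L$ over $C$ says precisely that $\bar t$ is algebraically independent over $L$, so the monomials in $\bar t$ --- a $C$-basis of $C[\bar t]$ --- remain $L$-linearly independent, giving linear disjointness of $C[\bar t]$, hence of $C(\bar t)$, from $L$ over $C$. Since $1,\theta,\dots,\theta^{d-1}$ is a $C(\bar t)$-basis of $K$, statement (ii) is equivalent to $[L(\bar t)(\theta):L(\bar t)]=d$, i.e.\ to $f$ remaining irreducible over $L(\bar t)$.

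Point (ii) is the heart of the matter and the only place where algebraic closedness of $C$ enters. Over $\overline{C(\bar t)}$ one has $f=\prod_{i=1}^d(Y-\theta_i)$, so every monic factor of $f$ in $L(\bar t)[Y]$ has coefficients in $L(\bar t)\cap\overline{C(\bar t)}$; hence it suffices to know that $C(\bar t)$ is relatively algebraically closed in $L(\bar t)$, for then any such factor lies in $C(\bar t)[Y]$ and irreducibility of $f$ over $C(\bar t)$ forces it to be trivial. This relative algebraic closedness comes from the theory of regular extensions: since $C=\acl(C)$, the extension $L/C$ is regular (i.e.\ separable with $C$ relatively algebraically closed in $L$), and by (i) we may identify $L(\bar t)$ with $\Frac(L\otimes_C C(\bar t))$; the base change of a regular extension stays regular, so $L(\bar t)/C(\bar t)$ is regular and in particular $C(\bar t)$ is relatively algebraically closed in $L(\bar t)$. (Equivalently one may simply cite the standard fact --- see, e.g., Lang's \emph{Algebra}, Ch.~VIII, or the treatment of regular extensions in field arithmetic --- that a regular extension of $C$ is linearly disjoint over $C$ from any extension it is algebraically independent from.) Combining (i) and (ii) via the tower property yields $A\indi\ld_C B$. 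Thus the main obstacle is exactly the preservation-of-irreducibility step (ii): everything else is finite-character bookkeeping and elementary linear algebra with monomials, while (ii) is where one must invoke the structure of regular extensions over an algebraically closed field.
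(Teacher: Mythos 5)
Your proof is correct, but note that the text itself offers no proof of this Fact: it is quoted as a special case of the general theorem that a regular extension $F/E$ is linearly disjoint over $E$ from any extension it is algebraically free from, with the comment that the proof is ``more involved''. What you have written is essentially the standard textbook proof of that general theorem, specialised to an algebraically closed base: finite character, a separating transcendence basis (available since $C$ is perfect), the tower property splitting the problem into the purely transcendental step (i) and the finite separable step (ii), and the reformulation of (ii) as preservation of irreducibility of $f$, which you correctly reduce to $C(\bar t)$ being relatively algebraically closed in $L(\bar t)$. All of these steps are sound. The one caveat is that at the decisive point you still invoke the same regular-extension machinery the paper defers to (regularity of $L/C$ and its preservation under base change from $C$ to $C(\bar t)$, or outright the general linear-disjointness theorem you mention in parentheses), so your argument is a faithful unpacking of the cited phenomenon rather than an independent or more elementary proof. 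If you want step (ii) to be self-contained and in the spirit of the course, you can bypass regular extensions: after clearing denominators and applying Gauss's lemma, take $f$ irreducible in $C[\bar t,Y]$; a nontrivial factorisation over $L(\bar t)$ gives one in $L[\bar t,Y]$ with both factors nonconstant, and the existence of such a factorisation with bounded multidegrees is an existential condition on the coefficients of $f$, which lie in $C$; since $C=\acl(C)$ is an algebraically closed subfield of $\K$ and ACF is model-complete, $C\prec\K$, so the factorisation would already exist over $C$, contradicting irreducibility. This yields the needed absolute irreducibility directly and removes the last black box from your argument.
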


The proof of Fact \ref{fact:lineardisjointnesstensor} is simply unravelling the definition, it is not hard and you can do it as an exercise. Fact \ref{fact:lineardisjointnessandalgebraicdisjointness} is a particular case of a more general phenomenon: for fields $E\seq F\cap K$, if $F\indi\alg _E K$ and $F$ is a \textit{regular} extension of $E$ (which means that $F\indi\ld_E E^\alg$) then $F\indi\ld_E\ K$. The proof of this is more involved.

\begin{remark}\label{remark:indlddonotsatisfyCLO}
        Note that $\indi\alg\ $ is not stronger than $\indi\ld\ $: take any $\alpha\in \Q^\alg\setminus \Q$ then $\alpha \indi\alg_\Q \ \alpha$ but $\alpha\nindi\ld_\Q \alpha$. In particular, in ACF, $\indi\ld\ $ do not satisfy right \ref{CLO} for $\cl = \acl$. It does satisfy \ref{CLO} for the closure operator given by the field generated by.
\end{remark}

\begin{proposition}\label{prop:stationaritylineardisjointness}
    Let $A,A',B,C$ be small fields with $C\seq A\cap A'\cap B$. If $A\equiv_C A'$, $A\indi\ld_C B$ and $A'\indi\ld_C B$, then $A\equiv_B A'$.
\end{proposition}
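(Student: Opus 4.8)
The plan is to transport the given isomorphism between $A$ and $A'$ over $C$ to an isomorphism between the composita $A\cdot B$ and $A'\cdot B$ that is the identity on $B$, using the tensor-product description of linear disjointness (Fact \ref{fact:lineardisjointnesstensor}), and then to promote this field isomorphism to a partial elementary map via Exercise \ref{exo:typesinfields}.

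First I would fix, using $A\equiv_C A'$, an elementary bijection $g\colon A\to A'$ fixing $C$ pointwise; being elementary it is in particular a field isomorphism, and being the identity on $C$ it is an isomorphism of $C$-algebras. Tensoring over $C$ with $B$ yields an isomorphism of $C$-algebras $g\otimes\mathrm{id}_B\colon A\otimes_C B\to A'\otimes_C B$. Now the hypotheses $A\indi\ld_C B$ and $A'\indi\ld_C B$, together with Fact \ref{fact:lineardisjointnesstensor}, say precisely that the canonical maps $A\otimes_C B\to A[B]$ and $A'\otimes_C B\to A'[B]$ are isomorphisms, where $A[B]$ and $A'[B]$ denote the subrings of $\K$ generated by $A\cup B$ and $A'\cup B$. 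Composing these three isomorphisms gives a ring isomorphism $h\colon A[B]\to A'[B]$, and chasing the identifications shows that $h$ restricts to $g$ on $A$ (since $a\mapsto a\otimes 1\mapsto g(a)\otimes 1\mapsto g(a)$) and to the identity on $B$ (since $b\mapsto 1\otimes b\mapsto 1\otimes b\mapsto b$, using $g(1)=1$). Since $A[B]$ and $A'[B]$ are integral domains (as subrings of the field $\K$) whose fields of fractions computed inside $\K$ are the composita $A\cdot B$ and $A'\cdot B$, the map $h$ extends uniquely to a field isomorphism $\hat h\colon A\cdot B\to A'\cdot B$ still fixing $B$ pointwise and still restricting to $g$ on $A$.

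Finally, $A\cdot B$ and $A'\cdot B$ are small subfields of $\K$ (being generated by small sets), so by Exercise \ref{exo:typesinfields} the field isomorphism $\hat h$ is a partial elementary map. Fixing an enumeration $a=(a_i)_i$ of $A$ (so that $g(a):=(g(a_i))_i$ enumerates $A'$) and an enumeration $b=(b_j)_j$ of $B$, the map $\hat h$ sends the tuple $ab$ to $g(a)\,b$, so $ab\equiv g(a)\,b$; in particular $\tp(a/B)=\tp(g(a)/B)$, that is, $a\equiv_B g(a)$, and since $g(a)$ is an enumeration of $A'$ this is exactly $A\equiv_B A'$. I do not expect a real obstacle once Fact \ref{fact:lineardisjointnesstensor} is granted; the only point needing care is the bookkeeping that the isomorphism of composita restricts to $g$ on the $A$-side while simultaneously restricting to the identity on $B$, and that passing to fields of fractions preserves both properties.
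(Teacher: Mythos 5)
Your proof is correct and follows essentially the same route as the paper's: take the isomorphism $A\to A'$ over $C$, use Fact \ref{fact:lineardisjointnesstensor} to identify $A[B]\simeq A\otimes_C B$ and $A'[B]\simeq A'\otimes_C B$, transport via $g\otimes\mathrm{id}_B$ to get a $B$-fixing isomorphism of the composita, and conclude with Exercise \ref{exo:typesinfields}. The extra bookkeeping you spell out (that the map restricts to $g$ on $A$ and to the identity on $B$, and survives passage to fraction fields) is exactly what the paper leaves implicit, so nothing is missing.
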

\begin{proof}
    We use abundantly Exercise \ref{exo:typesinfields}. Let $f:A\to A'$ be a field isomorphism over $C$. Using Fact \ref{fact:lineardisjointnesstensor}, we have $A[B]\simeq A\otimes_C B$ and $A'[B]\simeq A'\otimes_C B$. We extend $f$ to an isomorphism of rings $A\otimes_C B\to A'\otimes_C B$ by preserving $B$ pointwise, i.e. mapping $a\otimes b\mapsto f(a)\otimes b$. This yields an isomorphism of integral domaines $A[B]\to A'[B]$ preserving $B$ pointwise, which extends to the field generated by $AB$ and $A'B'$.
\end{proof}

\subsection{A second example: the random graph} Let $\LL$ be the language of graphs, that is a single binary relation symbole $R$. Let $\Graph$ be the theory of graphs, i.e. expressing that $R$ is antireflexive ($\forall x\neg R(x,x)$) and symmetryc ($\forall xy R(x,y)\leftrightarrow R(y,x)$). The theory RG is defined by adding to Graph the following axiom-scheme:
\[\forall x_0\ldots x_{m-1} \forall y_1\ldots y_{n-1} \left( \bigwedge_{i\neq j} x_i\neq y_j \to \exists z \bigwedge_{i<m} R(z,x_i)\wedge \bigwedge_{j<n} \neg R(z,y_j) \wedge z\neq y_j \right)\]
for all $n,m<\omega$.

A \textit{random graph} (sometimes \textit{Rado graph}, sometimes \textit{Erdős–Rényi}) is a countable model of RG. Note that RG has no finite model. 

\begin{theorem}
    The theory RG has quantifier elimination and is complete. Further, it is model-complete.
\end{theorem}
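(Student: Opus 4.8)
The plan is to verify quantifier elimination via the criterion recalled above, and then read off completeness and model-completeness as consequences. Model-completeness will be immediate once we have quantifier elimination, since every theory with quantifier elimination is model-complete; and completeness will follow from quantifier elimination together with a remark about the language.

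For quantifier elimination, I would take models $M,N\models\mathrm{RG}$ with a common substructure $A$ — in the language of graphs a substructure is just an induced subgraph, so $A$ is an arbitrary common induced subgraph — and a primitive existential formula $\exists x\,\phi(x)$ with $\phi$ a conjunction of atomic and negated atomic $\LL(A)$-formulas in the single variable $x$, and I would assume $M\models\exists x\,\phi(x)$, say with witness $b\in M$. The conjuncts of $\phi$ can only be (negations of) $x=x$, $R(x,x)$, $x=a$, $R(x,a)$ for $a\in A$. I would first dispose of the easy cases: $x=x$ and $\neg R(x,x)$ are tautologous and may be deleted; $\neg(x=x)$ and $R(x,x)$ are unsatisfiable in any graph, so they cannot occur given $M\models\exists x\,\phi(x)$; and if some conjunct is $x=a$ then $b=a$, so $\phi(x)$ is equivalent to the quantifier-free sentence $\phi(a)$ with all parameters in $A$, which is absolute between $A$ and each of $M,N$ because atomic and negated atomic formulas are preserved upward and downward — so $A\models\phi(a)$, hence $N\models\phi(a)$, and we are done.

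In the remaining case $\phi(x)$ is equivalent to $\bigwedge_{a\in P}R(x,a)\wedge\bigwedge_{a\in Q}\neg R(x,a)\wedge\bigwedge_{a\in D}x\neq a$ for finite $P,Q,D\seq A$. The witness $b$ forces $P\cap Q=\emptyset$, and since $R(x,a)$ implies $x\neq a$ in a graph I may remove $P$ from $D$, so that $P$ and $Q\cup D$ are disjoint finite tuples of distinct elements of $A$. The corresponding instance of the $\mathrm{RG}$ axiom scheme — true in $N$ because $N\models\mathrm{RG}$ — with the $x_i$ enumerating $P$ and the $y_j$ enumerating $Q\cup D$ yields $z\in N$ adjacent to every element of $P$ and non-adjacent to and distinct from every element of $Q\cup D$; this $z$ realizes $\phi$ in $N$. (The $\abs{A}$-saturation of $N$ permitted by the criterion is not actually needed.) This verifies the criterion, so $\mathrm{RG}$ has quantifier elimination.

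For completeness, note that $\LL=\{R\}$ has no constant or function symbols, hence no variable-free terms and so no atomic sentences; the only quantifier-free sentences are therefore $\top$ and $\bot$ up to logical equivalence. By quantifier elimination every $\LL$-sentence is $\mathrm{RG}$-equivalent to $\top$ or to $\bot$, and $\mathrm{RG}$ is consistent (a countable random graph exists, e.g. as a Fraïssé limit), so $\mathrm{RG}$ is complete; alternatively one may invoke the $\aleph_0$-categoricity of $\mathrm{RG}$ obtained by a back-and-forth using the extension axioms together with the absence of finite models. Model-completeness is then immediate. The only slightly delicate point in the whole argument is the bookkeeping in the quantifier-elimination step: enumerating the shapes of the conjuncts of $\phi$ and checking that after the obvious simplifications the required neighbours $P$ and the remaining constrained parameters $Q\cup D$ form disjoint tuples of distinct elements, so that the $\mathrm{RG}$ axiom applies verbatim; everything else is routine.
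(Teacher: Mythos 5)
Your proof is correct, and it follows exactly the intended route: the paper itself only says ``This is an easy exercise,'' and the expected solution is precisely the one you give, applying the quantifier-elimination criterion recalled before Tarski's theorem (reduce to a primitive existential formula, simplify the literals, and realize the remaining constraints with one instance of the $\RG$ axiom scheme), then getting model-completeness for free and completeness either from the absence of nontrivial quantifier-free sentences or from $\aleph_0$-categoricity plus the lack of finite models. The only cosmetic omissions (parameter-only literals and literals of the form $R(a,x)$, which your simplification step handles the same way) do not affect the argument.
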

\begin{proof}
    This is an easy exercise.
\end{proof}

\begin{example}
    Define the following graph with vertex $\N$ and put an edge $\Gamma$ between $n$ and $m>n$ if and only if $2^n$ appears non-trivially in the binary development of $m$. Then $(\N, \Gamma)$ is a random graph.
\end{example}

\begin{exercise}
    Prove that for all small $A,B$, any graph isomorphism $f:A\to B$ is an elementary map. It follows that $A\equiv B$ if and only if the (restricted) graphs $(A,R)$ and $(B,R)$ are isomorphic.
\end{exercise}

\begin{exercise}\label{exo:RGmodelcompanion}
    Prove that any model of $\Graph$ admits and extension which is a model of $\RG$. We say that $\RG$ is the \textit{model-companion} of $\Graph$.
\end{exercise}

\begin{corollary}
    In RG, $\acl(A) = \dcl(A) = A$, for all $A$.
\end{corollary}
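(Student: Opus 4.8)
The plan is to reduce everything to a statement about $\acl$. Since $A\seq\dcl(A)\seq\acl(A)$ holds in general, it suffices to prove $\acl(A)\seq A$. So I would argue by contradiction: suppose there is $b\in\acl(A)\setminus A$. By Proposition \ref{prop:algeclodefclo_automorphism}(1) the orbit of $b$ under $\Aut(\MM/A)$ is finite, equivalently $\tp(b/A)$ has only finitely many realisations; hence it is enough to build, by recursion, an infinite sequence $b=b_0,b_1,b_2,\ldots$ of pairwise distinct realisations of $\tp(b/A)$ to obtain a contradiction.

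The key input is quantifier elimination for RG: it tells us that $\tp(b/A)$ is axiomatised by its quantifier-free part, which, since $b\notin A$, amounts to the conditions $x\neq a$ for all $a\in A$ together with the edge/non-edge pattern $\set{R(x,a)\mid a\in A,\ \models R(b,a)}\cup\set{\neg R(x,a)\mid a\in A,\ \models\neg R(b,a)}$. Call this partial type $\pi_b(x)$; any realisation of $\pi_b$ is a conjugate of $b$ over $A$. In the recursion step, having produced distinct conjugates $b_0,\ldots,b_{k-1}$, I would realise $\pi_b(x)\cup\set{x\neq b_i\mid i<k}$ to get $b_k$, and the sequence so obtained is the desired contradiction.

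The heart of the matter is the consistency of $\pi_b(x)\cup\set{x\neq b_i\mid i<k}$, and the only real work is that RG's extension axiom-scheme is a single sentence about finitely many parameters, so I would feed it into compactness. A finite subset of this partial type is contained in one of the form $\set{R(x,a)\mid a\in A_0,\ \models R(b,a)}\cup\set{\neg R(x,a)\mid a\in A_0,\ \models\neg R(b,a)}\cup\set{x\neq a\mid a\in A_0}\cup\set{x\neq b_i\mid i<k}$ for a finite $A_0\seq A$, so it suffices to realise those. I would apply the extension scheme with the ``$x$''-parameters taken to be those $a\in A_0$ with $\models R(b,a)$ and the ``$y$''-parameters the remaining elements of $A_0$ together with $b_0,\ldots,b_{k-1}$; its pairwise-distinctness hypothesis holds because the elements of $A_0$ are distinct, each $b_i$ lies outside $A$ (being a conjugate over $A$ of $b\notin A$), and the $b_i$ are distinct by construction. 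The resulting witness $z$ has the prescribed edges, has the prescribed non-edges and is $\neq$ those parameters, is $\neq$ each $b_i$, and is automatically $\neq$ the ``$x$''-parameters by anti-reflexivity of $R$; so $z$ realises the finite subset. Compactness together with $\kappabar$-saturation of $\MM$ then realises $\pi_b(x)\cup\set{x\neq b_i\mid i<k}$, and the recursion goes through, contradicting finiteness of the orbit. The only step demanding care is exactly this interleaving of the finitary extension scheme with compactness, plus the bookkeeping that conjugates of $b$ never return to $A$; everything else is routine. Once $\acl(A)=A$ is established, $\dcl(A)=A$ is immediate from $A\seq\dcl(A)\seq\acl(A)$.
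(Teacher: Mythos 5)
Your argument is correct and is exactly the argument the paper intends: the corollary is stated there as an immediate consequence of quantifier elimination, and your proof simply fills in the standard details (reduce to $\acl(A)\seq A$, use QE to identify $\tp(b/A)$ with its quantifier-free part, then use the extension axiom-scheme plus compactness and saturation to manufacture infinitely many conjugates of any $b\notin A$, contradicting Proposition \ref{prop:algeclodefclo_automorphism}). The bookkeeping you flag — conjugates of $b$ stay outside $A$, and distinctness from the ``$x$''-parameters comes for free from anti-reflexivity — is handled correctly, so nothing is missing.
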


It follows that in a monster model $\MM$ of RG, the independence relation $\indi a$ is given by $A\indi a_C B$ if and only if $AC\cap BC = C$, or equivalently $A\cap B\seq C$. 

\begin{proposition}\label{prop:algebraicindepRGbasicproperties}
    In RG, the relation $\indi a$ satisfies  \ref{SYM}, \ref{FIN}, \ref{EX},  \ref{NOR}, \ref{MON}, \ref{BMON}, \ref{TRA}, \ref{AREF}, \ref{CLO}, \ref{SCLO}, \ref{LOC}.
\end{proposition}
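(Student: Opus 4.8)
The plan is to observe that the ambient closure operator in RG is as trivial as possible and to read the statement off from facts already recorded for $\indi a$. Indeed, we have just seen that $\acl(A) = A$ for every $A$, so the relevant closure operator is the identity $\cl = \mathrm{id}$, which is a finitary closure operator (and in fact a pregeometry, exchange holding vacuously). Unwinding the definition of $\indi a$, the relation $A\indi a_C B$ then says exactly $AC\cap BC = C$, equivalently $A\cap B\seq C$.

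From here I would argue as follows. Since $\cl$ is a finitary closure operator, Exercise \ref{exo:propofindia} already gives \ref{SYM}, \ref{FIN}, \ref{EX}, \ref{NOR}, \ref{MON}, \ref{TRA} and \ref{LOC}, while the exercise above stating that $\indi a$ satisfies \ref{AREF} and \ref{SCLO} for any finitary closure operator takes care of those two; and \ref{CLO} is immediate because $\cl(B) = B$. So the only genuinely new point is \ref{BMON}, which I would verify directly: if $C\seq B\seq D$ and $A\indi a_C D$, i.e. $A\cap D\seq C$, then $C\seq B$ gives $A\cap D\seq B$, that is, $A\indi a_B D$. This also explains why \textit{all eleven} axioms hold simultaneously: once \ref{BMON} is available, Proposition \ref{prop:monotonisation} (applied with $\indi 0 = \ind = \indi a$) yields $\indi a\to\indi a^M$, hence $\indi a = \indi a^M = \indi\cl$ by Theorem \ref{thm:pregeom_forcingbmononalg}, and the list then follows from Theorem \ref{thm:pregeom_axiomsfordimindependence}.

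Should one want a wholly self-contained proof, every axiom can equally be checked by hand from $A\indi a_C B\iff A\cap B\seq C$: for instance \ref{MON} is $A\cap BD\seq C\implies A\cap B\seq C$, \ref{NOR} uses $A\cap BC = (A\cap B)\cup(A\cap C)$, \ref{TRA} uses $A\cap D\seq B\implies A\cap D = (A\cap D)\cap B\seq A\cap B$, and \ref{SCLO} reduces to the identity $AC\cap BC = (A\cap B)\cup C$. I do not anticipate any real obstacle: the whole content of the proposition is the (already established) fact that $\acl$ is trivial in RG, after which everything is routine, the single substantive new line being the one-step verification of \ref{BMON}.
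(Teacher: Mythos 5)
Your proof is correct. It differs slightly in decomposition from the paper's: the paper's argument is a one-liner observing that in RG the algebraic closure is the \emph{trivial} pregeometry, hence in particular modular, so that $\indi a = \indi{\acl}$ and the whole list follows from Theorem \ref{thm:pregeom_axiomsfordimindependence} on $\indi\cl$. You instead read everything off the identity closure operator directly: Exercise \ref{exo:propofindia} and the exercise on \ref{AREF}/\ref{SCLO} give all axioms except \ref{BMON} and \ref{CLO}, which you check in one line each from $A\indi a_C B\iff A\cap B\seq C$. This is more elementary and self-contained, since it bypasses the dimension/modularity machinery of Chapter 1 entirely; your closing remark (that \ref{BMON} plus Proposition \ref{prop:monotonisation} and Theorem \ref{thm:pregeom_forcingbmononalg} force $\indi a = \indi a^M = \indi\cl$) is exactly the paper's modularity route, so you in fact have both arguments. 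Either suffices; the paper's is shorter given the Chapter 1 theorems, yours makes transparent that nothing beyond the triviality of $\acl$ and elementary set computations is needed.
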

\begin{proof}
    In RG, the algebraic closure defines a trivial pregeometry hence in particular modular hence the relation $\indi a = \indi{\acl}\ $ satisfies all those properties by Theorem \ref{thm:pregeom_axiomsfordimindependence}.
\end{proof}

We define a strengthening of the relation $\indi a$: 
\[A\indi{\st}_C\  B \text{ if $A\indi a _C B$ and for all $u,v\in ABC$, if $R(u,v)$ then $u,v\in AC$ or $u,v\in BC$}.\]
We call the relation $\indi{\st}$ the \textit{free amalgamation in RG}.
\begin{proposition}\label{prop:propositionstrongindependenceinRG}
    The relations $\indi{\st}$ satisfies \ref{SYM}, \ref{FIN}, \ref{EX},  \ref{NOR}, \ref{MON}, \ref{BMON}, \ref{TRA}, \ref{AREF}, \ref{CLO}, \ref{SCLO}.
\end{proposition}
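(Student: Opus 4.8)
The plan is to lean on Proposition \ref{prop:algebraicindepRGbasicproperties}, which says that $\indi a$ already satisfies \emph{every} axiom on the list, so that only the extra graph-theoretic clause in the definition of $\indi{\st}$ needs attention. Unwinding that definition, $A\indi{\st}_C B$ asserts $A\indi a_C B$ together with the ``no cross-edge'' condition
\[(\star)\qquad \text{for all }u,v\in ABC,\quad R(u,v)\ \Longrightarrow\ (u,v\in AC)\ \text{ or }\ (u,v\in BC).\]
For each axiom the $\indi a$-part of the conclusion is therefore handed to us by Proposition \ref{prop:algebraicindepRGbasicproperties}, and I only have to carry $(\star)$ from the hypotheses to the conclusion. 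I would also use throughout that $\acl(X)=X$ in $\RG$, so that $A\indi a_C B$ means simply $A\cap B\seq C$ and the closures on the right of \ref{CLO} and \ref{SCLO} are vacuous.

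Several of the axioms then fall out at once. For \ref{SYM}, $(\star)$ is visibly symmetric in $A$ and $B$. For \ref{EX}, in $(A,C,C)$ the ambient set $ACC$ equals $AC$ and every $R$-edge has both endpoints in $AC$. For right \ref{NOR}, the ambient set $A\cup BC\cup C$ is $ABC$ and $(BC)C=BC$, so $(\star)$ for $(A,BC,C)$ coincides with $(\star)$ for $(A,B,C)$. \ref{AREF} is inherited from $\indi a$. Right \ref{CLO} is trivial since $\acl(B)=B$. For right \ref{BMON} with $C\seq B\seq D$, the ambient set is $AD$ both for $A\indi{\st}_C D$ and for $A\indi{\st}_B D$, and ``$u,v\in AC$ or $u,v\in D$'' implies ``$u,v\in AB$ or $u,v\in D$'' because $C\seq B$ gives $AC\seq AB$. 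For \ref{SCLO}, since $\acl$ is the identity the statement is just $A\indi{\st}_C B\iff AC\indi{\st}_C BC$, and here the ambient set $A\cup B\cup C$ and the edge-condition ``$u,v\in AC$ or $u,v\in BC$'' are literally the same on both sides; the $\indi a$-part is \ref{SCLO} for $\indi a$.

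The remaining axioms need a short argument that uses the $\indi a$-part already secured. For \ref{FIN}, assume $a\indi{\st}_C B$ for every finite $a\seq A$; \ref{FIN} for $\indi a$ gives $A\indi a_C B$. If $(\star)$ failed for $(A,B,C)$, there would be an $R$-edge between some $u\in A\setminus(B\cup C)$ and some $v\in B\setminus(A\cup C)$ --- two disjoint sets --- and then $(\star)$ would already fail for the singleton $a=\{u\}\seq A$: indeed $v\neq u$ and $v\notin C$ give $v\notin aC=\{u\}\cup C$, while $u\notin B\cup C$ gives $u\notin BC$. This contradicts $a\indi{\st}_C B$. For right \ref{MON}, assume $A\indi{\st}_C BD$; then $A\indi a_C B$ is right \ref{MON} for $\indi a$, and from $A\indi a_C BD$ I also get $A\cap D\seq C$. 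The latter forces the set identity $ABC\cap BDC=BC$: an element of $ABC\cap BDC$ lying in neither $B$ nor $C$ would be in $A$ and in $D$, hence in $A\cap D\seq C$, absurd. Hence an $R$-edge inside $ABC$ is, by $(\star)$ for $(A,BD,C)$, inside $AC$ or inside $BDC$, so inside $AC$ or $BC$; thus $(\star)$ holds for $(A,B,C)$.

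Right \ref{TRA} is similar: given $C\seq B\seq D$ with $A\indi{\st}_C B$ and $A\indi{\st}_B D$, the $\indi a$-conclusion is \ref{TRA} for $\indi a$, and for $(\star)$ I would take $u,v\in AD$ with $R(u,v)$; $(\star)$ for $A\indi{\st}_B D$ (ambient set $AD$, and $DB=D$) puts the edge inside $AB$ or inside $D$, and in the first case $(\star)$ for $A\indi{\st}_C B$ (ambient set $AB$, and $BC=B$) puts it inside $AC$ or inside $B\seq D$; either way it lies inside $AC$ or $D$, which is $(\star)$ for $(A,D,C)$. Finally the left-hand forms of \ref{NOR}, \ref{MON}, \ref{BMON} and \ref{TRA} follow from the right-hand ones together with \ref{SYM}. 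I expect the only steps above that are not pure bookkeeping to be the set identity $ABC\cap BDC=BC$ used for \ref{MON} and the two-step edge-chase for \ref{TRA}, and neither of these looks hard.
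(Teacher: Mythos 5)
Your proof is correct: you verify exactly the direct checks that the paper's proof leaves to the reader (it simply declares \ref{SYM}--\ref{TRA} ``easy to check'' and \ref{AREF}, \ref{CLO}, \ref{SCLO} trivial since $\acl(A)=A$), delegating the $\indi a$-part to Proposition \ref{prop:algebraicindepRGbasicproperties} and chasing the no-cross-edge condition, which is the intended argument. The only nontrivial steps (the set identity $ABC\cap BDC=BC$ for \ref{MON}, the singleton reduction for \ref{FIN}, and the two-step edge-chase for \ref{TRA}) are all carried out correctly.
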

\begin{proof}
    The properties \ref{SYM}, \ref{FIN}, \ref{EX}, \ref{NOR}, \ref{MON}, \ref{BMON} and \ref{TRA} are easy to check. The properties \ref{AREF}, \ref{CLO} and \ref{SCLO} are trivial since $\acl(A) = A$.
\end{proof}

\begin{example}\label{example:stindepinRGdonotsatLOC}
    The property \ref{LOC} is not satisfied by $\indi\st$. For a counterexample, take a singleton $a$ and an arbitrary big infinite $B$ such that $R(a,b)$ for all $b\in B$. 
\end{example}

\chapter{Axiomatic calculus with independence relations}\label{chapter:3}

\section{Axioms for independence relations in an ambient theory}

We work in the same setting as above, $\MM$ is a monster model for some $\LL$-theory $T$.

\begin{definition}\label{def:axioms_part3}
    A ternary relation $\ind$ on $\MM$ is \textit{invariant under automorphisms} (or simply \textit{invariant}) if for all small $A,B,C$ and $\sigma\in \Aut(\MM)$ we have:
    \[A\ind_C B\iff \sigma(A)\ind_{\sigma(C)} \sigma(B)\]
\end{definition}

\begin{definition}[Axioms of independence relations, part 3]
 Let $\ind$ be an invariant ternary relation on small subsets of $\MM$. We define the following axioms.
\begin{enumerate}[$(1)$]
\setcounter{enumi}{11}
  \item (\setword{extension}{EXT}) If $A\ind_C B$ then for any $D\supseteq B$ there is $A'\equiv_{BC} A$ with $A'\ind_C D$.
  \item (\setword{full existence}{FEX}) For all $A,B,C$ there exists $A'\equiv_C A$ such that $A'\ind _C B$.
  \item (\setword{the independence theorem}{INDTHM} over models) Let $M$ be a small model, and assume $A\ind_M B$, $C_1\ind_M A$, $C_2\ind_M B$, and $C_1\equiv_M C_2$. Then there is a set $C$ such that $C\ind_M AB$, $C\equiv_{MA}C_1$, and $C\equiv_{MB}C_2$.
  \item (\setword{stationarity}{STAT} over models) Let $M$ be a small model, and assume $C_1\ind_M A$, $C_2\ind_M A$, and $C_1\equiv_M C_2$. Then $C_1\equiv_{MA} C_2$.
\end{enumerate}
\end{definition}

\begin{proposition}\label{prop:basicpropertieswiththeory}
    Let $\ind$ be an invariant relation.
    \begin{enumerate}[(a)]
        \item If $\ind$ satisfies \ref{EXT} then $\ind$ satifies right \ref{NOR} and right \ref{CLO}.
        \item If $\ind$ satisfies \ref{EX}, \ref{SYM}, \ref{MON}, \ref{BMON}, \ref{TRA} and \ref{EXT}, then $\ind$ satisfies \ref{SCLO}.
        \item If $\ind$ satisfies \ref{EXT} and right \ref{MON} then $\ind$ satisfies \ref{EX} if and only if it satisfies \ref{FEX}.
        \item If $\ind$ satisfies \ref{FEX}, right \ref{NOR}, right \ref{MON} and right \ref{TRA}, then $\ind$ satisfies \ref{EXT}.
    \end{enumerate}
\end{proposition}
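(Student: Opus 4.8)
The plan is to treat all four items as pure ``forking calculus'': each of \ref{EXT} and \ref{FEX} produces a conjugate $A'$ of $A$ over the relevant base with the desired independence, and then invariance under $\Aut(\MM)$ transports that independence back to $A$ itself. Throughout, $\cl$ will mean $\acl$; the only place the choice matters is item (a), where I use that an automorphism fixing a set pointwise fixes its algebraic closure setwise (so the argument in fact works for any automorphism-invariant closure operator).

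\textbf{Item (a).} For right \ref{NOR}: from $A\ind_C B$, apply \ref{EXT} with $D := BC$ to get $A'\equiv_{BC}A$ with $A'\ind_C BC$; choosing $\sigma\in\Aut(\MM/BC)$ with $\sigma(A')=A$ and invoking invariance (with $\sigma(C)=C$, $\sigma(BC)=BC$) gives $A\ind_C BC$. For right \ref{CLO}: from $A\ind_C B$, apply \ref{EXT} with $D := \acl(B)$ to get $A'\equiv_{BC}A$ with $A'\ind_C\acl(B)$; now $\sigma\in\Aut(\MM/BC)$ with $\sigma(A')=A$ fixes $B$ pointwise, hence $\sigma(\acl(B))=\acl(B)$, and invariance yields $A\ind_C\acl(B)$.

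\textbf{Item (b).} I split \ref{SCLO} into its two implications. For ``$\Rightarrow$'', starting from $A\ind_C B$ I iterate the right-hand instances of \ref{NOR} and \ref{CLO} from (a), using \ref{SYM} to swap sides: $A\ind_C B\Rightarrow A\ind_C\acl(BC)\Rightarrow \acl(BC)\ind_C\acl(AC)\Rightarrow\acl(AC)\ind_C\acl(BC)$, and then lower the base with \ref{BMON} along $C\seq\acl(C)\seq\acl(BC)$ to reach $\acl(AC)\ind_{\acl(C)}\acl(BC)$. For ``$\Leftarrow$'', starting from $\acl(AC)\ind_{\acl(C)}\acl(BC)$, left \ref{MON} (via \ref{SYM}) shrinks the left side to $A$, giving $A\ind_{\acl(C)}\acl(BC)$; separately \ref{EX} gives $A\ind_C C$, hence $A\ind_C\acl(C)$ by right \ref{CLO} from (a); \ref{TRA} along $C\seq\acl(C)\seq\acl(BC)$ then yields $A\ind_C\acl(BC)$, and right \ref{MON} finishes with $A\ind_C B$. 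The one non-mechanical point is that ``raising'' the base from $\acl(C)$ back to $C$ is not an axiom but is recovered from \ref{TRA} together with $A\ind_C\acl(C)$.

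\textbf{Items (c) and (d).} For (c): \ref{FEX}$\Rightarrow$\ref{EX} is immediate by applying \ref{FEX} with $B:=C$ and transporting the resulting $A'\ind_C C$ by a $\sigma\in\Aut(\MM/C)$ with $\sigma(A')=A$; conversely \ref{EX}$\Rightarrow$\ref{FEX} starts from $A\ind_C C$, applies \ref{EXT} with $D:=BC$ to get $A'\equiv_C A$ with $A'\ind_C BC$, and concludes $A'\ind_C B$ by right \ref{MON}. For (d): given $A\ind_C B$ and $D\supseteq B$, first get $A\ind_C BC$ by right \ref{NOR}; apply \ref{FEX} over the base $BC$ with the set $DC$ to obtain $A''\equiv_{BC}A$ with $A''\ind_{BC}DC$; transport $A\ind_C BC$ to $A''\ind_C BC$ via a $\sigma\in\Aut(\MM/BC)$; apply right \ref{TRA} along $C\seq BC\seq DC$ (this is why I work with $DC$ rather than $D$, since $C\seq D$ need not hold) to get $A''\ind_C DC$; and conclude $A''\ind_C D$ by right \ref{MON}, with $A''\equiv_{BC}A$ as \ref{EXT} demands. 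The only things needing genuine care in the whole proposition are this $DC$-versus-$D$ bookkeeping, keeping straight which automorphisms fix a base pointwise versus fix a closure setwise, and the base-raising trick in the ``$\Leftarrow$'' half of (b).
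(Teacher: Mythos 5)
Your proof is correct and follows essentially the same route as the paper's: each item uses \ref{EXT}/\ref{FEX} to produce a conjugate over the relevant base and then transports the independence back via invariance, with (b) decomposed into the same forward (NOR, CLO, SYM, BMON) and backward (MON, EX, CLO, TRA) steps. The only deviations are cosmetic bookkeeping (e.g.\ in (d) you feed $DC$ directly into \ref{FEX} where the paper applies \ref{FEX} with $D$ and then right \ref{NOR}), which does not change the argument.
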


\begin{proof}
    \textit{(a)} Assume that $A\ind_C B$. By \ref{EXT} there exists $A'\equiv_{BC} A$ such that $A'\ind_C BC$. As $A'\equiv_{BC} A$ we get $A\ind_C BC$ by invariance. Similarly there exists $A''\equiv_{BC} A$ with $A''\ind_C \acl(B)$. If $\sigma$ is an automorphism over $BC$ which sends $A''$ to $A$, then by invariance $A\ind_C \sigma(\acl(B))$. As $\sigma(\acl(B)) = \acl(B) $ (as set) we conclude $A\ind_C \acl(BC)$.

    \textit{(b)} First we show that $A\ind_C B$ implies $\acl(AC)\ind_{\acl(C) } \acl(BC)$. Assume that $A\ind_C B$ then using \textit{(a)} we have $A\ind_C BC$ and $A\ind_C \acl(BC)$. By \ref{BMON} we have $A\ind_{\acl(C)} \acl(BC)$ and we conclude $\acl(AC)\ind_{\acl(C)}\acl(BC)$ by \ref{SYM}. Conversely if $\acl(AC)\ind_{\acl(C)}\acl(BC)$ then by \ref{MON} and \ref{SYM} we have $A\ind_{\acl(C)} B$. Using \ref{EX} we have $A\ind_C C$ and again by \textit{(a)} we have $A\ind_C \acl(C)$. Using \ref{TRA} and \ref{NOR} with $A\ind_{\acl(C)} B$ we get $A\ind_C B\acl(C)$ hence $A\ind_C B$ by \ref{MON}.

    \textit{(c)} Assume that $\ind$ satisfies \ref{EXT} and right \ref{MON}. Let $A,B,C$ be given then by \ref{EX} $A\ind_C C$. By \ref{EXT} with $B=C$ and $D = BC$, there exists $A'\equiv_{C} A$ with $A'\ind_{C} BC$, so $A'\ind_{C} B$ by \ref{MON}. Conversely if \ref{FEX} holds then in particular by taking $B = C$ we get that there exists $A'\equiv_C A$ with $A'\ind_C C$. By invariance $A\ind _C C$.

    \textit{(d)} Assume that $A\ind_C B$ and $D\supseteq B$ is given. Using \ref{FEX} there exists $A'\equiv_{BC} A$ with $A'\ind_{BC} D$. By right \ref{NOR} we have $A'\ind_{BC} CD$. By invariance $A'\ind_C B$ and by \ref{NOR} $A'\ind_C BC$. Using right \ref{TRA} $A'\ind_C CD$ and by right \ref{MON} we have $A'\ind_C D$.
\end{proof}

\begin{exercise}[Adler]
    Can you prove that in $(d)$ the property right \ref{MON} is necessary? To answer this, you could try to find an invariant relation on some theory satisfying \ref{FEX}, right \ref{NOR}, right \ref{TRA} but neither right \ref{MON} nor \ref{EXT}. The existence of such a theory and relation is an open problem.
\end{exercise}

\begin{lemma}[Baudisch]\label{lm:baudishmonBMON}
    Let $\ind$ be an invariant relation satisfying \ref{FEX}, right \ref{TRA} and \ref{STAT} over some set $C$. Then $\ind$ satisfies \ref{BMON} over $C$: $A\ind_C D\implies A\ind_{B} D$ for all $A,B,D$ with $C\seq B\seq D$.  
\end{lemma}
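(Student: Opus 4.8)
The plan is to derive $A\ind_B D$ (for $C\seq B\seq D$) from $A\ind_C D$ by manufacturing, via two applications of \ref{FEX}, a single tuple $A_1$ that is simultaneously a $C$-conjugate of $A$, independent from $D$ over $C$, \emph{and} independent from $D$ over $B$; then \ref{STAT} over $C$ will force $A_1\equiv_D A$, and invariance will transport $A_1\ind_B D$ down to $A\ind_B D$.

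Concretely, first apply \ref{FEX} to $A$, $C$, $B$ to get $A_0$ with $A_0\equiv_C A$ and $A_0\ind_C B$. Then apply \ref{FEX} to $A_0$, $B$, $D$ to get $A_1$ with $A_1\equiv_B A_0$ and $A_1\ind_B D$. Restricting types along $C\seq B$ gives $A_1\equiv_C A_0\equiv_C A$. Now fix $\tau\in\Aut(\MM/B)$ with $\tau(A_0)=A_1$; since $C\seq B$, $\tau$ fixes $C$ and $B$ setwise, so by invariance $A_0\ind_C B$ becomes $A_1\ind_C B$. Applying right \ref{TRA} along the chain $C\seq B\seq D$ to $A_1\ind_C B$ and $A_1\ind_B D$ yields $A_1\ind_C D$.

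Finally, \ref{STAT} over $C$ applied to $A\ind_C D$, $A_1\ind_C D$ and $A\equiv_C A_1$ gives $A\equiv_{CD} A_1$, that is $A\equiv_D A_1$ since $C\seq D$. Choose $\sigma\in\Aut(\MM/D)$ with $\sigma(A_1)=A$; as $B\seq D$, $\sigma$ fixes $B$ setwise, so invariance turns $A_1\ind_B D$ into $A\ind_B D$, which is the conclusion.

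The one genuine idea is the two-step construction in the first paragraph: a naive single application of \ref{FEX} would only produce $A_1\equiv_B A$ with $A_1\ind_B D$, and there is no visible reason for such $A_1$ to be independent from $D$ over the \emph{smaller} base $C$ — this is precisely the missing \ref{BMON}. The fix is to first produce $A_0$ with $A_0\ind_C B$ (which is where \ref{FEX} does the work that right \ref{MON} would otherwise do), and only then re-extend to gain $\ind_B D$ while keeping $\ind_C B$ intact via the conjugating automorphism over $B$. Everything else is bookkeeping with invariance and the inclusions $C\seq B\seq D$; note in particular that the hypothesis $A\ind_C D$ is used only in the final \ref{STAT} step, \ref{FEX} requiring no input hypothesis.
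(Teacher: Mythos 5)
Your proof is correct and follows essentially the same route as the paper's: two successive applications of \ref{FEX} (first over $C$ to gain $\ind_C B$, then over $B$ to gain $\ind_B D$), transfer of $\ind_C B$ by an automorphism over $B$, right \ref{TRA} to get $\ind_C D$, then \ref{STAT} over $C$ and invariance to conclude. (Minor remark: the automorphisms over $B$ resp.\ $D$ fix $C$ and $B$ pointwise, not merely setwise, though setwise invariance is all you need.)
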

\begin{proof}
Assume that $A\ind_C D$ and $C\seq B\seq D$. By \ref{FEX}, there exists $A'\equiv_C A$ such that $A'\ind_C B$. Again by \ref{FEX} there exists $A''\equiv_{B} A'$ with $A''\ind_{B} D$. By invariance, we also have $A''\ind_C B$. By right \ref{TRA}, we have $A''\ind_C D$. As $A\ind _C D$ and $A''\equiv_C A'\equiv_C A$ we get $A''\equiv_{D} A$  by \ref{STAT} over $C$. In particular, by invariance, we conclude that $A\ind_{B} D$.
\end{proof}

\begin{exercise}
    Prove that if $\ind$ is invariant, then \ref{EXT} is equivalent to: if $A\ind_C B$ then for all $D\supseteq B$ there exists $D'\equiv_{BC} D$ with $A\ind_C D'$.
\end{exercise}

\begin{exercise}[*]
    Assume that $\ind$ satisfies right \ref{MON} and \ref{FEX}. Prove that for all set $C$, \ref{STAT} over $C$ implies \ref{INDTHM} over $C$. 
\end{exercise}

\begin{exercise}
    Prove that the following sets of axioms are equivalent for an invariant relation $\ind$:
    \begin{itemize}
        \item \ref{SYM}, \ref{EX},  \ref{NOR}, \ref{MON}, \ref{BMON}, \ref{TRA}, \ref{CLO}, \ref{SCLO}, \ref{EXT}.
        \item \ref{SYM}, \ref{EX},  \ref{MON}, \ref{BMON}, \ref{TRA}, \ref{EXT}.
        \item \ref{SYM}, \ref{MON}, \ref{BMON}, \ref{TRA}, \ref{NOR}, \ref{FEX}.
    \end{itemize}
\end{exercise}



\textbf{Convention.} For now on, the notation $\indi a$ refers to 
 the independence relation relatively to the closure operator $\acl$, i.e.
 \[A\indi a _C B\iff \acl(AC)\cap \acl(BC) = \acl(C)\]

 \begin{exercise}
     Using Proposition \ref{proposition:elementarymapextendstoalgebraicclosure}, check that $\indi a$ is invariant.
 \end{exercise}

We recall a classical Lemma from P.M. Neumann.
\begin{fact}[P.M. Neuman]\label{fact:neumanslemma}
    Let $X$ be a set and suppose $G$ is a group of permutations of $X$. Let $P,Q\seq X$ be finite subsets such that no point in $P$ has a finite orbit. Then there is some $g\in G$ such that $gP\cap Q = \emptyset$.
\end{fact}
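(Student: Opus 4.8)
The plan is to reduce the statement to a covering lemma of B.\,H.~Neumann about cosets in a group. It suffices to show that the ``bad set'' $\mathcal{B} := \set{g \in G \mid gP \cap Q \neq \emptyset}$ is a \emph{proper} subset of $G$, since then any $g \notin \mathcal{B}$ witnesses the conclusion. Now $\mathcal{B} = \bigcup_{p \in P}\bigcup_{q \in Q}\set{g \in G \mid gp = q}$, and for fixed $p,q$ the set $\set{g \in G \mid gp = q}$ is either empty or a single left coset of the stabiliser $G_p := \set{g \in G \mid gp = p}$. Since the orbit $Gp$ is infinite, the index $[G : G_p]$ (which equals $\abs{Gp}$) is infinite. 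Hence $\mathcal{B}$ is contained in a union of at most $\abs{P}\cdot\abs{Q}$ left cosets of subgroups of $G$, each of infinite index.

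The heart of the matter is then the following: \emph{a group $G$ is never the union of finitely many left cosets of subgroups of infinite index.} I would prove this by induction on the number $k$ of \emph{distinct} subgroups appearing in the union. If $k=1$, a union of $n$ cosets of a single subgroup $H$ covering $G$ forces $[G:H]\le n<\infty$, a contradiction. If $k\ge 2$, single out one of the subgroups $K$, and let $x_1K,\dots,x_rK$ be the $K$-cosets occurring in the union; since $[G:K]=\infty$ there is a coset $yK$ disjoint from all of them, so $yK$ is covered by the remaining cosets (all of the other $k-1$ subgroups). Translating $x_jK$ by $x_jy^{-1}$ carries it onto $yK$, and hence re-expresses $x_jK$ as a finite union of cosets of those other $k-1$ subgroups; substituting these for the $x_jK$ shows $G$ is a finite union of cosets of only $k-1$ distinct subgroups, all still of infinite index, contradicting the induction hypothesis. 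Combining this with the previous paragraph gives $\mathcal{B}\subsetneq G$, which finishes the proof.

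The only genuine content is the covering lemma (it is B.\,H.~Neumann's lemma and could simply be cited rather than reproved). Within its proof the one point requiring care is the bookkeeping in the translation step: pushing all the $K$-cosets onto a single disjoint coset $yK$ and re-expanding increases the \emph{total} number of cosets, so the induction must be run on the number of distinct subgroups, not on the number of cosets — once this is set up correctly the reduction $k \mapsto k-1$ is immediate. Everything else — the description of $\set{g : gp=q}$ as an empty set or a coset of $G_p$, and the identification of $[G:G_p]$ with the orbit size — is routine. (A direct induction on $\abs{P}$ instead runs into an ``algebraicity'' case, where some $p_i$ has finite orbit under $G_{p_j}$, that one would rather avoid; factoring through the covering lemma sidesteps it entirely.)
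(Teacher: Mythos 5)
Your proof is correct, and there is nothing in the paper to compare it against: the statement appears there only as a quoted classical Fact (P.M. Neumann's separation lemma), used as a black box with no proof given. What you wrote is the standard derivation of it. The reduction is sound: for fixed $p,q$ the set $\{g\in G: gp=q\}$ is empty or a left coset $g_0G_p$ of the stabiliser, orbit–stabiliser converts ``infinite orbit'' into ``infinite index'', so the bad set is a finite union of cosets of infinite-index subgroups, and B.H. Neumann's covering lemma shows this union cannot be all of $G$. Your inline proof of the covering lemma is also correct, and you handle the one delicate point properly: the induction must be on the number of \emph{distinct} subgroups occurring (since translating the $K$-cosets onto a disjoint coset $yK$ and re-expanding inflates the number of cosets), with the base case $k=1$ giving a finite index contradiction and the translation step $x_jK = x_jy^{-1}(yK)$ reducing $k$ to $k-1$. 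So the argument is complete; in a text like this one it would also be acceptable simply to cite B.H. Neumann's lemma rather than reprove it, exactly as the paper cites P.M. Neumann's.
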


\begin{proposition}\label{prop:propertiesofalgebraicindependence}
    The relation $\indi a$ is invariant and satisfies \ref{SYM}, \ref{FIN}, \ref{EX},  \ref{NOR}, \ref{MON}, \ref{TRA}, \ref{AREF} and \ref{FEX}. 
\end{proposition}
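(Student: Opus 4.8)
The plan is to read off every property except \ref{FEX} from the general facts about $\indi a$ established in Chapter~\ref{chapter:1}, and to prove \ref{FEX} by a two-step compactness argument driven by P.M.\ Neumann's Lemma (Fact~\ref{fact:neumanslemma}). Indeed $\acl$ is a finitary closure operator on $\MM$ (reflexivity, monotonicity and transitivity are routine; finite character holds because an algebraic formula has only finitely many parameters), so Exercise~\ref{exo:propofindia} gives at once that $\indi a$ satisfies \ref{SYM}, \ref{FIN}, \ref{EX}, \ref{NOR}, \ref{MON} and \ref{TRA}. Invariance is clear since any automorphism of $\MM$ commutes with $\acl$, and hence preserves the defining condition $\acl(AC)\cap\acl(BC)=\acl(C)$. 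Finally \ref{AREF} is a one-liner: $a\indi a_C a$ unwinds to $\acl(aC)=\acl(aC)\cap\acl(aC)=\acl(C)$, whence $a\in\acl(C)$.

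It remains to prove \ref{FEX}. Fix small $A,B,C$. Since $\acl(C)\seq\acl(A'C)\cap\acl(BC)$ always holds, I want a realization $A'$ of $\tp(A/C)$ whose algebraic closure over $C$ is disjoint from the small set $\acl(BC)\setminus\acl(C)$. The idea is to produce $A'$ as a realization of the partial type
\[\Pi(X)\ =\ \tp(A/C)\ \cup\ \bigcup_{e\in\acl(BC)\setminus\acl(C)}\pi_e(X),\]
where $X$ is a tuple of variables enumerating $A$, and $\pi_e(X)$ is the partial type over $Ce$ whose realizations are exactly the tuples $\bar a$ with $e\notin\acl(\bar a C)$; concretely, $\pi_e(X)$ consists of the formulas $\lnot(\phi(e,\bar x)\wedge\theta_{\phi,k}(\bar x))$ for $\phi(z,\bar x)\in\LL(C)$, $k\in\N$ and $\bar x$ a finite subtuple of $X$, where $\theta_{\phi,k}(\bar x)$ asserts that $\phi$ has at most $k$ solutions in $z$. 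A realization $A'$ of $\Pi$ then satisfies $A'\equiv_C A$ and $\acl(A'C)\cap\big(\acl(BC)\setminus\acl(C)\big)=\emptyset$, i.e.\ $A'\indi a_C B$, as wanted.

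Thus everything comes down to the consistency of $\Pi$. A finite fragment of $\Pi$ involves only finitely many variables --- i.e.\ a finite subtuple $a$ of $A$ --- and finitely many parameters $e_1,\dots,e_m\in\acl(BC)\setminus\acl(C)$; to realize it, it suffices to find $a^\dagger\equiv_C a$ with $e_j\notin\acl(a^\dagger C)$ for all $j$, and then to push an automorphism over $C$ taking $a$ to $a^\dagger$ across all of $A$. For this inner claim I would use Neumann's Lemma with $G=\Aut(\MM/C)$: each $e_j\notin\acl(C)$, so its $G$-orbit is infinite by Proposition~\ref{prop:algeclodefclo_automorphism}, and hence Fact~\ref{fact:neumanslemma} applied to the finite sets $\{e_1,\dots,e_m\}$ and any finite $Q\seq\acl(aC)$ yields $g\in G$ with $g\{e_1,\dots,e_m\}\cap Q=\emptyset$. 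Therefore the partial type $\tp(e_1\dots e_m/C)\cup\{\,z_j\neq u:u\in\acl(aC),\ 1\le j\le m\,\}$ is finitely satisfiable, so by $\kappabar$-saturation it is realized by some $(e_1',\dots,e_m')$. Picking $\sigma\in G$ with $\sigma(e_j)=e_j'$ and setting $a^\dagger=\sigma^{-1}(a)$ does the job: $a^\dagger\equiv_C a$, and if $e_j\in\acl(a^\dagger C)$ then $e_j'=\sigma(e_j)\in\acl(\sigma(a^\dagger)C)=\acl(aC)$, contradicting the choice of $e_j'$.

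The one genuine obstacle is the gap between Neumann's Lemma, which only moves a finite set off another finite set, and the sets $\acl(aC)$ and $\acl(BC)$ that actually occur, which are merely small. The two nested compactness arguments above --- first upgrading ``disjoint from every finite subset of $\acl(aC)$'' to ``disjoint from $\acl(aC)$'', then assembling a single $A'$ from its finitely-generated approximations --- are precisely what bridges this gap, together with the routine but necessary observation that ``$e\notin\acl(XC)$'' is cut out by a partial type.
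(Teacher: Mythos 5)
Your proof is correct, and its core is the same as the paper's: everything except \ref{FEX} is quoted from the closure-operator chapter (plus the trivial \ref{AREF} and invariance checks), and \ref{FEX} is obtained by a compactness reduction whose finite kernel is exactly the paper's application of P.M.~Neumann's Lemma (Fact~\ref{fact:neumanslemma}): move the finitely many non-algebraic points by some $\sigma\in\Aut(\MM/C)$ and pull back with $\sigma^{-1}$. Where you diverge is in how the compactness is packaged. The paper enumerates all of $\acl(AC)$ as the variables of the type $\Sigma$, so that a realization automatically enumerates $\acl(A'C)$ and the avoidance condition becomes a set of plain inequations $x_i\neq b$; the finite fragments are then literally finite-versus-finite, and Neumann's Lemma finishes immediately. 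You instead keep variables only for $A$ and encode ``$e\notin\acl(\bar xC)$'' as a partial type $\pi_e$ via the formulas $\lnot(\phi(e,\bar x)\wedge\theta_{\phi,k}(\bar x))$; the price is that in a finite fragment the set $\acl(aC)$ is still infinite, so you need the extra intermediate step of realizing $\tp(e_1\ldots e_m/C)\cup\{z_j\neq u: u\in\acl(aC)\}$ by saturation before transporting with an automorphism. Both routes are sound: the paper's enumeration trick is more economical (one compactness argument, finite Neumann step), while yours makes explicit the useful general fact that non-algebraicity over a set is type-definable in the parameters, at the cost of a nested saturation argument. (Minor remark: the clause about pushing an automorphism ``across all of $A$'' is not needed for finite satisfiability of $\Pi$, since a finite fragment only mentions the finite subtuple $a$; it is harmless but superfluous.)
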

\begin{proof}
    All were checked before except \ref{FEX}. We fix $A,B,C$. Let $B^*:= \acl(BC)\setminus \acl(C)$. We want to find $A'\equiv_C A$ such that $\acl(A'C)\cap B^* = \emptyset$. 
    \begin{claim}
        By compactness it is enough to show that for all finite $A_0\seq \acl(AC)$ and $B_0\seq B^*$, there is $A_0'\equiv_C A_0$ such that $A_0'\cap B_0 = \emptyset$.
    \end{claim}
    \begin{proof}[Proof of the claim]
        I give this argument in details now. This sort of arguments are classical and generally just referred to as ``by compactness". Let $(a_i)_{i<\alpha}$ be an enumeration of $\acl(AC)$ starting with $A$ (i.e. there is $\beta\leq \alpha$ such that $\set{a_i\mid i<\beta} = A$). Let $\Sigma(x)$ be the type of $(a_i)_{i<\alpha}$ over $C$. Now observe that for any $a' = (a_i')_{i<\alpha}$ realising $\Sigma$, the tuple $a'$ is an enumeration of $\acl(A'C)$ for $A' = \set{a_i'\mid i<\beta}$, and $A'\equiv_C A$. To conclude the claim it suffices to prove that $\Delta(x):=\Sigma(x)\cup \set{x_i\neq b\mid b\in B^*, i<\alpha}$ is consistent. By compactness it suffices to prove that it is finitely consistent and each finite fragment of $\Delta(x)$ involves finitely many variables, hence is contained in $\tp(A_0/C)\bigcup \set{x_i\neq b\mid b\in B_0}$ for some finite $A_0\seq \acl(AC)$ and $B_0\seq B^*$.
    \end{proof}

     For such $A_0$ and $B_0$, we have $B_0\cap \acl(C) = \emptyset$ hence the orbit of every element in $B_0$ over $\Aut(\MM/C)$ is infinite. By Fact \ref{fact:neumanslemma} there exists $\sigma\in \Aut(\MM/C)$ such that $\sigma B_0\cap A_0 = \emptyset$. Take $A_0' = \sigma^{-1}(A_0)$.
\end{proof}

\begin{proposition}\label{prop:ACFsatisfiesINVEXTSTAT}
    For $T = \ACF$, the relation $\indi\alg\ $ is invariant and satisfies \ref{EXT} and \ref{STAT} over algebraically closed sets.
\end{proposition}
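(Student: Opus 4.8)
The plan is to treat the three assertions in turn, proving \ref{FEX} by hand and then reading off \ref{EXT} and \ref{STAT} from results already available.

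\textbf{Invariance.} Recall $\indi\alg=\indi{a}^{M}$ for $\cl=\acl$. Since both $\indi a$ and the operator $\acl$ are invariant under $\Aut(\MM)$ (automorphisms preserve algebraicity of formulas), applying $\sigma\in\Aut(\MM)$ to the parameter $X$ in the clause ``$A\ind_X B$ for all $X$ with $C\seq X\seq\acl(BC)$'' simply reparametrises it as ``for all $X'$ with $\sigma(C)\seq X'\seq\acl(\sigma(B)\sigma(C))$''; hence $\indi\alg$ is invariant.

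\textbf{Full existence.} Fix $A,B,C$ and put $k=\F(C)$. Choose a transcendence basis $\bar t=(t_j)_{j<\lambda}$ of $\F(AC)$ over $k$, so that $A\seq\acl(C\bar t)$ and $\bar t$ is algebraically independent over $k$; note $\lambda<\kappabar$. By $\kappabar$-saturation there is a tuple $\bar t'=(t'_j)_{j<\lambda}$ in $\MM$ algebraically independent over the small field $\F(BC)$, hence over $k$. The map $t_j\mapsto t'_j$ extends to an isomorphism of the rational function fields $k(\bar t)\cong k(\bar t')$ over $k$, which is partial elementary by Exercise \ref{exo:typesinfields}; as $C\seq k$ this yields $\bar t\equiv_C\bar t'$, which we extend to $\sigma\in\Aut(\MM/C)$. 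Set $A'=\sigma(A)\equiv_C A$; then $A'\seq\sigma(\acl(C\bar t))=\acl(C\bar t')$. Now the finite subsets of $C\bar t'$ that are algebraically independent over $C$ are exactly the finite subsets of $\bar t'$, and these stay algebraically independent over $BC$ by the choice of $\bar t'$; hence $C\bar t'\indi\alg_C B$ (using $\indi\alg=\indi\cl$). By \ref{SCLO}, $\acl(C\bar t')\indi\alg_{\acl(C)}\acl(BC)$, and since $\acl(A'C)\seq\acl(C\bar t')$, \ref{MON} gives $\acl(A'C)\indi\alg_{\acl(C)}\acl(BC)$, i.e.\ $A'\indi\alg_C B$ by \ref{SCLO} once more. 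This establishes \ref{FEX}.

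\textbf{Extension and stationarity.} By Proposition \ref{prop:ACFalgebraicsatisfiesbasics}, $\indi\alg$ satisfies right \ref{NOR}, right \ref{MON} and right \ref{TRA}, so \ref{EXT} follows from \ref{FEX} via Proposition \ref{prop:basicpropertieswiththeory}\,(d). For \ref{STAT}, let $C$ be algebraically closed and assume $C_1\indi\alg_C A$, $C_2\indi\alg_C A$, $C_1\equiv_C C_2$; set $E_i=C(C_i)$ and $D=C(A)$. Fix $\sigma\in\Aut(\MM/C)$ with $\sigma(C_1)=C_2$, so $\sigma\upharpoonright E_1\colon E_1\to E_2$ is a field isomorphism over $C$. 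Since $C$ is algebraically closed, Fact \ref{fact:lineardisjointnessandalgebraicdisjointness} upgrades $C_i\indi\alg_C A$ to $C_i\indi\ld_C A$, which (as $C\supseteq\F$ is a field, so $\F(C_iC)=E_i$ and $\F(AC)=D$) is precisely the linear disjointness of $E_i$ and $D$ over $C$. Applying Proposition \ref{prop:stationaritylineardisjointness} to the fields $E_1,E_2,D,C$ and the isomorphism $\sigma\upharpoonright E_1$ produces a field isomorphism $\F(E_1D)\to\F(E_2D)$ extending $\sigma\upharpoonright E_1$ and fixing $D$ pointwise; restricted to $C_1$ it sends $C_1$ to $C_2$ while fixing $CA\seq D$, and being a field isomorphism it is partial elementary (Exercise \ref{exo:typesinfields}), so $C_1\equiv_{CA}C_2$.

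\textbf{Main obstacle.} The crux is \ref{FEX}: one must move $A$ to a copy $A'$ that both retains $\tp(A/C)$ and becomes algebraically free from $B$ over $C$, which is why the argument has to pass through a transcendence basis and then correct back using \ref{SCLO} and \ref{MON}. Everything else is formal once the basic axioms and Propositions \ref{prop:basicpropertieswiththeory} and \ref{prop:stationaritylineardisjointness} are invoked; in particular the restriction to algebraically closed bases in \ref{STAT} is exactly what is needed to bring Fact \ref{fact:lineardisjointnessandalgebraicdisjointness} into play.
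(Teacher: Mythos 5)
Your proof is correct, and the stationarity part coincides with the paper's: both deduce \ref{STAT} over algebraically closed bases from Proposition \ref{prop:stationaritylineardisjointness} together with Fact \ref{fact:lineardisjointnessandalgebraicdisjointness} (your extra care in tracking the explicit isomorphism of composita, so that it really carries $C_1$ to $C_2$ over $CA$, is a welcome refinement of the paper's one-line reduction). Where you genuinely diverge is the existence part. The paper works with a finite tuple $a$ and builds the independent copy \emph{syntactically}: it takes the prime ideal $I=\set{P\in C[X]\mid P(a)=0}$ for $X$ algebraically independent over $B$, forms the fraction field of $C[X]/I$, checks directly from $I\seq C[X]$ that the generic point $a'$ is algebraically independent from $B$ over $C$, and then invokes model-completeness of ACF to see that this configuration is realised in $\K$. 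You instead work \emph{inside the monster}: you replace a transcendence basis of $\F(AC)$ over $\F(C)$ by one algebraically independent over $\F(BC)$ (obtained by $\kappabar$-saturation), move $A$ by the resulting automorphism over $C$, and then transfer independence from the new basis to $A'$ using \ref{SCLO} and \ref{MON}; finally you make explicit the passage from \ref{FEX} to \ref{EXT} via Proposition \ref{prop:basicpropertieswiththeory}(d), a step the paper leaves implicit (its argument literally establishes full existence). Your route buys a uniform treatment of infinite $A$ with no reduction to finite tuples and no appeal to model-completeness beyond what is already packaged in Exercise \ref{exo:typesinfields}, at the cost of leaning on the abstract pregeometry axioms; the paper's route is more self-contained algebraically and exhibits the independent copy as a generic point of the variety of $a$, which is conceptually closer to the geometric picture.
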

\begin{proof}
We start with \ref{EXT}. Let $a,B,C$ be given and we prove that there exists $a'\equiv_C a$ with $a'\indi\alg_C\ B$. We may assume that $C\seq B$ and that $a = (a_1,\ldots,a_n)$ is a finite tuple, $B,C$ are fields. By Exercise \ref{exo:typesinfields}, $\tp(a/C)$ is given by the field isomorphism type of $C(a)$ over $C$. Let $X = (X_1,\ldots,X_n)$ be a tuple of algebraically independent elements over $B$. Let $I = \set{P(X)\in C[X] \mid P(a) = 0}$. Observe that $I$ is a prime ideal and let $F$ be the fraction field of $C[X]/I$. For $a_i' := X_i+I$ we have that the map $f:C(a)\to F = C(a')$ which fixes $C$ and $a_i\mapsto a_i'$ is a field isomorphism. To check that $a'\indi\alg _C B$, observe that if $a'$ is algebraically dependent over $B$, there exists $Q(X)\in B[X]$ such that $Q(a') = 0$. Then $Q(X)+I = 0$ hence $Q(X)\in I\seq C[X]$ so $a'$ is algebraically dependent over $C$. To conclude, consider $K = (FB)^\alg$, then $K$ is an elementary extension of $(CB)^\alg$ (by model-completeness) hence the type given by the isomorphism type of $K$ over $CB$ is consistent so there exists such $a'$ in $\K$ and $a'\equiv_C a$ (using $f$).

To conclude, \ref{STAT} over algebraically closed sets follows from Proposition \ref{prop:stationaritylineardisjointness} and Fact \ref{fact:lineardisjointnessandalgebraicdisjointness}.
\end{proof}

\begin{remark}
    Note that in ACF, $\indi \ld$ does not satisfy \ref{EXT}. To see this, use Proposition \ref{prop:basicpropertieswiththeory} (a) and Remark \ref{remark:indlddonotsatisfyCLO} or simply consider $a = \sqrt{2}, C = B = \Q$ and $D = \Q(\sqrt{2})$.
\end{remark}

\begin{proposition}\label{propositionpropertiesindepRGwithtypes}
    If $T = \RG$ then $\indi a$ and $\indi\st$ are invariant relations. $\indi\st$ satisfies \ref{EXT} and \ref{STAT} over every set and $\indi a$ satisfies \ref{EXT} and \ref{INDTHM} over models. 
\end{proposition}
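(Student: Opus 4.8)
The plan is to reduce everything to quantifier elimination for $\RG$ together with graph amalgamation. First note that $\indi a$ is invariant (an earlier exercise, since automorphisms commute with $\acl$), and $\indi\st$ is invariant because its extra clause only mentions $R$ and set-membership, both preserved by automorphisms. Throughout, use the dictionary valid in $\RG$ (where $\acl=\dcl=\mathrm{id}$): $A\indi a_C B$ means $AC\cap BC=C$, i.e.\ $A\cap B\seq C$; and $A\indi\st_C B$ means moreover that every $R$-edge inside $ABC$ lies inside $AC$ or inside $BC$, so in particular there is no $R$-edge between $A\setminus C$ and $B\setminus C$. Two facts will be used repeatedly: by quantifier elimination any isomorphism between induced subgraphs of $\MM$ is a partial elementary map; and since $\RG$ is the model companion of the theory of graphs and $\MM$ is $\kappabar$-saturated, any graph of size $<\kappabar$ extending an induced subgraph $D\seq\MM$ embeds into $\MM$ over $D$ (the quantifier-free type over $D$ naming the new vertices is finitely satisfiable by the extension axioms of $\RG$, hence realised). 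So in each case it suffices to build the right graph.

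For \ref{EXT} (both relations): given $A\indi a_C B$ resp.\ $A\indi\st_C B$ and $D\supseteq B$, after replacing $B,D$ by $BC,DC$ using right \ref{NOR} and right \ref{MON} we may assume $C\seq B\seq D$, so $A\cap B\seq C$. Take a disjoint copy $A''$ of $A\setminus B$ (bijection $a\leftrightarrow a''$), set $A'=(A\cap B)\cup A''$, and build a graph on $D\cup A''$: keep the induced graph on $D$; choose the edges inside $A'$ that make $a\mapsto a''$ (identity on $A\cap B$) a graph isomorphism $A\cup B\to A'\cup B$ fixing $B$; for $a''\in A''$ and $b\in B$ set $R(a'',b)$ iff $R(a,b)$ in $\MM$; and put no edge between $A''$ and $D\setminus B$. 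This is a well-defined graph extending $D$, and for $\indi\st$ the hypothesis (no edge between $A\setminus C$ and $B\setminus C$) is exactly what guarantees that $A''$ has no edge to $D\setminus C$ in it. Embedding into $\MM$ over $D$ yields $A'\seq\MM$ with $A'\equiv_{BC}A$; and $A'C\cap D=C$ with no $R$-edge from $A'\setminus C=A''$ to $D\setminus C$, so $A'\indi a_C D$ resp.\ $A'\indi\st_C D$.

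For \ref{STAT} over an arbitrary set (for $\indi\st$): from $C_1\indi\st_C A$, $C_2\indi\st_C A$ and $C_1\equiv_C C_2$, use \ref{SYM} to get $A\indi\st_C C_i$, add $C$ everywhere by \ref{NOR} so that $C_1\cap A=C_2\cap A=C$, pick a graph isomorphism $g\colon C_1\to C_2$ fixing $C$, and set $h=g\cup\mathrm{id}_A\colon C_1\cup A\to C_2\cup A$; this is well defined and bijective since $C_1\cap A=C$, $g\upharpoonright C=\mathrm{id}$, and $g(C_1\setminus C)=C_2\setminus C$ is disjoint from $A$. It is a graph isomorphism: edges inside $C_1$ and inside $A$ are clear, and for $u\in C_1\setminus C$, $v\in A\setminus C$ neither $R(u,v)$ nor $R(g(u),v)$ holds, by $C_1\indi\st_C A$ and $C_2\indi\st_C A$ respectively. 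Hence $h$ is elementary over $CA$, giving $C_1\equiv_{CA}C_2$.

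For \ref{INDTHM} over models (for $\indi a$) — the one genuinely triple amalgamation, and the step I expect to be the crux: given $A\indi a_M B$, $C_1\indi a_M A$, $C_2\indi a_M B$ and a graph isomorphism $g\colon C_1\to C_2$ fixing $M$, add $M$ everywhere by \ref{NOR} so that $A\cap B=M$, $C_1\cap A=M$, $C_2\cap B=M$. Take a disjoint copy $C^*$ of $C_1\setminus M$ with bijection $\psi$, put $C=M\cup C^*$ and $\phi=g\circ\psi\colon C^*\to C_2\setminus M$, and build a graph on $A\cup B\cup C^*$: keep the induced graph on $A\cup B$; put edges inside $C$ and between $C^*$ and $M$ so that $\psi$ matches $C_1$ — equivalently so that $\phi$ matches $C_2$, which is consistent because $g$ is a graph isomorphism fixing $M$; for $c^*\in C^*$, $a\in A\setminus M$ set $R(c^*,a)$ iff $R(\psi(c^*),a)$ in $\MM$; and for $c^*\in C^*$, $b\in B\setminus M$ set $R(c^*,b)$ iff $R(\phi(c^*),b)$ in $\MM$. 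No clash arises, because $(A\setminus M)\cap(B\setminus M)=\emptyset$. Then $(\mathrm{id}_A,\psi)$ is a graph isomorphism $A\cup C_1\to A\cup C$ fixing $A$, and $(\mathrm{id}_B,\phi)$ is one $B\cup C_2\to B\cup C$ fixing $B$. Embedding $A\cup B\cup C^*$ into $\MM$ over $A\cup B$ and transporting $C^*$ gives $C=M\cup C^*\seq\MM$ with $C\equiv_{MA}C_1$, $C\equiv_{MB}C_2$, and $C\cap(A\cup B)=M$, i.e.\ $C\indi a_M AB$. The only thing that could go wrong is the consistency of the edge prescriptions on $C^*$: the potential overlap between the ``$C^*$--$A$'' and ``$C^*$--$B$'' rules is exactly $C^*$--$(A\cap B)$, governed separately and coherently — and this is precisely where the hypothesis $A\indi a_M B$, i.e.\ $A\cap B=M$, is used — while the $C_1$- and $C_2$-side descriptions of the internal structure of $C$ agree because $g$ fixes $M$ pointwise. (The same argument works over any set $M$; we only stated it over models.)
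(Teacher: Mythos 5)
Your proof is correct and follows essentially the same route as the paper's: in each case you build the prescribed amalgam abstractly as a graph extending the relevant induced subgraph of $\MM$ and realize it in the monster via quantifier elimination plus saturation/extension axioms (the paper packages this last step through the model companion and model-completeness), with the \ref{STAT} and \ref{INDTHM} arguments matching the paper's almost verbatim, including the use of $A\cap B=M$ to reconcile the two edge prescriptions. The only minor difference is that you prove \ref{EXT} directly as an extension preserving the type over $BC$, whereas the paper constructs a full-existence-style copy over $C$ inside a model and leaves the upgrade to \ref{EXT} to the general axiomatics; this is a faithful, equally valid variant.
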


\begin{proof}
    Invariance is trivial. 
    We prove \ref{EXT} for $\indi\st$ which implies it for $\indi a$ (although we already know that $\indi a$ satisfies it by Proposition \ref{prop:propertiesofalgebraicindependence}). Let $a,B,C$ be given and assume that $B$ and $C$ are contained live in a small model $M\prec \MM$. We may assume that $a\cap C = \emptyset$. Let $x$ be a tuple of new elements (i.e. outside of $M$) such that $\abs{x} = \abs{a}$. Let $N = Mx$. The relation $\Gamma = R\upharpoonright M$ defines a graph on $M$. We define an edge relation on $N$ which extends $\Gamma$. Let $f$ be the bijection between $Ca$ and $Cx$ which fixes $C$ pointwise and maps $a_i$ to $x_i$. Define $\Gamma' = f(\Gamma)$ in the sense that $\Gamma(u,v)$ holds if and only if $\Gamma'(f(u),f(v))$ for all $u,v\in Ca$. Observe that $\Gamma'\cap \Gamma = \Gamma(C)$. Let $\Gamma_1 = \Gamma\cup \Gamma'$. Then $(N,\Gamma_1)$ is an extension of $(M,\Gamma)$ (i.e. $\Gamma_1\cap M = \Gamma$) . By Exercise \ref{exo:RGmodelcompanion} there exists a model $(M_1,\Gamma_2)$ of RG extending $(N,\Gamma_1)$. By model-completeness, $(M,\Gamma)\prec (M_1,\Gamma_2)$ hence in particular the type $\tp^{M_1}(x/CB)$ (computed in $M_1$) is finitely satisfiable in $\MM$. Let $a'$ be a realisation of this type in $\MM$. Clearly $a'\equiv_C a$. It is clear that $a'\cap B\seq C$. Further, by construction there is no new edge between $a'$ and $B$ hence $a'\indi\st_C B$. For \ref{STAT}, assume that $a,a',C,B$ are given with $a\equiv_C a'$, $a\indi\st_C B$ and $a'\indi\st_C B$. Let $f:Ca\to Ca'$ be a graph isomorphism over $C$ mapping $a$ to $a'$ and $g:B\to B$ be the identity. We claim that $h = f\cup g$ is elementary. First, as $a\cap B \seq C$, $a'\cap B\seq C$ and $f$ and $g$ agree on $C$, the map is well-defined. The second condition of $\indi\st$ implies that there is no edge between $Ca\setminus C$ (resp. $Ca'\setminus C$) and $B\setminus C$ hence $h$ thus defined preserves the edge relation.

    We turn to the independence theorem for $\indi a$. Let $C_1\equiv_E C_2$, $C_1\indi a_E A$ and $C_2\indi a _E B$. By \ref{SYM}, \ref{MON} and \ref{NOR} we may assume that $E\seq C_1\cap C_2\cap A\cap B$. Let $M$ be a small model containing $A,B,C_1,C_2$. Let $X$ be a set of new points with $X\cap M = \emptyset$ and $\abs{X} = \abs{C_1\setminus E}$. Let $C = X\cup E$. There is a graph isomorphism $h:C_1\cong C_2$ fixing $E$ pointwise. Let $f:C\to C_1$ be a bijection fixing $E$ pointwise. Let $g:C\to C_2$ be $h\circ f$. As $C_1\cap A = E$ the following extension $f_A:CA\to C_1A$ of $f$ is well-defined:
    \[f_A(x) =
    \begin{cases*}
      f(x) &  if $x\in C$\\
      x    &  if $x\in A$
    \end{cases*}
    \]
    Similarly we define the extension $g_B:CB\to C_2B$ of $B$ by $\Id_B$. Let $\Gamma_1 = f_A^{-1}(R)$ and $\Gamma_2 = g_B^{-1}(R)$, and $\Gamma = \Gamma_1\cup \Gamma_2$. Similarly as above, let $N$ be a model of RG extending $(MX, R(M)\cup\Gamma)$ (by Exercise \ref{exo:RGmodelcompanion}). We have $M\prec N$ hence the type $\tp^N(C/AB)$ (computed in $N$) is finitely satisfiable in $\MM$. Any realisation $C$ of this type in $\MM$ satisfies the conclusion of \ref{INDTHM} over $E$.
\end{proof}

\begin{example}
    Note that in RG, $\indi a$ do not satisfy \ref{STAT} (over any set): let $a\equiv_C b$ and $B = C\cup\set{d}$ for $d\notin Cab$ with $R(a,d)$ and $\neg R(b,d)$.
\end{example}

\section{A Theorem of Adler}
\subsection{Indiscernible sequences}

\begin{definition}
    Let $(I,<)$ be an infinite linear order and $a = (a_i)_{i\in I}$ a sequence of tuples of the same size $\alpha$. Let $x=(x_i)_{i<\alpha}$ be a set of variables. The \textit{Ehrenfeucht-Mostowski type of $a$ over $C$}, denoted $\EM(a/C)$, is the set of $\LL_\alpha(C)$-formulas $\phi(\vec x_{1},\ldots,\vec x_n)$ with $\MM\models \phi(a_{i_1},\ldots,a_{i_n})$ for all $i_1<\ldots<i_n\in I$, for all $n<\omega$ and $\vec x_i\seq x$. For each $\vec x_i$ there is a finite subset $\beta_i\seq \alpha$ such that $\vec x_i = (x_j)_{j\in \beta_i}$ and when we say $\MM\models \phi(a_{i_1},\ldots,a_{i_n})$ we really mean $\MM\models \phi(a_{i_1}\upharpoonright\beta_1,\ldots,a_{i_n}\upharpoonright\beta_n)$.
\end{definition}
    Of course, by adding variables in the above we may assume that $\beta_i = \beta_j$ in the formula $\phi(\vec x_1,\ldots,\vec x_n)$.
\begin{remark}
    We see $\EM(a/C)$ as a (partial) type $\Sigma$ in variables 
    \[\underbrace{(x_i)_{i<\alpha},(x_i)_{i<\alpha},\ldots, (x_i)_{i<\alpha},\ldots}_\text{$\omega$-times}\]
    In particular one sees that the $\EM$-type does not depend on the order type of $(I,<)$.
\end{remark}

\begin{definition}
    Let $(I,<)$ be an infinite linear order and $a = (a_i)_{i\in I}$ a sequence of tuples. We say that $a$ is \textit{indiscernible over $C$} if for all $i_1<\ldots<i_n\in I$ and $j_1<\ldots<j_n\in I$ we have $\MM\models \phi(a_{i_1},\ldots,a_{i_n})$ if and only if $\MM\models \phi(a_{j_1},\ldots,a_{j_n})$.
\end{definition}

\begin{exercise}
    Prove that $(a_i)_{i\in I}$ is indiscernible over $C$ if and only if for all $n<\omega$, for all $i_1<\ldots<i_n\in I$ and $j_1<\ldots<j_n\in I$ we have 
    \[a_{i_1}\ldots a_{i_n}\equiv_C a_{j_1}\ldots a_{j_n}\]
\end{exercise}

\begin{exercise}
    Assume that $a=(a_i)_{i<\omega}$ is indiscernible over $C$, then $\phi(x_1,\ldots,x_n)\in \EM(a/C)$ if and only if $\models \phi(a_0,\ldots,a_{n-1})$.
\end{exercise}

\begin{exercise}[*]
    Prove that $\EM(a/C)$ is complete if and only if $a$ is indiscernible over $C$.
\end{exercise}

\begin{exercise}
    Let $(a_i)_{i<\alpha}$ be an indiscernible sequence over $\emptyset$. Then $\abs{\set{a_i\mid i<\alpha}}$ is either $\abs{\alpha}$ or $1$.
\end{exercise}

We will need some basic fact from set theory, namely the Ramsey theorem and the Erd\H{o}s-Rado theorem. In partition calculus, we denote by $[\kappa]^n$ the set of $n$-elements subsets of $\kappa$ and the symbol \[\kappa\to (\lambda)_\mu^n\] stands for the statement: for any function $f:[\kappa]^n\to \mu$, there is $A\seq \kappa$ with $\abs{A} = \lambda$ such that $f$ is constant on $[A]^n$. Stated differently: every partition of $[\kappa]^n$ into $\mu$ pieces has a homogeneous set of size $\lambda$ (meaning that there is an infinite subsets whose $n$-elements subsets are all in one component). Recall that the function $\beth$ is defined as follows: for $\alpha$ an ordinal and $\mu$ a cardinal: $\beth_0(\mu) = \mu$, $\beth_\alpha(\mu) = 2^{\beth_\beta(\mu)}$ if $\alpha = \beta+1$ and $\beth_\alpha(\mu) = \sup_{\beta<\alpha} \beth_\beta(\mu)$ if $\alpha $ is limit. We define also $\beth_\alpha:= \beth_\alpha(\aleph_0)$.

\begin{fact}
Classical partition calculus facts.
\begin{itemize}
    \item (Pigeonhole principle) $\omega\to (\omega)_k^1$
    \item (Ramsey Theorem) $\omega\to (\omega)^n_k$. In other words:
    if $A$ is an infinite set and $C_1,\ldots,C_k$ is a coloring of $[A]^n$, then there is an infinite subset of $A$ whose $n$-elements subsets is of the same color $C_i$.
    \item (Erd\H{o}s-Rado Theorem) $\beth_n^+ (\mu) \to (\mu^+)_\mu^{n+1}$.
\end{itemize}
\end{fact}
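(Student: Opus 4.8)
The plan is to prove the three items in order, each reducing to the previous one.

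The pigeonhole principle $\omega\to(\omega)^1_k$ is immediate: if $[\omega]^1=\omega$ is split into $k$ pieces and all were finite, $\omega$ would be a finite union of finite sets, hence finite. For Ramsey's theorem I would induct on $n$, the base $n=1$ being pigeonhole. For the step, assume $\omega\to(\omega)^n_k$ and let $c\colon[A]^{n+1}\to k$ with $A$ infinite. Recursively build infinite sets $A=A_0\supseteq A_1\supseteq\cdots$ and points $a_i\in A_i\setminus A_{i+1}$: given $A_i$, choose $a_i\in A_i$ and apply the induction hypothesis to the $k$-colouring $s\mapsto c(\{a_i\}\cup s)$ of $[A_i\setminus\{a_i\}]^n$ to obtain an infinite $A_{i+1}\subseteq A_i\setminus\{a_i\}$ homogeneous for it, with colour $d_i$. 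Since $i\mapsto a_i$ is injective, by pigeonhole some colour $d$ has $d_i=d$ for infinitely many $i$, and then $H=\{a_i:d_i=d\}$ is infinite with $c$ constant $=d$ on $[H]^{n+1}$: for $a_{j_0},\dots,a_{j_n}\in H$ with $j_0<\cdots<j_n$ we have $a_{j_1},\dots,a_{j_n}\in A_{j_0+1}$, so $c(\{a_{j_0},\dots,a_{j_n}\})=d_{j_0}=d$.

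For Erdős--Rado, $\beth_n(\mu)^+\to(\mu^+)^{n+1}_\mu$, I would induct on $n$. The case $n=0$ is $\mu^+\to(\mu^+)^1_\mu$, which holds since $\mu^+$ is regular. For the step, set $\nu=\beth_n(\mu)$ and $\lambda=\beth_{n+1}(\mu)=2^\nu$ (note $\mu\le\nu$), fix $F\colon[\lambda^+]^{n+2}\to\mu$, and aim for an $F$-homogeneous set of size $\mu^+$. The core is to first extract an \emph{end-homogeneous} set $H\subseteq\lambda^+$ of size $\nu^+$ --- i.e. with $F(s\cup\{\alpha\})$ depending only on $s$ whenever $s\in[H]^{n+1}$ and $\alpha\in H$ with $\max s<\alpha$. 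To build $H$, run a transfinite recursion of length $\nu^+$ producing an increasing sequence $\langle a_\xi:\xi<\nu^+\rangle$ of ordinals $<\lambda^+$, maintaining a decreasing family of reservoirs $S_\xi\subseteq\lambda^+$ with $|S_\xi|=\lambda^+$ all of whose members share the same ``$F$-type over $\{a_\eta:\eta<\xi\}$'', where the $F$-type of $x$ over a set $B$ is the function $s\mapsto F(s\cup\{x\})$ on those $s\in[B]^{n+1}$ with $\max s<x$. Since $\{a_\eta:\eta<\xi\}$ has size $\le\nu$ there are at most $\mu^{\nu}=2^{\nu}=\lambda$ such $F$-types, so at successor stages one picks $a_\xi\in S_\xi$ and, using regularity of $\lambda^+$, refines $S_\xi$ to an $F$-type class of size $\lambda^+$ over the enlarged set; limit stages are handled by the usual coherence bookkeeping that keeps the reservoir large and consistent with all earlier type choices. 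Granting $H$, define $G\colon[H]^{n+1}\to\mu$ by $G(s)=F(s\cup\{\alpha\})$ for any $\alpha\in H$ with $\max s<\alpha$; by the induction hypothesis $\nu^+=\beth_n(\mu)^+\to(\mu^+)^{n+1}_\mu$, so $G$ has a homogeneous set $H'\subseteq H$ with $|H'|=\mu^+$ and some colour $c$. Then $H'$ is $F$-homogeneous of colour $c$: any $t\in[H']^{n+2}$ equals $s\cup\{\alpha\}$ with $\alpha=\max t$, whence $F(t)=G(s)=c$.

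The only non-routine ingredient is the construction of the end-homogeneous set in the Erdős--Rado step, and within it the passage through limit ordinals $<\nu^+$: keeping the reservoir of size $\lambda^+$ while remaining coherent with all previously fixed $F$-types (the naive ``take the intersection of the reservoirs'' fails, since an intersection of $\nu$ sets of size $\lambda^+$ can collapse, so one must choose the type classes more carefully). The successor stages, the counting bound $2^\nu=\lambda$, the reduction from end-homogeneity to genuine homogeneity via the induction hypothesis, and the Pigeonhole/Ramsey parts are all straightforward.
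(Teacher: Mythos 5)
The paper itself gives no proof of this Fact (it is quoted as classical), so the comparison is with the standard arguments. Your pigeonhole and Ramsey parts are correct and completely standard, and your overall plan for Erd\H{o}s--Rado --- induct on $n$, extract an end-homogeneous set of size $\beth_n(\mu)^+$ inside $\lambda^+$ where $\lambda=2^{\beth_n(\mu)}$, then apply the induction hypothesis to the induced colouring $G$ --- is exactly the classical strategy, and your reduction from end-homogeneity to homogeneity is fine.

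The gap is precisely the point you flag, and it is not ``bookkeeping'': the reservoir recursion as you set it up does not survive limit stages, and there is no way to ``choose the type classes more carefully'' while insisting that the reservoir keep size $\lambda^+$. At a limit $\xi$ the set of points compatible with all earlier commitments is $\bigcap_{\eta<\xi}S_\eta$, and a decreasing $\nu$-sequence of subsets of $\lambda^+$ each of size $\lambda^+$ can have small or empty intersection; picking the largest type class at each successor does not help, because the class that is large over $\{a_\eta:\eta<\xi\}$ need not extend the classes you already chose. The standard repairs abandon the large reservoir altogether. One correct implementation: build a continuous increasing chain $\langle N_\xi:\xi<\nu^+\rangle$ of subsets of $\lambda^+$ with $|N_\xi|\le\lambda$, where $N_{\xi+1}$ contains, for every $B\in[N_\xi]^{\le\nu}$ and every $F$-type over $B$ realized by some element outside $B$, one such representative (there are at most $\lambda^{\nu}\cdot\mu^{\nu}=\lambda$ pairs to serve, so the size bound holds); since $|\bigcup_{\xi<\nu^+}N_\xi|\le\lambda$, pick an anchor $a\in\lambda^+\setminus\bigcup_\xi N_\xi$ and recursively choose $a_\xi\in N_{\xi+1}\setminus\{a_\eta:\eta<\xi\}$ realizing the same $F$-type as $a$ over $\{a_\eta:\eta<\xi\}$. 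Then $F(s\cup\{a_\xi\})=F(s\cup\{a\})$ whenever $s\subseteq\{a_\eta:\eta<\xi\}$, which is the end-homogeneity you need (with ``later'' taken in the construction order rather than the ordinal order, so in the final verification one removes the last-constructed element of an $(n+2)$-set instead of its maximum); the rest of your argument then goes through verbatim. Alternatively the classical ramification (tree) argument works. As written, however, the essential step of Erd\H{o}s--Rado is missing rather than merely routine.
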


Of course if $\kappa'\geq \kappa$ and $\lambda '\leq \lambda$ then 
\[\kappa\to (\lambda)^n_\mu \implies \kappa'\to (\lambda')_\mu^n\]
so one looks for the smallest $\kappa$ and the biggest $\lambda$.

\begin{lemma}[Ramsey and Compactness]\label{lm:ramseycompactness}
    Let $C$ be a small set, $(I,<)$ and $(J,<)$ two ordered sets. For all sequence $a = (a_i)_{i\in I}$ there exists $b = (b_j)_{j\in J}$ such that $b$ is indiscernible over $C$ and $b$ satisfies $\EM(a/C)$, by which we mean that $\phi(b_{j_1},\ldots,b_{j_n})$ for all $j_1<\ldots<j_n\in J$ and $\phi\in \EM(a/C)$.
\end{lemma}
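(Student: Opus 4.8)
The statement is the classical "extraction of an indiscernible sequence" lemma, proved by the standard Ramsey-plus-compactness argument. The plan is to write down the partial type asserting that the target sequence $(b_j)_{j\in J}$ realizes $\EM(a/C)$ \emph{and} is indiscernible over $C$, and then to show this type is consistent by compactness, using Ramsey's theorem to handle each finite fragment. Since we only assumed Ramsey ($\omega\to(\omega)^n_k$) is available from the stated facts, we first reduce to the case $J=\omega$; once we have an indiscernible sequence indexed by $\omega$ satisfying $\EM(a/C)$, a second (easy) compactness argument stretches it to any linear order $(J,<)$, because indiscernibility and satisfying a fixed $\EM$-type are both expressed by a type in which only the order type of finitely many indices matters.

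\textbf{Key steps.} First I would fix the variables: for $j\in\omega$ let $y_j=(y_{j,i})_{i<\alpha}$ be a fresh copy of the variable tuple, and consider the partial type $\Gamma\big((y_j)_{j<\omega}\big)$ over $C$ consisting of (i) all formulas $\phi(y_{j_1},\dots,y_{j_n})$ for $\phi\in\EM(a/C)$ and $j_1<\dots<j_n$, and (ii) for every $\LL_\alpha(C)$-formula $\psi(\vec x_1,\dots,\vec x_n)$ and all index tuples $j_1<\dots<j_n$, $k_1<\dots<k_n$ in $\omega$, the formula $\psi(y_{j_1},\dots,y_{j_n})\leftrightarrow\psi(y_{k_1},\dots,y_{k_n})$. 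A realization of $\Gamma$ in $\MM$ is exactly a $C$-indiscernible sequence of order type $\omega$ satisfying $\EM(a/C)$. By compactness (and $\kappabar$-saturation) it suffices to show every finite $\Gamma_0\subseteq\Gamma$ is realized in $\MM$. Such a $\Gamma_0$ mentions only finitely many variables, hence only finitely many blocks $y_{j}$, finitely many formulas $\psi_1,\dots,\psi_m$ (each in some fixed arity, and by padding variables we may take a common arity $n$ and common finite sub-tuple $\beta\subseteq\alpha$), and it is implied by: "$(y_j)$ realizes $\EM(a/C)$" together with the indiscernibility clauses for $\psi_1,\dots,\psi_m$ at arity $n$.

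\textbf{Using Ramsey.} Now color $[\omega]^n$ by the function sending $\{i_1<\dots<i_n\}$ to the tuple $\big(\mathbf{1}[\MM\models\psi_\ell(a_{i_1}{\restriction}\beta,\dots,a_{i_n}{\restriction}\beta)]\big)_{\ell\le m}\in\{0,1\}^m$, a finite-valued coloring. By Ramsey's theorem $\omega\to(\omega)^n_{2^m}$, there is an infinite $H\subseteq \omega$ homogeneous for this coloring; enumerate $H$ in increasing order as $h_0<h_1<\dots$. Then setting $b_j:=a_{h_j}$ for the finitely many $j$ appearing in $\Gamma_0$ (padded arbitrarily for the rest) gives a realization: it satisfies $\EM(a/C)$ because the $a_i$ do along any increasing tuple of indices, and it satisfies the relevant indiscernibility clauses by homogeneity of $H$. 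This proves $\Gamma$ is finitely satisfiable, hence consistent, hence realized by some $C$-indiscernible $(b_j)_{j<\omega}$ in $\MM$ satisfying $\EM(a/C)$.

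\textbf{Stretching to $(J,<)$.} Finally, given the arbitrary infinite linear order $(J,<)$, consider the partial type $\Gamma'\big((z_j)_{j\in J}\big)$ over $C$ which, for every finite increasing tuple $j_1<\dots<j_n$ in $J$, asserts $z_{j_1}\dots z_{j_n}\equiv_C b_0b_1\dots b_{n-1}$ (equivalently, includes $\EM(b/C)=\EM(a/C)$ evaluated on these indices). Each finite fragment involves finitely many indices $j_1<\dots<j_N$ and is realized by $b_0,\dots,b_{N-1}$ since $(b_j)_{j<\omega}$ is indiscernible; hence $\Gamma'$ is consistent, and any realization $(b_j)_{j\in J}$ is indiscernible over $C$ and satisfies $\EM(a/C)$, as required.

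\textbf{Main obstacle.} There is no deep obstacle; the only thing requiring care is bookkeeping with the variable blocks and the padding of arities/sub-tuples $\beta_i\subseteq\alpha$ so that a single application of finite-valued Ramsey handles all finitely many formulas in a given fragment at once — i.e. making precise the reduction of an arbitrary finite $\Gamma_0$ to a monochromatic-set argument at one fixed arity $n$.
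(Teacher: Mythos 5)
Your proof is correct and is essentially the paper's argument: both write down the partial type consisting of the $\EM(a/C)$-formulas together with the indiscernibility biconditionals, reduce to a finite fragment by compactness, and realize that fragment by applying Ramsey to the colouring of increasing $n$-tuples from the original sequence according to their $\Delta$-truth values (the paper phrases this as an equivalence relation on $[A]^n$ with at most $2^{\abs{\Delta}}$ classes). The only differences are cosmetic: the paper works with variables indexed by $J$ directly, so your detour through $J=\omega$ plus a second stretching compactness step is harmless but unnecessary, and your colouring should be stated on increasing $n$-tuples from $(I,<)$ rather than on $[\omega]^n$, since $I$ is an arbitrary infinite linear order.
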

\begin{proof}
    Note that ``satisfying $\EM(a/C)$" do not make sense in general since $\EM(a/C)$ is a type in $\alpha\times \omega$ variables and $b$ is a tuple indexed by $\alpha\times J$. It does make sense to ask that $\models \phi(b_{j_1},\ldots,b_{j_n})$, for all $\phi(x_1,\ldots,x_n)\in \EM(a/C)$ and $j_1<\ldots<j_n\in J$. As $b$ will be indiscernible, we actually get that $\EM(a/C)\seq \EM(b/C)$. So $\EM(b/C)$ is a completion of $\EM(a/C)$. We need to show that the following partial type 
 in the set of variables $(x_j)_{j\in J}$ with $\abs{x_j} = \abs{a_i}$ is consistent:
    \begin{align*}
\{\phi(x_{j_1},\ldots,x_{j_n})\mid \phi\in &\EM(a/C), j_1<\ldots<j_n\in J \} \\ &\cup\{\phi(x_{i_1},\ldots,x_{i_n})\leftrightarrow \phi(x_{j_1},\ldots,x_{j_n}) \mid i_1<\ldots<i_n\in J,\\
& j_1<\ldots<j_n\in J, \phi\in \LL_{\alpha\times n}(C),n<\omega\}
    \end{align*}
    By compactness, it is enough to show consistency for all finite subsets of the above. As $\EM(a/C)$ is closed under conjunctions, it is enough to show that the following partial type is consistent:\[\set{\psi(x_1,\ldots,x_n)}\cup\set{\phi(x_{i_1},\ldots,x_{i_n})\leftrightarrow \phi(x_{j_1},\ldots,x_{j_n}) \mid i_1<\ldots<i_n\in J, j_1<\ldots<j_n\in J, \phi\in\Delta}\]
    for $\psi\in \EM(a/C)$ and a finite $\Delta$ over $C$. Let $A = \set{a_i\mid i\in I}$. Any element of $[A]^n$ is now considered as a tuple $\vec a = (a_{i_1},\ldots,a_{i_n})$ where $i_1<\ldots<i_n$. We define the following equivalence relation for $\vec a,\vec a'\in [A]^n$:
    \[\vec a\sim\vec a'\iff \models \phi(\vec a) \iff \models \phi(\vec a'), \text{ for all $\phi\in \Delta$}.\]
    Observe that there are at most $2^{\abs{\Delta}}$ equivalent classes, 
    hence by Ramsey's Theorem there exists an infinite subset $A'$ included in one single class. By enumerating $A'$ with the induced order, we get that the partial type above is finitely satisfiable.
\end{proof}

\begin{exercise}[*]
    Let $(a_i)_{i<\omega}$ be a $C$-indiscernible sequence and $\alpha>\omega$. Then there exists $(a_i)_{\omega\leq i<\alpha}$ such that $(a_i)_{i<\alpha}$ is $C$-indiscernible.  
\end{exercise}

\begin{lemma}[Erd\H{o}s-Rado and compactness]\label{lm:erdosradocompactness}
    Let $\kappa> \abs{T}+\abs{C}$ and $\lambda = \beth_{(2^\kappa)^+}$. If $(a_i)_{i<\lambda}$ is a sequence of tuples with $\abs{a_i}\leq \kappa$. Then there is an $C$-indiscernible sequence $(b_i)_{i<\omega}$ such that for each $n$ there are $i_0<\ldots<i_n<\lambda$ with 
    \[b_0,\ldots,b_{n}\equiv_C a_{i_0},\ldots,a_{i_n}.\]
    In particular $b$ satisfies the $\EM$-type of $a$ over $C$.
\end{lemma}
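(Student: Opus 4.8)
The plan is to extract from $(a_i)_{i<\lambda}$ a \emph{subsequence} that is already $C$-indiscernible. Unlike the previous lemma, where compactness only gave an indiscernible sequence realising $\EM(a/C)$, here we want each initial segment of $b$ to have the exact $C$-type of a genuine increasing tuple of $a$'s, so a real Erd\H{o}s--Rado extraction is needed rather than a pure compactness argument.

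First I would bound the number of colours: since $\LL$ is countable and $\abs{C}<\kappa$, there are at most $\kappa$ many $\LL(C)$-formulas in $\kappa$ variables, hence at most $\mu:=2^{\kappa}$ complete $C$-types in finitely many variables each of length $\le\kappa$. For $n\ge 1$ I would colour $[\lambda]^{n}$ by the map $f_n$ sending $\set{i_1<\dots<i_n}$ to $\tp(a_{i_1},\dots,a_{i_n}/C)$; this uses at most $\mu$ colours. The combinatorial heart is then to produce an infinite set $H=\set{h_0<h_1<\cdots}\seq\lambda$ homogeneous for all these colourings simultaneously, i.e.\ such that for every $k$ all increasing $k$-tuples from $H$ have the same complete $C$-type. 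I would obtain this by the standard iterated application of Erd\H{o}s--Rado, $\beth_n^{+}(\mu)\to(\mu^{+})^{n+1}_{\mu}$: thin $\lambda$ successively to sets homogeneous for arity $1$, then arity $2$, and so on. The delicate point --- and the reason for the hypothesis $\lambda=\beth_{(2^{\kappa})^{+}}$ --- is that each Erd\H{o}s--Rado step shrinks the ambient set, so one must start high enough that all $\omega$ many stages survive; this is guaranteed because $\lambda\ge\beth_{\alpha}(\mu)$ for every $\alpha<(2^{\kappa})^{+}$, leaving ample room for the ordinal bookkeeping. (Alternatively, one handles the arities $\le N$ at once by a single Erd\H{o}s--Rado on $[\lambda]^{N}$ coloured by the full $(\le N)$-type diagram of the tuple --- still $\le\mu$ colours --- then deletes the extreme elements of the homogeneous set, a short position-juggling argument making $f_1,\dots,f_N$ constant; one then threads these finitely-homogeneous blocks together and realises the resulting limit type, which is where compactness enters, exactly as in the previous lemma.)

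Finally I would set $b_i:=a_{h_i}$. By homogeneity of $H$ and the characterisation of indiscernibility through equality of types of increasing tuples, $(b_i)_{i<\omega}$ is $C$-indiscernible; and for each $n$ we have $(b_0,\dots,b_n)=(a_{h_0},\dots,a_{h_n})$ with $h_0<\cdots<h_n<\lambda$, so $b_0,\dots,b_n\equiv_C a_{h_0},\dots,a_{h_n}$ outright. For the last clause, if $\phi\in\EM(a/C)$ then $\models\phi(a_{i_1},\dots,a_{i_n})$ for all increasing tuples, in particular $\models\phi(a_{h_0},\dots,a_{h_{n-1}})=\phi(b_0,\dots,b_{n-1})$, and since $b$ is indiscernible this yields $\EM(a/C)\seq\EM(b/C)$. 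The sole genuine obstacle is the cardinal bookkeeping in the Erd\H{o}s--Rado iteration; everything else is routine.
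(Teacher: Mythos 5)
There is a genuine gap, and it lies exactly where you locate the ``sole genuine obstacle'': the cardinal bookkeeping of the iterated extraction does not work, and in fact cannot work. Your main plan is to find a single infinite set $H\seq\lambda$ homogeneous for the colourings $f_n$ of $[\lambda]^n$ \emph{for all $n$ simultaneously}, and to take $b_i:=a_{h_i}$ as an actual subsequence. That is the partition relation $\lambda\to(\omega)^{<\omega}_{2^\kappa}$, which is the defining property of an Erd\H{o}s cardinal; it is a large-cardinal hypothesis, not provable in ZFC, and certainly not implied by $\lambda=\beth_{(2^\kappa)^+}$. Concretely, the successive-thinning step fails at the second stage already: Erd\H{o}s--Rado applied to a colouring of $[\lambda]^{n+1}$ with $\mu=2^\kappa$ colours only guarantees a homogeneous set of size $\mu^+=(2^\kappa)^+$, and $(2^\kappa)^+<\beth_2^+(\mu)$, so there is no room left to treat arity $n+2$; having $\lambda\geq\beth_\alpha(\mu)$ for all $\alpha<(2^\kappa)^+$ does not help, because the collapse to $\mu^+$ happens at the first application, not gradually. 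Note also that the lemma is deliberately weaker than what you aim for: the witnessing indices $i_0<\ldots<i_n$ are allowed to depend on $n$, precisely because $(b_i)_{i<\omega}$ is not going to be a subsequence of $(a_i)_{i<\lambda}$.

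The paper's proof (Exercise \ref{exo:erdosrado}) avoids this by never extracting a subsequence. It builds, by induction on $n$, a single coherent chain of types $p_n\in S_n(C)$ with $p_n\seq p_{n+1}$, together with $(2^\kappa)^+$ many nested sets $X_i^{n+1}\seq X_i^n\seq\lambda$ of staggered sizes $>\beth_{2^\kappa+\alpha}$, so that every increasing $n$-tuple from any $X_i^n$ realises $p_n$; Erd\H{o}s--Rado is applied inside each $X_i^n$ separately (which is why the sizes must grow along $I_n$), and a pigeonhole over the $(2^\kappa)^+$ indices keeps a single type $p_{n+1}$. The sequence $b$ is then any realisation of $\bigcup_n p_n$, obtained by compactness. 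Your parenthetical alternative (one Erd\H{o}s--Rado per arity $N$, then realise a limit type) is in this spirit but skips the crucial coherence issue: homogeneous sets produced by independent applications at different arities $N$ need not induce compatible types $p_k$ for $k\le N$, so the ``limit type'' you propose to realise is not obviously consistent. Making the types cohere --- either by working inside the previous homogeneous set (impossible for size reasons, as above) or by the paper's parallel nested construction --- is the actual content of the proof, so this step needs to be supplied rather than asserted.
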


\begin{proof}
    See Exercise \ref{exo:erdosrado}.
\end{proof}

\begin{theorem}\label{thm:indiscernibleerdorado}
    Assume that $a = (a_i)_{i<\omega}$ is $C$-indiscernible. Then there exists a (small) model $M\supseteq C$ such that $a$ is $M$-indiscernible. In particular $a$ is $\acl(C)$-indiscernible.
\end{theorem}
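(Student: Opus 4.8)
The plan is to obtain $M$ as a small increasing union $M=\bigcup_{n<\omega}M_n$ with $M_0=C$, arranging that each $M_{n+1}\supseteq M_n$ is small, realises every consistent $\LL_1(M_n)$-formula (so that $M$ will pass Tarski's test, Exercise~\ref{exo:domainofelementarysubstructure}), and is chosen so that $a$ is still indiscernible over $M_{n+1}$. Everything reduces to one step: \emph{if $(b_i)_{i<\omega}$ is indiscernible over a small set $B$ and $\theta(x)\in\LL_1(B)$ is consistent, then there is $d\models\theta$ such that $(b_i)_{i<\omega}$ is indiscernible over $Bd$.}

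For this one-step statement I would fix any $d_0\models\theta$ and, using Lemma~\ref{lm:ramseycompactness}, produce a sequence $(b_i')_{i<\omega}$ that is indiscernible over $Bd_0$ and satisfies $\EM((b_i)_{i<\omega}/Bd_0)$. Then $(b_i')_{i<\omega}$ is a fortiori indiscernible over $B$ and satisfies $\EM((b_i)_{i<\omega}/B)$; since the latter $\EM$-type is complete (because $(b_i)_{i<\omega}$ is $B$-indiscernible) it must equal $\EM((b_i')_{i<\omega}/B)$. Two $B$-indiscernible $\omega$-sequences with the same $\EM$-type over $B$ have the same type over $B$ --- by indiscernibility the type of any finite subtuple over $B$ is read off from the $\EM$-type --- so by strong $\kappabar$-homogeneity there is $\sigma\in\Aut(\MM/B)$ with $\sigma(b_i')=b_i$ for all $i<\omega$. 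Setting $d:=\sigma(d_0)$ gives $d\equiv_B d_0$, hence $\models\theta(d)$, and transporting ``$(b_i')_{i<\omega}$ is $Bd_0$-indiscernible'' along $\sigma$ gives ``$(b_i)_{i<\omega}$ is $Bd$-indiscernible''.

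Granting this, $M_{n+1}$ is built from $M_n$ by a recursion of length at most $|M_n|+|T|$ through the consistent $\LL_1(M_n)$-formulas, adjoining one witness at a time via the one-step statement applied to the set built so far (which stays small); limit stages are harmless, because indiscernibility over a union of an increasing chain of parameter sets follows from indiscernibility over each member, a formula having only finitely many parameters. Each $M_n$ then has size at most $|C|+|T|+\aleph_0$, so $M=\bigcup_nM_n$ is small, contains $C=M_0$, and $a$ is indiscernible over $M$; and every consistent $\LL_1(M)$-formula has its parameters in some $M_n$, hence a realisation in $M_{n+1}\seq M$, so $M$ is the domain of an elementary substructure of $\MM$ by Tarski's test, in particular $M\models T$. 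For the last assertion, $\acl(C)\seq\acl(M)=M$ --- an elementary submodel is algebraically closed, since an algebraic $\LL(M)$-formula has the same finite set of realisations in $M$ and in $\MM$ --- so indiscernibility over $M$ implies indiscernibility over $\acl(C)$.

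The one genuinely non-routine point is the one-step statement: one cannot simply adjoin an arbitrary witness to the base, as that may destroy indiscernibility, and the remedy is to first build an indiscernible copy of the sequence \emph{over} $Bd_0$ by Ramsey and compactness and then pull it back onto the original sequence by an automorphism fixing $B$. (Alternatively one could run an Erd\H{o}s-Rado extraction on a long $C$-indiscernible extension of $a$ --- presumably the reason this statement sits just after Lemma~\ref{lm:erdosradocompactness} --- but the argument above already suffices.)
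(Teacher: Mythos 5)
Your proof is correct, but it takes a genuinely different route from the one in the text. The proof given here fixes an arbitrary small model $N\supseteq C$ in advance, extends $a$ to a sequence of length $\beth_{(2^\kappa)^+}$, and uses the Erd\H{o}s--Rado extraction (Lemma \ref{lm:erdosradocompactness}) to produce an $N$-indiscernible sequence $b$ whose finite subtuples realise types of subtuples of $a$ over $N$; since $b$ then realises the complete type $\EM(a/C)$, one has $b\equiv_C a$ and a single automorphism over $C$ transports $N$ to the desired $M$. You instead build $M$ from scratch as an increasing union passing Tarski's test (Exercise \ref{exo:domainofelementarysubstructure}), and the only nontrivial ingredient is your one-step lemma, which you prove correctly using only Ramsey and compactness (Lemma \ref{lm:ramseycompactness}): produce a copy of the sequence indiscernible over $Bd_0$, note it has the same (complete) $\EM$-type over $B$ as the original, and pull it back by an automorphism over $B$, moving the witness rather than the sequence. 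The trade-offs: your argument avoids Erd\H{o}s--Rado entirely (in the spirit of the Skolemization exercise attributed to N.~Ramsey, but without Skolem functions), at the cost of a transfinite bookkeeping construction and cardinality checks; the text's argument is shorter given Lemma \ref{lm:erdosradocompactness}, and that lemma's stronger conclusion (realising actual types over the model $N$, not merely an $\EM$-type over $C$) is what gets reused in arguments such as Lemma \ref{lm:LOCcharactergivesMS}, where your weaker Ramsey-based extraction would not suffice. It is also worth noting that for this particular theorem your one-step idea can be run in one shot: applying Lemma \ref{lm:ramseycompactness} with base $N$ to the original sequence already yields an $N$-indiscernible realisation of the complete $\EM(a/C)$, so the text's proof goes through verbatim with Ramsey in place of Erd\H{o}s--Rado, which would shortcut your chain construction.
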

\begin{proof}
    Let $N$ be any small model containing $C$. Let $\kappa = \max\set{\abs{a_i}, \abs{N},\abs{T}}$ and $\lambda = \beth_{(2^\kappa)^+}$. Let $(a_i)_{i<\lambda}$ be an extension of $a$ of length $\lambda$. By Lemma \ref{lm:erdosradocompactness} there exists $(b_i)_{i<\omega}$ indiscernible over $N$ such that for all $n<\omega$ there exists $i_0<\ldots<i_n<\lambda$ with $b_0\ldots b_n\equiv_N a_{i_0}\ldots a_{i_n}$. In particular, as $(a_i)_{i<\lambda}$ is $C$-indiscernible, $(b_i)_{i<\omega}$ satisfies the $\EM(a/C)$, which is complete as $a$ is $C$-indiscernible. It follows that $b\equiv_C a_{<\omega}$. Let $\sigma$ be an automorphism over $C$ sending $b$ to $a$, and let $M = \sigma(N)$. Observe that $M$ is a small model containing $C$. Then $aM\equiv_C bN$ hence $a$ is $M$-indiscernible. For the rest, observe that for any model $M\supseteq C$ we have $\acl(C)\seq M$. 
\end{proof}

\begin{exercise}[Hard]\label{exo:erdosrado}
    We prove Lemma \ref{lm:erdosradocompactness}.
    \begin{enumerate}
        \item We prove that there exists the following data by induction. For $n<\omega$, a type $p_n(x_1,\ldots,x_n)\in S_n(C)$, $I_n\seq (2^{\kappa})^+$ with $\abs{I_n} = (2^{\kappa})^+$ and $(X_i^n)_{i\in I_n}$ with $X_i^n\seq \lambda$ such that
        \begin{itemize}
            \item $X_i^{n+1}\seq X_i^n$ for all $i\in I_{n+1}$
            \item $\abs{X_i^n}> \beth_{2^\kappa+\alpha}$, if $i$ is the $\alpha$'s element of $I_n$
            \item $(a_{j_1},\ldots,a_{j_n})\models p_n$ for all $j_1<\ldots<j_n\in X_i^n$ for all $i\in I_n$.
        \end{itemize}
        \begin{enumerate}
            \item Start with $I_0 = (2^\kappa)^+$, $X_i^0=\lambda$ and $p_0 = \emptyset$, check that the three conditions above are satisfied. 
            \end{enumerate}
            Assume that $I_n $, $X_i^n$ $(i\in I_n)$ and $p_n$ have been constructed. Let $(\xi_\alpha)_{\alpha<(a2^\kappa)^+}$ be an increasing enumeration of $I_n$ and set $I_n' = \set{\xi_{\alpha+n}\mid \alpha<(2^\kappa)^+}$. Fix $i = \xi_{\alpha+n}\in I_n'$. 
            \begin{enumerate}
                \item Prove that there are at most $2^\kappa$ types $\tp(a_{j_1},\ldots,a_{j_n}/C)$, for $j_1<\ldots<j_n\in X_i^n$.
                \item Deduce from Erd\H{o}s-Rado that there is some $X_i^{n+1}\seq X_i^n$ and $p_{n+1}^i\in S_{n+1}(C)$ such that $\abs{X_i^{n+1}}>\beth_{2^\kappa+\alpha}$ and $(a_{j_1},\ldots,a_{j_n})\models p_{n+1}^i$, for all $j_1<\ldots <j_n \in X_i^{n+1}$.
                (\textit{Hint.} Observe that $\beth_{2^\kappa+\alpha+n} = \beth_n(\beth_{2^\kappa+\alpha})$.)
                \item Deduce that there exists $I_{n+1}\seq I_n'$ such that $\abs{I_{n+1}} = (2^\kappa)^+$ and $p_{n+1}^i = p_{n+1}^j$ for all $i,j\in I_{n+1}$. (\textit{Hint.} Observe that $\abs{I_{n}'} = (2^\kappa)^+$.)
            \end{enumerate}
            \item For $x = (x_i)_{i<\omega}$ and $p(x) = \bigcup_{n<\omega}p_n$. Prove that any $b\models p$ satisfies the conclusion of Lemma \ref{lm:erdosradocompactness}.
        \end{enumerate}
\end{exercise}

\subsection{Morley sequences and Adler's theorem.}

\begin{definition}
    Let $\ind$ be in independence relation. We say that $(a_i)_{i<\alpha}$ is an \textit{$\ind$-Morley sequence over $C$} if $a_i\ind_C a_{<i}$ for all $i<\alpha$. For $p\in S(B)$ we say that $(a_i)_{i<\alpha}$ is a $\ind$-Morley sequence over $C$ \textit{in $p$} if $a_i\models p$ for all $i<\alpha$. If $B = C$ we call $(a_i)_{i<\alpha}$ a \textit{Morley sequence in $\tp(a_0/C)$}. 
\end{definition}
In general a Morley sequence is by default indexed by $\omega$.

\begin{exercise}
    Prove that if $\ind$ satisfies right \ref{MON}, \ref{EXT} and $a\ind_C B$ for $C\seq B$. Then there exists a Morley sequence over $C$ in $\tp(a/B)$.
\end{exercise}

\begin{exercise}
    Prove that if $\ind$ satisfies \ref{FEX} then for all $a,C$ there exists a Morley sequence in $\tp(a/C)$.
\end{exercise}

\begin{exercise}[*]
    Prove that if $\ind$ is invariant and satisfies (right) \ref{MON} and \ref{STAT}, then any Morley sequence in $\tp(a/C)$ is indiscernible over $C$. 
\end{exercise}

\begin{definition}
    Let $\ind$ be in independence relation, we denote by $\indi\opp\quad$ the relation defined by \[A\indi\opp_C\quad B\iff B\ind_C A.\]
\end{definition}

\begin{exercise}
    If $\ind$ satisfies left-sided \ref{MON}/\ref{NOR}/\ref{BMON}/\ref{TRA} then $\indi\opp\ $ satisfies the right-sided version of those axioms.
\end{exercise}

\begin{lemma}\label{lemma:adlerthmoppmorleysequenceexistence}
    Assume that $\ind$ is invariant and satisfies (left and right) \ref{NOR}, right \ref{MON}, right \ref{BMON}, left \ref{TRA}. If there exists arbitrarily long $\ind$-Morley sequence in $\tp(a/B)$ over $C$ then there is a $B$-indiscernible $\indi\opp\ \ $-Morley sequence in $\tp(a/B)$ over $C$.
\end{lemma}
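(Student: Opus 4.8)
The plan is to take a very long $\ind$-Morley sequence, extract an indiscernible sequence from it, and then \emph{reverse} that sequence — the reversal being forced on us because the conclusion concerns $\indi\opp$ rather than $\ind$, and an ordinary Ramsey extraction would only return another $\ind$-Morley sequence. The heart of the matter is the following auxiliary fact, where the asymmetric list of axioms is used: \emph{if $(b_0,\dots,b_n)$ is a finite $\ind$-Morley sequence over $C$, i.e.\ $b_k\ind_C b_0\cdots b_{k-1}$ for $1\le k\le n$, then $Cb_1\cdots b_n\ind_C b_0$.} I would prove this by induction on $n$: for $n=1$, $b_1\ind_C b_0$ gives $Cb_1\ind_C b_0$ by left \ref{NOR}; for the step, from $b_n\ind_C b_0\cdots b_{n-1}$ one gets $b_n\ind_{Cb_1\cdots b_{n-1}}b_0$ by right \ref{BMON} followed by right \ref{MON}, hence $Cb_1\cdots b_n\ind_{Cb_1\cdots b_{n-1}}b_0$ by left \ref{NOR}, and chaining this with the induction hypothesis $Cb_1\cdots b_{n-1}\ind_C b_0$ through left \ref{TRA} yields $Cb_1\cdots b_n\ind_C b_0$. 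I would also record the easy remark that every increasing subtuple $(a_{j_1},\dots,a_{j_n})$ of an $\ind$-Morley sequence over $C$ is again an $\ind$-Morley sequence over $C$, by right \ref{MON}.

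Now fix $\kappa>\abs T+\abs B+\abs C+\abs a$ and $\lambda=\beth_{(2^\kappa)^+}$, and let $(a_j)_{j<\lambda}$ be an $\ind$-Morley sequence in $\tp(a/B)$ over $C$. Applying Lemma \ref{lm:erdosradocompactness} with base $BC$ produces a $BC$-indiscernible sequence $(b_i)_{i<\omega}$ such that for each $n$ there are $i_0<\dots<i_n$ with $b_0\cdots b_n\equiv_{BC}a_{i_0}\cdots a_{i_n}$; in particular $b_i\equiv_B a$ for all $i$, and since the $\ind$-Morley-ness of the subtuple $(a_{i_0},\dots,a_{i_n})$ transfers by invariance (the relevant automorphism fixes $BC\supseteq C$), $(b_i)_{i<\omega}$ is itself an $\ind$-Morley sequence over $C$ in $\tp(a/B)$. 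Applying the auxiliary fact to each initial segment gives $Cb_1\cdots b_n\ind_C b_0$ for every $n$.

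It remains to reverse $(b_i)_{i<\omega}$. By compactness there is a $BC$-indiscernible sequence $(c_i)_{i<\omega}$ with $(c_0,\dots,c_n)\equiv_{BC}(b_n,b_{n-1},\dots,b_0)$ for every $n$; consistency of the defining partial type follows because every finite fragment is realised by $(b_N,b_{N-1},\dots,b_0)$ for $N$ large, using that all decreasing $m$-tuples from the $BC$-indiscernible sequence $(b_i)$ have the same type over $BC$. Then $(c_i)_{i<\omega}$ is $B$-indiscernible, and reading off $1$-types, $c_i\equiv_{BC}b_0$, so $c_i\equiv_B a$ for all $i$. Finally, fixing $n$ and choosing $\sigma\in\Aut(\MM/BC)$ with $\sigma(b_{n-k})=c_k$ for $0\le k\le n$, invariance converts $Cb_1\cdots b_n\ind_C b_0$ into $Cc_0c_1\cdots c_{n-1}\ind_C c_n$, i.e.\ $c_{<n}\ind_C c_n$. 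Hence $(c_i)_{i<\omega}$ is a $B$-indiscernible $\indi\opp$-Morley sequence in $\tp(a/B)$ over $C$, as required.

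The step I expect to be the main obstacle is the auxiliary inductive fact: it is exactly there that right \ref{BMON} is needed to push the base up and left \ref{TRA} to accumulate the independences on the left, and the absence of \ref{SYM} is what obliges us to carry $C$ along on the left throughout — so the conclusion that comes out is literally $Cc_{<n}\ind_C c_n$, which is the $\indi\opp$-Morley condition once $C$ is absorbed into the left side in the usual way. The reversal is routine but must be arranged with care, since Lemma \ref{lm:erdosradocompactness} only extracts ``increasing-to-increasing'', so one genuinely has to turn an $\omega$-indexed indiscernible sequence around — which is harmless precisely because indiscernibility makes all decreasing $m$-tuples equivalent over $BC$.
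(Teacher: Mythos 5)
Your proof is correct and takes essentially the same route as the paper's: extract an indiscernible $\ind$-Morley sequence by Erd\H{o}s--Rado (you work over $BC$, the paper over $B$), reverse it by compactness, and establish the $\indi\opp\ $-Morley property by the same calculus with \ref{NOR}, \ref{BMON}, \ref{MON} and \ref{TRA} --- the only cosmetic difference being that you run the accumulation induction on the original sequence and transport it to the reversal by an automorphism, whereas the paper runs it directly on the reversed sequence using its indiscernibility. The residual $C$ on the left of your final $Cc_{<n}\ind_C c_n$, which you flag yourself, arises in exactly the same way in the paper's own iteration (its left-transitivity step really yields $Cb_0'b_1'\ind_C b_2'\ldots b_n'$, and left \ref{MON} is available in all downstream applications), so it is not a gap relative to the paper.
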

\begin{proof}
   Let $(a_i)_{i<\kappa}$ be a Morley sequence in $\tp(a/B)$ over $C$ for $\kappa$ big enough. Using Erd\H{o}s-Rado, there exists a sequence $b = (b_i)_{i<\omega}$ which is indiscernible over $B$ and such that for all $n<\omega$ there exists $i_1<\ldots<i_n$ such that $b_0\ldots b_{n-1} \equiv_B a_{i_1}\ldots a_{i_n}$. As $a_m\ind_C a_{<m}$ for all $m<\omega$, we have $b_n\ind_C b_{<n}$ by invariance and right \ref{MON}, hence $(b_i)_{i<\omega}$ is a $\ind$-Morley sequence over $C$ in $\tp(a/B)$ which is $B$-indiscernible. We now ``invert" the sequence. 
   \begin{claim}\label{claim:inversionofindiscernible}
       For any indiscernible sequence $(b_i)_{i<\omega}$ over $B$, there exists $(b_i')_{i<\omega}$ such that $b_0'\ldots b_n'\equiv_B b_n\ldots b_0$, for all $n<\omega$.
   \end{claim}
   \begin{proof}
       Let $x = (x_i)_{i<\omega}$ be variables with $\abs{x_i} = \abs{b_i}$. Let $p_n(x_0,\ldots,x_n) = \tp(b_n,\ldots,b_0)$. To prove the claim it is enough to find a realisation of $p(x) := \bigcup_n p(x_0,\ldots,x_n)$. First observe that $p_n(x_0,\ldots,x_n)\seq p_{n+1}(x_0,\ldots,x_{n+1})$: as $b_n\ldots b_0\equiv_B b_{n+1}\ldots b_1$, if $\phi(x_0,\ldots,x_n)\in p_n$ then $\models \phi(b_{n+1},\ldots,b_1)$ hence $\phi(x_0,\ldots,x_n)\in p_{n+1}$. It follows that $p$ is finitely consistent since $b_n\ldots b_0\models p_n$, for any $n$, so we conclude by compactness.
   \end{proof}
   Let $b' = (b_i')_{i<\omega}$ be the inverse of $b$ as in the claim. Then $b'$ is also indiscernible over $B$. As $b_0'\equiv_B b_0$, $b'$ is a sequence of realisations of $\tp(a/B)$. It remains to check that $b'$ is $\indi\opp\ $ Morley over $C$. By invariance, $b_0'\ind_C b_1'\ldots b_n'$ and as $b_0'\ldots b_n'\equiv_C b_1'\ldots b_{n+1}'$ we have by invariance $b_1'\ind_C b_2'\ldots b_{n+1}'$ and by right \ref{MON} we have $b_1'\ind_C b_2'\ldots b_{n}'$. By right \ref{NOR}, right \ref{BMON} and right \ref{MON} we get $b_0'\ind_{Cb_1'} b_2'\ldots b_n'$. Using left \ref{NOR} and left \ref{TRA} we have $b_0'b_1'\ind_C b_2'\ldots b_n'$. Iterating the last operation we get $b_n'\indi\opp_C b_{<n}'$ so $b'$ is an $\indi\opp\ $-Morley sequence.
\end{proof}

\begin{remark}
    One may ask the following question when confronted to the previous proof: do we really need Erd\H{o}s-Rado or Ramsey is enough? If one get the sequence $(b_i)_{i<\omega}$ via Ramsey and compactness instead of Erd\H{o}s-Rado, then $(b_i)_{i<\omega}$ satisfies the EM-type of $(a_i)_{i<\omega}$. The point would be to conclude that $(b_i)_i$ is also Morley over $C$. In other words the question is: if $a$ is a $\ind$-Morley sequence over $C$ and $b\models \EM(a/C)$, is $b$ a Morley sequence over $C$? Of course here the $\EM$-type is not necessarily complete (otherwise one conclude by invariance).
\end{remark}

\begin{remark}\label{rk:campenhausen} $\text{ }$\\
\noindent\begin{minipage}{0.8\textwidth}
Here is another proof of Claim \ref{claim:inversionofindiscernible} suggested by the Baron of Campenhausen. Start with a $B$-indiscernible sequence $b = (b_i)_{i<\omega}$. By Ramsey and compactness there exists a $B$-indiscernible sequence $b'' = (b_i'')_{i\in \Z}$ satisfying the EM-type of $b$ over $B$. Then for each $n\in \N$ we have 
    \[b_{-n}''\ldots b_0''\equiv_B b_0''\ldots b_n''\equiv_B b_0\ldots b_n\]
    hence by setting $b_n':= b_{-n}''$ for each $n\in \N$ we get $b_0'\ldots b_n'\equiv_B b_n\ldots b_0$.
\end{minipage}%
\begin{minipage}{0.2\textwidth}
\begin{center}
    \fbox{\includegraphics[scale=.24]{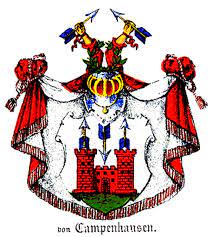}}
\end{center}
\end{minipage}
\end{remark}


\begin{fact} Some set-theoretic reminders about regular cardinals.
    \begin{enumerate}
        \item If $\kappa, \lambda$ are cardinals, we say that $\kappa$ is \emph{cofinal in $\lambda$} if there exists a subset $A\seq \lambda$ of cardinality $\kappa$ such that for all $i<\lambda$ there is $j\in A$ such that $i\leq j$. The smallest $\kappa$ which is cofinal in $\lambda$ is called the \textit{cofinality of $\lambda$}, denoted $\cf(\lambda)$. For example $\cf(n) = 1$ and $\cf(\aleph_0) = \aleph_0$. Of course, $\cf(\lambda)\leq \lambda$.
        \item A cardinal $\kappa$ is called \emph{regular} if $\cf(\kappa) = \kappa$.
        \item Every successor cardinal is regular. In particular, for any cardinal $\kappa$ there exists $\lambda\geq \kappa$ which is regular.
    \end{enumerate}
\end{fact}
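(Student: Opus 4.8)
Only the last item has genuine content; the rest is immediate from the definition (the ordinal $\lambda$ is cofinal in itself, so $\cf(\lambda)\le\lambda$; for finite $n\ge 1$ the singleton $\{n-1\}$ is cofinal in $n$, so $\cf(n)=1$; and a finite subset of $\omega$ always has a largest element, hence is never cofinal in $\omega$, giving $\cf(\aleph_0)=\aleph_0$). So the plan is to prove that every infinite successor cardinal is regular, and then read off the ``in particular''.

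Write $\kappa=\mu^+$ with $\mu$ an infinite cardinal. I would argue by contradiction: suppose $\lambda:=\cf(\kappa)<\kappa$, which since $\kappa=\mu^+$ forces $\lambda\le\mu$. Fix $A\seq\kappa$ with $\abs A=\lambda$ cofinal in $\kappa$. First, since $\kappa$ is a limit ordinal with no largest element and $A$ is cofinal, $\kappa=\bigcup_{\alpha\in A}\alpha$. Second, since $\kappa=\mu^+$ is by definition the least ordinal of cardinality strictly above $\mu$, every $\alpha\in A$ satisfies $\abs\alpha\le\mu$. Combining these by cardinal arithmetic,
\[
\mu^+=\abs\kappa=\Bigl\lvert\bigcup_{\alpha\in A}\alpha\Bigr\rvert\le\sum_{\alpha\in A}\abs\alpha\le\lambda\cdot\mu=\mu,
\]
a contradiction (the final equality because $\lambda\le\mu$ with $\mu$ infinite, so the product of these two cardinals is their maximum). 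Hence $\cf(\kappa)=\kappa$.

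For the final clause: given any cardinal $\kappa$, take $\lambda=\kappa^+$; this is a successor cardinal, hence regular by the above, and $\lambda\ge\kappa$. (If one wishes to include finite $\kappa$, just take $\lambda=\aleph_1=\aleph_0^+$ instead; note that finite $n\ge 2$ are not regular, so ``successor cardinal'' here is understood to mean infinite successor cardinal.)

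The one step that is not pure bookkeeping is the chain $\bigl\lvert\bigcup_{\alpha\in A}\alpha\bigr\rvert\le\sum_{\alpha\in A}\abs\alpha\le\lambda\cdot\mu=\mu$: it uses the axiom of choice (to pick, for each $\alpha\in A$, a surjection $\abs\alpha\to\alpha$, and to have the infinite cardinal sum behave well) together with the standard fact that the product of two infinite cardinals equals the larger of the two. Everything else just unwinds the definitions of cofinality and of $\mu^+$.
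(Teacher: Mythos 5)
Your proof is correct, but note that the paper offers no argument to compare against: this statement is labelled a \emph{Fact} and is explicitly a set-theoretic reminder, quoted without proof as background for Lemma \ref{lemma:adlerthmoppmorleysequenceassumption} and Lemma \ref{lm:LOCcharactergivesMS}. What you wrote is the standard textbook argument — regularity of $\mu^+$ via the bound $\mu^+=\bigl\lvert\bigcup_{\alpha\in A}\alpha\bigr\rvert\le\lambda\cdot\mu=\mu$ for a cofinal $A$ of size $\lambda\le\mu$, using choice — and your housekeeping remarks (the computation of $\cf(n)$ and $\cf(\aleph_0)$, and the caveat that ``successor cardinal'' is to be read as infinite successor cardinal, since finite $n\ge 2$ have $\cf(n)=1$) are accurate and consistent with how the Fact is used in the text.
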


\begin{lemma}\label{lemma:adlerthmoppmorleysequenceassumption}
    Assume that $\ind$ is invariant and satisfies left and right \ref{MON}, left \ref{NOR}, right \ref{BMON}, left \ref{TRA}, \ref{FIN} and \ref{LOC}. If there is a $B$-indiscernible $\indi\opp\ \ $-Morley sequence over $C\seq B$ in $\tp(a/B)$, then $a\ind^\opp_C B$. 
\end{lemma}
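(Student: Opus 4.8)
The goal is to show that if $(a_i)_{i<\alpha}$ is a $B$-indiscernible $\indi\opp$-Morley sequence over $C$ in $\tp(a/B)$, then $a\indi\opp_C B$, i.e. $B\ind_C a$. The natural strategy is to first extend the sequence to a very long indiscernible sequence, indexed by a regular cardinal $\lambda$ chosen bigger than $|B|$, using the standard "stretching" of indiscernible sequences (Ramsey/compactness, or the exercise following Lemma \ref{lm:erdosradocompactness}). Since the $\indi\opp$-Morley property is expressed by invariant conditions on finite subtuples, the stretched sequence $(a_i)_{i<\lambda}$ is still $B$-indiscernible and still $\indi\opp$-Morley over $C$: for each $i$ we have $a_{<i}\ind_C a_i$, and by left \ref{NOR}, left \ref{TRA}, and left \ref{MON} (as in the end of the proof of Lemma \ref{lemma:adlerthmoppmorleysequenceexistence}) this upgrades to $a_{<i}\ind_C a_{\geq i}$ being pushed around — more precisely one gets that for any finite set of indices the corresponding finite subtuple behaves well.

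The key step is to apply \ref{LOC}. Apply local character to the set $B$ (or rather to a finite subtuple of $B$, combined with \ref{FIN}): there is a cardinal $\kappa(B)$ such that for every set $D$ there is $D_0\seq D$ with $|D_0|<\kappa(B)$ and $B\ind_{D_0} D$. Choose $\lambda$ regular with $\lambda>\kappa(B)+|B|+|C|$ and take $D = Ca_{<\lambda}$. Then there is a small $D_0$, and by regularity of $\lambda$ there is $i^*<\lambda$ with $D_0\seq Ca_{<i^*}$, so $B\ind_{Ca_{<i^*}} Ca_{<\lambda}$, hence by right \ref{MON} and left \ref{NOR} (to absorb $C$ into the base) we may write $B\ind_{Ca_{<i^*}} a_{i^*}$. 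On the other hand, from the $\indi\opp$-Morley property, $a_{<i^*}\ind_C a_{i^*}$, i.e. $a_{i^*}\indi\opp_C a_{<i^*}$; by \ref{SYM}? — no, we do not have symmetry, so instead I will work directly: from $a_{<i^*}\ind_C a_{i^*}$ and left \ref{NOR} we get $Ca_{<i^*}\ind_C a_{i^*}$. Now I would like to combine $Ca_{<i^*}\ind_C a_{i^*}$ with $B\ind_{Ca_{<i^*}} a_{i^*}$ via a transitivity on the left in the base variable — but transitivity as stated (right \ref{TRA}) moves the base, and we only have left \ref{TRA}. The correct move is: $B\ind_{Ca_{<i^*}} a_{i^*}$ together with $a_{<i^*}\ind_C a_{i^*}$ should give, after adding parameters, $Ba_{<i^*}\ind_C a_{i^*}$, and then by right \ref{MON}, $B\ind_C a_{i^*}$. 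Finally, $a_{i^*}\equiv_B a$ (by $B$-indiscernibility, $\tp(a_{i^*}/B)=\tp(a/B)$), so by invariance $B\ind_C a$, i.e. $a\indi\opp_C B$, as desired.

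The main obstacle is getting the bookkeeping of left/right versions right, since we lack both symmetry and right transitivity. Concretely, the delicate point is the passage from "$B$ is independent from the long sequence over an initial segment" and "the sequence is $\indi\opp$-Morley" to "$B\ind_C a_{i^*}$"; this requires carefully chaining left \ref{NOR}, left \ref{TRA}, right \ref{MON} and right \ref{BMON} in the right order, exactly in the spirit of the manipulations at the end of the proof of Lemma \ref{lemma:adlerthmoppmorleysequenceexistence}. One also needs \ref{FIN} to reduce \ref{LOC} to the case where $B$ is replaced by each of its finite subtuples, and then reassemble. Once the index $i^*$ is found by regularity, the rest is a finite diagram chase in the axioms.
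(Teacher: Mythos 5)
Your proof follows essentially the same route as the paper's: stretch the sequence to a regular cardinal, use \ref{LOC} together with regularity to find an initial segment with $B\ind_{Ca_{<i^*}} a_{i^*}$ (via right \ref{BMON} and right \ref{MON}), combine this with the Morley condition $a_{<i^*}\ind_C a_{i^*}$ using left \ref{NOR} and left \ref{TRA} to get $Ba_{<i^*}\ind_C a_{i^*}$, then shrink and conclude by invariance. Two cosmetic slips only: the final passage from $Ba_{<i^*}\ind_C a_{i^*}$ to $B\ind_C a_{i^*}$ is left \ref{MON} (not right), and \ref{FIN} is really needed as left finite character to upgrade the finite-subtuple independences to $a_{<i^*}\ind_C a_{i^*}$ at infinite stages, rather than to reduce \ref{LOC} to finite subtuples of $B$.
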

\begin{proof}
    Let $(a_i)_{i<\omega}$ is an $\indi\opp\ \ $-Morley sequence over $C$ in $\tp(a/B)$ which is indiscernible over $B$. By Ramsey and compactness, we may take an extension $(a_i)_{i<\kappa}$ for some regular cardinal $\kappa$ greater than or equal to $\kappa(B)$ as in \ref{LOC}.
     We know that $a_n\indi\opp_C \ \  a_{<n}$ for all $n<\omega$. If $\alpha<\kappa$, then using indiscernibility over $B$, invariance and left \ref{MON}, we have $a_\alpha\ind^\opp_C a_{i_1}\ldots a_{i_n}$, for any $i_1,\ldots,i_n<\alpha$. By \ref{FIN} of $\ind$, we conclude that $a_\alpha\ind^\opp_C a_{<\alpha}$ $(*)$.

     Using \ref{LOC}, there exists $D\seq C a_{<\kappa}$ with $\abs{D}<\kappa$ with $B\ind_D C (a_i)_{i<\kappa}$. As $\abs{D}<\kappa$ and $\kappa$ is regular, $D$ is not cofinal in $\kappa$ hence there is some $\alpha<\kappa$ such that $D\seq Ca_{<\alpha}$. By right \ref{BMON} and right \ref{MON} we conclude $B\ind_{Ca_{<\alpha}} a_\alpha$ $(**)$.

     By combining $(*)$ and $(**)$ we use left \ref{NOR} and left \ref{TRA} of $\ind$ to conclude $Ba_{<\alpha}\ind_{C} a_\alpha$ so $B\ind_{C} a_\alpha$ by left \ref{MON} and $B\ind_{C} a$ by invariance.
\end{proof}

\begin{theorem}\label{thm:adlersymmetry}
Let $\ind$ be an invariant relation satisfying left and right \ref{MON}, right \ref{BMON}, left \ref{NOR}, left \ref{TRA}, \ref{FIN}, \ref{LOC} and \ref{EXT}. Then $\ind$ satisfies \ref{SYM}.
\end{theorem}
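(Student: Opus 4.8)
The plan is to chain the two lemmas immediately above. First I reduce to the case $C\seq B$: since $\ind$ satisfies \ref{EXT}, Proposition \ref{prop:basicpropertieswiththeory}(a) gives right \ref{NOR}, so from $A\ind_C B$ I get $A\ind_C BC$; and once $BC\ind_C A$ is known, left \ref{MON} yields $B\ind_C A$. So from now on assume $C\seq B$, fix an enumeration $a$ of $A$ and put $p=\tp(a/B)$; the goal becomes $a\ind^\opp_C B$.

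The next step is to build, for every $\lambda$, a $\ind$-Morley sequence $(a_i)_{i<\lambda}$ over $C$ in $p$. This is the standard \ref{EXT}-construction of a Morley sequence, which in fact goes through for any length: take $a_0=a$ (then $a_0\models p$, and $a_0\ind_C\emptyset$ by right \ref{MON}); at any stage $j$, apply \ref{EXT} to the \emph{original} instance $a\ind_C B$ with $D:=Ba_{<j}\supseteq B$ to obtain $a'\equiv_{B}a$ with $a'\ind_C Ba_{<j}$, and set $a_j:=a'$. Then $a_j\models p$ and, by right \ref{MON}, $a_j\ind_C a_{<j}$. Limit stages require nothing, since the maintained conditions concern each $a_i$ individually. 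Hence arbitrarily long $\ind$-Morley sequences over $C$ in $p$ exist.

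Now I feed this into Lemma \ref{lemma:adlerthmoppmorleysequenceexistence}: its hypotheses --- invariance, (left and right) \ref{NOR}, right \ref{MON}, right \ref{BMON}, left \ref{TRA} --- all hold, the only non-immediate one being right \ref{NOR}, already extracted from \ref{EXT}. So there is a $B$-indiscernible $\ind^\opp$-Morley sequence over $C$ in $p$. Finally I apply Lemma \ref{lemma:adlerthmoppmorleysequenceassumption}, whose hypotheses (invariance, left and right \ref{MON}, left \ref{NOR}, right \ref{BMON}, left \ref{TRA}, \ref{FIN}, \ref{LOC}) are all among those assumed in the theorem; it gives $a\ind^\opp_C B$, i.e. $B\ind_C a$, which is $B\ind_C A$ in the reduced case, and then left \ref{MON} gives it in general. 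This proves \ref{SYM}.

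The genuinely delicate work is already absorbed into the two lemmas: the inversion argument inside Lemma \ref{lemma:adlerthmoppmorleysequenceexistence} (passing from a long $\ind$-Morley sequence to a $B$-indiscernible $\ind^\opp$-Morley sequence, via Erd\H{o}s--Rado and reversal of indiscernible sequences) and the regular-cardinal/local-character argument in Lemma \ref{lemma:adlerthmoppmorleysequenceassumption}. At the level of the theorem the only thing needing care is keeping track of which one-sided axioms are used where, and observing that \ref{EXT} is exactly what supplies the right-sided \ref{NOR} required to run Lemma \ref{lemma:adlerthmoppmorleysequenceexistence}, while the right \ref{MON} used in the Morley-sequence construction is directly assumed.
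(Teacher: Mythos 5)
Your proof is correct and follows essentially the same route as the paper's: reduce to $C\seq B$ using right \ref{NOR} (extracted from \ref{EXT} via Proposition \ref{prop:basicpropertieswiththeory}), iterate \ref{EXT} to produce arbitrarily long $\ind$-Morley sequences in $\tp(a/B)$ over $C$, then chain Lemma \ref{lemma:adlerthmoppmorleysequenceexistence} and Lemma \ref{lemma:adlerthmoppmorleysequenceassumption} to get $a\indi\opp_C B$. The only difference is that you spell out the Morley-sequence construction and the axiom bookkeeping that the paper leaves implicit, which is fine.
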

\begin{proof}
    By Proposition \ref{prop:basicpropertieswiththeory}, $\ind$ also satisfies right \ref{NOR}. Assume that $a\ind_C B$. By left and right \ref{NOR} we may assume that $C\seq B$. Iterating the use of \ref{EXT}, there exists arbitrarily long $\ind$-Morley sequence in $\tp(a/B)$ over $C$ hence by Lemma \ref{lemma:adlerthmoppmorleysequenceexistence} there exists an $\indi\opp\ $-Morley sequence in $\tp(a/B)$ over $C$ which is $B$-indiscernible. By Lemma \ref{lemma:adlerthmoppmorleysequenceassumption} we have $a\indi\opp_C\ B$.
\end{proof}

\begin{definition}
    An \textit{Adler independence relation} (in short \textit{AIR}) is an invariant relation satisfying left and right \ref{MON}, right \ref{BMON}, left \ref{NOR}, left \ref{TRA}, \ref{FIN}, \ref{LOC} and \ref{EXT}. It is called a \textit{strict} AIR if it further satisfies \ref{AREF}.
\end{definition}

\begin{remark}
    By Adler's theorem of symmetry (Theorem \ref{thm:adlersymmetry}) any Adler independence relation is symmetric, hence there is no need in mentioning the left and right attributes in the previous definition. Using Proposition \ref{prop:basicpropertieswiththeory}, an invariant relation $\ind$ is an AIR if and only if it satisfies \ref{SYM}, \ref{FIN}, \ref{LOC}, \ref{NOR}, \ref{MON}, \ref{BMON}, \ref{TRA}, \ref{EXT}, \ref{EX}, \ref{FEX} and \ref{SCLO}.
\end{remark}

\begin{remark}[Local character depends on the type]
    Let $\ind$ be an invariant relation satisfying \ref{LOC}, i.e. for all $A$ there is $\kappa = \kappa(A)$ such that for all $B$ there is $C\seq B$ with $\abs{C}\leq \kappa$ and $A\ind_C B$. Then $\kappa$ actually does not depend on $A$ but on $\tp(A)$. If $A'\equiv A$ witnessed by $\sigma\in \Aut(\MM)$ with $\sigma(A) = A'$, then for any $B$, apply \ref{LOC} for $A$ with $\sigma^{-1}(B)$ and push back with $\sigma$ to get a subset $C\seq B$ of size $\kappa(A)$ such that $A'\ind_C B$. Exercise \ref{exo:LOCcharacterdependsonthesize} actually shows that under \ref{FIN}, the cardinal $\kappa(A)$ in \ref{LOC} depends only on $\abs{A}$.
\end{remark}

In particular, using \ref{LOC} as in the proof of Lemma \ref{lemma:adlerthmoppmorleysequenceassumption} yields the following convenient lemma.

\begin{lemma}\label{lm:LOCcharactergivesMS}
    Let $(b_i)_{i<\omega}$ be a $C$-indiscernible sequence and $\ind$ be an invariant relation satisfying right \ref{MON}, right \ref{BMON} and \ref{LOC}. Then there exists a model $M$ containing $C$ such that $(b_i)_{i<\omega}$ is an $M$-indiscernible $\ind$-Morley sequence over $M$.
\end{lemma}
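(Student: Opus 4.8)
The plan is to imitate the use of \ref{LOC} in the proof of Lemma \ref{lemma:adlerthmoppmorleysequenceassumption}, but along a long indiscernible sequence carrying an increasing chain of models, so that the base at which local character ``stops'' is itself a model. I would first dispose of the trivial case where $(b_i)_{i<\omega}$ is constant (by indiscernibility the only alternative to the $b_i$ being pairwise distinct): any small model $M\supseteq C\cup\{b_0\}$ works, since the sequence is then $M$-indiscernible for free and $b_0\ind_M M$ by \ref{EX} (a consequence of \ref{LOC} and right \ref{BMON}), whence $b_0\ind_M b_{<i}$ for all $i$ by right \ref{MON}. So assume the $b_i$ pairwise distinct. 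Fix a regular cardinal $\kappa<\kappabar$ with $\kappa>\abs{C}+\abs{T}+\aleph_0$ and $\kappa\geq\kappa(b_0)$. Since the EM-type of $(b_i)_{i<\omega}$ over $C$ is complete and independent of the order type, I would realise it along the order $\kappa+\omega$ by Lemma \ref{lm:ramseycompactness} and conjugate over $C$ to obtain a $C$-indiscernible sequence $(d_i)_{i<\kappa+\omega}$ whose final $\omega$-block $(d_{\kappa+n})_{n<\omega}$ is literally $(b_n)_{n<\omega}$.

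The heart of the proof is to build a continuous increasing chain $(M_\gamma)_{\gamma\leq\kappa}$ of small models with $M_0\supseteq C$, $\{d_\delta:\delta<\gamma\}\subseteq M_\gamma$, and the tail $(d_i)_{\gamma\leq i<\kappa+\omega}$ indiscernible over $M_\gamma$. At limit stages one takes unions (a union of an elementary chain is an elementary submodel, and any finite tuple from it lies in a member, so the tail stays indiscernible over the union). At a successor, $(d_i)_{\gamma+1\leq i<\kappa+\omega}$ is $M_\gamma d_\gamma$-indiscernible (its first term absorbed into the base), so Theorem \ref{thm:indiscernibleerdorado}---which goes through verbatim for sequences of length $>\omega$ over any small base and whose proof yields a model \emph{containing} the prescribed base---provides a small model $M_{\gamma+1}\supseteq M_\gamma d_\gamma$ over which that tail is indiscernible; since the terms are distinct, $M_{\gamma+1}$ contains no $d_i$ with $i\geq\gamma+1$. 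Thus $M_\kappa:=\bigcup_{\gamma<\kappa}M_\gamma$ is a small model containing the $\kappa$-block $\{d_\delta:\delta<\kappa\}$ but disjoint from the $\omega$-block.

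Now I would run the local character argument. For each $n<\omega$, \ref{LOC} applied to $A=b_n$ and $B=M_\kappa$ gives $D_n\subseteq M_\kappa$ with $\abs{D_n}<\kappa(b_n)=\kappa(b_0)\leq\kappa$ and $b_n\ind_{D_n}M_\kappa$; as $\kappa$ is regular and the chain is increasing with union $M_\kappa$, $D_n\subseteq M_{\gamma_n}$ for some $\gamma_n<\kappa$. Setting $\gamma:=\sup_{n<\omega}\gamma_n<\kappa$, right \ref{BMON} gives $b_n\ind_{M_\gamma}M_\kappa$ for all $n$, and right \ref{MON} gives $b_n\ind_{M_\gamma}\{d_{\gamma+m}:m<n\}$ since $d_{\gamma+m}\in M_\kappa$. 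Finally, transfer along the sequence: the tail $(d_i)_{\gamma\leq i<\kappa+\omega}$ being $M_\gamma$-indiscernible, the increasing tuples $(d_\gamma,\dots,d_{\gamma+n-1},b_n)$ and $(b_0,\dots,b_{n-1},b_n)$ have the same type over $M_\gamma$, so some $\rho_n\in\Aut(\MM/M_\gamma)$ fixes $b_n$ and sends $d_{\gamma+m}\mapsto b_m$ for $m<n$; applying $\rho_n$ and invariance turns $b_n\ind_{M_\gamma}\{d_{\gamma+m}:m<n\}$ into $b_n\ind_{M_\gamma}b_{<n}$. Then $M:=M_\gamma$ does the job: $C\subseteq M$, the sequence $(b_n)_{n<\omega}$ is $M$-indiscernible (a subsequence of the $M_\gamma$-indiscernible tail) and an $\ind$-Morley sequence over $M$.

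I expect the chain construction of the second paragraph to be the main obstacle: interleaving the models with the indiscernible prefix is exactly what forces the base $M_\gamma$ coming out of \ref{LOC} to be an elementary submodel. If one instead just fixed one model $N\supseteq C$ making the configuration indiscernible and applied \ref{LOC} with $B=N\cup\{d_\delta:\delta<\kappa\}$, the same transfer would only give the two properties over $N\cup\{d_\delta:\delta<\gamma\}$, which is not a model; and with only right \ref{BMON} and right \ref{MON} at hand (no symmetry, no transitivity) there is no way to enlarge such a base to a model afterwards.
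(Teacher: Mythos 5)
Your proof is correct and follows essentially the same route as the paper: extend the indiscernible sequence by a block of length a regular $\kappa$, build a chain of models along it via Theorem \ref{thm:indiscernibleerdorado}, use \ref{LOC} together with regularity to trap the local base inside one model of the chain, and transfer back to the original $\omega$-sequence by indiscernibility and invariance. The only differences are bookkeeping ones (you prepend the long block and apply \ref{LOC} once per $b_n$ with a supremum, where the paper appends it, applies \ref{LOC} once to the top element $b_\kappa$, and conjugates over $C$ at the end), so no further comment is needed.
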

\begin{proof}
    Let $\kappa$ be as in \ref{LOC} for $b_0$ and we may assume that $\kappa$ is regular (if $\kappa$ is not regular, consider $\kappa^+$). Note that $\kappa$ only depends on $\tp(b_0)$. Let $(b_i)_{i\leq\kappa}$ be an extension of $(b_i)_{i<\omega}$. 
    
    We construct a chain of models $(M_i)_{i<\kappa}$ such that $M_i$ contains $C b_{<i}$ and $(b_n)_{i\leq n\leq \kappa}$ is $M_i$-indiscernible, for all $i<\kappa$. We use Theorem \ref{thm:indiscernibleerdorado}: for $M_0$ take any model containing $C$ such that $(b_i)_{i<\kappa}$ is $M_0$-indiscernible. If $M_i$ has been constructed, hence $M_i$ contains $C b_{<i}$ and $(b_n)_{i\leq n< \kappa}$ is $M_i$-indiscernible. Then $(b_n)_{i+1\leq n< \kappa}$ is $M_i b_{i}$-indiscernible hence by Theorem \ref{thm:indiscernibleerdorado} there exists $M_{i+1}$ containing $M_ib_{i}$ such that $(b_n)_{i+1\leq n< \kappa}$ is $M_{i+1}$-indiscernible. If $i$ is a limit ordinal, and $(M_j)_{j<i}$ has been constructed, define $M_i = \bigcup_{j<i} M_j$.

    By \ref{LOC}, as $b_0\equiv b_\kappa$, there exists a subset $D$ of $\bigcup_{i<\kappa} M_i$ of size less than $\kappa$ such that $b_\kappa\ind _D \bigcup_{i<\kappa} M_i$. As $\kappa$ is regular, we have that $D\seq M_{i_0}$ for some $i_0<\kappa$ hence by \ref{BMON} we have $b_\kappa\ind_{M_{i_0}} \bigcup_{i<\kappa} M_i$ and by \ref{MON}, $b_\kappa\ind_{M_{i_0}} b_{<\kappa}$. By indiscernibility and invariance, we also have $b_i\ind _{M_{i_0}} (b_j)_{i_0<j<i}$. Using an automorphism over $C$ sending $(b_{i_0+j})_{j<\omega}$ to $(b_j)_{j<\omega}$, we get a model $M$ as needed. Note that $i_0+j<\kappa$ for all $j<\omega$ since otherwise $\kappa$ would not be regular.
\end{proof}

\begin{remark}
    Consider the following version of local character, that we call \textit{chain local character}: for all $A$ there exists $\kappa$ such that for all chain of models $(M_i)_{i<\kappa}$ there exists $i_0<\kappa$ such that $A\ind_{M_{i_0}} \bigcup_{i<\kappa} M_i$. Then, in the statement of the previous Lemma, one might replace \ref{BMON} and \ref{LOC} by chain local character and have the same conclusion.
\end{remark}

\begin{exercise}\label{exo:LOCcharacterdependsonthesize}
    We prove that if $\ind $ is invariant and satisfies, \ref{FIN}, right \ref{BMON} and \ref{LOC}, then for all $A$ there exists $\kappa$ depending only on $\abs{A}$ such that for all $B$ there exists $B_0\seq B$ of size $<\kappa$ such that $A\ind_{B_0} B$. Fix $A$ and $B$.
    \begin{enumerate}[$(a)$]
        \item Let $S$ be a set of representative of $\bigcup_{n<\omega}\MM^n$ quotiented by the equivalence relation $\equiv$. Prove that $\abs{S} = \abs{\bigcup_{n<\omega} S_n(\emptyset)}$.
        \item Let $\lambda = \sup\set{\kappa(s)\mid s\in S}$. Prove that for all finite tuple $a$ from $A$, there exists $B(a)\seq B$ of size $<\lambda$ such that $a\ind_{B(a)} B$.
        \item Take $B_0 = \bigcup_{a\seq_\text{finite} A} B(a)$. Check that $\abs{B_0}\leq \lambda+A$ and that $A\ind_{B_0} B$.
        \item Conclude.
    \end{enumerate}
\end{exercise}

\section{An easy criterion for NSOP$_4$ theories}

\subsection{Heirs and coheirs}

\begin{definition}[Heirs and coheirs]
    Let $C\seq B$, $p(x)$ a type over $C$ and $q$ a type over $B$ with $p\seq q$. 
    \begin{itemize}
        \item $q$ is an \textit{heir} of $p$ if for all $\phi(x,b)\in q$ there exists $c$ from $C$ such that $\phi(x,c)\in p$.
        \item $q$ is a \textit{coheir} of $p$ if $q$ is finitely satisfiable in $C$: for all $\phi(x,b)\in q$ there exists $c$ from $C$ such that $\models \phi(c,b)$.
    \end{itemize}
    We write $a\indi h _C b$ if $\tp(a/Cb)$ is an heir of $\tp(a/C)$ and $a\indi u _C b$ if $\tp(a/Cb)$ is a coheir of $\tp(a/C)$.
\end{definition}

\begin{exercise}[*]\label{exo:indiscernible_overinitialsegment_heir}
        Let $\alpha>\omega$ and $(a_i)_{i<\alpha}$ be a $C$-indiscernible sequence. Prove that $(a_i)_{\omega\leq i<\alpha}$ is a $Ca_{<\omega}$-indiscernible $\indi h$-Morley sequence over $Ca_{<\omega}$, i.e. $(a_i)_{\omega\leq i<\alpha}$ is $Ca_{<\omega}$-indiscernible and $a_i\indi h _{Ca<\omega} a_{<i}$ for all $ \omega \leq i<\alpha$.
\end{exercise}

\begin{exercise}\label{exo:heirdcl}
    Prove that if $a\indi h_C b$ then $a\indi h_{\dcl(C)} b$.
\end{exercise}

\begin{exercise}
    Prove that $a\indi h_C b$ if and only if $b\indi u_C a$, i.e. $(\indi h)^\opp = \indi u$.
\end{exercise}

\begin{exercise}
    If $\ind$ satisfying \ref{SYM} and $\indi 0$ is a relation with $\indi 0\to \ind$, then $(\indi 0)^\opp\to \ind$. In particular $\indi h\to \ind$ if and only if $\indi u\to \ind$.
\end{exercise}

\begin{exercise}
    Prove that $\indi h$ (and $\indi u$) are invariant under automorphisms.
\end{exercise}

\begin{proposition}\label{prop:propertiesofindiu}
The relation $\indi u$ is invariant and satisfies \ref{FIN}, left and right \ref{MON}, left and right \ref{BMON}, left \ref{TRA} and left \ref{NOR}.    
\end{proposition}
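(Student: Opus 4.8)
The plan is to unwind the definition — $A\indi u_C B$ means $\tp(A/CB)$ is finitely satisfiable in $C$, i.e. every $\LL(CB)$-formula $\phi(x)$ with $x$ matching a finite subtuple of an enumeration of $A$ and $\models\phi(a_0)$ admits a witness $c$ from $C$ with $\models\phi(c)$ — and check each listed property by bookkeeping on types and witnesses. Invariance is immediate: an automorphism $\sigma$ sends $\tp(A/CB)$ to $\tp(\sigma A/\sigma C\,\sigma B)$ and a $C$-witness to a $\sigma C$-witness. For \ref{FIN}, any $\LL(CB)$-formula in the variables enumerating $A$ uses only finitely many of them, hence concerns a finite subtuple $a_0\seq A$, and $a_0\indi u_C B$ supplies the witness. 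For left and right \ref{MON}: if $AD\indi u_C B$ (resp. $A\indi u_C BD$) then $\tp(A/CB)$ is the restriction of $\tp(AD/CB)$ to fewer variables (resp. of $\tp(A/CBD)$ after deleting the parameters in $D$), and finite satisfiability in $C$ passes to such restrictions.

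Next the slightly less formal ones. For left \ref{NOR}, given $A\indi u_C B$ and an $\LL(CB)$-formula $\phi(x,z)$ with $\models\phi(a_0,c_0)$ where $a_0\seq A$ is finite and $c_0$ is from $C$, I keep $c_0$ in place (it already lies in $C$) and apply $A\indi u_C B$ to the $\LL(CB)$-formula $\phi(x,c_0)$ to get $c$ from $C$ with $\models\phi(c,c_0)$; so $(c,c_0)$ witnesses $AC\indi u_C B$. For left \ref{TRA}, with $C\seq B\seq D$, $B\indi u_C A$ and $D\indi u_B A$: given an $\LL(CA)$-formula $\phi(y)$ with $\models\phi(d_0)$, $d_0\seq D$ finite, first use $D\indi u_B A$ (legitimate since $\LL(CA)\seq\LL(BA)$) to obtain $b$ from $B$ with $\models\phi(b)$, then use $B\indi u_C A$ on the same formula, now satisfied by $b\seq B$, to get $c$ from $C$ with $\models\phi(c)$; hence $\tp(D/CA)$ is finitely satisfiable in $C$. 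For right \ref{BMON}, with $C\seq B\seq D$ and $A\indi u_C D$: since $C\seq D$ and $B\seq D$ one has $CD=D=BD$, so $\tp(A/CD)=\tp(A/D)=\tp(A/BD)$, and a type finitely satisfiable in $C$ is a fortiori finitely satisfiable in the larger set $B$, giving $A\indi u_B D$.

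The delicate point, and where I expect the real work to be, is left \ref{BMON}: here the base grows within the left argument, $C\seq B\seq A$ with $A\indi u_C D$, and the finite-satisfiability formulation is awkward because a witness produced by $A\indi u_C D$ replaces \emph{all} the left-hand coordinates by elements of $C$, whereas the condition $A\indi u_B D$ demands witnesses that leave the parameters coming from $B\setminus C$ fixed. My plan is to pass instead to the dual relation, using the heir–coheir duality $(\indi h)^{\opp}=\indi u$ established earlier, so that left \ref{BMON} for $\indi u$ becomes right \ref{BMON} for $\indi h$, and then to argue directly about heirs: if $\tp(a/CD)$ is an heir of $\tp(a/C)$ and $C\seq B\seq D$, then this type equals $\tp(a/BD)$ and one must check it is still an heir of $\tp(a/B)$. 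This step — reconciling the $\LL(B)$-formulas appearing in the heir condition over $B$ with the witnesses the heir condition over $C$ provides — is the crux, and I would isolate it as a small lemma; it is the part of the proof I would check most carefully, since unlike everything above it is not a one-line consequence of finite character of types.
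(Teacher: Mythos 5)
Everything up to your last paragraph is correct and is essentially the paper's own proof: the paper writes out only left \ref{TRA} and left \ref{NOR} (with exactly the witness bookkeeping you give) and leaves invariance, \ref{FIN}, the two \ref{MON}'s and \ref{BMON} as easy exercises; your verifications of those, including the one-line right \ref{BMON} argument via $\tp(A/CD)=\tp(A/BD)$ and enlarging the set of witnesses, are the intended ones.

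The gap is exactly where you located it, and it cannot be closed: left \ref{BMON} for $\indi u$ is false in general, so neither your dualization plan nor any other argument will finish this part. The reduction itself is harmless, since $(\indi h)^\opp=\indi u$ is just a rereading of the definitions, so left \ref{BMON} for $\indi u$ is the statement that if $\tp(a/D)$ is an heir of $\tp(a/C)$ and $C\seq B\seq D$, then it is an heir of $\tp(a/B)$. This fails already in DLO: let $M\prec\MM$ be a small model and pick $d_1<d_2<a$ all greater than every element of $M$. By quantifier elimination, $\tp(Md_1d_2/Ma)$ is finitely satisfiable in $M$ (a finite configuration $m<x_1<x_2<a$ is realized inside $M$, every element of $M$ being below $a$), so $Md_1d_2\indi u _M a$; equivalently, $\tp(a/Md_1d_2)$ is an heir of $\tp(a/M)$. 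But $\tp(Md_1d_2/Md_1a)$ contains the formula $d_1<x_2\wedge x_2<a$ (about the coordinate of $d_2$), which has no realization in $M\cup\set{d_1}$, so $Md_1d_2\nindi u _{Md_1} a$; dually, the $\LL(Md_1)$-formula $(d_1<y)\wedge(y<x)$ is satisfied by $(a,d_2)$ but by no $(a,c)$ with $c$ from $Md_1$, so $\tp(a/Md_1d_2)$ is not an heir of $\tp(a/Md_1)$. The obstruction you sensed in the direct approach is the real one: finite satisfiability in $C$ lets you move \emph{all} left-hand coordinates into $C$, while left \ref{BMON} demands witnesses with the new base parameters from $B\setminus C$ held fixed, and the hypothesis provides none. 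So of the two \ref{BMON} claims in the statement only the right-sided one is provable; the left-sided instance (which the paper's proof also skips as an ``easy exercise'', and later invokes in the equivalent form ``$\indi h$ satisfies right \ref{BMON}'') is an error in the statement, not a defect of your instinct -- but as a proof of the proposition as stated, your proposal is incomplete precisely at this point.
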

\begin{proof}
    We check left \ref{TRA} and left \ref{NOR}, the other properties are easy and left as an exercise. 
    Let $C\seq B\seq D$ and $A$ such that $D\indi u _B A$ and $B\indi u_C A$. Let $\phi(x,a)\in \tp(D/AC)$. As $D\indi u _B A$, there exists $b$ from $B$ such that $\models \phi(b,a)$, hence $\phi(x,a)\in \tp(B/AC)$. As $B\indi u _C A$ there exists $c$ from $C$ such that $\models \phi(c,a)$. It follows that $D\indi u_C A$. For \ref{NOR}, assume that $A\indi u _C B$ and assume that $\phi(ac, b)$ holds, for some $\LL(C)$-formula $\phi(x,y,z)$, i.e. $\phi(x,y,b)\in \tp(AC/BC)$. Then $\phi(xc,b)\in \tp(A/BC)$ hence as $A\indi u _C B$, there exists $c'$ from $C$ such that $\models \phi(c'c,b)$, hence $AC\indi u_C B$.
\end{proof}

Recall that a filter $\cF$ on a set $S$ is a subset of the powerset of $S$ which does not contain $\emptyset$, is closed under finite intersections and under supersets (if $a\in \cF$ and $a\seq b$ then $b\in \cF$). An ultrafilter $\cU$ on $S$ is a filter on $S$ such that for all subset $a$ of $S$ we have either $a$ or its complement is in $\cU$. A principal ultrafilter is given by all subsets containing a given element. Using Zorn's Lemma, every filter is contained in an ultrafilter.  

\begin{definition}
    Let $A, B$ be small sets and let $\cU$ be an ultrafilter on $A^n$. The \textit{average type of $\cU$ over $B$} is the type defined by:
    \[\Av(\cU/B):= \set{\phi(x,b)\in \LL(B)\mid \set{a\in A^n\mid \models \phi(a,b)}\in \cU}\]
    for $x = (x_1,\ldots,x_n)$.
\end{definition}

\begin{exercise}
    Check that the definition above defines a (consistent/complete) type over $B$.
\end{exercise}


\begin{theorem}\label{thm:extensionforu-independence}
    $a\indi u _C B$ if and only if $\tp(a/BC) = \Av(\cU/BC)$ for some ultrafilter $\cU$ on $C^{\abs{a}}$. In particular, $\indi u$ satisfies \ref{EXT}.
\end{theorem}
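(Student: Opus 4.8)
The plan is to establish the stated equivalence directly from the definitions of coheir and average type, and then read off \ref{EXT} as a corollary.

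First I would dispatch the easy implication. Suppose $\tp(a/BC) = \Av(\cU/BC)$ for some ultrafilter $\cU$ on $C^{\abs{a}}$. Then for every formula $\phi(x,b)\in\tp(a/BC)$ (with $b$ a tuple from $BC$) we have $\set{c\in C^{\abs{a}}\mid \models\phi(c,b)}\in\cU$, and since an ultrafilter never contains $\emptyset$ this set is nonempty; so $\phi(x,b)$ is satisfied by a tuple from $C$. Hence $\tp(a/BC)$ is finitely satisfiable in $C$, i.e.\ $a\indi u_C B$.

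For the converse, assume $a\indi u_C B$. I would build the desired ultrafilter by hand: for $\phi(x,b)\in\tp(a/BC)$ set $X_\phi:=\set{c\in C^{\abs{a}}\mid \models\phi(c,b)}$, and let $\cF$ be the collection of subsets of $C^{\abs{a}}$ that contain some $X_\phi$. The point is that $\cF$ is a \emph{proper} filter: it is upward closed by construction; it is closed under finite intersections because $\tp(a/BC)$ is closed under conjunction and $X_\phi\cap X_\psi = X_{\phi\wedge\psi}$; and it avoids $\emptyset$ precisely because finite satisfiability of $\tp(a/BC)$ in $C$ makes every $X_\phi$ nonempty. Extending $\cF$ to an ultrafilter $\cU$ on $C^{\abs{a}}$ via Zorn's lemma, every $\phi(x,b)\in\tp(a/BC)$ satisfies $X_\phi\in\cU$, so $\tp(a/BC)\subseteq\Av(\cU/BC)$. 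Since $\Av(\cU/BC)$ is a complete type over $BC$ (by the exercise following the definition of the average type) and so is $\tp(a/BC)$, this inclusion is an equality.

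Finally, for \ref{EXT}: given $a\indi u_C B$ and $D\supseteq B$, the equivalence just proved lets me write $\tp(a/CB)=\Av(\cU/CB)$ for an ultrafilter $\cU$ on $C^{\abs{a}}$, and I would take $a'\models\Av(\cU/CD)$ (a consistent complete type, again by that exercise). Restricting to parameters from $CB$ gives $\tp(a'/CB)=\Av(\cU/CB)=\tp(a/CB)$, so $a'\equiv_{CB} a$; and since $\tp(a'/CD)=\Av(\cU/CD)$ is itself an average type over an ultrafilter on $C^{\abs{a}}$, the easy implication yields $a'\indi u_C D$, which is exactly the content of \ref{EXT}. I do not expect a genuine obstacle: the only places needing a moment's care are checking that $\cF$ is a proper filter (which is where finite satisfiability is used) and invoking completeness of average types to upgrade the inclusion $\tp(a/BC)\subseteq\Av(\cU/BC)$ to an equality, the latter being the general fact that a complete type cannot properly extend another complete type.
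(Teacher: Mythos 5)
Your proof is correct and follows essentially the same route as the paper: form the filter generated by the sets $\phi(C,b)$ for $\phi(x,b)\in\tp(a/BC)$ (proper precisely by finite satisfiability), extend to an ultrafilter, use completeness of types to upgrade $\tp(a/BC)\seq\Av(\cU/BC)$ to equality, and realise $\Av(\cU/CD)$ to obtain \ref{EXT}. The only cosmetic difference is that the paper states the extension step for finite tuples and then invokes \ref{FIN} to pass to arbitrary small sets, a bookkeeping step you leave implicit.
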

\begin{proof}
    Assume first that $a\indi u _C B$. Let $n = \abs{a}$. Define $\cF$ to be the set of $C'\seq C^n$ such that $\phi(C,b)\seq C'$, for some $\phi(x,b)\in \tp(a/BC)$. As $a\indi u _C B$, $\phi(C,b)$ is non-empty for any $\phi(x,b)\in \tp(a/BC)$. As $\cF$ is clearly closed under finite intersections and closed under supersets, $\cF$ defines a filter on $C^n$. Let $\cU$ be an ultrafilter on $C^n$ extending $\cF$. Clearly, $\phi(x,b)\in \Av(\cU/BC)$ for any $\phi(x,b)\in \tp(a/BC)$ hence $\tp(a/BC)\seq \Av(\cU/BC)$. As $\tp(a/BC)$ is a complete type, we have $\tp(a/BC) = \Av(\cU/BC)$. Conversely, if $\cU$ is an ultrafilter on $C^n$ and $\tp(a/BC) = \Av(\cU/BC)$, then for all $\phi(x,b)\in \tp(a/BC)$, the set $\phi(C,b) = \set{c\in C^n\mid \models \phi(c,b)}\in \cU$ hence is not empty, so $a\indi u_C B$.

    To conclude, we first prove extension for finite tuples. Assume that $a\indi u _C B$ and let $\cU$ be an ultrafilter on $C^n$ such that $\tp(a/BC) = \Av(\cU/BC)$. Assume further that $BC\seq D$. Let $q(x,D)$ be the type $\Av(\cU/D)$. Clearly, for any $\phi(x,b)\in \tp(a/BC)$ we have $\phi(x,b)\in \Av(\cU/BC)$ if and only if $\phi(x,b)\in \Av(\cU/D)$ hence $\tp(a/BC)\seq q(x,D)$. For any $a'\models q(x,D)$, we have $a'\equiv_{BC} a$ and $a'\indi u_C D$. Using \ref{FIN}, we have that $\indi u$ satisfies \ref{EXT}.
\end{proof}

\begin{remark}
    The characterisation of finitely satisfiable types via average types explains the index ``$u$" in $\indi u$: it stands for \emph{ultrafilter}.
\end{remark}

\begin{corollary}\label{cor:indusatisfiesFEXovermodels}
    $\indi u$ satisfies \ref{EX} and \ref{FEX} \emph{over models}: for any $A,B,M$ there exists $A'\equiv_M A$ with $A'\indi u _M B$. 
\end{corollary}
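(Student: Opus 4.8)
The plan is to split the statement into its two halves and reduce the second to the first. First I would establish \ref{EX} over models, namely $A\indi u_M M$ for every small set $A$ and every model $M$; unwinding the definition, this is exactly the assertion that $\tp(A/M)$ is finitely satisfiable in $M$. Then I would obtain \ref{FEX} over models essentially for free, by running the argument of Proposition \ref{prop:basicpropertieswiththeory}(c) with the base set taken to be $M$ rather than an arbitrary base, using that $\indi u$ satisfies \ref{EXT} (Theorem \ref{thm:extensionforu-independence}) and right \ref{MON} (Proposition \ref{prop:propertiesofindiu}).

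For \ref{EX} over models: fix $\phi(x,b)\in\tp(A/M)$ with $b$ a tuple from $M$ — recall an $\LL(M)$-formula involves only finitely many coordinates of $A$, so there is no issue with $A$ being an infinite tuple. Since $\MM\models\exists x\,\phi(x,b)$ and $M\prec\MM$ with $b\in M$, elementarity gives $M\models\exists x\,\phi(x,b)$, so there is a tuple $c$ from $M$ with $\MM\models\phi(c,b)$. As $\phi(x,b)\in\tp(A/M)$ was arbitrary, $\tp(A/M)$ is finitely satisfiable in $M$, i.e. $A\indi u_M M$. This is the only step with real content, and it is precisely here that the hypothesis ``$M$ is a model'' is used: over an arbitrary small set $C$ a complete type over $C$ need not be finitely satisfiable in $C$ (e.g. when $A$ is algebraic over $C$ but not contained in it, or when $C=\emptyset$), so the ``over models'' qualifier is genuinely necessary.

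For \ref{FEX} over models: given $A,B,M$, we have $A\indi u_M M$ by the previous paragraph. Now apply \ref{EXT} for $\indi u$ to this instance, extending from $M$ to the larger set $BM\supseteq M$: this produces $A'\equiv_M A$ with $A'\indi u_M BM$, and then right \ref{MON} gives $A'\indi u_M B$. This is verbatim the proof of the ``$\ref{EX}\Rightarrow\ref{FEX}$'' direction in Proposition \ref{prop:basicpropertieswiththeory}(c); the point is simply that that argument invokes \ref{EX} only at the base under consideration, hence it localises to $M$.

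I expect no serious obstacle here: everything past the one-line elementarity argument for \ref{EX} over models is bookkeeping. As a remark one could also produce $A'$ directly from Theorem \ref{thm:extensionforu-independence} without citing Proposition \ref{prop:basicpropertieswiththeory}(c): finite satisfiability of $\tp(a/M)$ in $M$ says that the definable sets $\phi(M,b)$, for $\phi(x,b)\in\tp(a/M)$, are nonempty, and since $\tp(a/M)$ is complete they are closed under finite intersection, so they generate a filter on $M^{\abs a}$; extending it to an ultrafilter $\cU$ and taking $a'\models\Av(\cU/BM)$ yields $a'\equiv_M a$ with $a'\indi u_M BM$, whence $a'\indi u_M B$ — which is just the same argument with the ultrafilter written out.
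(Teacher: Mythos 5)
Your proof is correct and follows essentially the same route as the paper: the same elementarity argument ($M\prec\MM$ plus a witness in $M$) gives $A\indi u_M M$, i.e.\ \ref{EX} over models, and \ref{FEX} over models then follows by combining \ref{EXT} from Theorem \ref{thm:extensionforu-independence} with right \ref{MON} from Proposition \ref{prop:propertiesofindiu}, exactly as the paper does. Your closing remark merely unwinds the ultrafilter construction already contained in Theorem \ref{thm:extensionforu-independence}, so it adds no genuinely different argument.
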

\begin{proof}
    To get \ref{EX}, observe that if $\phi(x,m)\in \tp(A/M)$ then $\MM\models \exists x\phi(x,m)$ hence $M\models \exists x\phi(x,m)$. There is $m'$ from $M$ such that $\models \phi(m',m)$, hence $A \indi u _M M$. The rest is from Theorem \ref{thm:extensionforu-independence} and Proposition \ref{prop:propertiesofindiu}.
\end{proof}

\begin{theorem}\label{thm:modelheirindependence}
        $\indi h$ satisfies \ref{LOC}. In particular, if $a = (a_i)_{i<\omega}$ is $C$-indiscernible, then there exists a model $M$ containing $C$ such that $a = (a_i)_{i<\omega}$ is an $M$-indiscernible $\indi h$-Morley sequence over $M$, i.e. $a_n\indi h _M a_{<n}$ for all $n<\omega$. 
\end{theorem}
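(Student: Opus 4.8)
The plan is to reduce the ``in particular'' clause to Lemma~\ref{lm:LOCcharactergivesMS} and to spend the actual effort on showing that $\indi h$ satisfies \ref{LOC}. For the reduction: $\indi h$ is invariant (earlier exercise), and since $\indi u$ satisfies left \ref{MON} and left \ref{BMON} (Proposition~\ref{prop:propertiesofindiu}) while $\indi h = (\indi u)^{\opp}$ (heir/coheir exercise), the exercise transferring left-sided axioms to right-sided ones under $(\cdot)^{\opp}$ gives that $\indi h$ satisfies right \ref{MON} and right \ref{BMON}. So once \ref{LOC} is known, Lemma~\ref{lm:LOCcharactergivesMS} applies to $\ind = \indi h$ and yields, for any $C$-indiscernible $(a_i)_{i<\omega}$, a model $M \supseteq C$ over which the sequence is an $M$-indiscernible $\indi h$-Morley sequence, i.e.\ $a_n \indi h_M a_{<n}$ for all $n<\omega$.

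For \ref{LOC} I would run a downward L\"owenheim--Skolem / witness-closure argument inside $B$. Fix a small set $A$ and put $\kappa(A) := (\abs{A} + \aleph_0)^+$. Given a small $B$, build an increasing chain $E_0 \seq E_1 \seq \cdots$ of subsets of $B$ by recursion on $n<\omega$: set $E_0 = \emptyset$, and given $E_n$, for every $\LL(E_n)$-formula $\phi(x,y)$ with $\abs{x} = \abs{A}$, $\abs{y} < \omega$ such that $\models \phi(A,\bar b)$ for some tuple $\bar b$ from $B$, choose one such witness $\bar b_\phi$ from $B$ and let $E_{n+1}$ consist of $E_n$ together with all the chosen $\bar b_\phi$'s; finally put $E := \bigcup_{n<\omega} E_n$.

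Two routine verifications then finish the argument. First, cardinality: as $\LL$ is at most countable, at stage $n$ there are at most $\abs{A} + \aleph_0 + \abs{E_n}$ formulas to treat, each contributing a finite tuple, so inductively $\abs{E_n} \leq \abs{A} + \aleph_0$ and hence $\abs{E} \leq \abs{A} + \aleph_0 < \kappa(A)$. Second, $E \seq B$ and $\tp(A/B)$ is an heir of $\tp(A/E)$: given $\phi(x,b) \in \tp(A/B)$ with $\phi(x,y) \in \LL(E)$ and $b$ from $B$, the formula $\phi$ uses only finitely many parameters, so $\phi(x,y) \in \LL(E_n)$ for some $n$; since $\models \phi(A,b)$, at stage $n+1$ a witness $\bar b_\phi \in E_{n+1} \seq E$ with $\models \phi(A,\bar b_\phi)$ was chosen, i.e.\ $\phi(x,\bar b_\phi) \in \tp(A/E)$ with $\bar b_\phi$ from $E$. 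Hence $A \indi h_E B$, which is exactly \ref{LOC} for $\indi h$ with $\kappa(A)$ as above.

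I do not expect a genuine obstacle; the only delicate points are bookkeeping ones. The recursion must be iterated $\omega$ times rather than once, since witnesses added at one stage bring new parameters into the formulas considered at the next stage, and it closes off precisely because each formula uses finitely many parameters. Also it matters that the witnesses are drawn from $B$, so that $E \seq B$ --- this is exactly what is required of the base of an instance $A \indi h_E B$. (One could equivalently work on the coheir side via $\indi h = (\indi u)^{\opp}$, finding a small $E \seq B$ with $\tp(B/EA)$ finitely satisfiable in $E$, but the heir formulation is the most direct here.)
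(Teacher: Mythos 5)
Your proof is correct and follows essentially the same route as the paper: an $\omega$-stage witness-closure inside $B$ producing a small base $E\seq B$ over which $\tp(A/B)$ is an heir, followed by Lemma~\ref{lm:LOCcharactergivesMS} (using right \ref{MON} and right \ref{BMON} for $\indi h$) for the Morley-sequence clause. The only cosmetic difference is that the paper first reduces to finite tuples via \ref{FIN} and gets the bound $\abs{T}$, while you run the closure directly on the full set $A$ and get the bound $\abs{A}+\aleph_0$, which is equally fine for \ref{LOC}.
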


\begin{proof}
    By \ref{FIN} it is enough to prove that for all finite tuple $a$ and for all $B$ there exists $C\seq B$ with $\abs{C} \leq \abs{T}$ such that $a\indi h _C B$. Assume that $a$ and $B$ are given. We construct a sequence $(C_n)_{n<\omega}$ with $C_n\seq B$ such that $\abs{C_{n+1}} \leq \abs{T}+\abs{C_n}$ and for all $\phi(x,b)\in \tp(aC_n/B)$ there exists $c\in C_{n+1}$ such that $\phi(x,c)\in \tp(aC_n/C_{n+1})$. We proceed by induction. Let $C_0 =\emptyset$ and if $C_n$ has been constructed, then enumerates all formulas $\phi(x,b)\in \tp(aC_n/B)$ (i.e. such that there exists $d\in aC_n$ such that $\models \phi(d,b)$) and extend $C_n$ by adding one tuple $b$ for each such formula $\phi(x,b)$. Then $\abs{C_{n+1}}\leq \abs{T}+\abs{C_n}$. By construction we have $a\indi h _C B$ for $C = \bigcup_{i<\omega} C_i$ hence as $\abs{C} \leq \abs{T}$ we conclude \ref{LOC} for $\indi h$. As $\indi h$ satisfies right \ref{MON} and right \ref{BMON}, we conclude by Lemma \ref{lm:LOCcharactergivesMS}.
\end{proof}

\begin{exercise}[Ramsey]
    We give an alternate proof of the fact that if $a = (a_i)_{i<\omega}$ is $C$-indiscernible, then there exists a model $M$ containing $C$ such that $a = (a_i)_{i<\omega}$ is an $M$-indiscernible $\indi h$-Morley sequence over $M$. The advantage of this proof due to N. Ramsey (not the old Ramsey of ``Ramsey and compactness", a recent Ramsey) is that it does not use the Erd\"os-Rado theorem.
    \begin{enumerate}
        \item Consider the expansion $\MM^\Sk$ of $\MM$ by Skolem functions in the expansion $\LL^\Sk$ of the language $\LL$: for each $\LL$-formula $\phi(x,y)$ there is a $\abs{y}$-ary function $f_\phi$ such that $\MM^\Sk\models \phi(f_\phi(y),y)$. Prove that $\MM^\Sk$ is a monster model of its theory. 
        \item Prove that every $\dcl$-closed set in $\MM^\Sk$ is an elementary substructure of $\MM^\Sk$ and the reduct to $\LL$ is an elementary substructure of $\MM$ (\textit{Hint.} Use Exercise \ref{exo:domainofelementarysubstructure}.)
        \item By Ramsey (the old one) and compactness, there exists an $\LL^\Sk$-indiscernible sequence $(a_i')_{i<\omega+\omega}$ over $C$ satisfying the $\LL^\Sk$-EM type of $(a_i)_{i<\omega}$ over $C$. 
        \begin{enumerate}[$(a)$]
            \item Prove that there exists a model $N$ such that $(a_i')_{\omega<i<\omega+\omega}$ is $N$-indiscernible and $a_i'\indi h _{N} a_{<i}'$ in the sense of $\MM^\Sk$ (\textit{Hint.} Use Exercises \ref{exo:indiscernible_overinitialsegment_heir} and \ref{exo:heirdcl}.) 
            \item Prove that $(a_i')_{\omega<i<\omega+\omega}\equiv_C (a_i)_{i<\omega}$ in $\MM$.
            \item Let $N_0$ be the reduct of $N$ to $\LL$. Prove that there exists $M\succ \MM$ such that $(a_i')_{\omega<i<\omega+\omega} N_0 \equiv_C (a_i)_{i<\omega} M$.
            \item Conclude.
        \end{enumerate}
    \end{enumerate} 
\end{exercise}

\begin{exercise}[*]\label{exo:strongfinitecharacter}
    We define the following property
    \begin{itemize}
        \item (\setword{strong finite character}{SFIN}). If $a\nind_C b$, then there is a formula $\phi(x,b)\in \tp(a/Cb)$ such that for all $a'$, if $a'\models \phi(x,b)$ then $a'\nind_C b$.
    \end{itemize}
    Assume that $\ind$ satisfies left \ref{MON}, \ref{SFIN} and that $\indi\opp\ \ $ satisfies \ref{EX}. Prove that $\indi u \to \ind$.
\end{exercise}

\begin{exercise}[*]\label{exo:indihsatisfiesLOC}
    Prove that any relation satisfying \ref{MON}, \ref{SYM}, \ref{EX} and \ref{SFIN} also satisfies \ref{LOC}. Deduce from Example \ref{example:stindepinRGdonotsatLOC} that in RG, $\indi\st$ do not satisfy \ref{SFIN}.
\end{exercise}

{\color{blue}
\begin{exercise}
    Prove that in any theory, $\indi a$ satisfies \ref{SFIN}. Deduce that $\indi a$ always satisfies \ref{LOC}.
\end{exercise}
}

\begin{exercise}\label{exo:heirandcoheirinACF}
    In ACF prove that $\indi h\to \indi\alg\ $ and $\indi u\to \indi\alg\ $. Actually, one can prove that in ACF, $\indi h = \indi u = \indi\alg\ $.
\end{exercise}

\begin{exercise}
    Prove that if $\ind$ is invariant and satisfies \ref{EX}, \ref{SYM}, \ref{MON}, \ref{BMON}, \ref{TRA}, \ref{EXT} and \ref{SFIN}, then $\ind$ is an AIR.
\end{exercise}

\begin{exercise}[Open]
    Is there an AIR which does not satisfy \ref{SFIN}?
\end{exercise}

\subsection{NSOP$_4$ theories} 

\begin{definition}
    Given $n\geq 3$, we say that $T$ has the $n$-\textit{strong order property} ($n$-SOP) if there is an indiscernible sequence $(a_i)_{i<\omega}$ such that if $p(x,y) = \tp(a_0,a_1)$ then the type $q(x_1,\ldots,x_n)$ defined by \[p(x_1,x_2)\cup p(x_2,x_3)\cup\ldots\cup p(x_{n-1},x_n)\cup p(x_n,x_1)\]
    is inconsistent. We say that $T$ is NSOP$_n$ if it does not have the $n$-SOP.
\end{definition}

\begin{remark}\label{rk:NSOPnNSOPn+1}
    If $T$ is NSOP$_n$ then $T$ is NSOP$_{n+1}$. Indeed if $T$ is NSOP$_n$, then for any indiscernible sequence $(a_i)_{i<\omega}$ there is $c_1,\ldots, c_n$ such that $c_i c_{i+1}\equiv a_0a_1$ for $i< n$ and $c_n c_1\equiv a_0a_1$. Then $c_1c_2\equiv a_0a_1\equiv a_0a_2$ hence there exists $c_{\frac{3}{2}}$ such that $c_1c_{\frac{3}{2}} c_2 \equiv a_0a_1a_2$ and $c_1,c_{\frac{3}{2}}, c_2,\ldots,c_n$ witness NSOP$_{n+1}$.
\end{remark}

\begin{example}
    DLO has the SOP$_n$ for all $n\geq 3$. Indeed, for an increasing sequence $(a_i)_{i<\omega}$ were $a_i<a_j$ iff $i<j$, then $p(x,y)=\tp(a_0,a_1)$ is isolated by the formula $x<y$, and clearly if the type $q$ above was consistent, it would contradicts transitivity of $<$.
\end{example}

\begin{lemma}\label{lm:stat_invert}
    Assume that $\ind$ is invariant and satisfies \ref{SYM} and \ref{STAT} over models. If $a\ind_M b$ and $a\equiv_M b$ then $ab\equiv_M ba$.
\end{lemma}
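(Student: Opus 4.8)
The plan is to introduce one auxiliary tuple and reduce the whole statement to a single application of \ref{STAT} over models, then compose two elementary maps.

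First I would use $a\equiv_M b$ to fix an automorphism $\sigma\in\Aut(\MM/M)$ with $\sigma(a)=b$, and set $b':=\sigma(b)$. Applying $\sigma$ to the tuple $ab$ gives at once $ab\equiv_M bb'$, so the problem is reduced to showing $bb'\equiv_M ba$, i.e.\ $b'\equiv_{Mb}a$. To get this from \ref{STAT} over models with base $M$ and the roles ``$A$''$=b$, ``$C_1$''$=a$, ``$C_2$''$=b'$, I need three inputs: $a\ind_M b$, which is a hypothesis; $b'\ind_M b$, obtained by applying $\sigma$ (invariance) to $a\ind_M b$ to get $b\ind_M b'$ and then using \ref{SYM}; and $a\equiv_M b'$, obtained from $a\equiv_M b$ together with $b\equiv_M b'$ (the latter because $\sigma$ fixes $M$ pointwise and maps $b$ to $b'$), by transitivity of $\equiv_M$.

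Then \ref{STAT} over models yields $a\equiv_{Mb}b'$, equivalently $b'\equiv_{Mb}a$; lifting this to an automorphism over $Mb$ sending $b'$ to $a$ gives $bb'\equiv_M ba$. Combining with $ab\equiv_M bb'$ from the first step we conclude $ab\equiv_M ba$.

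I do not expect a genuine obstacle here: the only non-mechanical step is realising that one should pass through the auxiliary element $b'=\sigma(b)$, which converts the desired symmetry of the pair $(a,b)$ into a stationarity assertion about the two $M$-conjugates $a$ and $b'$, both independent from $b$ over $M$. The remaining verifications are routine bookkeeping with invariance and automorphisms, and \ref{SYM} is used exactly once (to pass from $b\ind_M b'$ to $b'\ind_M b$).
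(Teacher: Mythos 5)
Your proof is correct and is essentially the paper's own argument: the paper also conjugates by an automorphism over $M$ sending $a$ to $b$ (its auxiliary tuple $a'$ is exactly your $b'=\sigma(b)$), uses invariance and \ref{SYM} to get $b'\ind_M b$, and then applies \ref{STAT} over $M$ to obtain $b'\equiv_{Mb}a$ before composing the two equivalences. No gaps; the bookkeeping with invariance and transitivity of $\equiv_M$ is exactly as in the paper.
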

\begin{proof}
    By an automorphism, let $a'$ be such that $ab\equiv_M ba'$. By invariance, $b\ind_M a'$ and by \ref{SYM} $a'\ind_M b$. Using \ref{STAT} we conclude $a'\equiv_{Mb} a$ hence $a'b\equiv_M ab$. As $ab\equiv_M ba'$, we have $ba\equiv_M ab$.
\end{proof}

\begin{theorem}\label{thm:criterionNSOP4}
Let $\ind$ be an invariant relation satisfying: \ref{SYM}, \ref{FEX}, \ref{STAT} over models, and the following weak transitivity over models:
    \[a\ind_{Md} b \text{ and } a\indi h _M d \text{ and } b\indi u _M d  \implies a\ind_M b\]
    Then $T$ is NSOP$_4$.
\end{theorem}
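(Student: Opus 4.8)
The plan is to argue by contradiction: suppose $T$ has SOP$_4$, so there is an indiscernible sequence $(a_i)_{i<\omega}$ with $p(x,y)=\tp(a_0,a_1)$ such that
\[p(x_1,x_2)\cup p(x_2,x_3)\cup p(x_3,x_4)\cup p(x_4,x_1)\]
is inconsistent. The goal is to build a realisation of this type using the independence relation $\ind$, producing a contradiction. First I would use Theorem~\ref{thm:modelheirindependence} (and Corollary~\ref{cor:indusatisfiesFEXovermodels}) to find a small model $M$ over which the indiscernible sequence $(a_i)_{i<\omega}$ becomes an $M$-indiscernible $\indi h$-Morley sequence; by the symmetric statement for $\indi u$ one also arranges $a_i \indi u_M a_{<i}$. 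Passing to a pair $a=a_0$, $b=a_1$ sitting at consecutive positions of this sequence, we get $a \indi h_M b$ and (relabeling) control over both heir and coheir independence.

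**Constructing the four points.** The key idea is that in an indiscernible sequence $(a_i)$, for any two disjoint ``intervals'' the data $(a_i,a_j)$ all realise $p$. I would take four elements indexed so as to form two pairs, say $d_1, d_2$ realising $\tp(a_0/M)$ with $d_1 \ind_M d_2$ (using \ref{FEX}), and then use \ref{STAT} over models together with Lemma~\ref{lm:stat_invert} to flip pairs: since $d_1 \ind_M d_2$ and $d_1 \equiv_M d_2$, we get $d_1 d_2 \equiv_M d_2 d_1$, i.e. the pair-type is symmetric. The construction should produce $c_1,c_2,c_3,c_4$ with $c_1 c_2, c_2 c_3, c_3 c_4, c_4 c_1$ all realising $p$: one builds $c_1,c_3$ independent over $M$ (a ``Morley pair''), then finds $c_2$ with $c_2 \indi h_M$ or $\indi u_M$ the relevant base and $c_2$ realising $p$ with both $c_1$ and $c_3$, and symmetrically $c_4$ on the other side. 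The heir/coheir asymmetry is exactly what lets the weak transitivity hypothesis apply: one wants $c_2 \ind_{M d} \, \cdot$ where $d$ is an intermediate parameter, $c_2 \indi h_M d$, and the other side $\indi u_M d$, to conclude $c_2 \ind_M (\text{the far point})$ — and then \ref{STAT} pins down its type to be the right copy of $p$.

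**The main obstacle.** The delicate step is threading the heir side against the coheir side so that the hypotheses of weak transitivity over models are literally met: one needs, simultaneously, an instance $a \ind_{Md} b$, an heir instance $a \indi h_M d$, and a coheir instance $b \indi u_M d$. This forces a careful choice of which of the four points plays the role of $a$, which plays $b$, and which intermediate tuple plays $d$ — presumably $d$ is one of the already-constructed points and the heir/coheir Morley structure of the original indiscernible sequence (one side heir, read off from Exercise~\ref{exo:indiscernible_overinitialsegment_heir}, the other coheir) is what supplies the two one-sided independences. Once weak transitivity gives $a \ind_M b$ for the ``long'' pair $c_1, c_3$ (or $c_2,c_4$), \ref{STAT} over $M$ forces $\tp(c_1 c_3/M)$ (resp. its flip via Lemma~\ref{lm:stat_invert}) to equal $p$, closing the 4-cycle and contradicting inconsistency of $q$. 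I expect the bookkeeping of indices and the exact placement of the model $M$ relative to the sequence to be the only real work; everything else is an application of \ref{FEX}, \ref{STAT}, \ref{SYM}, Lemma~\ref{lm:stat_invert}, and the existence of heir/coheir Morley sequences over models.
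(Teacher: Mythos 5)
Your overall plan does match the shape of the paper's argument (heir--Morley sequence over a model $M$ via Theorem~\ref{thm:modelheirindependence}, \ref{FEX} to produce an independent conjugate, weak transitivity fed by one heir and one coheir instance, then \ref{STAT} via Lemma~\ref{lm:stat_invert}), but the step you defer as ``bookkeeping'' is in fact the entire content of the proof, and the way you sketch it would not go through. You propose to first build an independent diagonal pair $c_1,c_3$ realising $\tp(a_0/M)$ and \emph{then} find $c_2$ realising $p$ with both $c_1$ and $c_3$ (and $c_4$ ``symmetrically''). Finding such a $c_2$ means amalgamating two types over $Mc_1$ and $Mc_3$, which is exactly an instance of \ref{INDTHM}-style amalgamation that the hypotheses do not give you; with only \ref{STAT} available this step has no justification. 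The paper's construction avoids any amalgamation: one takes $a_0^*\equiv_{Ma_1}a_0$ with $a_0^*\ind_{Ma_1}a_2$ (so the edge $p(a_0^*,a_1)$ is automatic), applies weak transitivity with $a=a_2$, $b=a_0^*$, $d=a_1$ --- the heir instance $a_2\indi h_M a_1$ coming from the Morley sequence and the coheir instance $a_0^*\indi u_M a_1$ coming from $(\indi h)^{\mathrm{opp}}=\indi u$ together with invariance over $Ma_1$ --- and then obtains the \emph{fourth} point $a_3^*$ as the image of $a_1$ under an automorphism witnessing the flip $a_0^*a_2\equiv_M a_2a_0^*$. (In particular your assertion that one can also arrange $a_i\indi u_M a_{<i}$ over the same $M$ is unjustified, though also unnecessary.)

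A second, more pointed error is in your endgame: you say \ref{STAT} ``pins down $\tp(c_1c_3/M)$ to equal $p$''. It does not, and it must not: $c_1,c_3$ are opposite corners of the $4$-cycle, and their type is the type of an $\ind_M$-independent pair, which in general is not $p$ (think of the random graph with free amalgamation, where the diagonal of an independent pair of adjacent-type vertices is a non-edge); if the argument required the diagonal to realise $p$ you would be proving a $3$-cycle consistency statement that these hypotheses cannot yield. The only use of stationarity is Lemma~\ref{lm:stat_invert}: it gives the symmetry $c_1c_3\equiv_M c_3c_1$, and it is the automorphism realising this flip, applied to the already-placed middle point $a_1$, that produces the fourth vertex and closes the cycle. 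As written, your proposal leaves the alignment of the weak-transitivity hypotheses unverified and misassigns the role of \ref{STAT}, so it does not constitute a proof.
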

\begin{proof}
        Let $(a_i)_{i<\omega}$ be an indiscernible sequence and $p(x,y) = \tp(a_0,a_1)$. We show that 
    \[p(x_0,x_1)\cup p(x_1,x_2)\cup p(x_2,x_3)\cup p(x_3,x_0)\]
    is a consistent partial type. By Theorem \ref{thm:modelheirindependence}, $a_i \indi h _M a_{<i}$ for all $i<\omega$ and some small model $M$.

    By \ref{FEX}, there exists $a_0^*\equiv_{Ma_1} a_0$ such that $a_0^*\ind_{Ma_1} a_2$. By \ref{SYM}, we have $a_2\ind_{Ma_1} a_0^*$. As $a_2\indi h _M a_1$ and $a_0^*\indi u _M a_1$, we conclude $a_2\ind_{M} a_0^*$ using the weak transitivity assumption.

    We have $a_0^*\equiv_M a_2$ and $a_0^*\ind_M a_2$ hence by \ref{STAT} and \ref{SYM} (Lemma \ref{lm:stat_invert}), we have $a_0^*a_2\equiv_M a_2a_0^*$. Then, there exists $a_3^*$ such that $a_0^*a_2a_1\equiv_M a_2a_0^*a_3^*$. We claim that $(a_0^*,a_1,a_2,a_3^*)$ satisfies the type above.
    First, $a_0^*a_1\equiv a_0a_1$ hence $p(a_0^*,a_1)$. By indiscernability, $a_0a_1\equiv_M a_1a_2$ hence $p(a_1,a_2)$. By choice, $a_2a_3^*\equiv_M a_0^*a_1$ hence $p(a_2,a_3^*)$. Finally $a_3^*a_0\equiv_M a_1a_2$ hence $p(a_3^*,a_0)$.
\end{proof}


\begin{corollary}
    ACF is NSOP$_4$.
\end{corollary}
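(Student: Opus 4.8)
The plan is to apply Theorem \ref{thm:criterionNSOP4} with the independence relation $\ind = \indi\alg$ of algebraic disjointness, working in a monster model $\K$ of an arbitrary completion of $\ACF$. One then has to verify the four hypotheses of that theorem for $\indi\alg$: \ref{SYM}, \ref{FEX}, \ref{STAT} over models, and the displayed weak transitivity over models.

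The first three I expect to be immediate from earlier results. \ref{SYM} holds by Proposition \ref{prop:ACFalgebraicsatisfiesbasics}. For \ref{FEX}: $\indi\alg$ satisfies \ref{EX} (Proposition \ref{prop:ACFalgebraicsatisfiesbasics}), together with \ref{EXT} and right \ref{MON} (Propositions \ref{prop:ACFsatisfiesINVEXTSTAT} and \ref{prop:ACFalgebraicsatisfiesbasics}), so \ref{FEX} follows from Proposition \ref{prop:basicpropertieswiththeory}(c) --- alternatively, it is exactly the statement built in the proof of Proposition \ref{prop:ACFsatisfiesINVEXTSTAT}. For \ref{STAT} over models: every model of $\ACF$ is an algebraically closed field, hence equal to its own algebraic closure, so \ref{STAT} over models is a special case of \ref{STAT} over algebraically closed sets, which holds by Proposition \ref{prop:ACFsatisfiesINVEXTSTAT}.

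The only point needing a genuine argument is the weak transitivity over models: I must show that $a\indi\alg_{Md} b$, $a\indi h_M d$ and $b\indi u_M d$ together imply $a\indi\alg_M b$. Here I plan to use much less than is offered. By Exercise \ref{exo:heirandcoheirinACF}, $\indi h\to\indi\alg$ in $\ACF$, so the hypothesis $a\indi h_M d$ already yields $a\indi\alg_M d$ (the coheir hypothesis on $b$ will not be used at all). Thus it suffices to show that $a\indi\alg_{Md} b$ and $a\indi\alg_M d$ imply $a\indi\alg_M b$. Since $\indi\alg$ satisfies right \ref{NOR}, right \ref{MON} and right \ref{TRA} (Proposition \ref{prop:ACFalgebraicsatisfiesbasics}), this is a short forking-calculus computation: from $a\indi\alg_M d$ and right \ref{NOR} we get $a\indi\alg_M Md$; from $a\indi\alg_{Md} b$ and right \ref{NOR} (over the base $Md$) we get $a\indi\alg_{Md} Mdb$; right \ref{TRA} applied along $M\seq Md\seq Mdb$ then gives $a\indi\alg_M Mdb$; and right \ref{MON} gives $a\indi\alg_M b$.

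With all hypotheses of Theorem \ref{thm:criterionNSOP4} verified, $T$ is NSOP$_4$, and since the completion of $\ACF$ was arbitrary, $\ACF$ is NSOP$_4$. I do not expect any real obstacle: the substance of the corollary is entirely front-loaded into Theorem \ref{thm:criterionNSOP4} and the established properties of $\indi\alg$. The only things to be careful about are invoking the correct (left/right) sides of the calculus axioms and recognizing that in the stable theory $\ACF$ the heir/coheir hypotheses of the criterion collapse to ordinary transitivity of algebraic disjointness.
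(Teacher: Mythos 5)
Your proposal is correct and follows essentially the same route as the paper: apply Theorem \ref{thm:criterionNSOP4} to $\indi\alg$, citing Propositions \ref{prop:ACFalgebraicsatisfiesbasics} and \ref{prop:ACFsatisfiesINVEXTSTAT} for \ref{SYM}, \ref{FEX} and \ref{STAT} over models, and derive the weak transitivity from $\indi h\to\indi\alg$ (Exercise \ref{exo:heirandcoheirinACF}) together with \ref{TRA} and \ref{MON}. Your write-up merely spells out the \ref{NOR}/\ref{TRA}/\ref{MON} chain that the paper leaves implicit, and correctly observes that the coheir hypothesis is not needed.
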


\begin{proof}
    In ACF, consider the relation $\indi\alg\ $. By Proposition \ref{prop:ACFalgebraicsatisfiesbasics} and \ref{prop:ACFsatisfiesINVEXTSTAT}, the invariant relation $\indi\alg\ $ satisfies \ref{SYM}, \ref{TRA}, \ref{FEX} and \ref{STAT} over models. Further it is easy to check that $\indi h\to \indi\alg\ $ (see Exercise \ref{exo:heirandcoheirinACF}) hence the conditions of Theorem \ref{thm:criterionNSOP4} are satisfied by \ref{TRA} and \ref{MON}.
\end{proof}

\begin{corollary}\label{cor:RGNSOP4}
    RG is NSOP$_4$.
\end{corollary}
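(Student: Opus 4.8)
The plan is to apply Theorem \ref{thm:criterionNSOP4} to the free amalgamation relation $\indi\st$ on $\RG$. Invariance, \ref{SYM}, \ref{EX}, \ref{EXT}, right \ref{MON} and \ref{STAT} over every set are already recorded in Propositions \ref{prop:propositionstrongindependenceinRG} and \ref{propositionpropertiesindepRGwithtypes}; in particular \ref{STAT} holds over models. For \ref{FEX} I would invoke Proposition \ref{prop:basicpropertieswiththeory}(c): since $\indi\st$ satisfies \ref{EXT} and right \ref{MON}, its \ref{EX} upgrades to \ref{FEX}. Thus everything reduces to checking the weak transitivity over models.

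So I must show: if $a\indi\st_{Md}b$, $a\indi h_M d$ and $b\indi u_M d$, then $a\indi\st_M b$. The key reduction is that from the heir/coheir hypotheses I only need the crude facts $a\cap d\subseteq M$ and $b\cap d\subseteq M$. These hold because $\indi h\to\indi a$ and $\indi u\to\indi a$ (if $a_i=d_j\notin M$, then $x_i=d_j$ is an heir/coheir formula for $\tp(a/Md)$ over $M$, forcing this common element into $M$ — alternatively one may cite Exercise \ref{exo:strongfinitecharacter} with $\ind=\indi a$, using that $\indi a$ has \ref{SFIN}), together with $\acl=\dcl=\mathrm{id}$ in $\RG$, which makes $\indi a$ over $M$ nothing but set-disjointness past $M$.

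It then remains to check, purely combinatorially, that $a\indi\st_{Md}b$, $a\cap d\subseteq M$ and $b\cap d\subseteq M$ imply $a\indi\st_M b$. Disjointness $a\cap b\subseteq M$ is immediate ($x\in a\cap b$ lies in $Md$ by $a\indi\st_{Md}b$, and $x\in d$ would force $x\in a\cap d\subseteq M$). For the no-new-edge clause, suppose $R(u,v)$ with $u,v\in a\cup b\cup M$ witnesses a failure; a short case analysis (using the disjointness just obtained, so that $a\cup M$ and $b\cup M$ overlap only in $M$) forces, up to swapping $u$ and $v$, that $u\in a\setminus(b\cup M)$ and $v\in b\setminus(a\cup M)$. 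Since also $u,v\in a\cup b\cup M\cup d$, the edge clause of $a\indi\st_{Md}b$ puts $u,v$ on a common side: if on the $a$-side then $v\in a\cup M\cup d$, so $v\in d$, hence $v\in b\cap d\subseteq M$, a contradiction; if on the $b$-side then $u\in d$, hence $u\in a\cap d\subseteq M$, again a contradiction. Hence no such edge exists, weak transitivity over models holds, and Theorem \ref{thm:criterionNSOP4} establishes the corollary.

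The only place needing genuine care is this last combinatorial step: treating the tuples $a,b,d$ as sets of coordinates and verifying that every potential new edge between the $a$-part and the $b$-part is intercepted by one of the two disjointness facts coming from the heir and coheir hypotheses; everything else is bookkeeping already done in the cited propositions.
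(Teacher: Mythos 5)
Your proof is correct and follows essentially the same route as the paper: apply Theorem \ref{thm:criterionNSOP4} to $\indi\st$, quote Propositions \ref{prop:propositionstrongindependenceinRG} and \ref{propositionpropertiesindepRGwithtypes} for the axioms, and verify weak transitivity by extracting the disjointness facts $a\cap d\seq M$, $b\cap d\seq M$ from the heir/coheir hypotheses and then running the edge case analysis against $a\indi\st_{Md}b$. If anything, your write-up is slightly more complete than the paper's (you make the \ref{FEX} step via Proposition \ref{prop:basicpropertieswiththeory}(c) explicit and also check the disjointness clause $a\cap b\seq M$ of the conclusion), so nothing needs fixing.
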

\begin{proof}
    We check that $\indi \st$ satisfies the hypotheses of Theorem \ref{thm:criterionNSOP4}. By Propositions \ref{prop:propositionstrongindependenceinRG} and \ref{propositionpropertiesindepRGwithtypes} it remains to prove the weak transitivity property. Assume that $A\indi h_M B$, $B\indi h _M D$ and $A\indi\st_{MB} D$. From $A\indi h_M B$, $B\indi h _M D$ we deduce that $A\cap B\seq M$ and $B\cap D\seq M$. Let $uv\in AMD$ be such that $\models R(u,v)$. As $A\indi\st_{MB} D$, in particular $uv\seq AMB$ or $uv\seq MBD$. However $AMB\cap AMD\seq AM\cup(B\cap D)\seq AM$ and $MBD\cap AMD\seq MD$ hence hence $uv\seq AM$ or $uv\seq MD$. We conclude that $A\indi\st_M B$.
\end{proof}

\begin{exercise}
Using Lemma \ref{lm:LOCcharactergivesMS} and the proof of Theorem \ref{thm:criterionNSOP4}, prove the following:
if $\ind$ is an invariant relation satisfying \ref{SYM}, \ref{FEX}, \ref{STAT} over models and $\indi 0$ is invariant and satisfies right \ref{MON}, right \ref{BMON} and \ref{LOC}, if further:
    \[a\ind_{Md} b \text{ and } a\indi 0 _M d \text{ and } d\indi 0 _M b  \implies a\ind_M b\]
    then $T$ is NSOP$_4$.
\end{exercise}

\begin{exercise}
    A formula $\phi(x,y)$ with $\abs{x} = \abs{y}$ has the \textit{$n$-strong order property} if there exists $(a_i)_{i<\omega}$ such that $\phi(a_i,a_j)$ for all $i<j$ and 
    \[\phi(x_1,x_2)\wedge \phi(x_2,x_3)\wedge \ldots \phi(x_n,x_1)\]
    is inconsistent in $T$. Prove that $T$ has SOP$_n$ if and only if there is a formula $\phi(x,y)$ which has SOP$_n$ modulo $T$.
\end{exercise}

\begin{exercise}[*]
    Let $\ind$ be an invariant relation satisfying: \ref{SYM}, \ref{MON}, \ref{TRA}, \ref{FEX}, \ref{SFIN} and \ref{STAT} over models then $T$ is NSOP$_4$. Deduce that ACF is NSOP$_4$.
\end{exercise}

\begin{exercise}[*]
    We define the following property
    \begin{itemize}
        \item (\setword{freedom}{FREE}). If $A\ind_C B$ and $C\cap AB\seq D\seq C$ then $A\ind_D B$.
    \end{itemize}
    \begin{enumerate}
        \item In any set $S$ the relation $A\cap B\seq C$ satisfies \ref{FREE}.
        \item Prove that in RG, the relation $\indi\st$ satisfies \ref{FREE}.
        \item We propose to prove the following criterion, due to Conant: \begin{center} if $T$ is a theory where there is an invariant relation $\ind$ on small sets satisfying \ref{SYM}, \ref{FEX}, \ref{STAT} over every set and \ref{FREE}, then $T$ is NSOP$_4$.
        \end{center}
        The proof is a variant of the one of Theorem \ref{thm:criterionNSOP4}.
        \begin{enumerate}
            \item Let $(a_i)_{i<\omega}$ be an indiscernible sequence and $p(x,y) = \tp(a_0,a_1)$. Let $C = a_0\cap a_1$. Prove that $a_i\cap a_j = C$ for all $i,j$ and that the sequence $(b_i)_{i<\omega}$ defined by $b_i = a_i\setminus C$ is indiscernible over $C$.
            \item Prove that there exists $b_0^*$ with $b_0^*\equiv_{Cb_1} b_0$ and $b_0^*\ind_C b_2$ (\textit{Hint.} Use \ref{FEX} and \ref{FREE}.)
            \item Prove that $b_0^*b_2\equiv_C b_2b_0^*$.
            \item Conclude as in Theorem \ref{thm:criterionNSOP4} that $p(x_0,x_1)\cup p(x_1,x_2)\cup p(x_2,x_3)\cup p(x_3,x_0)$ is consistent.
        \end{enumerate}
    \item Deduce that RG is NSOP$_4$.
    \item (\textit{Bonus}. The property \ref{STAT} over every set is a strong property, but using Theorem \ref{thm:indiscernibleerdorado} one might redo the whole proof above assuming only \ref{STAT} over models.)
    \end{enumerate}
\end{exercise}

\chapter{Forking and dividing}\label{chapter:4}

\section{Generalities on dividing and forking}

\subsection{Dividing}
\begin{definition}[Dividing]
    Let $k\in \N$, $b$ a tuple and $C$ a set. We say that a formula $\phi(x,b)$ \textit{$k$-divides over $C$} if there exists a sequence $(b_i)_{i<\omega}$ in $\tp(b/C)$ such that the set $\set{\phi(x,b_i)\mid i<\omega}$ is $k$-inconsistent, i.e. the conjunction $\bigwedge_{j=1}^k \phi(x,b_{i_j})$ is inconsistent for all $i_1<\ldots<i_k<\omega$. A formula divides if there is some $k$ such that it $k$-divides.

    A (partial) type $\Sigma(x)$ \textit{divides over $C$} if there exists $\phi(x,b)$ which divides over $C$ and $\Sigma(x)\models \phi(x,b)$. In particular if $\Sigma$ is closed under conjunction, $\Sigma$ divides over $C$ if it contains a formula that does.
\end{definition}

\begin{example}
 In DLO, the formula $\phi(x,b_1,b_2)$ defined by $b_1<x<b_2$ $2$-divides over $\emptyset$. In ACF, let $\phi(x,b)$ be any nontrivial polynomial equation $P(X,b)=0$ which solutions are not in $\Q^\alg$. Then $\phi(x,b)$ $2$-divides over $\emptyset$. 
\end{example}

\begin{exercise}
    If $\phi(\MM,b)\seq \psi(\MM,d)$ and $\psi(x,d)$ divides over $C$ then $\phi(x,b)$ divides over $C$. 
\end{exercise}

\begin{exercise}
    If $T$ is strongly minimal and $\phi(x, b)\seq \MM$ is infinite, then $\phi(x,b)$ does not divide over $\emptyset$. 
\end{exercise}

\begin{lemma}\label{lm:dividing_indiscernible}
        $\pi(x, b)$ divides over $C$ if and only if there exists a $C$-indiscernible sequence $(b_i)_{i<\omega}$ such that $\tp(b_0/C) = \tp(b/C)$ and $\bigcup_{i<\omega} \pi(x,b_i)$ is inconsistent.
\end{lemma}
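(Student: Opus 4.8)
## Proof Plan for Lemma \ref{lm:dividing_indiscernible}

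The statement is the standard "dividing is witnessed by indiscernible sequences" lemma. The plan is to prove the two directions separately, with the forward direction being the substantive one.

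\textbf{The easy direction ($\Leftarrow$).} Suppose there is a $C$-indiscernible sequence $(b_i)_{i<\omega}$ with $\tp(b_0/C) = \tp(b/C)$ and $\bigcup_{i<\omega}\pi(x,b_i)$ inconsistent. By compactness, some finite subconjunction is inconsistent, so there is a finite $\pi_0 \subseteq \pi$ and some $k$ such that $\bigwedge_{i<k}\bigwedge\pi_0(x,b_i)$ is inconsistent; let $\phi(x,y)$ be the conjunction $\bigwedge \pi_0(x,y)$, so $\phi(x,b_0) \in \pi(x,b_0)$-consequences (i.e. $\pi(x,b_0)\models\phi(x,b_0)$) and $\{\phi(x,b_i)\mid i<k\}$ is inconsistent. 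By $C$-indiscernibility, for \emph{any} $i_1 < \dots < i_k < \omega$ the tuple $(b_{i_1},\dots,b_{i_k})$ has the same type over $C$ as $(b_0,\dots,b_{k-1})$, so $\{\phi(x,b_{i_j})\mid j \le k\}$ is also inconsistent; hence $\phi(x,b_0)$ $k$-divides over $C$ (the sequence $(b_i)_i$ itself is the required $\tp(b_0/C)$-indiscernible witness, and $\tp(b_0/C) = \tp(b/C)$), and since $\pi(x,b) \models \phi(x,b)$ after translating $b_0 \mapsto b$ by an automorphism over $C$, $\pi(x,b)$ divides over $C$.

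\textbf{The main direction ($\Rightarrow$).} Suppose $\pi(x,b)$ divides over $C$, so there is $\phi(x,y)$ with $\pi(x,b)\models\phi(x,b)$ and a sequence $(b_i')_{i<\omega}$ in $\tp(b/C)$ with $\{\phi(x,b_i')\mid i<\omega\}$ $k$-inconsistent for some $k$. The idea is to extract a $C$-indiscernible sequence out of $(b_i')_i$ while preserving enough: apply Ramsey and compactness (Lemma \ref{lm:ramseycompactness}) to get a $C$-indiscernible sequence $(b_i)_{i<\omega}$ realizing $\EM((b_i')_i / C)$. Since $k$-inconsistency of $\{\phi(x,b_i')\mid i<\omega\}$ is expressed by the formulas $\neg\exists x\bigwedge_{j<k}\phi(x,y_{i_j})$ lying in $\EM((b_i')_i/C)$, these hold of $(b_i)_i$ as well, so $\{\phi(x,b_i)\mid i<\omega\}$ is $k$-inconsistent, hence (as it is a chain of $k$-inconsistent sets over an infinite linear order) outright inconsistent. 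Also each $b_i \models \tp(b/C)$ since unary formulas over $C$ satisfied by $b_0'$ are in the EM-type, so $\tp(b_0/C) = \tp(b_0'/C) = \tp(b/C)$. Finally, replacing $b_0$ by $b$ via an automorphism $\sigma \in \Aut(\MM/C)$ and setting $b_i'' := \sigma(b_i)$, we get a $C$-indiscernible sequence with $b_0'' = b$, $\tp(b_0''/C) = \tp(b/C)$, and $\{\phi(x,b_i'')\mid i<\omega\}$ inconsistent; since $\pi(x,b)\models\phi(x,b)$ we have a fortiori that $\bigcup_{i<\omega}\pi(x,b_i'')$ is inconsistent, as required.

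\textbf{Expected obstacle.} There is no deep obstacle; the one point needing a little care is the passage from "$k$-inconsistent" to "inconsistent": one must note that $k$-inconsistency of $\{\phi(x,b_i)\mid i<\omega\}$ plus indiscernibility (or just plus being indexed by an infinite order) forces inconsistency of the whole set, since any realization would give infinitely many $b_i$'s with a common solution, contradicting $k$-inconsistency by pigeonhole on a $k$-subset. The only other subtlety is bookkeeping with whether $\pi$ is closed under conjunction when passing from $\pi \models \phi$ to a single dividing formula, but this is handled exactly as in the definition of dividing for partial types given just above.
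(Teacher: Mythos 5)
Your proof is correct and follows essentially the same route as the paper: the forward direction extracts a $C$-indiscernible sequence realizing the EM-type via Ramsey and compactness, noting that $k$-inconsistency is recorded in the EM-type, and the backward direction uses compactness to isolate a finite conjunction $\phi$ from $\pi$ and indiscernibility to upgrade inconsistency of one $k$-tuple to $k$-inconsistency of the whole family. The extra automorphism moving $b_0$ to $b$ at the end of the forward direction is harmless but unnecessary, since the lemma only asks for $\tp(b_0/C)=\tp(b/C)$.
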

\begin{proof}
    Assume that $\pi(x,b)$ divides over $C$, so there is $\phi(x,b)$ with $\pi(x,b)\models \phi(x,b)$ ($\phi(x,b)$ is a conjunction of formula from $\pi$, by compactness) and such that $\phi(x,b)$ divides over $C$. Let $(b_i)_{i<\omega}$ be a sequence with $\tp(b_0/C) = \tp(b/C)$ and such that $\set{\phi(x,b_i)\mid i<\omega}$ is $k$-inconsistent, for some $k$. Then the formula $\theta(y_1,\ldots,y_k)$ defined by $\neg (\exists x \bigwedge_i \phi(x,y_i)$ is in the EM-type of $(b_i)_{i<\omega}$ over $C$ (even over $\emptyset$). By Ramsey and compactness, let $(b_i')_{i<\omega}$ be an indiscernible sequence satisfying $\EM((b_i)_{i<\omega}/C)$, then $\set{\phi(x,b_i)\mid i<\omega}$ is also $k$-inconsistent. In particular it is inconsistent and so is $\bigcup_{i<\omega}\pi(x,b_i')$.

    Conversely, assume that $(b_i)_{i<\omega}$ is an indiscernible sequence such that $\tp(b_0/C) = \tp(b/C)$ and that $\bigcup_{i<\omega} \pi(x,b_i)$ is inconsistent. By compactness, there is a finite conjunction $\phi(x,b)$ of formulas from $\pi(x,b)$ such that $\set{\phi(x,b_i)\mid i<\omega}$ is inconsistent. By compactness, there exists $k<\omega$ such that $\phi(x,b_{i_1})\wedge \ldots \wedge \phi(x,b_{i_k})$ is inconsistent, so $\models \theta(b_{i_1},\ldots,b_{i_k})$. As $(b_i)_{i<\omega}$ is $C$-indiscernible, if $\theta(b_{i_1},\ldots,b_{i_k})$ holds for some ordered tuple $i_1<\ldots<i_k$, then it holds for all such tuples, hence $\set{\phi(x,b_i)\mid i<\omega}$ is $k$-inconsistent. As $\tp(b_0/C) = \tp(b/C)$, we conclude that $\pi(x,b)$ divides over $C$. 
    \end{proof}

\begin{proposition}\label{prop:dividing_characterisation_indiscerniblesequencestayindisceniblerovercanda}
    The following are equivalent:
    \begin{enumerate}
        \item $\tp(a/Cb)$ does not divide over $C$
        \item for any $C$-indiscernible sequence $(b_i)_{i<\omega}$ with $b_0 = b$ there is $a'\equiv_{Cb} a$ such that $(b_i)_{i<\omega}$ is $Ca'$-indiscernible.
        \item for any $C$ indiscernible sequence $(b_i)_{i<\omega}$ there exists $(b_i')_{i<\omega}$ with $(b_i')_{i<\omega}\equiv_{Cb}(b_i)_{i<\omega}$ such that $(b_i')_{i<\omega}$ is $Ca$-indiscernible.
    \end{enumerate}
\end{proposition}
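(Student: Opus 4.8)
The plan is to prove the three conditions equivalent by showing $(1)\Rightarrow(2)\Rightarrow(3)\Rightarrow(1)$, using Lemma \ref{lm:dividing_indiscernible} as the main engine and a compactness-and-extraction argument for the first implication.

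For $(1)\Rightarrow(2)$: suppose $\tp(a/Cb)$ does not divide over $C$ and let $(b_i)_{i<\omega}$ be $C$-indiscernible with $b_0=b$. I want to find $a'\equiv_{Cb}a$ making the sequence $Ca'$-indiscernible. Consider the partial type $\Sigma(x)$ over $C(b_i)_{i<\omega}$ consisting of $\tp(a/Cb)$ (written in the variables matching $b_0$) together with the indiscernibility conditions $\phi(x,b_{i_1},\dots,b_{i_n})\leftrightarrow\phi(x,b_{j_1},\dots,b_{j_n})$ for all $\phi\in\LL(C)$, all $n$, and all increasing tuples. I claim $\Sigma$ is consistent. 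By compactness it suffices to satisfy finitely many conditions at a time, and here the key move is: the type $\bigcup_{i<\omega}\tp(a/Cb_i)$ is consistent because $\tp(a/Cb)$ does not divide over $C$ (this is essentially Lemma \ref{lm:dividing_indiscernible} applied to the type $\pi(x,b)=\tp(a/Cb)$). So pick any realization $a^*$ of $\bigcup_i\tp(a/Cb_i)$; then $(b_i)_{i<\omega}$ need not yet be $Ca^*$-indiscernible, but by Ramsey and compactness (Lemma \ref{lm:ramseycompactness}) applied to the sequence $(b_i)$ over $Ca^*$ we can extract a $Ca^*$-indiscernible sequence $(b_i')_{i<\omega}$ realizing $\EM((b_i)_i/Ca^*)$; since $a^*\models\tp(a/Cb_i)$ for every $i$, the sequence $(b_i')$ still consists of realizations of $\tp(b/C)$ and $a^*\models\tp(a/Cb_0')$. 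Now $(b_i')_{i<\omega}\equiv_C (b_i)_{i<\omega}$ (both satisfy the complete type $\EM((b_i)/C)$, which is complete by $C$-indiscernibility, provided we also match $b_0'\equiv_{Cb_0}$ — here one has to be a little careful and instead argue as in $(3)$, see below), and transporting by an automorphism over $C$ (or over $Cb$) sending $(b_i')$ back to $(b_i)$ carries $a^*$ to the desired $a'$. Actually the cleanest route is to prove $(1)\Rightarrow(3)$ directly in this way, then deduce $(2)$ from $(3)$ by an automorphism; so I would reorganize as $(1)\Rightarrow(3)$, $(3)\Rightarrow(2)$, $(2)\Rightarrow(1)$.

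For $(3)\Rightarrow(2)$: given $(b_i)_{i<\omega}$ $C$-indiscernible with $b_0=b$, apply $(3)$ to get $(b_i')_{i<\omega}\equiv_{Cb}(b_i)_{i<\omega}$ which is $Ca$-indiscernible. Let $\sigma\in\Aut(\MM/Cb)$ send $(b_i')$ to $(b_i)$ (using $(b_i')\equiv_{Cb}(b_i)$ and strong homogeneity); set $a'=\sigma(a)$. Then $a'\equiv_{Cb}a$ since $\sigma$ fixes $Cb$, and $(b_i)=\sigma((b_i'))$ is $C\sigma(a)=Ca'$-indiscernible by invariance of indiscernibility under automorphisms.

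For $(2)\Rightarrow(1)$: suppose $\tp(a/Cb)$ divides over $C$. By Lemma \ref{lm:dividing_indiscernible} there is a $C$-indiscernible sequence $(b_i)_{i<\omega}$ with $b_0\equiv_C b$ such that $\bigcup_{i<\omega}\tp(a/Cb_i)$ — or more precisely $\bigcup_i\pi(x,b_i)$ for a suitable dividing formula $\pi\subseteq\tp(a/Cb)$ — is inconsistent; after an automorphism over $C$ we may assume $b_0=b$. If $(2)$ held, there would be $a'\equiv_{Cb}a$ with $(b_i)_{i<\omega}$ being $Ca'$-indiscernible; in particular $a'b_i\equiv_C a'b_0=a'b\equiv_C ab$ for all $i$, so $a'\models\tp(a/Cb_i)$ for every $i$, contradicting inconsistency of $\bigcup_i\tp(a/Cb_i)$. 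Hence $(2)$ fails, proving the contrapositive.

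\textbf{Main obstacle.} The delicate point is $(1)\Rightarrow(3)$: one must produce, from a single realization $a^*$ of $\bigcup_i\tp(a/Cb_i)$, a sequence that is simultaneously $Ca^*$-indiscernible \emph{and} $Cb$-conjugate to the original $(b_i)$. Naive Ramsey extraction over $Ca^*$ destroys the $Cb$-conjugacy (it only preserves the $C$-EM-type). The fix is to do the extraction carefully — keep $b_0$ fixed, or rather extract an indiscernible sequence realizing the EM-type of $(b_i)_{i\geq 1}$ over $Ca^*b_0$ and prepend $b_0$, using that $a^*\models\tp(a/Cb_0)=\tp(a/Cb)$ — so that the resulting sequence is $Cb$-indiscernible-compatible in the required sense; then a compactness argument bundling $\tp(a/Cb)$, the $Cb$-type of $(b_i)$, and $Ca$-indiscernibility of the sequence into one consistent partial type (finite satisfiability of which reduces, via Lemma \ref{lm:dividing_indiscernible} and the Ramsey-extraction just described, to non-dividing) yields $(3)$ directly. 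I expect this bookkeeping — tracking exactly which parameters each extracted indiscernible sequence is indiscernible over — to be where the real care is needed.
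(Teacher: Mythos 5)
Your first pass at $(1)\Rightarrow(2)$ is in fact the paper's proof, and it is already complete as you wrote it: with $p(x,y)=\tp(ab/C)$, realise $\bigcup_{i}p(x,b_i)$ by some $a^*$ (this is Lemma \ref{lm:dividing_indiscernible}; note you should write $\bigcup_i p(x,b_i)$ rather than $\bigcup_i\tp(a/Cb_i)$, which literally denotes types of the fixed element $a$ and is trivially consistent), then extract a $Ca^*$-indiscernible $(b_i')_{i<\omega}$ realising $\EM((b_i)_{i<\omega}/Ca^*)$. Since $\models p(a^*,b_i)$ for every $i$, the formulas $p(a^*,y)$ lie in that EM-type, so $a^*b_0'\equiv_C ab$; and $(b_i')_{i<\omega}\equiv_C(b_i)_{i<\omega}$ simply because $\EM((b_i)/C)$ is complete — no matching of $b_0'$ with $b_0$ over $Cb$ is needed here. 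Now take an automorphism over $C$ (not over $Cb$; such an automorphism need not exist and is not needed) sending $(b_i')$ to $(b_i)$ and $a^*$ to $a'$: then $(b_i)$ is $Ca'$-indiscernible and $a'b_0\equiv_C a^*b_0'\equiv_C ab_0$, i.e.\ $a'\equiv_{Cb}a$. The ``main obstacle'' you flag is a misdiagnosis: you never need the extracted sequence to be $Cb$-conjugate to the original; the $Cb$-equivalence of $a'$ with $a$ is recovered at the end from $a'b\equiv_C ab$. Then $(3)$ follows from $(2)$ by one more automorphism over $Cb$, exactly as in your $(3)\Rightarrow(2)$ step, and your $(2)\Rightarrow(1)$ (contrapositive via Lemma \ref{lm:dividing_indiscernible}) is fine and equivalent to the paper's direct argument.

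The genuine gap is that you then abandon this correct route, commit to the organisation $(1)\Rightarrow(3)$, $(3)\Rightarrow(2)$, $(2)\Rightarrow(1)$, and leave $(1)\Rightarrow(3)$ resting on the ``careful fix'': extract a sequence realising $\EM((b_i)_{i\geq1}/Ca^*b_0)$ and prepend $b_0$. As sketched this step fails. Prepending $b_0$ does give a sequence $\equiv_{Cb}$ to the original (the tail $(b_i)_{i\geq1}$ is $Cb_0$-indiscernible, so its EM-type over $Cb_0$ is complete), but the resulting sequence need not be indiscernible over $Ca^*$: nothing in the EM-type of the tail over $Ca^*b_0$ ties formulas $\phi(a^*,b_0,\bar y)$ to the corresponding formulas $\phi(a^*,y_0,\bar y)$ evaluated inside the tail. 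For instance, in the generic $3$-hypergraph with $C=\emptyset$ and $(b_i)$ an indiscernible sequence with no hyperedges, $\tp(a/b_0)$ is just $x\neq b_0$ (non-dividing), and one can choose $a^*$ with $S(a^*,b_0,b_i)$ for all $i\geq1$ but $\neg S(a^*,b_i,b_j)$ for $1\leq i<j$, even though $a^*b_i\equiv_C ab$ for every single $i$; both patterns survive the extraction over $Ca^*b_0$, and the prepended sequence is then not $Ca^*$-indiscernible. So the proof as you organised it has a hole precisely at the step you postponed. The remedy is simply to revert to your first argument: prove $(1)\Rightarrow(2)$ by extracting the \emph{whole} sequence over $Ca^*$ as above, and deduce $(3)$ from $(2)$ by an automorphism over $Cb$.
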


\begin{proof}
    $(1)\implies (2)$. Let $(b_i)_{i<\omega}$ be a $C$-indiscernible sequence with $b_0 = b$. By Lemma \ref{lm:dividing_indiscernible}, for $p(x,y) = \tp(ab/C)$, the partial type $\bigcup_{i<\omega} p(x,b_i)$ is consistent, let $a''$ be a realisation. By Ramsey and compactness, there exists a $Ca''$-indiscernible sequence $(b_i')_{i<\omega}$ realising $\EM((b_i)_{i<\omega}/Ca'')$. As $\models p(a'',b_i)$ for all $i<\omega$, $p(a'',x)\seq \EM((b_i)_{i<\omega}/Ca'')$, hence $b_i'\models p(a'',y)$ for all $j<\omega$. In particular, $\tp(a''b_0'/C) = p(x,y)$ i.e. $a''b_0'\equiv_C ab$. As $(b_i)_{i<\omega}$ is $C$-indiscernible, we have $(b_i')_{i<\omega}\equiv_C (b_i)_{i<\omega}$. Using an automorphism, there exists $a'$ such that $(b_i')_{i<\omega}a''\equiv_C (b_i)_{i<\omega}a'$, so $(b_i)_{i<\omega}$ is $Ca'$-indiscernible. As $a'b_0\equiv_C a''b_0'$ and $a''b_0'\equiv_C ab_0$ we have $a'\equiv_{Cb} a$.

    $(2)\implies (1)$. Let $p(x,b) = \tp(a/Cb)$ where $p(x,y)$ is a type over $C$. Let $(b_i)_{i<\omega}$ be any $C$-indiscernible sequence and $a'$ as in $(2)$. As $a'\equiv_{Cb} a$ we have $p(a',b)$. As $(b_i)_{i<\omega}$ is $Ca'$-indiscernible, we also have $p(a',b_i)$, for all $i<\omega$, so $\bigcup_{i<\omega} p(x,b_i)$ is consistent. By Lemma \ref{lm:dividing_indiscernible}, $\tp(a/Cb)$ does not divide over $C$.

    $(2)\iff (3)$. Clear via an appropriate automorphism.
\end{proof}

\begin{definition}
    We define the \textit{non-dividing independence relation} $\indi d$ by
    \[A \indi d_C B \iff  \text{$\tp(A/BC)$ does not divide over $C$}.\] Equivalently, $A\indi d_C B$ if for any enumeration $a$ of $A$ and $b$ of $B$ we have: for any indiscernible sequence $(b_i)_{i<\alpha}$ with $b_0 = b$ there is $a'$ such that $a'b_i \equiv_{C} ab$ for all $i<\omega$.
\end{definition}

Oddly enough, the non-dividing independence relation is also sometimes orally called the dividing independence relation.

\begin{theorem}\label{thm:propertieofdividing}
    The relation $\indi d$ is invariant and satisfies \ref{EX}, \ref{FIN}, left and right \ref{MON}, left and right \ref{NOR}, right \ref{BMON}, left \ref{TRA} and \ref{AREF}.
\end{theorem}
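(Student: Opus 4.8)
The plan is to verify each property of $\indi d$ separately, relying heavily on the characterisation of non-dividing in terms of indiscernible sequences (Proposition \ref{prop:dividing_characterisation_indiscerniblesequencestayindisceniblerovercanda} and Lemma \ref{lm:dividing_indiscernible}). Invariance is immediate since dividing is preserved by automorphisms. For \ref{EX}: $\tp(A/C)$ over $C$ is trivially not dividing over $C$, since a formula $\phi(x,c)$ with $c$ from $C$ has a constant conjugate sequence, which is $1$-consistent. For right \ref{MON}: if $\tp(A/BDC)$ does not divide over $C$ then $\tp(A/BC)$ does not divide over $C$, because any formula over $BC$ implied by $\tp(A/BC)$ is implied by $\tp(A/BDC)$ — dividing is monotone in the formula/type.

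First I would handle \ref{FIN} and the two-sided \ref{NOR} and \ref{MON} using the formula-level definition: a type divides iff one of its conjunctions of formulas divides, so dividing of $\tp(A/BC)$ depends only on the finite sub-tuples of $A$ (giving left \ref{FIN}) and on formulas with finitely many parameters from $B$ (giving right \ref{FIN}); left versions follow by enumerating $A$ and noting a formula $\phi(x,a,b)$ over $Cb$ divides over $C$ iff ... — here I must be slightly careful, left \ref{NOR} ($A\indi d_C B \implies AC\indi d_C B$) needs that $\tp(AC/BC) = \tp(A/BC)$ as the relevant data since $C$ is in the base, and left \ref{MON} is just restriction of the tuple. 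Right \ref{NOR} says $A\indi d_C B \implies A\indi d_C BC$, which is clear since $BC$ and $B$ generate the same parameter set over $C$.

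Next, right \ref{BMON}: given $C\seq B\seq D$ and $A\indi d_C D$, I want $A\indi d_B D$. Using Proposition \ref{prop:dividing_characterisation_indiscerniblesequencestayindisceniblerovercanda}: take a $B$-indiscernible sequence $(d_i)_{i<\omega}$ with $d_0$ enumerating $D$ (so $d_0 \supseteq B$); it is in particular $C$-indiscernible, so by non-dividing over $C$ there is $a' \equiv_{CD} a$ with $(d_i)$ being $Ca'$-indiscernible; since $B \seq D = d_0$ and the $d_i$ all have the same type, one argues that $B$ is fixed appropriately and $(d_i)$ is actually $Ba'$-indiscernible — the key point being $a' \equiv_{CD} a$ gives $a' \equiv_{CB} a$ and $B \subseteq \dcl(d_0)$, so $Ba'$-indiscernibility follows from $Ca'd_0$-indiscernibility applied to the tail, the standard trick. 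For left \ref{TRA}: given $A\indi d_C B$ and $AB \indi d_C D$ with $C \seq B \seq D$ — actually left \ref{TRA} is $B \indi d_C A$ and $D\indi d_B A$ imply $D \indi d_C A$ with $C\seq B\seq D$; I would run the indiscernible-sequence argument, taking a $C$-indiscernible sequence $(a_i)$, using $B\indi d_C A$ to make it $B$-indiscernible after conjugating, then $D\indi d_B A$ to make it $D$-indiscernible, chaining the conjugations. Finally \ref{AREF}: if $a \indi d_C a$ then $\tp(a/Ca)$ does not divide over $C$; but the formula $x = a$ is in $\tp(a/Ca)$ and for any $C$-indiscernible sequence $(a_i)$ with $a_0 = a$, the type $\bigcup_i (x = a_i)$ is consistent only if all $a_i$ are equal, forcing $a$ to have a finite (indeed singleton) orbit issue — more precisely it forces $a \in \acl(C)$ by a compactness/Ramsey argument producing arbitrarily long constant-needing sequences.

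The main obstacle I expect is right \ref{BMON}: unlike the other properties it genuinely uses the indiscernible-sequence characterisation in an essential way and requires the careful observation that a $B$-indiscernible sequence whose first term enumerates a set $D \supseteq B$ can be conjugated to stay indiscernible over $Ba'$ rather than merely $Ca'$ — one must track that the conjugating automorphism fixes $B$ because it fixes $D \supseteq B$. The subtlety is that non-dividing is \emph{a priori} sensitive to the base, and base monotonicity for dividing is exactly the property that can fail for its ``Kim-forking'' analogue, so the proof must genuinely exploit that we are dividing over the \emph{smaller} set $C$ and the sequence is indiscernible over the \emph{larger} set $B$. Everything else is a routine unravelling of definitions at the level of formulas.
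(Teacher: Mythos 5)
Your proof is correct and follows essentially the paper's route: invariance, \ref{EX}, \ref{FIN}, \ref{MON} and \ref{NOR} at the level of formulas, and the indiscernible-sequence characterisation (Lemma \ref{lm:dividing_indiscernible}, Proposition \ref{prop:dividing_characterisation_indiscerniblesequencestayindisceniblerovercanda}) for right \ref{BMON}, left \ref{TRA} and \ref{AREF}, exactly as in the text. One small correction on right \ref{BMON}: the upgrade from $Ca'$- to $Ba'$-indiscernibility is not given by the initial-segment (tail) trick you invoke — that only handles the terms $d_i$ with $i\geq 1$ among themselves — but by the observation that, the sequence being $B$-indiscernible with $B\seq d_0$, every $d_i$ contains $B$ in the same coordinates, so the elementary maps witnessing $Ca'$-indiscernibility automatically fix $B$ pointwise; this is precisely the paper's device of passing to the $C$-indiscernible sequence $(b_iD)_{i<\omega}$.
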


\begin{proof}
    The relation $\indi d$ is invariant because for any $\sigma\in \Aut(\MM)$, $\phi(x,b)$ divides over $C$ if and only if $\phi(x,\sigma(b))$ divides over $\sigma(C)$. The property \ref{EX} is trivial since $\tp(a/C)$ does not divides over $C$.
    The property \ref{FIN} holds for $\indi d$ since dividing is witnessed at the level of formulas, which only mention a finite number of elements from $A$. Assume that $A\indi d_C B$. If $A'\seq A$ and $B'\seq B$ then $\tp(A'/B'C)$ consist of formulas in $\tp(A/BC)$ hence no formula in $\tp(A'/B'C)$ divides over $C$ hence $A'\indi d_C B'$, which yields left and right \ref{MON}. Right \ref{NOR} holds by definition. For left \ref{NOR}, if $a\indi d_C b$ and $(b_i)_{i<\omega}$ is $C$-indiscernible with $b_0 = b$, then there is $a'$ such that $a'b_i\equiv_C ab$. Then $a'Cb_i\equiv_C aCb$ hence $aC\indi d _C b$. We prove \ref{BMON}: assume $a\indi d_C b$ and $C\seq D \seq b$. If $(b_i)_{i<\omega}$ is a $D$-indiscernible sequence with $b_0 = b$, then $(b_iD)_{i<\omega}$ is $C$-indiscernible hence there exists $a'$ with $a'b_iD\equiv_C abD$, so $a'b_i\equiv_D ab$ hence $a\indi d_D b$. We prove left \ref{TRA}: assume that $C\seq B\seq D$ and $D\indi d_B a$ and $B\indi d _C a$. Let $(a_i)_{i<\omega}$ be a $C$-indiscernible sequence with $a_0 = a$. As $B\indi d_C a$ there exists $B'\equiv_{aC} B$ such that $(a_i)_{i<\omega}$ is $B'$-indiscernible. Using an automorphism, there is $D'$ such that $D'B'\equiv_{aC} DB$. By invariance, $D'\indi d_{B'} a$ hence as $(a_i)_{i<\omega}$ is $B'$-indiscernible, there is $D''\equiv _{B'} D'$ such that $(a_i)_{i<\omega}$ is $D''$-indiscernible. Then $D''B'\equiv_{aC} D'B'\equiv_{aC} DB$ hence $D''\equiv_{aC} D$ so we conclude $D\indi d _C a$. For \ref{AREF}, if $a\notin \acl(C)$ then there exists a $C$-indiscernible sequence $(a_i)_{i<\omega}$ of distinct elements with $a_0 = a$. This sequence witnesses that the formula $x = a\in \tp(a/Ca)$ $2$-divides over $C$, hence $a\nindi d _B a$.
\end{proof}

In general, $\indi d$ is not an Adler independence relation, for instance, it need not be symmetric. It will be in the context of simple theories. However, we have the following connection between $\indi d$ and any Adler independence relation.

\begin{proposition}\label{prop:dividingindepstrongerthanAIR}
    $\indi d\to \ind$ for any Adler independence relation $\ind$.
\end{proposition}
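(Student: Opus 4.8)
Here is the plan. Write $a$ for an enumeration of $A$ and $b$ for one of $B$, and recall that $\ind$, being an AIR, is invariant and — by Theorem~\ref{thm:adlersymmetry} and Proposition~\ref{prop:basicpropertieswiththeory} — satisfies \ref{SYM}, \ref{FEX}, \ref{SCLO}, right \ref{NOR} and right \ref{FIN}, on top of its defining axioms. Assume $A\indi d_C B$, i.e. $\tp(a/bC)$ does not divide over $C$; the goal is $a\ind_C b$. The idea is to build one indiscernible sequence that is simultaneously a $\ind$-Morley sequence over $C$ \emph{and} can be made indiscernible over $Ca^*$ for a good conjugate $a^*$ of $a$, then extract $a^*\ind_C b$ from \ref{LOC}, \ref{BMON} and \ref{TRA} and transfer back by invariance.

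First I would produce a $C$-indiscernible $\ind$-Morley sequence over $C$ in $\tp(b/C)$ with first term $b$. Iterating \ref{FEX} transfinitely yields a (non-indiscernible) $\ind$-Morley sequence $(b_i)_{i<\lambda}$ over $C$ in $\tp(b/C)$ for $\lambda$ large; since $\ind$ has no \ref{STAT} we cannot appeal to "Morley sequences are indiscernible" directly, so instead I apply Lemma~\ref{lm:erdosradocompactness} to extract a $C$-indiscernible $(b'_i)_{i<\omega}$ each of whose finite initial segments $b'_0\dots b'_n$ is $\equiv_C$ to some $b_{i_0}\dots b_{i_n}$ with $i_0<\dots<i_n$. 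As $b_{i_n}\ind_C b_{<i_n}$, right \ref{MON} and invariance give $b'_n\ind_C b'_{<n}$, so $(b'_i)_{i<\omega}$ is $\ind$-Morley over $C$; conjugating the whole sequence by an automorphism over $C$ sending $b'_0$ to $b$ (legitimate by strong homogeneity, harmless by invariance of $\ind$) I may assume $b'_0=b$.

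Next, using that $\tp(a/bC)$ does not divide over $C$, Proposition~\ref{prop:dividing_characterisation_indiscerniblesequencestayindisceniblerovercanda} gives $a^*\equiv_{Cb}a$ with $(b'_i)_{i<\omega}$ $Ca^*$-indiscernible. Let $\kappa$ be a regular cardinal witnessing \ref{LOC} for $\tp(a^*)$ and lengthen $(b'_i)_{i<\omega}$ to a $Ca^*$-indiscernible sequence $(b'_i)_{i<\kappa}$ (standard compactness lengthening of indiscernible sequences). This long sequence is still $\ind$-Morley over $C$: for $\alpha_0<\dots<\alpha_n<\kappa$ one has $b'_{\alpha_0}\dots b'_{\alpha_n}\equiv_C b'_0\dots b'_n$, so $b'_{\alpha_n}\ind_C b'_{\alpha_0}\dots b'_{\alpha_{n-1}}$ by invariance, and then right \ref{MON} together with right \ref{FIN} gives $b'_\beta\ind_C b'_{<\beta}$ for every $\beta<\kappa$. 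Also, $a^*b'_0=a^*b\equiv_C ab$ and $Ca^*$-indiscernibility give $a^*b'_\alpha\equiv_C ab$ for all $\alpha<\kappa$. Now apply \ref{LOC} to $a^*$ and $C\cup\bigcup_{i<\kappa}b'_i$: there is a base $D$ of size $<\kappa$ with $a^*\ind_D C\bigcup_{i<\kappa}b'_i$, and (enlarging $D$ by $C$ via \ref{BMON}) we may take $C\subseteq D$; since $D\setminus C\subseteq\bigcup_{i<\kappa}b'_i$ has size $<\kappa$ and $\kappa$ is regular, $D\subseteq Cb'_{<\alpha}$ for some $\alpha<\kappa$. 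Right \ref{BMON} then right \ref{MON} yield $a^*\ind_{Cb'_{<\alpha}}b'_\alpha$, hence $b'_\alpha\ind_{Cb'_{<\alpha}}a^*$ by \ref{SYM}; combining this with the Morley instance $b'_\alpha\ind_C b'_{<\alpha}$ through right \ref{NOR} and \ref{TRA} along $C\subseteq Cb'_{<\alpha}\subseteq Cb'_{<\alpha}a^*$ gives $b'_\alpha\ind_C Cb'_{<\alpha}a^*$, so $b'_\alpha\ind_C a^*$ by \ref{MON} and $a^*\ind_C b'_\alpha$ by \ref{SYM}. Since $\tp(a^*b'_\alpha/C)=\tp(ab/C)$, applying an automorphism over $C$ sending $a^*b'_\alpha$ to $ab$ and invoking invariance of $\ind$ gives $a\ind_C b$.

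The main obstacle I anticipate is the double role of the sequence in the middle step: it must be a genuine $\ind$-Morley sequence over $C$ (so that \ref{LOC} applied at a term $b'_\alpha$ can be pushed down to base $C$ via \ref{TRA}) and at the same time indiscernible over $Ca^*$ (so that non-dividing is genuinely used and every term looks to $a^*$ like $b$). Arranging both at once — lengthening to a $Ca^*$-indiscernible sequence of length $\geq\kappa$ while checking that the Morley property over $C$ survives the lengthening — is the delicate point; the degenerate case where the extracted sequence is constant (which occurs exactly when $b\in\acl(C)$) is absorbed by the same argument, since there $b\ind_C b$ holds by \ref{SCLO} and \ref{EX}. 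Everything else is routine forking calculus.
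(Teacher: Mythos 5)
Your argument is correct and is essentially the paper's own proof with its two ingredients unpacked: the paper quotes Lemma \ref{lemma:adlerthmoppmorleysequenceexistence} (Erd\H{o}s--Rado extraction of an indiscernible $\ind$-Morley sequence in $\tp(b/C)$) and Lemma \ref{lemma:adlerthmoppmorleysequenceassumption} (the \ref{LOC}/regular-cardinal/\ref{BMON}/\ref{TRA} extraction) around Proposition \ref{prop:dividing_characterisation_indiscerniblesequencestayindisceniblerovercanda}, whereas you re-derive both inline, using \ref{SYM} from Adler's theorem to dispense with the $\indi\opp\ $ bookkeeping. Your worry about the degenerate constant-sequence case is unnecessary: the Morley property of the extracted (and lengthened) sequence is transferred by invariance in all cases, so no case distinction is needed.
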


\begin{proof}
    Assume that $\ind$ is an Adler independence relation and $a\indi d_C b$. Let $(b_i)_{i<\omega}$ be an $\ind$-Morley sequence over $C$ with $b_0 = b$. We may assume that $(b_i)_{i<\omega} $ is $C$-indiscernible using Lemma \ref{lemma:adlerthmoppmorleysequenceexistence} and $\ind=\indi\opp$. As $a\indi d_C b$ there exists $a'\equiv_{Cb} a$ such that $(b_i)_{i<\omega}$ is $Ca'$-indiscernible, by Proposition \ref{prop:dividing_characterisation_indiscerniblesequencestayindisceniblerovercanda}. The sequence $(b_i)_{i<\omega}$ is a $Ca'$-indiscernible $\ind$-Morley sequence over $C$ in $\tp(b/Ca')$ so by Lemma \ref{lemma:adlerthmoppmorleysequenceassumption} we have $a'\ind_C b$. By invariance, we have $a\ind_C b$.
\end{proof}

Another use of Neuman's Lemma is the following (see \cite{conant2022separation}):

\begin{proposition}
    $\indi d\rightarrow \indi a$
\end{proposition}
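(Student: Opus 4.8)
The plan is to show the contrapositive: if $a\nindi a_C b$ then $\tp(a/Cb)$ divides over $C$. The failure of $\indi a$ means that there is some $e\in\acl(Cab)\setminus\acl(Cb)$ with $e\in\acl(Ca)$; by working with $e$ in place of an element of $a$ (or rather adding $e$ to $a$, which only makes dividing harder to destroy — recall $\indi d$ has left \ref{MON}), it suffices to produce an element witnessing dividing. More directly: suppose $a\nindi a_C b$, so there is a singleton $e\in(\acl(Ca)\cap\acl(Cb))\setminus\acl(C)$. I would like to conclude $a\nindi d_C b$, and for this it is enough (by left \ref{MON} for $\indi d$, Theorem \ref{thm:propertieofdividing}) to show $e\nindi d_C b$, i.e. that $\tp(e/Cb)$ divides over $C$.

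To see that $\tp(e/Cb)$ divides over $C$, fix a formula $\phi(x,b)\in\LL(Cb)$ isolating the (algebraic) type of $e$ over $Cb$ with $\phi(\MM,b)$ finite, say $|\phi(\MM,b)|=n$. Since $e\notin\acl(C)$, the orbit of $e$ under $\Aut(\MM/C)$ is infinite, so I can build a long $C$-indiscernible sequence $(b_i)_{i<\omega}$ with $b_0=b$ such that no realisation of $\tp(e/C)$ lies in $\acl(Cb_i)$ for all $i$ simultaneously in a "coherent" way. More precisely, I want $\bigcup_{i<\omega}\phi(x,b_i)$ to be inconsistent. The key point is a counting/Neumann-style argument: each $\phi(\MM,b_i)$ has at most $n$ elements, and these finite sets can be arranged (by moving $b$ around with automorphisms over $C$, then extracting a $C$-indiscernible sequence via Ramsey and compactness, Lemma \ref{lm:ramseycompactness}) so that any $n+1$ of them have empty intersection — this is exactly the flavour of Neumann's Lemma (Fact \ref{fact:neumanslemma}) applied to the orbit of the finite set $\phi(\MM,b)$ under $\Aut(\MM/C)$, which has infinite orbit on each point since $e\notin\acl(C)$ forces the conjugates of $e$ over $C$ to be infinite. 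Then $\set{\phi(x,b_i)\mid i<\omega}$ is $(n+1)$-inconsistent, so $\phi(x,b)$ divides over $C$, hence so does $\tp(e/Cb)$ since it implies $\phi(x,b)$.

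The main obstacle is the combinatorial heart: producing the sequence $(b_i)_{i<\omega}$ so that the finite sets $\phi(\MM,b_i)$ are "almost disjoint" in the required sense. One clean way: let $k=|\phi(\MM,b)|$. By Neumann's Lemma applied inside $\Aut(\MM/C)$ acting on $\MM$, with $P=Q=\phi(\MM,b)$ (legitimate because every point of $P$ has infinite $\Aut(\MM/C)$-orbit, being a conjugate of $e$ over $C$ and $e\notin\acl(C)$), there is $\sigma\in\Aut(\MM/C)$ with $\sigma(\phi(\MM,b))\cap\phi(\MM,b)=\emptyset$, i.e. $\phi(\MM,\sigma(b))\cap\phi(\MM,b)=\emptyset$; iterating and using compactness produces an infinite sequence of $C$-conjugates $b=b^{(0)},b^{(1)},\dots$ of $b$ with pairwise disjoint $\phi(\MM,b^{(j)})$. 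Extracting a $C$-indiscernible sequence $(b_i)_{i<\omega}$ with $b_i\equiv_C b$ satisfying the EM-type of this sequence (Lemma \ref{lm:ramseycompactness}) preserves the relevant formula "$\phi(\MM,y)\cap\phi(\MM,y')=\emptyset$", so along $(b_i)_{i<\omega}$ we get $\phi(\MM,b_i)\cap\phi(\MM,b_j)=\emptyset$ for $i\neq j$; since each has exactly $k$ elements, $\set{\phi(x,b_i)\mid i<\omega}$ is $2$-inconsistent, so $\phi(x,b)$ $2$-divides over $C$. This finishes the proof.

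\begin{proof}
We prove the contrapositive. Suppose $a\nindi a_C b$, i.e. $\acl(Ca)\cap\acl(Cb)\supsetneq\acl(C)$; fix a singleton $e\in(\acl(Ca)\cap\acl(Cb))\setminus\acl(C)$. By left \ref{MON} for $\indi d$ (Theorem \ref{thm:propertieofdividing}), it suffices to show $ea\nindi d_C b$, and again by left \ref{MON} it suffices to show $e\nindi d_C b$, i.e. that $\tp(e/Cb)$ divides over $C$.

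Since $e\in\acl(Cb)$, pick $\phi(x,b)\in\LL(Cb)$ with $e\models\phi(x,b)$ and $\phi(\MM,b)$ finite, say of size $k\geq 1$. Every element of $\phi(\MM,b)$ is algebraic over $Cb$; we may also arrange (shrinking $\phi$) that every element of $\phi(\MM,b)$ is a conjugate of $e$ over $C$, so in particular has infinite orbit under $G:=\Aut(\MM/C)$ because $e\notin\acl(C)$ (Proposition \ref{prop:algeclodefclo_automorphism}). Indeed, letting $\psi(x)$ isolate $\tp(e/C)$ in the sense that $\psi(\MM)$ is the $G$-orbit of $e$ (such $\psi$ exists: replace $\phi(x,b)$ by $\phi(x,b)\wedge\psi(x)$ if $e\in\acl(C)$ fails — but it does fail by choice of $e$; if $e\notin\acl(C)$ then $\psi$ is non-algebraic and $\phi(x,b)\wedge\psi(x)$ still has finitely many realisations since $\phi(\MM,b)$ is finite).

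Apply Fact \ref{fact:neumanslemma} with $X=\MM$, the group $G$, and $P=Q=\phi(\MM,b)$: no point of $P$ has finite $G$-orbit, so there is $g\in G$ with $g(\phi(\MM,b))\cap\phi(\MM,b)=\emptyset$, i.e. $\phi(\MM,gb)\cap\phi(\MM,b)=\emptyset$ with $gb\equiv_C b$. Iterating, by compactness we obtain a sequence $(b^{(j)})_{j<\omega}$ with $b^{(j)}\equiv_C b$ and $\phi(\MM,b^{(i)})\cap\phi(\MM,b^{(j)})=\emptyset$ for all $i\neq j$. By Lemma \ref{lm:ramseycompactness} there is a $C$-indiscernible sequence $(b_i)_{i<\omega}$ satisfying $\EM((b^{(j)})_{j<\omega}/C)$; since the formula $\neg\exists x\,(\phi(x,y)\wedge\phi(x,y'))$ lies in that EM-type, $\phi(\MM,b_i)\cap\phi(\MM,b_j)=\emptyset$ for $i\neq j$, and $b_0\equiv_C b$.

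Thus $\set{\phi(x,b_i)\mid i<\omega}$ is $2$-inconsistent, so $\phi(x,b)$ $2$-divides over $C$. Since $\tp(e/Cb)\models\phi(x,b)$, the type $\tp(e/Cb)$ divides over $C$, so $e\nindi d_C b$ and hence $a\nindi d_C b$, as required.
\end{proof}
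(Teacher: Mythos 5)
Your proof follows essentially the same route as the paper's: isolate the finite set of $Cb$-conjugates of an element of $(\acl(Ca)\cap\acl(Cb))\setminus\acl(C)$ by an algebraic formula $\phi(x,b)$, use Neumann's Lemma (Fact \ref{fact:neumanslemma}) to produce pairwise disjoint $C$-conjugate copies of that finite set, and read off $2$-inconsistency. The Ramsey extraction you add at the end is harmless but unnecessary: the definition of ($k$-)dividing only asks for a sequence of realisations of $\tp(b/C)$, not an indiscernible one, so the sequence obtained by iterating Neumann's Lemma already witnesses dividing.

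Two remarks. First, the reduction of the goal $a\nindi d_C b$ to $e\nindi d_C b$ is misjustified: left \ref{MON} gives that $e\nindi d_C b$ implies $ea\nindi d_C b$, but it cannot take you from $ea\nindi d_C b$ back to $a\nindi d_C b$ --- monotonicity goes the wrong way for that. What you actually need is that non-dividing on the left is preserved under adding elements of $\acl(Ca)$ (equivalently, $a\indi d_C b$ implies $e\indi d_C b$ for every $e\in\acl(Ca)$). This is true, e.g.\ by Proposition \ref{prop:dividing_characterisation_indiscerniblesequencestayindisceniblerovercanda} combined with Theorem \ref{thm:indiscernibleerdorado} (a sequence indiscernible over $Ca'$ is indiscernible over $\acl(Ca')$), or by a pigeonhole argument bounding the number of realisations of an algebraic formula, but it is not among the properties listed in Theorem \ref{thm:propertieofdividing} and deserves a line of proof; to be fair, the paper's own proof leaves the analogous final step (from dividing of the type of an element of $\acl(AC)$ over $BC$ to $A\nindi d_C B$) equally implicit. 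Second, the parenthetical about a formula $\psi$ isolating $\tp(e/C)$ is confused: since $e\notin\acl(C)$, that type is non-algebraic and no formula cuts out its orbit. It is also unneeded: simply take $\phi(x,b)$ to isolate $\tp(e/Cb)$, so that $\phi(\MM,b)$ is exactly the finite orbit of $e$ under $\Aut(\MM/Cb)$; every element of it is then $C$-conjugate to $e$, hence lies outside $\acl(C)$ and has infinite orbit under $\Aut(\MM/C)$, which is all Neumann's Lemma requires. With these two points repaired, your argument is correct and coincides with the paper's.
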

\begin{proof}
    Suppose $A\nindi a _C B$ and let $a_0\in (\acl(AC)\cap \acl(BC))\setminus \acl(C)$. Let $a = (a_0,\ldots,a_n)$ be a tuple enumerating the orbit of $a$ over $\Aut(\MM/BC)$. Let $\phi(x,b)$ be a formula with $\phi(x,y)\in \LL(C)$ which isolates $\tp(a/BC)$, so that $a$ enumerates $\phi(\MM)$. As $a_0\notin \acl(C)$ the orbit of every coordinate of $a$ under $\Aut(\MM/C)$ is infinite. By Neuman's Lemma (Fact \ref{fact:neumanslemma}) there exists $a^1 \equiv _C a$ such that $a^1\cap a = \emptyset$ (as set of coordinates). By iterating applications of Neuman's Lemma, there is a sequence $( a^i)_{i<\omega}$ of pairwise disjoint tuples such that $ a^i \equiv _C  a$. By shifting by an automorphism, let $ b_i$ be such that $ a^i b_i \equiv_C  a b$. Then $a^i$ enumerates the solutions of $\phi(x,b_i)$, so $\set{\phi(x, b_i)\mid i<\omega}$ is $2$-inconsistent, so $a\nindi d _C B$.
\end{proof}

\begin{remark}
    It was believed for a long time (at least since 2008!) that the implication $\indi d\to \indi a ^M$ were true in general. According to Proposition \ref{prop:monotonisation}, it is enough that $\indi d$ satisfies the right version of \ref{NOR}, \ref{MON}, \ref{BMON} and \ref{CLO}. By Theorem \ref{thm:propertieofdividing}, one only needs to check right \ref{CLO}. It was recently observed that $\indi d$ does not satisfy right \ref{CLO} (so $\indi d \neq (\indi d)^M$!), even worst, the implication $\indi d\to \indi a ^M$ does not hold in general. 
\end{remark}

\begin{exercise}
    Prove that $a\indi d_C b$ if and only if for any $C$-indiscernible sequence $(b_i)_{i<\alpha}$ with $b_0 = b$ there is $a'$ such that $a'b_i \equiv_{C} ab$ for all $i<\omega$.
\end{exercise}

\begin{exercise}\label{exo:divinDLOnotTRA}
    In DLO, assume that $b_1<c<a<b_2$. Check that $a\indi d_c b_1b_2$ and $a\indi d_\emptyset c$, deduce that $\indi d$ does not satisfy right \ref{TRA}.
\end{exercise}

\begin{exercise}\label{exo:vancampenhausenDLO}
    In DLO, prove that $A\indi d _C B$ if and only if $A\cap [b_1,b_2]\neq \emptyset$ implies $C\cap [b_1,b_2]\neq \emptyset$, for all $b_1<b_2\in B$. 
\end{exercise}

\begin{exercise}\label{exo:dividingsatstrongfinitecharacter}
    Prove that $\indi d$ satisfies \ref{SFIN}. Deduce that if $\indi d$ satisfies \ref{SYM} then $\indi d$ satisfies \ref{LOC}. 
\end{exercise}

\begin{exercise}
    Is there an independence relation $\ind\neq \indi d$ such that $\indi d = \indi M$ ?
\end{exercise}

\subsection{Digression: dividing in fields}

In this subsection, $T$ is a theory of fields in a language extending $\LLr$. Let $\K$ be a monster model of $T$.

\begin{lemma}\label{lm:heirfields}
Let $F, A,B$ be a small subfields of $T$ with $F\seq A\cap B$. If $A\indi h_F B$ or $A\indi u _F B$ then $A\indi\ld _F B$.
\end{lemma}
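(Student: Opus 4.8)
The statement says: for small subfields $F\seq A\cap B$ of a model $\K$ of a theory of fields, if $A\indi h_F B$ (heir) or $A\indi u_F B$ (coheir) then $A\indi\ld_F B$ (linear disjointness). The plan is to prove the coheir case directly, and deduce the heir case by symmetry. Recall from Fact \ref{fact:lineardisjointnesstensor} that $A\indi\ld_F B$ is equivalent to the canonical surjection $A\otimes_F B\to A[B]$ being injective; equivalently (by a standard unravelling), every finite tuple $a_1,\ldots,a_n$ from $A$ which is linearly independent over $F$ remains linearly independent over $B$. So it suffices to show: if $a_1,\ldots,a_n\in A$ are $F$-linearly independent and $\sum_i b_i a_i = 0$ for some $b_i\in B$, then all $b_i=0$.

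\textbf{The coheir case.} Suppose $A\indi u_F B$, i.e. $\tp(A/B)$ is finitely satisfiable in $F$. Fix $a_1,\ldots,a_n\in A$ linearly independent over $F$, and suppose toward a contradiction that $\sum_i b_i a_i = 0$ with the $b_i\in B$ not all zero; say $b_1\neq 0$. The condition ``$a_1,\ldots,a_n$ are $F$-linearly independent'' is expressed by a (possibly infinite) conjunction of $\LL(F)$-formulas in the variables $x_1,\ldots,x_n$ — namely for each nonzero tuple $(f_1,\ldots,f_n)$ over the prime field with coefficients naming elements of $F$, the formula $\sum_i f_i x_i\neq 0$; actually it is cleaner to work with a single witnessing instance. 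The formula $\phi(x_1,\ldots,x_n; b_1,\ldots,b_n)$ given by ``$b_1 x_1 + \cdots + b_n x_n = 0 \wedge b_1 \neq 0$'' lies in $\tp(A/B)$ (with the $b_i$ as parameters). By finite satisfiability in $F$, there exist $c_1,\ldots,c_n\in F$ with $\sum_i b_i c_i = 0$ and $b_1\neq 0$ — but that last conjunct has no free variable and is about parameters only, so more carefully: I should instead take the formula asserting a nontrivial linear dependence among the $x_i$ \emph{with coefficients from $F$}. The right move: the statement ``$(x_1,\ldots,x_n)$ is linearly dependent over $F$'' — i.e. $\exists$ nontrivial $F$-combination vanishing — is \emph{not} a single formula, so instead fix the specific dependence we want to transfer. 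Let $d_1,\ldots,d_n\in F$ be chosen so that the formula $\psi(y_1,\ldots,y_n) := ``\sum_i d_i y_i = 0 \wedge \bigvee_i d_i\neq 0"$ — wait, that is about $B$, not $A$. Let me restructure: since $\sum_i b_i a_i = 0$, the type $q(x_1,\ldots,x_n) := \tp(a_1,\ldots,a_n/B)$ contains the formula $\theta(x;b) : \sum_i b_i x_i = 0$ for the fixed parameters $b=(b_1,\ldots,b_n)$ from $B$ with $b_1\neq 0$. By finite satisfiability of $q$ in $F$, there exist $f_1,\ldots,f_n\in F$ with $\sum_i b_i f_i = 0$ — this is a relation among the $b_i$ with coefficients in $F$, equivalently (viewing it from $B$'s side) $(b_1,\ldots,b_n)$ is linearly dependent over $F$ via a nontrivial relation, since we also get $b_1\neq 0$ from $\theta$ requiring... hmm, $\theta$ with parameter $b$ does not force $b_1\neq 0$ unless I put it in. Put $\theta(x;b): \sum_i b_i x_i = 0$ and separately note $b_1\neq 0$ holds in $\K$; finite satisfiability gives $f\in F^n$ with $\sum_i b_i f_i = 0$. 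Now I want to contradict $F$-linear independence of the $a_i$, but I have a relation among the $b_i$, not the $a_i$. The resolution is to use \emph{symmetry of the bilinear pairing}: $\sum_i b_i a_i = 0$ is symmetric in the roles of the $a$'s and $b$'s in the sense that it simultaneously says $(a_i)$ is linearly dependent over $B$ and $(b_i)$ is linearly dependent over $A$. So from coheir I transfer the dependence of the $b_i$ from over $A$ to over $F$: the correct formula to use has the $b_i$ as the \emph{variables}. That is, set $p(z_1,\ldots,z_n):=\tp(b_1,\ldots,b_n/A)$ — but coheir is about $\tp(A/B)$ finitely satisfiable in $F$, which via the equivalence $(\indi u)$ unwinds to $\tp(B/A)$ being... no. Let me just invoke: $A\indi u_F B$ means $\tp(A/FB)$ finitely satisfiable in $F$; by the exercise $(\indi h)^\opp=\indi u$ this is the same as $B\indi h_F A$. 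So in the heir case $A\indi h_F B$ means $\tp(A/FB)$ is an heir of $\tp(A/F)$.

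\textbf{Cleaner approach via heir.} Assume $A\indi h_F B$. Suppose $a_1,\ldots,a_n\in A$ are $F$-linearly independent but $\sum_i b_i a_i=0$ with $b_i\in B$, not all zero — WLOG (reindexing and scaling, using that $\K$ is a field) $b_1 = 1$, so $a_1 = -\sum_{i\geq 2} b_i a_i$. The formula $\exists y_2\ldots y_n\, (a_1 = -\sum_{i\geq 2} y_i a_i)$... no, I want parameters from $B$. Consider the $\LL(A)$-formula with parameter $a=(a_1,\ldots,a_n)$ from $A$: $\chi(z_2,\ldots,z_n) := (a_1 + \sum_{i\geq 2} z_i a_i = 0)$, which is satisfied by $(b_2,\ldots,b_n)\in B$. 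Hmm, but $\chi$ has parameters in $A$ and is satisfied in $B$ — this is about $\tp(B/A)$. Since heir of $\tp(A/F)$ over $B$ corresponds to coheir of... I'll use: $A\indi h_F B \iff B\indi u_F A$, so $\tp(B/FA)$ is finitely satisfiable in $F$. Then $\chi(z;a)\in\tp(B/FA)$ — more precisely a finite subtype does — so there exist $f_2,\ldots,f_n\in F$ with $a_1 + \sum_{i\geq 2} f_i a_i = 0$, i.e. $1\cdot a_1 + f_2 a_2 + \cdots + f_n a_n = 0$ is a nontrivial $F$-linear relation among the $a_i$, contradicting their $F$-linear independence. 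This handles $A\indi h_F B$. For $A\indi u_F B$: by the exercise $\indi u = (\indi h)^\opp$... actually I need it the other way. Note $A\indi u_F B$ means $\tp(A/FB)$ finitely satisfiable in $F$; by exactly the symmetric argument with the roles of $A$ and $B$ swapped (using $b_i a_i$ symmetric, pick $a_1$ with coefficient $1$ and transfer the relation among $b$'s... ), or more simply: by Fact \ref{fact:lineardisjointnesstensor}, $\indi\ld$ is symmetric in $A,B$, and the condition $A\indi u_F B$ combined with the characterization gives $B\indi h_F A$, so applying the already-proven heir case with $(B,A)$ in place of $(A,B)$ yields $B\indi\ld_F A$, hence $A\indi\ld_F B$ by symmetry of linear disjointness.

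\textbf{Main obstacle.} The only real subtlety is the bookkeeping: correctly matching the direction of heir/coheir to which type is finitely satisfiable / an heir, and correctly extracting from a single instance of a linear dependence the right $\LL$-formula (with parameters on the correct side) so that heir/coheir transfers the dependence into one over $F$. Once the formula $``\,a_1 + \sum_{i\geq 2} z_i a_i = 0\,"$ (parameters in $A$, witnessed in $B$) is identified and recognized as lying in $\tp(B/FA)$, finite satisfiability in $F$ closes it immediately; the heir formulation works symmetrically by exchanging which side carries the parameters. There is essentially no computation — just the definitions of heir, coheir, and (from Fact \ref{fact:lineardisjointnesstensor}) of $\indi\ld$.
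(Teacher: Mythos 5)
Your proof is correct and follows essentially the same route as the paper: both arguments reduce one of the two cases to the other via $(\indi h)^\opp=\indi u$ together with the symmetry of $\indi\ld$, and then transfer a single witnessed linear relation into $F$ using the heir/coheir property (you phrase the transfer dually, with parameters on the $A$-side and finite satisfiability of $\tp(B/FA)$ in $F$, whereas the paper moves the $B$-parameters directly by the heir property). A minor bonus of your normalization $b_1=1$ is that the transferred relation is automatically nontrivial, a point the paper's one-line argument glosses over.
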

\begin{proof}
As $\indi h = (\indi u)^\opp$ and $\indi \ld$ is symmetric, it is enough to prove that $\indi h\to \indi \ld$. Let $a_1,\ldots,a_n\in A$ be linearly dependent over $B$. Then for some $b_1,\ldots,b_n$ we have $\sum_i a_ib_i = 0$. As $\tp(A/B)$ is an heir, there exists $c_1,\ldots,c_n\in F$ such that $\sum_i a_ic_i = 0$, hence $a_1,\ldots,a_n$ are linearly independent over $F$.
\end{proof}

\begin{lemma}\label{lm:fieldsformuladivides}
    Let $b = (b_1,\ldots,b_n)$ be a tuple linearly independent over a model $F$ of $T$. Then the formula $\phi(x,y)$ defined by 
    \[(x_1,\ldots,x_n)\neq (0,\ldots,0)\wedge x_1b_1+\ldots+ x_nb_n = 0\]
    $n$-divides over $F$.
\end{lemma}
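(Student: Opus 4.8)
The plan is to produce a sequence of realisations of $\tp(b/F)$ that is ``linearly disjoint over $F$'' along the sequence, and then witness $n$-dividing by a rank computation. Note first that for a fixed increasing tuple $i_1<\dots<i_n$ the conjunction $\bigwedge_{j=1}^{n}\phi(x,b^{(i_j)})$ asserts exactly that $x\neq 0$ lies in the kernel of the $n\times n$ matrix $M=(b^{(i_j)}_{k})_{j,k}$; so this conjunction is inconsistent iff $\det M\neq 0$, and the whole statement reduces to producing a sequence all of whose length-$n$ sub-matrices of this shape are invertible.

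\emph{Construction of the sequence.} Since $F$ is a model, $\indi u$ satisfies \ref{FEX} over models (Corollary \ref{cor:indusatisfiesFEXovermodels}); iterating it, we obtain $(b^{(i)})_{i<\omega}$ with $b^{(i)}\equiv_F b$ and $b^{(i)}\indi u_F b^{(<i)}$, where $b^{(<i)}=(b^{(0)},\dots,b^{(i-1)})$. Passing to the generated subfields (coheir independence of the tuples transfers to coheir independence of $F(b^{(i)})$ over $F(b^{(<i)})$), Lemma \ref{lm:heirfields} gives $F(b^{(i)})\indi \ld_F F(b^{(<i)})$. Now fix $i_1<\dots<i_m$; by right \ref{MON} of $\indi\ld$ we get $b^{(i_m)}\indi\ld_F(b^{(i_1)},\dots,b^{(i_{m-1})})$, and since $b^{(i_m)}\equiv_F b$ is linearly independent over $F$, the definition of $\indi\ld$ yields the key property
\[(\star)\qquad b^{(i_m)}_{1},\dots,b^{(i_m)}_{n}\ \text{are linearly independent over}\ F(b^{(i_1)},\dots,b^{(i_{m-1})}).\]

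\emph{Descent.} Suppose for contradiction that for some $i_1<\dots<i_n$ the matrix $M$ above is singular, and write $M_m$ for the $m\times n$ matrix with rows $b^{(i_1)},\dots,b^{(i_m)}$. I claim $\rk M_m<m$ for every $1\le m\le n$; since $\det M_n=\det M=0$ this holds for $m=n$, and I run it downwards. Assume $\rk M_m<m$ for some $m\ge 2$. Then every $m\times m$ minor of $M_m$ vanishes; fixing a column set $S$ with $|S|=m$ and expanding this minor along its last row (the row $b^{(i_m)}$) expresses $0$ as an $F(b^{(i_1)},\dots,b^{(i_{m-1})})$-linear combination of the entries $b^{(i_m)}_{k}$ ($k\in S$), the coefficients being, up to sign, the $(m-1)\times(m-1)$ minors of $M_{m-1}$ on the column sets $S\setminus\{k\}$. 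By $(\star)$ each coefficient is $0$; letting $S$ range over all $m$-subsets, every $(m-1)\times(m-1)$ minor of $M_{m-1}$ vanishes, i.e. $\rk M_{m-1}<m-1$. At $m=1$ the claim reads $\rk M_1<1$, i.e. $b^{(i_1)}=0$, contradicting that $b^{(i_1)}$, being a conjugate of $b$, is linearly independent over $F$, hence nonzero. Thus $\{\phi(x,b^{(i)})\mid i<\omega\}$ is $n$-inconsistent, and as every $b^{(i)}\equiv_F b$ this shows $\phi(x,b)$ $n$-divides over $F$.

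The routine bookkeeping is checking that $\indi u$ of the tuples passes to the generated fields so that Lemma \ref{lm:heirfields} applies; the real point, and where one should be careful, is organising the minor expansions so that the descent closes — this works precisely because $(\star)$ is available at \emph{every} level $m$, which is exactly what the linearly disjoint construction of the sequence buys us, so that neither indiscernibility nor Erd\H{o}s--Rado is needed.
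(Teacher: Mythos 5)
Your proof is correct and follows essentially the same route as the paper: build a $\indi u$-Morley sequence in $\tp(b/F)$ via Corollary \ref{cor:indusatisfiesFEXovermodels}, transfer to linear disjointness over $F$ via Lemma \ref{lm:heirfields} (with the same routine passage from tuples to generated fields), and conclude that any $n$ of the linear equations admit only the trivial solution, which the clause $x\neq 0$ rules out. The only difference is that your minor-expansion descent spells out in detail the nonvanishing-determinant claim that the paper simply asserts from $b^{j}\indi\ld_F(b^{i})_{i<j}$, and that justification is sound.
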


\begin{proof}
    Using Corollary \ref{cor:indusatisfiesFEXovermodels} there exist a $\indi u$-Morley sequence $(b^i)_{i<\omega}$ in $\tp(b/F)$ with $b^0 = b$. Let $\Sigma(x)$ be the partial type consisting of all equations $\phi(x,b^j) = (x_1,\ldots,x_n)\neq (0,\ldots,0)\wedge (\sum_i x_ib_i^j=0)$, for all $j<\omega$. Using Lemma \ref{lm:heirfields} we have that $b^j\indi\ld _F (b^i)_{i<j}$ hence the determinant of each $n$ of those equations $(\sum_i x_ib_i^j=0)$ is nonzero. It follows that $(0,\ldots,0)$ is the only realisation of $n$ of those equations. As the tuple $(0,\ldots,0)$ does not satisfy $\phi(x,b_i)$ by definition, the partial type $\Sigma(x)$ is $n$-inconsistent.
\end{proof}

\begin{proposition}
Let $F$ be a small model of $T$ and $A,B$ be $\acl$-closed small subsets of $\K$ containing $F$. If $A\indi d _F B$ then $A\indi \ld_F B$. 
\end{proposition}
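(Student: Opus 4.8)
The plan is to prove the contrapositive: starting from $A\nindi\ld_F B$, I will produce a formula over $B$ that divides over $F$ and belongs to $\tp(A/B)$; since $F\seq B$ gives $\tp(A/BF)=\tp(A/B)$, this yields $A\nindi d_F B$. Because $A$ and $B$ are $\acl$-closed and contain $F$, they are subfields of $\K$ with $F\seq A\cap B$, so Fact \ref{fact:lineardisjointnesstensor} applies: $A\nindi\ld_F B$ says that the canonical surjection $A\otimes_F B\to A[B]$, $a\otimes b\mapsto ab$, is not injective, hence it has a nonzero kernel element $\xi$.

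The first real step is to extract a good configuration from $\xi$. I would write $\xi=\sum_{i=1}^{n}a_i\otimes b_i$ with $a_i\in A$, $b_i\in B$ and $n$ \emph{minimal}. A routine argument then shows $b_1,\ldots,b_n$ are linearly independent over $F$: discarding any zero $b_i$ would shorten the representation, and if some $b_j$ were an $F$-linear combination of the others, substituting and using bilinearity of $\otimes_F$ would again shorten it. Symmetrically $a_1,\ldots,a_n$ are $F$-linearly independent, so in particular $(a_1,\ldots,a_n)\neq(0,\ldots,0)$. Since $\xi$ lies in the kernel, $\sum_{i}a_ib_i=0$ in $\K$.

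Now the dividing content is already packaged for us: applying Lemma \ref{lm:fieldsformuladivides} to the $F$-linearly independent tuple $\bar b=(b_1,\ldots,b_n)$ over the model $F$, the formula $\phi(x,\bar b)$ expressing $(x_1,\ldots,x_n)\neq(0,\ldots,0)\wedge x_1b_1+\ldots+x_nb_n=0$ $n$-divides over $F$. Moreover $\models\phi(\bar a,\bar b)$, so $\phi(x,\bar b)\in\tp(\bar a/B)$. Fixing an enumeration of $A$ among whose entries $a_1,\ldots,a_n$ occur, $\tp(A/BF)=\tp(A/B)$ implies $\phi$ at the corresponding variables (adding dummy variables does not affect dividing), so $\tp(A/BF)$ divides over $F$, i.e. $A\nindi d_F B$, as needed.

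The main obstacle is purely the bookkeeping in the middle paragraph --- choosing a minimal-length kernel element and checking that both factor-tuples are $F$-independent; the genuinely model-theoretic input (that such a linear-dependence formula divides over a model) is exactly Lemma \ref{lm:fieldsformuladivides}. A tensor-product-free variant would instead pick a minimal $F$-independent tuple from $A$ that becomes linearly dependent over $B$ and run the same substitution trick on the coefficients of a witnessing dependence; this lands on the identical configuration.
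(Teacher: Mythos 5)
Your proof is correct and is essentially the paper's own argument: working contrapositively, both proofs reduce to producing a tuple $\bar b$ from $B$ that is linearly independent over $F$ together with a nontrivial $A$-linear dependence $\sum_i a_ib_i=0$, and then invoking Lemma \ref{lm:fieldsformuladivides} (dividing of the linear-dependence formula over the model $F$) to contradict $A\indi d_F B$. The only difference is cosmetic: you extract this configuration via the tensor-product criterion of Fact \ref{fact:lineardisjointnesstensor} and a minimal-length kernel element, whereas the paper reads it off directly from the definition (symmetric form) of linear disjointness.
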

\begin{proof}
Assume that $A\indi d _F B$. If $A\nindi\ld_F B$, then there exists a tuple $b = (b_1,\ldots,b_n)$ from $B$ such that $\sum_ia_ib_i$ for some $(a_1,\ldots,a_n)\neq (0,\ldots,0)$ from $A$ and $b_1,\ldots,b_n$ linearly independent over $F$. Hence the formula $(x_1,\ldots,x_n)\neq (0,\ldots,0)\wedge (\sum_i x_ib_i = 0)$ is in $\tp(A/B)$, which contradicts Lemma \ref{lm:fieldsformuladivides}.
\end{proof}

\begin{remark}
    Chatzidakis also proved that in any theory of fields if $A\indi d_F B$ (or $A\indi u_F B$) then $\K$ is a separable extension of the field compositum $A(B)$ and $\acl(AB)\cap A^\alg (B^\alg) = A(B)$.
\end{remark}

\subsection{Forking and forcing the extension axiom}

\begin{definition}
    A formula $\psi(x,b)$ \textit{forks over $C$} if $\psi(x,b)\models \bigvee_{i=1}^n\phi(x,b_i)$ and each $\phi(x,b_i)$ divides over $C$. A (partial) type $\pi(x)$ forks over $C$ if it implies a formula that forks over $C$.

    We define the \textit{non-forking independence relation} $\indi f$ by 
    \[A\indi f_C B \iff \tp(A/BC)\text{ does not fork over $C$}\]
\end{definition}

We clearly have $\indi f\to \indi d$.

\begin{example}[A formula which forks and does not divides]\label{example:orientedcircle}
    We describe the theory of an oriented circle, which we will denote $\Circle$. \\
\noindent\begin{minipage}{0.3\textwidth}
\begin{center}
\begin{tikzpicture}
   \draw [blue,thick,domain=-50:110] plot ({cos(\x)}, {sin(\x)});
   \draw [black,thin,domain=0:360] plot ({cos(\x)}, {sin(\x)});
   \path (0:0) ++(-50:1) coordinate (A);
   \fill[black] (A) circle[radius=1pt] ++(-50:1em) node {x};
   \path (0:0) ++(10:1) coordinate (B);
   \fill[black] (B) circle[radius=1pt] ++(10:1em) node {y};
   \path (0:0) ++(110:1) coordinate (C);
   \fill[black] (C) circle[radius=1pt] ++(110:1em) node {z};
   \draw [black,->,thin,domain=30:55] plot ({1.7*cos(\x)}, {1.7*sin(\x)});
\end{tikzpicture}
\end{center}
\end{minipage}\begin{minipage}{0.7\textwidth}
    The domain is an infinite set and the language consists of a single ternary relation $C(x,y,z)$ such that 
    \begin{itemize}
        \item $C(x,y,z)\iff C(y,z,x)\iff C(z,x,y)$
        \item for all $x$ the relation on $(y,z)$ defined by $C(x,y,z)$ is a strict linear order which is dense without endpoints on the domain
    \end{itemize}
    One easily shows that the theory $\Circle$ has quantifier elimination in the language $\set{C}$.
    An easy and concrete model of this theory is given by defining the relation 
    \[C(x,y,z) := (x<y<z)\vee (y<z<x)\vee (z<x<y)\] in a dense linear order without endpoint. 
\end{minipage}\\
\noindent\begin{minipage}{0.7\textwidth}
    As in DLO, for any $a,c$ the formula $\phi(y; a,c)$ given by $C(a,y,c)$ divides over $\emptyset$. To see this easily, consider pairs $(a_i,c_i)_{i<\omega}$ such that $C(a_0,a_i,c_i)$ and each $a_ic_i$ defines disjoint arcs via $\phi$. It is easy to see that the type of a pair of distinct elements over $\emptyset$ is unique hence $a_ic_i\equiv ac$ for all $i<\omega$. The family obtained $(\phi(x,a_i,b_i))_{i<\omega}$ is $2$-inconsistent. 
    
    The formula $x= x$ does not divide over $\emptyset$. Now observe that for any $a,b,c$ such that $C(a,b,c)$ we have 
    \[(x = x)\models C(a,x,b)\vee C(b,x,c) \vee C(c,x,a)\vee x = a\vee x = b\vee x = c\]
    hence as the formulas in the disjunct divides over $\emptyset$ the formula $x = x$ forks over $\emptyset$. Note that two distinct points $a,b$ are enough to get that $x = x$ forks over $\emptyset$.
\end{minipage}%
\begin{minipage}{0.3\textwidth}
\begin{center}
\begin{tikzpicture}
   \draw [blue,thick,domain=-50:90] plot ({cos(\x)}, {sin(\x)});
   \draw [blue,thick,domain=110:170] plot ({cos(\x)}, {sin(\x)});
   \draw [blue,thick,domain=190:230] plot ({cos(\x)}, {sin(\x)});
   \draw [blue,thick,domain=240:260] plot ({cos(\x)}, {sin(\x)});
   \draw [blue,thick,domain=270:280] plot ({cos(\x)}, {sin(\x)});
   \draw [black,thin,domain=0:360] plot ({cos(\x)}, {sin(\x)});
   \path (0:0) ++(-50:1) coordinate (A);
   \fill[black] (A) circle[radius=1pt] ++(-50:1em) node {$a_0$};
   \path (0:0) ++(90:1) coordinate (C);
   \fill[black] (C) circle[radius=1pt] ++(90:1em) node {$c_0$};
   \path (0:0) ++(110:1) coordinate (B);
   \fill[black] (B) circle[radius=1pt] ++(110:1em) node {$a_1$};
   \path (0:0) ++(170:1) coordinate (D);
   \fill[black] (D) circle[radius=1pt] ++(170:1em) node {$c_1$};
   \path (0:0) ++(190:1) coordinate (E);
   \fill[black] (E) circle[radius=1pt] ++(190:1em) node {$a_2$};
   \path (0:0) ++(230:1) coordinate (F);
   \fill[black] (F) circle[radius=1pt] ++(230:1em) node {$c_2$};
   \path (0:0) ++(240:1) coordinate (G);
   \fill[black] (G) circle[radius=1pt] ++(230:1em) node {$ $};
   \path (0:0) ++(260:1) coordinate (H);
   \fill[black] (H) circle[radius=1pt] ++(260:1em) node {$ $};
   \path (0:0) ++(270:1) coordinate (I);
   \fill[black] (I) circle[radius=1pt] ++(270:1em) node {$ $};
   \path (0:0) ++(280:1) coordinate (J);
   \fill[black] (J) circle[radius=.9pt] ++(280:1em) node {$ $};
   \path (0:0) ++(287:1) coordinate (K);
   \fill[black] (K) circle[radius=.8pt] ++(287:1em) node {$ $};
   \path (0:0) ++(292:1) coordinate (L);
   \fill[black] (L) circle[radius=.6pt] ++(292:1em) node {$ $};
   \path (0:0) ++(295:1) coordinate (L);
   \fill[black] (L) circle[radius=.6pt] ++(295:1em) node {$ $};
   \draw [black,->,thin,domain=30:55] plot ({1.7*cos(\x)}, {1.7*sin(\x)});
\end{tikzpicture}
$\text{~}$\\
\begin{tikzpicture}
    \path (0:0) ++(170:1) coordinate (X);
   \fill[black] (X) circle[radius=1pt] ++(170:1em) node {x};
    \path (0:0) ++(-20:1) coordinate (A);
   \fill[black] (A) circle[radius=1pt] ++(-20:1em) node {a};
   \path (0:0) ++(110:1) coordinate (B);
   \fill[black] (B) circle[radius=1pt] ++(110:1em) node {b};
   \path (0:0) ++(250:1) coordinate (C);
   \fill[black] (C) circle[radius=1pt] ++(250:1em) node {c};
   \draw [blue,thick,domain=-20:110] plot ({cos(\x)}, {sin(\x)});
   \draw [red,thick,domain=110:250] plot ({cos(\x)}, {sin(\x)});
   \draw [green,thick,domain=250:340] plot ({cos(\x)}, {sin(\x)});
    \draw [black,thin,domain=0:360] plot ({cos(\x)}, {sin(\x)});
   \draw [black,->,thin,domain=30:55] plot ({1.7*cos(\x)}, {1.7*sin(\x)});
\end{tikzpicture}
\end{center}
\end{minipage}
\end{example}

\begin{definition}
    Given any relation $\ind$, we define the relation $\indi *$:
    \[A\indi * _C B \iff \text{for all $D\supseteq B$, there exists $A'\equiv_{BC} A$ with $A'\ind_C D$}\]
\end{definition}

We always have $\indi * \to \ind$. By definition, if $\indi 0 \to \ind$ and $\indi 0$ satisfies \ref{EXT}, then $\indi 0\to \indi *$.

\begin{proposition}\label{prop:forcingextpreservation}
    If $\ind$ is invariant and satisfies left and right \ref{MON} then $\indi *$ is invariant and satisfies left and right \ref{MON}, right \ref{NOR} and \ref{EXT}. If $\ind$ satisfies one of the following property: right \ref{BMON}, left \ref{TRA}, left \ref{NOR}, \ref{AREF} then so does $\indi *$.
\end{proposition}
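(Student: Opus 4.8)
The plan is to verify each listed closure property of $\indi *$ by unwinding its definition and invoking the matching axiom for $\ind$ together with invariance; only \ref{EXT} requires a genuine idea. Invariance of $\indi *$ is immediate: an automorphism $\sigma$ carries supersets of $B$ to supersets of $\sigma(B)$ and elementary maps over $BC$ to elementary maps over $\sigma(B)\sigma(C)$, hence a witness of $A\indi *_C B$ to one of $\sigma(A)\indi *_{\sigma(C)}\sigma(B)$. For right \ref{MON}, given $A\indi *_C BD$ and $E\supseteq B$, apply the hypothesis to $E\cup D\supseteq BD$ to obtain $A'\equiv_{BDC}A$ (so $A'\equiv_{BC}A$) with $A'\ind_C E\cup D$, whence $A'\ind_C E$ by right \ref{MON} of $\ind$. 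For left \ref{MON}, given $A_1A_2\indi *_C B$ and $D\supseteq B$, write a witness as $\sigma(A_1A_2)$ with $\sigma$ fixing $BC$; then $\sigma(A_1)\equiv_{BC}A_1$ and $\sigma(A_1)\ind_C D$ by left \ref{MON} of $\ind$. Right \ref{NOR} is trivial: every superset of $BC$ is a superset of $B$ and $(BC)C=BC$ as a set, so the defining condition for $A\indi *_C BC$ is a formal weakening of that for $A\indi *_C B$.

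The optional clauses are of the same flavour, now also using that $\indi *$ is invariant. For left \ref{NOR}, transport a witness of $A\indi *_C B$ along an automorphism over $BC$ and apply left \ref{NOR} of $\ind$. For \ref{AREF}, use $\indi *\to\ind$ and \ref{AREF} of $\ind$. For right \ref{BMON}, given $C\seq B\seq D$ and $A\indi *_C D$, a witness $A'\equiv_D A$ with $A'\ind_C E$ (for $E\supseteq D$) satisfies $A'\ind_B E$ by right \ref{BMON} of $\ind$. For left \ref{TRA}, given $C\seq B\seq D$, $B\indi *_C A$, $D\indi *_B A$ and $E\supseteq A$: take a witness $B_1=\sigma(B)$ of $B\indi *_C A$ at $E$ with $\sigma$ fixing $AC$, so $B_1\ind_C E$ and $B_1\seq\sigma(D)$; transport $D\indi *_B A$ to $\sigma(D)\indi *_{B_1}A$ by invariance and apply it at $E$ to obtain $D_2\equiv_{AB_1}\sigma(D)$ with $D_2\ind_{B_1}E$; then $C\seq B_1\seq D_2$ and $D_2\equiv_{AC}D$, so left \ref{TRA} of $\ind$ yields $D_2\ind_C E$, and $E$ was arbitrary above $A$.

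The heart of the matter is \ref{EXT}. Fix $A\indi *_C B$ and $D\supseteq B$; the goal is $A'\equiv_{BC}A$ with $A'\indi *_C D$. The observation driving the argument is that the pair of conditions ``$A'\equiv_{BC}A$ and $A'\indi *_C D$'' depends only on $p:=\tp(A'/DC)$: it says that $p$ extends $\tp(A/BC)$ and that for every small $E\supseteq D$ some realisation $A''$ of $p$ satisfies $A''\ind_C E$. Let $X\seq S(DC)$ be the set of extensions of $\tp(A/BC)$ and suppose, for contradiction, that every $p\in X$ fails this, as witnessed by a small $E_p\supseteq D$ such that no realisation of $p$ is $\ind$-independent from $E_p$ over $C$. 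Then $E^*:=\bigcup_{p\in X}E_p$ is still small, because by the largeness assumptions on $\kappabar$ there are fewer than $\kappabar$ types over the small set $DC$ and $\kappabar$ is regular. Since $E^*\supseteq B$, the hypothesis $A\indi *_C B$ provides $A^*\equiv_{BC}A$ with $A^*\ind_C E^*$; then $p^*:=\tp(A^*/DC)\in X$, and by right \ref{MON} of $\ind$ the realisation $A^*$ of $p^*$ satisfies $A^*\ind_C E_{p^*}$ (since $E_{p^*}\seq E^*$), contradicting the choice of $E_{p^*}$. Hence some $p\in X$ has the stated property, and any realisation of it is a suitable $A'$.

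The main obstacle is this last step. Once one notices that ``$A'\indi *_C D$'' is a property of $\tp(A'/DC)$ and that the obstructions $E_p$ amalgamate into a single small set, the contradiction is short; but arranging $E^*$ to be small is exactly where the ``big enough $\kappabar$'' convention is used, so this is the place to be careful. Among the routine items, left \ref{TRA} carries the most bookkeeping of automorphisms but poses no conceptual difficulty.
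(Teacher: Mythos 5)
Your proof is correct. For invariance, left and right \ref{MON}, right \ref{NOR}, and the optional clauses (left \ref{NOR}, \ref{AREF}, right \ref{BMON}, left \ref{TRA}) your arguments are the same routine unwinding as the paper's; the only cosmetic differences are that the paper treats the two monotonicities in one stroke, deduces right \ref{NOR} from \ref{EXT} rather than directly, and runs left \ref{TRA} through the ``alternative'' definition of $\indi *$ (moving the right-hand set by automorphisms) where you move the left-hand sets -- the content is identical. The genuine divergence is \ref{EXT}. The paper first proves a claim: there is a type $p\in S_\alpha(CD)$ extending $\tp(a/BC)$ such that for every sufficiently saturated model $M\supseteq CD$ some realisation of $p$ is $\ind$-independent from $M$ over $C$; the claim is proved by contradiction using a single highly saturated model, $\kappa$-universality and an automorphism over $CD$, and an arbitrary $E\supseteq D$ is then handled by embedding it into an $\abs{E}$-saturated model and invoking right \ref{MON}. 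You instead observe that ``$A'\indi *_C D$'' depends only on $\tp(A'/DC)$, amalgamate the hypothetical failure witnesses $E_p$ over all extensions $p$ of $\tp(A/BC)$ into one set $E^*\supseteq B$, apply the hypothesis $A\indi *_C B$ once to $E^*$, and conclude by right \ref{MON}. Your route is more elementary -- no saturated models or universality -- at the price of needing $E^*=\bigcup_p E_p$ to be small, i.e.\ $\abs{S_\alpha(DC)}<\kappabar$ together with regularity of $\kappabar$; this is exactly the monster-model convention the paper itself invokes when it asserts ``$S_\alpha(CD)$ is small'' to bound $\max_p \kappa(p)$, so you are not assuming more than the paper does (strictly speaking both arguments need $\kappabar$ large enough that type spaces over small sets are small). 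What the paper's formulation buys is the intermediate, potentially reusable statement about independence from saturated models; what yours buys is brevity and the elimination of the saturation machinery.
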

\begin{proof}
    Assume that $\indi *$ is invariant and satisfies left and right \ref{MON}. Assume that $ABC\equiv A'B'C'$ and let $D'$ be such that $B'\seq D'$. There exists $D$ such that $B\seq D$ and an automorphism $\sigma$ such that $\sigma(ABCD) = A'B'C'D'$. If $A\indi *_C B$ there exists $A''\equiv_{BC} A$ such that $A''\indi *_C D$. By invariance $\sigma(A'') \indi * _{C'} D'$ and as $A''\equiv_{BC} A$ we have $\sigma(A'')\equiv_{B'C'} A'$, hence $A'\indi *_{C'} B'$.

    If $A\indi *_C B$, $A_0\seq A$ and $B_0\seq B$. Let $D$ be such that $B_0\seq D$, then $B\seq DB_0$ so there exists $A'\equiv_{BC} A$ witnessed by $\sigma$ such that $A'\ind_C DB_0$. By right \ref{MON} we have $A'\ind_C D$. Also, let $A_0' = \sigma(A)$, then $A_0'\seq A'$ is such that $A_0'\equiv_{BC} A_0$. In particular $A_0'\equiv_{B_0C} A$ and by left \ref{MON} we have $A_0'\ind_C D$. We conclude that for all $B_0\seq D$ there exists $A_0'\equiv_{B_0C} A_0$ such that $A_0\ind_C D$ so $A_0\indi *_C B_0$.
     
     We prove \ref{EXT}. Assume that $A\indi * _C B$ and $B\seq D$. Let $a = (a_i)_{i<\alpha}$ be an enumeration of $A$.
     \begin{claim}
         There exists $p\in S_\alpha(CD)$ extending $\tp(a/BC)$ such that for all $\kappa$ and for all $\kappa$-saturated model $M$ containing $CD$ there exists $a''\models p$ with $a''\ind_C M$.
     \end{claim}
     \begin{proof}[Proof of the claim]
         Assume not. Then for all $p\in S_\alpha(CD)$ extending $\tp(a/BC)$ there exists $\kappa(p)$ and a $\kappa(p)$-saturated model $M(p)$ containing $CD$ such that for all $a''\models p$ we have $a''\nind_C M(p)$. As $S_\alpha(CD)$ is small, let $\kappa = \max \set{\kappa(p)\mid p\in S_{\alpha}(CD)}$ and let $M$ be a $\kappa$-saturated model. By the assumption $a\indi *_C B$, there exists $a'\equiv_{BC} a$ such that $a'\ind_C M$. Let $q = \tp(a'/CD)$. Clearly $q$ extends $\tp(a/BC)$. As $M$ is $\kappa$-saturated, it is $\kappa$-universal, hence as $\kappa \geq \kappa(q)$ there is an automorphism $\sigma$ over $CD$ such that $\sigma(M(q))\seq M$. As $a'\ind_C M$ we have $a'\ind_C \sigma(M(q))$ by \ref{MON} hence by applying $\sigma^{-1}$ and by invariance, we have $\sigma^{-1}(a')\ind_C M(q)$. As $\sigma^{-1}(a')\models q$ we get a contradiction.
     \end{proof}
         Let $p$ be as in the claim and let $a'$ be a realisation of $p$. Let $E$ be a superset of $D$. Let $M$ be a $\abs{E}$-saturated model containing $CD$. Then by the claim there exists $a''\equiv_{CD} a'$ such that $a''\ind_C M$. Using $\abs{E}$-saturation of $M$ and invariance we may assume that $E\seq M$ (as in the proof of the claim) hence by \ref{MON} we have $a''\ind_C E$. We conclude that $a'\indi * _C D$ hence as $a'\equiv_{BC} a$, $\indi *$ satisfies \ref{EXT}.

     The property right \ref{NOR} follows from \ref{EXT}. We now assume that $\ind$ is invariant and satisfies left and right \ref{MON}.

    Assume that $\ind$ satisfies right \ref{BMON}. Assume that $A\indi * _C B$ with $C\seq B_0\seq B$. Then for any $D$ with $B\seq D$ there exists $A'\equiv_{BC} A$ such that $A'\ind_C D$, hence $A'\ind_{B_0} D$ so $A\indi *_{B_0} B$.

    Assume that $\ind$ satisfies left \ref{TRA}. We use here the clearly equivalent alternative definition of $\indi *$:
    $A\indi * _C B$ if for all $B\seq D$ there exists $D'\equiv_{AC} D$ such that $A\ind_C D'$. Assume that $D\indi *_B A$ and $B\indi * _C A$ for $C\seq B\seq D$. Let $\hat A$ be a superset of $A$. As $B\indi * _C A$ and $D\indi *_B A$, there exists $\hat A '\equiv_{A} \hat A$ with $B\ind_C \hat A'$ and there exists $\hat A ''\equiv_{D} \hat A$ such that $D\ind _B \hat A''$. Then $\hat A''\equiv_B \hat A'$ hence $B\ind_C \hat A''$ by invariance. By left \ref{TRA} we have $D\ind_C \hat A''$ hence we conclude $D\indi *_C A$.

    Assume that $\ind$ satisfies left \ref{NOR}. Assume that $A\indi *_C B$. Then for all $B\seq D$ there is $A'\equiv_{BC} A$ such that $A'\ind_C D$. As $A'C\equiv_BC AC$ and $AC\ind_C D$ we conclude $AC \indi *_C B$ 

    If $\ind $ satisfies \ref{AREF} then so does $\indi *$ since $\indi *\to \ind$.
\end{proof}

\begin{lemma}\label{lm:forkexistence_equivalent}
Suppose that $\ind$ is an invariant relation. The following are equivalent:
\begin{enumerate}
    \item $\indi *$ satisfies \ref{EX};
    \item $\ind$ satisfies \ref{FEX};
    \item there exists an invariant relation $\indi 0$ satisfying \ref{FEX} with $\indi 0\rightarrow \ind$.
\end{enumerate}
\end{lemma}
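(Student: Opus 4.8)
The plan is to establish the cycle $(2)\Rightarrow(3)\Rightarrow(2)$ together with the equivalence $(1)\Leftrightarrow(2)$. Each of these is a direct unraveling of the definitions of $\indi *$, \ref{EX} and \ref{FEX}, so I do not anticipate any genuine obstacle; the only thing to watch is the bookkeeping about which set serves as the base.

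For $(2)\Rightarrow(3)$ I would simply take $\indi 0:=\ind$, which is invariant by hypothesis, satisfies \ref{FEX} by $(2)$, and trivially satisfies $\indi 0\to\ind$. For $(3)\Rightarrow(2)$, suppose $\indi 0$ is an invariant relation satisfying \ref{FEX} with $\indi 0\to\ind$; given $A,B,C$, I would apply \ref{FEX} for $\indi 0$ to obtain $A'\equiv_C A$ with $A'\indi 0_C B$, and then $A'\ind_C B$ follows from $\indi 0\to\ind$. Hence $\ind$ satisfies \ref{FEX}, which gives $(2)\Leftrightarrow(3)$.

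For $(1)\Leftrightarrow(2)$ I would unravel the phrase ``$\indi *$ satisfies \ref{EX}''. By definition this means $A\indi *_C C$ for every $A,C$; since $C\cup C=C$, expanding the definition of $\indi *$ this says precisely that for all $A,C$ and all $D\supseteq C$ there is $A'\equiv_C A$ with $A'\ind_C D$ — which, reading $D$ as the right-hand side, is exactly \ref{FEX}. Conversely, \ref{FEX} yields this statement immediately by instantiating its right-hand side at $D$. So the two conditions coincide.

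The one point that needs a word — and this is as close to a ``hard part'' as the statement has — is that the unraveled form of \ref{EX} for $\indi *$ delivers right-hand sides of the form $D\supseteq C$, whereas \ref{FEX} is phrased for an arbitrary right-hand side $B$. To match them one instantiates at $D:=BC$, obtaining $A'\equiv_C A$ with $A'\ind_C BC$; when $C\subseteq B$ this is the conclusion of \ref{FEX} verbatim, and in general it is that conclusion read up to adjoining the base, exactly as one does when reducing to the case $C\subseteq B$ elsewhere in the text. Beyond this, the proof is pure definition-chasing.
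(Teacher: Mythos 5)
Your proof is correct and essentially the paper's: $(2)\Rightarrow(3)$ is identical (take $\indi 0=\ind$), and your $(3)\Rightarrow(2)$ and $(1)\Leftrightarrow(2)$ are the same definition-unravellings that the paper arranges instead as the cycle $(1)\Rightarrow(2)\Rightarrow(3)\Rightarrow(1)$. The one place you diverge is that you explicitly confront the fact that unravelling \ref{EX} for $\indi *$ only produces right-hand sides $D\supseteq C$, whereas \ref{FEX} allows arbitrary $B$; the paper's printed proof silently instantiates $D=B$ without comment, so your remark about passing to $D=BC$ and reading the conclusion up to adjoining the base is, if anything, more careful than the original.
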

\begin{proof}
$(1) \implies (2)$. Let $A,B,C$ are given. By $(1)$, we have $A\indi*_C C$. In particular for $D = B$, there exists $A'\equiv_C A$ such that $A'\ind _C B$ so $\ind$ satisfies \ref{FEX}. 

$(2)\implies (3)$ is clear, take $\indi 0 = \ind$. 

$(3)\implies (1)$. Let $A,C$ be given, we want to show that $A\indi * _C C$. Let $D\supseteq C$, and by \ref{FEX} for $\indi 0$ there exists $A'\equiv_C A$ with $A'\indi 0 _C D$. As $\indi 0\rightarrow \ind$, $A'\ind _C D$ hence $A\indi * _C C$.
\end{proof}

\begin{exercise}
    Prove that if $\ind$ is invariant and satisfies \ref{MON} and \ref{SFIN} then $\indi *$ satisfies \ref{SFIN}.
\end{exercise}

\begin{exercise}[*]
    Prove that if $\ind$ is invariant and satisfies \ref{MON} and \ref{FIN} then $\indi *$ satisfies \ref{FIN}.
\end{exercise}

\begin{theorem}\label{thm:dividingstarredequalsforking}
    $\indi f = \indi d ^*$. 
\end{theorem}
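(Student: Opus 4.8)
The plan is to prove the classical characterisation of forking in terms of dividing (cf.\ \cite{C11,TZ12}): $\tp(A/BC)$ does not fork over $C$ if and only if for every small $D\supseteq B$ there is $A'\equiv_{BC}A$ with $\tp(A'/DC)$ not dividing over $C$ — which is literally the assertion $\indi f = \indi d^*$. I would prove the two implications separately. The one bookkeeping fact I would isolate first is this: for a complete type $q$ over a small set $E$, $q$ divides over $C$ if and only if $q$ contains a formula \emph{with parameters in $E$} that divides over $C$. The nontrivial direction follows from compactness (finitely many formulas of $q$ imply the dividing formula $\phi(x,\bar e)$, so their conjunction $\theta(x,\bar d)\in q$ with $\bar d\subseteq E$ satisfies $\theta(x,\bar d)\models\phi(x,\bar e)$) together with the exercise that dividing passes to subformulas (if $\theta(\MM,\bar d)\subseteq\phi(\MM,\bar e)$ and $\phi(x,\bar e)$ divides over $C$, then $\theta(x,\bar d)$ divides over $C$). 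I would also note once that, since $\indi d$ satisfies right \ref{NOR} and right \ref{MON} (Theorem \ref{thm:propertieofdividing}), in testing $\indi d^*$ it is enough to consider sets $D$ with $C\subseteq D$.

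For $\indi d^*\to\indi f$ I would argue contrapositively. Assuming $\tp(A/BC)$ forks over $C$, compactness and closure of $\tp(A/BC)$ under conjunction produce a single $\psi(x,\bar b)\in\tp(A/BC)$ with $\psi(x,\bar b)\models\bigvee_{i=1}^n\phi_i(x,\bar b_i)$ and each $\phi_i(x,\bar b_i)$ dividing over $C$. Setting $D:=BC\bar b_1\cdots\bar b_n$, any $A'\equiv_{BC}A$ satisfies $\psi(x,\bar b)$, hence some $\phi_j(A',\bar b_j)$ holds; since $\bar b_j\subseteq D$, the type $\tp(A'/D)$ contains the dividing formula $\phi_j(x,\bar b_j)$ and so divides over $C$. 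Thus no $A'\equiv_{BC}A$ witnesses $\indi d$ over this particular $D$, so $A\nindi d^*_C B$.

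For $\indi f\to\indi d^*$, assume $\tp(A/BC)$ does not fork over $C$, fix $D\supseteq B$ with $C\subseteq D$, let $x$ enumerate $A$, and consider the partial type over $D$
\[
\Sigma(x):=\tp(A/BC)\cup\{\,\neg\phi(x,\bar d):\bar d\subseteq D,\ \phi(x,\bar d)\text{ divides over }C\,\}.
\]
If $\Sigma$ were inconsistent, compactness would give $\psi(x,\bar b)\in\tp(A/BC)$ and finitely many $\phi_i(x,\bar d_i)$ dividing over $C$ with $\psi(x,\bar b)\models\bigvee_{i=1}^n\phi_i(x,\bar d_i)$; then $\psi(x,\bar b)$ forks over $C$, whence $\tp(A/BC)$ forks over $C$, contradicting the hypothesis. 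So $\Sigma$ is consistent; realising it by some small $A'\models\Sigma$ gives $A'\equiv_{BC}A$, and $\tp(A'/D)$ contains no formula over $D$ that divides over $C$ — hence, by the bookkeeping fact, $\tp(A'/D)$ does not divide over $C$, i.e.\ $A'\indi d_C D$. Since $D$ was arbitrary, $A\indi d^*_C B$.

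The argument is short, and its only delicate point is the bookkeeping fact: the reduction of ``$\tp(A'/D)$ divides over $C$'' to the presence inside it of a dividing formula whose parameters are confined to $D$. This is exactly what justifies negating only formulas over $D$ in the partial type $\Sigma$ — without it, $\Sigma$ might fail to control all the ways $\tp(A'/D)$ could divide over $C$. Everything else is routine manipulation of compactness.
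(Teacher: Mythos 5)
Your proof is correct and follows essentially the same route as the paper: the paper's argument also hinges on the partial type $\tp(a/Cb)\cup\set{\neg\phi(x,d)\mid d\seq D,\ \phi(x,d)\text{ divides over }C}$ and the compactness observation that its inconsistency is equivalent to $\tp(a/Cb)$ forking over $C$. You merely spell out the two implications separately and make explicit the bookkeeping fact (a complete type over $D$ divides over $C$ iff it contains a dividing formula with parameters in $D$) that the paper leaves implicit in ``the theorem follows''.
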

\begin{proof}
    Observe that $a\indi f_C b$ if and only if for all $B\seq D$ the (partial) type $\pi_D(x)$ defined by 
    \[\tp(a/Cb)\cup \set{\neg \phi(x,d)\mid d\seq D,  \text{$\phi(x,d)$ divides over $C$}}\]
    By compactness, $\pi_D$ is inconsistent if and only if there exist a finite number of formula $\phi_i(x,d)$ which divides over $C$ and such that $\tp(a/Cb)\models \bigvee_i \phi_i(x,d)$, the latter is equivalent to $\tp(a/Cb)$ forking over $C$. The theorem follows.
\end{proof}

\begin{corollary}\label{cor:forkingisanairiffLOC}
    $\indi f$ always satisfies \ref{FIN}, left and right \ref{MON}, left \ref{NOR}, right \ref{BMON}, left \ref{TRA}, \ref{FIN} and \ref{EXT}.
    In particular, $\indi f$ is an AIR if and only if $\indi f$ satisfies \ref{LOC}.
\end{corollary}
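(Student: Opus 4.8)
The plan is to deduce everything from the identity $\indi f = \indi d^*$ of Theorem \ref{thm:dividingstarredequalsforking}, combined with the list of properties of $\indi d$ in Theorem \ref{thm:propertieofdividing} and the preservation statements for the $(\cdot)^*$ construction in Proposition \ref{prop:forcingextpreservation}. So there is essentially no new argument to make; the task is purely one of assembling the earlier results correctly.

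First I would recall from Theorem \ref{thm:propertieofdividing} that $\indi d$ is invariant and satisfies \ref{FIN}, left and right \ref{MON}, left and right \ref{NOR}, right \ref{BMON}, left \ref{TRA} and \ref{AREF}. In particular $\indi d$ is invariant and satisfies left and right \ref{MON}, which is exactly the hypothesis of Proposition \ref{prop:forcingextpreservation}. Applying that proposition to $\ind = \indi d$ yields that $\indi d^* = \indi f$ is invariant and satisfies left and right \ref{MON}, right \ref{NOR} and \ref{EXT}; and since moreover $\indi d$ satisfies right \ref{BMON}, left \ref{TRA}, left \ref{NOR} and \ref{AREF}, the same proposition tells us that $\indi f$ inherits each of these as well. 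For \ref{FIN} I would invoke the starred exercise preceding Theorem \ref{thm:dividingstarredequalsforking}, which says that $\indi *$ preserves \ref{FIN} when $\ind$ is invariant and satisfies \ref{MON} and \ref{FIN}; applied to $\indi d$, this gives \ref{FIN} for $\indi f$. This establishes the first assertion.

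For the ``in particular'', I would simply compare this list with the definition of an AIR: an invariant relation satisfying left and right \ref{MON}, right \ref{BMON}, left \ref{NOR}, left \ref{TRA}, \ref{FIN}, \ref{LOC} and \ref{EXT}. The previous paragraph shows $\indi f$ satisfies every one of these axioms except possibly \ref{LOC}, whence $\indi f$ is an AIR if and only if it additionally satisfies \ref{LOC}. The only point requiring any care is to check that the hypothesis ``left and right \ref{MON}'' demanded by Proposition \ref{prop:forcingextpreservation} is genuinely available for $\indi d$ (it is, by Theorem \ref{thm:propertieofdividing}), and to cite the \ref{FIN}-preservation exercise rather than re-prove it; no step here is an obstacle.
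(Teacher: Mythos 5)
Your proof is correct and follows the paper's own route: it assembles Theorem \ref{thm:propertieofdividing}, Proposition \ref{prop:forcingextpreservation} and Theorem \ref{thm:dividingstarredequalsforking} in exactly the same way to transfer the properties of $\indi d$ to $\indi f = \indi d^*$, and then compares the resulting list with the definition of an AIR. The only cosmetic difference is that you obtain \ref{FIN} from the starred exercise on preservation of finite character under $(\cdot)^*$, whereas the paper observes that \ref{FIN} for $\indi f$ is immediate from the definition of forking (forking of a type is witnessed by a single formula, hence by a finite subtuple); either justification is fine.
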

\begin{proof}
    By putting together Proposition \ref{prop:forcingextpreservation}, Theorem \ref{thm:propertieofdividing} and Theorem \ref{thm:dividingstarredequalsforking}, it remains to check that $\ind$ satisfies \ref{FIN}, which is trivial by the definition of $\indi f$. The property \ref{LOC} is the only one missing from the definition of an AIR.
\end{proof}

\begin{remark}
    If $\indi f$ is an AIR, then $\indi d = \indi f$ by Proposition \ref{prop:dividingindepstrongerthanAIR} and Theorem \ref{thm:dividingstarredequalsforking}.
\end{remark}

\begin{example}
We come back to Example \ref{example:orientedcircle}, which cumulates several pathological behaviours around forking and dividing and $\indi *$.
\begin{enumerate}
    \item $\indi f\neq \indi d$ in general: $a\indi d _\emptyset \emptyset$ whereas $a\nindi f_\emptyset \emptyset$ (even at the level of formula, there is a forking formula that does not divides).
    \item $\indi f$ does not satisfy \ref{EX}: $a\nindi f_\emptyset \emptyset$, equivalently $\indi d$ does not satisfy \ref{FEX} (by Lemma \ref{lm:forkexistence_equivalent})
    \item $\indi d$ does not satisfy \ref{EXT}\footnote{Note that \ref{FEX} and \ref{EXT} would be equivalent if $\indi d$ satisfies right \ref{TRA} by Proposition \ref{prop:basicpropertieswiththeory} (c,d), but by Example \ref{exo:divinDLOnotTRA}, $\indi d$ does not satisfy right \ref{TRA} in general.}: for any distinct $a,b,c$, we have $a\indi d_\emptyset \emptyset$ but there is no $a'\equiv_{\emptyset} a$ such that $a'\indi d_\emptyset  bc$, because $\tp(a'/bc)$ contains either the formula $C(b,x,c)$ or $C(c,x,b)$ or $x = b$ or $x = c$ all of which divides over $\emptyset$.
    \item Forcing the extension axiom $\indi *$ does not preserve \ref{EX} in general: $\indi d$ always satisfy \ref{EX} and $\indi f$ does not.
\end{enumerate}     
\end{example}

\begin{corollary}\label{cor:forcingBMONandEXT}
    If $\ind$ is invariant and satisfies left and right \ref{MON} then $\indi M ^*$ is invariant and satisfies left and right \ref{MON}, right \ref{NOR}, right \ref{CLO}, right \ref{BMON} and \ref{EXT}. Further $\indi M ^*\to \indi M\to \ind$.
\end{corollary}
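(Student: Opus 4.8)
The plan is to combine Proposition \ref{prop:monotonisation} (properties preserved by $\indi M$) with Proposition \ref{prop:forcingextpreservation} (properties preserved by the starred operation $\indi *$), applied in that order. First I would observe that $\indi M \to \ind$ is immediate from the definition of the monotonisation (take $X = C$), and $\indi M ^* \to \indi M$ is immediate from the definition of the starred operation (take $D = B$), giving the last sentence of the statement at once.

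Next I would feed the hypotheses through Proposition \ref{prop:monotonisation}. Since $\ind$ is invariant and satisfies left and right \ref{MON}, that proposition tells us $\indi M$ satisfies right \ref{BMON} (the headline conclusion), and preserves left and right \ref{MON}; invariance of $\indi M$ is clear since it is defined by a condition quantifying over sets $X$ definable from the parameters, and $\cl$, $\equiv$ are automorphism-invariant. So at this stage $\indi M$ is invariant and satisfies left and right \ref{MON} and right \ref{BMON}. (Note I do \emph{not} get right \ref{NOR}, right \ref{CLO} for $\indi M$ for free here — Proposition \ref{prop:monotonisation} only preserves those if $\ind$ already has them — so these must come from the starred step.)

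Then I would apply Proposition \ref{prop:forcingextpreservation} to $\indi M$ in place of $\ind$: since $\indi M$ is invariant and satisfies left and right \ref{MON}, the conclusion is that $\indi M ^*$ is invariant and satisfies left and right \ref{MON}, right \ref{NOR} and \ref{EXT}. Moreover Proposition \ref{prop:forcingextpreservation} says that if the input relation satisfies right \ref{BMON} then so does the starred relation, and $\indi M$ does satisfy right \ref{BMON} by the previous paragraph, so $\indi M ^*$ satisfies right \ref{BMON} as well. Finally, right \ref{CLO} follows from \ref{EXT} by Proposition \ref{prop:basicpropertieswiththeory}(a): any invariant relation satisfying \ref{EXT} satisfies right \ref{NOR} and right \ref{CLO}. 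This collects all the asserted properties.

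I do not anticipate a genuine obstacle here; the corollary is a bookkeeping consequence of the two general preservation propositions, and the only point requiring a moment's care is to route right \ref{CLO} through \ref{EXT} (via Proposition \ref{prop:basicpropertieswiththeory}(a)) rather than expecting it to survive the monotonisation step directly — indeed the whole point emphasised in the text is that $\indi M$ preserves right \ref{CLO} only under extra hypotheses, whereas $\indi M ^*$ acquires it unconditionally once \ref{EXT} is in force.
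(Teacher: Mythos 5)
Your proof is correct and follows essentially the same route as the paper: Proposition \ref{prop:monotonisation} for $\indi M$, then Proposition \ref{prop:forcingextpreservation} for $\indi M ^*$, with right \ref{CLO} (and \ref{NOR}) recovered from \ref{EXT} via Proposition \ref{prop:basicpropertieswiththeory}(a). The only cosmetic difference is that you obtain $\indi M ^*\to \indi M$ directly from the definition of $\indi *$ (using invariance of $\indi M$), whereas the paper routes it through the last item of Proposition \ref{prop:monotonisation}; both are fine.
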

\begin{proof}
    By Proposition \ref{prop:monotonisation}, $\indi M$ satisfies right \ref{MON} and right \ref{BMON}. By Proposition \ref{prop:forcingextpreservation}, $\indi M ^*$ satisfies right \ref{MON}, right \ref{BMON} and \ref{EXT}. By Proposition \ref{prop:basicpropertieswiththeory}, $\indi M ^*$ further satisfies right \ref{NOR} and right \ref{CLO}. In particular Proposition \ref{prop:monotonisation} applies and $\indi M ^*\to \indi M$.
\end{proof}

\color{blue}
Recall that $\indi m$ is the ``naive" monotonisation from Definition \ref{def:naivemonotonisation}.

\begin{corollary}\label{prop:forcingnaiveBMONandEXT}
    If $\ind$ is invariant and satisfies left and right \ref{MON} then $\indi m ^*$ is invariant and satisfies left and right \ref{MON}, right \ref{NOR}, right \ref{CLO}, right \ref{BMON} and \ref{EXT}. Further $\indi m ^*\to \indi m\to \ind$.
\end{corollary}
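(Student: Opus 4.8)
The plan is to mimic, almost word for word, the proof of Corollary \ref{cor:forcingBMONandEXT}, replacing the monotonisation $\indi M$ by the naive monotonisation $\indi m$ and Proposition \ref{prop:monotonisation} by Proposition \ref{prop:naivemonotonisation}. The one place where the two arguments genuinely diverge is right \ref{CLO}: whereas $\indi M$ inherits right \ref{CLO} directly from $\ind$, the naive monotonisation $\indi m$ need not preserve right \ref{CLO}, so for $\indi m ^*$ this axiom must instead be recovered \emph{a posteriori} from \ref{EXT}.

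Concretely, I would first record that $\indi m$ is invariant whenever $\ind$ is: the trivial closure operator is preserved by every automorphism, so for $\sigma\in\Aut(\MM)$ the collection of admissible base sets $\{D : C\seq D\seq BC\}$ is carried bijectively onto $\{D' : \sigma(C)\seq D'\seq \sigma(BC)\}$, and invariance of $\ind$ then transfers to $\indi m$ via Definition \ref{def:naivemonotonisation}. By Proposition \ref{prop:naivemonotonisation}, since $\ind$ satisfies left and right \ref{MON}, so does $\indi m$, and $\indi m$ always satisfies right \ref{BMON}. Applying Proposition \ref{prop:forcingextpreservation} to the invariant relation $\indi m$ (which has left and right \ref{MON}) yields in one stroke that $\indi m ^*$ is invariant and satisfies left and right \ref{MON}, right \ref{NOR} and \ref{EXT}; and since $\indi m$ satisfies right \ref{BMON}, the last clause of that proposition gives right \ref{BMON} for $\indi m ^*$. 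Right \ref{CLO} for $\indi m ^*$ then follows from \ref{EXT} via Proposition \ref{prop:basicpropertieswiththeory}(a). Finally, $\indi m ^* \to \indi m$ is the instance of the general fact $\indi * \to \ind$ taken with $\ind = \indi m$, and $\indi m \to \ind$ is obtained by taking $D = C$ in Definition \ref{def:naivemonotonisation}.

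I anticipate essentially no obstacle: the statement is a purely formal assembly of Propositions \ref{prop:naivemonotonisation}, \ref{prop:forcingextpreservation} and \ref{prop:basicpropertieswiththeory}. The only point worth stating explicitly is that Proposition \ref{prop:forcingextpreservation} requires its input relation to be invariant, so one must note the (trivial) invariance of $\indi m$ before invoking it; this is the analogue of the implicit invariance of $\indi M$ used in the proof of Corollary \ref{cor:forcingBMONandEXT}.
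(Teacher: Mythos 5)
Your proof is correct and follows essentially the same route as the paper: Proposition \ref{prop:naivemonotonisation} to transfer \ref{MON} and get right \ref{BMON} for $\indi m$, Proposition \ref{prop:forcingextpreservation} applied to the (invariant) relation $\indi m$, and then right \ref{CLO} recovered from \ref{EXT}. The only cosmetic difference is at the end: you get $\indi m^*\to\indi m$ directly from the general fact $\indi *\to\ind$ (legitimate, since you checked $\indi m$ is invariant), whereas the paper routes this through the last item of Proposition \ref{prop:naivemonotonisation}; both are fine.
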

\begin{proof}
    By Proposition \ref{prop:naivemonotonisation}, $\indi m$ satisfies left and right \ref{MON} and right \ref{BMON}. By Proposition \ref{prop:forcingextpreservation}, $\indi m ^*$ satisfies right \ref{MON}, right \ref{BMON} and \ref{EXT}. It is standard that \ref{EXT} implies right \ref{NOR} and right \ref{CLO}. As $\indi m ^*\to \ind$ and $\indi m ^*$ satisfies the right-sided versions of \ref{NOR}, \ref{MON} and \ref{BMON}, Proposition \ref{prop:naivemonotonisation} applies and $\indi m ^*\to \indi m$.
\end{proof}

\begin{corollary}\label{cor:twomonsamextension}
    Assume that $\ind$ satisfies two-sided \ref{NOR} and \ref{MON} then $\indi M^* = \indi m ^*$.
\end{corollary}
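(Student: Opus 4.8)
The plan is to deduce the equality purely formally from the preservation results already in place, the one conceptual point being that forcing \ref{EXT} collapses the distinction between the two monotonisations: \ref{EXT} entails right \ref{CLO} by Proposition \ref{prop:basicpropertieswiththeory}(a), and right \ref{CLO} is exactly the axiom that separates $\indi M$ from $\indi m$ (compare Proposition \ref{prop:naivetomonviac}). So I expect a short two-directional argument, each inclusion closed off by the universal property of the $(-)^*$ construction recorded earlier: if $\indi 0 \to \ind'$ and $\indi 0$ satisfies \ref{EXT}, then $\indi 0 \to (\ind')^*$.

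First I would prove $\indi m ^* \to \indi M ^*$. By Corollary \ref{prop:forcingnaiveBMONandEXT}, the relation $\indi m ^*$ satisfies $\indi m ^* \to \ind$ together with the right-sided versions of \ref{NOR}, \ref{MON}, \ref{CLO} and \ref{BMON}, so the last item of Proposition \ref{prop:monotonisation}, applied with $\indi 0 = \indi m ^*$, gives $\indi m ^* \to \indi M$. Since $\indi m ^*$ also satisfies \ref{EXT} (Corollary \ref{prop:forcingnaiveBMONandEXT} again), the universal property of $(-)^*$ applied to the relation $\indi M$ yields $\indi m ^* \to (\indi M)^* = \indi M ^*$.

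For the reverse inclusion $\indi M ^* \to \indi m ^*$, I would first check $\indi M \to \indi m$. Because $\ind$ satisfies two-sided \ref{NOR} and \ref{MON}, Proposition \ref{prop:monotonisation} gives that $\indi M$ satisfies right \ref{NOR} and right \ref{MON}, and it always satisfies right \ref{BMON}; together with $\indi M \to \ind$ this lets the last item of Proposition \ref{prop:naivemonotonisation} apply, giving $\indi M \to \indi m$. Hence $\indi M ^* \to \indi M \to \indi m$, and since $\indi M ^*$ satisfies \ref{EXT} by Corollary \ref{cor:forcingBMONandEXT}, the universal property of $(-)^*$ gives $\indi M ^* \to (\indi m)^* = \indi m ^*$. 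Combining the two inclusions proves $\indi M ^* = \indi m ^*$.

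I do not anticipate any genuine obstacle. The only two points needing care are (i) bookkeeping which right-sided axioms are available for $\indi M$ and for $\indi m$ so that Propositions \ref{prop:monotonisation} and \ref{prop:naivemonotonisation} can be invoked — everything needed follows from $\ind$ having two-sided \ref{NOR} and \ref{MON} — and (ii) that Corollaries \ref{cor:forcingBMONandEXT} and \ref{prop:forcingnaiveBMONandEXT} presuppose $\ind$ invariant, so that hypothesis must be understood to be in force (as it is throughout this chapter).
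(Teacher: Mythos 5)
Your proof is correct and follows essentially the same route as the paper: both inclusions come from the last items of Propositions \ref{prop:monotonisation} and \ref{prop:naivemonotonisation} combined with the universal property of $(-)^*$, with Corollaries \ref{cor:forcingBMONandEXT} and \ref{prop:forcingnaiveBMONandEXT} supplying the needed right-sided axioms and \ref{EXT}. The only cosmetic difference is that for $\indi M^* \to \indi m^*$ the paper applies Proposition \ref{prop:naivemonotonisation} directly to $\indi M^*$ (whose right \ref{NOR} comes from \ref{EXT}), whereas you factor through $\indi M \to \indi m$, which uses the two-sided \ref{NOR} hypothesis on $\ind$; both are valid under the stated assumptions.
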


\begin{proof}
    We have $\indi M^*\to \indi m^*$ by Propositions \ref{prop:naivemonotonisation} and \ref{prop:forcingextpreservation} because $\indi M^*$ satisfies right \ref{BMON} and \ref{EXT}
    and $\indi M^*\to \ind$ . By Corollary \ref{cor:forcingBMONandEXT} $\indi m ^*$ satisfies left and right \ref{NOR}, \ref{MON}, right \ref{CLO}, right \ref{BMON} and \ref{EXT}. As $\indi m ^*\to \ind$ and $\indi m ^*$ satisfies right \ref{CLO}, right \ref{BMON}, we get $\indi m^*\to \indi M$. As  $\indi m ^*$ satisfies \ref{EXT}, we conclude $\indi m^* \to \indi M^*$.
\end{proof}

\color{black}

\begin{exercise}
     Prove that $\indi f$ satisfies \ref{SFIN}. Deduce that $\indi f$ satisfies \ref{LOC} if and only if $\indi f$ satisfies \ref{SYM} and \ref{EX}.
\end{exercise}

\begin{exercise}
    Is there an independence relation $\ind$ different from $\indi u$ such that $\indi * = \indi u$? 
\end{exercise}

\subsection{Forking and the independence theorem}

\begin{theorem}\label{thm:forcingBMON+EXTforking}
    Let $\indi 0$ be an invariant relation satisfying right \ref{MON}, right \ref{BMON} and \ref{LOC}.

  Let $\ind$ be an invariant relation satisfying left and right \ref{MON} and the following property ($\indi 0$-amalgamation over models):
  \begin{center}
      if $c_1\equiv_M c_2$ and $c_1\ind_M a$, $c_2\ind_M b$ and $a\indi 0 _M b$
      then there exists $c$ with $c\ind_E ab$ and $c\equiv_{Ma} c_1$, $c \equiv_{Mb} c_2$.
  \end{center}
  Then $\indi{M}^{*} \rightarrow \indi f$. 
\end{theorem}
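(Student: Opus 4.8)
The plan is to reduce everything to non-dividing and then run a Kim--Pillay style amalgamation induction. Since $\indi M^*$ satisfies \ref{EXT} by Corollary \ref{cor:forcingBMONandEXT} and $\indi f=\indi d^*$ by Theorem \ref{thm:dividingstarredequalsforking}, the universal property of $\indi *$ (if $\indi 0\to\ind$ and $\indi 0$ satisfies \ref{EXT}, then $\indi 0\to\indi *$) shows that it suffices to prove $\indi M^*\to\indi d$. So I would fix $A,B,C$ with $A\indi M^*_C B$, enumerate $A,B$ by small tuples $a,b$, write $\tp(a/Cb)=\pi(x,b)$ for some $\pi(x,y)$ over $C$, and aim to show $\tp(a/Cb)$ does not divide over $C$. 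By Lemma \ref{lm:dividing_indiscernible} this amounts to showing that $\bigcup_{i<\omega}\pi(x,b_i)$ is consistent for every $C$-indiscernible sequence $(b_i)_{i<\omega}$ with $b_0=b$ (the slightly more general case $\tp(b_0/C)=\tp(b/C)$ then follows by applying an automorphism over $C$).

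Given such a sequence, I would first manufacture a good model. Applying Lemma \ref{lm:LOCcharactergivesMS} to $\indi 0$ (which satisfies right \ref{MON}, right \ref{BMON} and \ref{LOC}) produces a model $M\supseteq C$ over which $(b_i)_{i<\omega}$ is indiscernible and is a $\indi 0$-Morley sequence, i.e. $b_n\indi 0_M b_{<n}$ for all $n$. Next I would move the hypothesis to the base $M$: by \ref{EXT} for $\indi M^*$ there is $a_0\equiv_{Cb}a$ with $a_0\indi M^*_C Mb$; then right \ref{BMON} and right \ref{MON} for $\indi M^*$ (Corollary \ref{cor:forcingBMONandEXT}) give $a_0\indi M^*_M b$, hence $a_0\ind_M b$ because $\indi M^*\to\ind$. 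As $a_0\equiv_{Cb}a$, replacing $a$ by $a_0$ does not affect whether $\tp(a/Cb)$ divides over $C$, so I may assume henceforth that $a\ind_M b_0$.

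The main construction is an induction building, for each $n\ge 1$, a tuple $a_n$ with $a_n\ind_M b_{<n}$ and $a_n b_i\equiv_M ab_0$ for all $i<n$, starting from $a_1=a$. Given $a_n$, I would take $\sigma\in\Aut(\MM/M)$ with $\sigma(b_0)=b_n$ (possible by $M$-indiscernibility) and set $c_1=\sigma(a)$, so that $c_1 b_n\equiv_M ab_0$ and $c_1\ind_M b_n$ by invariance, and set $c_2=a_n$, noting $c_1\equiv_M a\equiv_M c_2$. Then I would apply the $\indi 0$-amalgamation over models with the two sides being $b_n$ and $b_{<n}$: the hypotheses $c_1\ind_M b_n$, $c_2\ind_M b_{<n}$ and $b_n\indi 0_M b_{<n}$ all hold, producing $c$ with $c\ind_M b_{\le n}$, $c\equiv_{Mb_n}c_1$ and $c\equiv_{Mb_{<n}}c_2$; then $cb_n\equiv_M ab_0$ and $cb_i\equiv_M a_n b_i\equiv_M ab_0$ for $i<n$, so $a_{n+1}:=c$ works. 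Finally, by compactness the union over $i<\omega$ of the $M$-copy of $\tp(a/Mb_0)$ at $b_i$ is consistent, since any finite fragment mentions finitely many $b_i$, which by $M$-indiscernibility form an initial segment up to $\equiv_M$ and are therefore handled by some $a_n$; any realisation $a'$ satisfies $a'b_i\equiv_M ab_0$, hence $a'b_i\equiv_C ab_0$, hence $a'\models\bigcup_{i<\omega}\pi(x,b_i)$. I expect the main subtlety to be exactly the orientation in the amalgamation step: because $\indi 0$ is not assumed symmetric, one must feed $b_n$ and $b_{<n}$ into the two sides in the order matching $b_n\indi 0_M b_{<n}$ (the only $\indi 0$-information the Morley sequence provides); the other point to get right is the reduction to base $M$ via \ref{EXT} and \ref{BMON} for $\indi M^*$.
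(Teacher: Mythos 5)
Your proof is correct and follows essentially the same route as the paper's: reduce to showing $\indi M^*\to\indi d$ (using $\indi f=\indi d^*$ and \ref{EXT} for $\indi M^*$), obtain the model $M$ and the $\indi 0$-Morley sequence via Lemma \ref{lm:LOCcharactergivesMS}, pass to the base $M$ with \ref{EXT}, \ref{BMON} and \ref{MON}, and then run the same inductive $\indi 0$-amalgamation argument (with the same orientation $b_n$ versus $b_{<n}$) followed by compactness. The only differences are cosmetic, e.g.\ replacing $a$ by its $\equiv_{Cb}$-conjugate at the outset rather than carrying $a'$ through, so there is nothing further to add.
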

\begin{proof}
By Corollary \ref{cor:forcingBMONandEXT}, the relation ${\indi{M}} ^{*}$ satisfies satisfies left and right \ref{MON}, right \ref{NOR}, right \ref{CLO}, right \ref{BMON} and \ref{EXT}.

We show that $\indi M ^* \rightarrow \indi d$, the result follows from $\indi f = {\indi d}^*$ (Theorem \ref{thm:dividingstarredequalsforking}). 

Assume that $a\indi M ^* _C b$, for some $a,b,C$. Let $(b_i)_{i<\omega}$ be a $C$-indiscernible sequence with $b = b_0$. By Lemma \ref{lm:LOCcharactergivesMS}, there exists a model $M\supseteq C$ such that $(b_i)_{i<\omega}$ is an $M$-indiscernible $\indi 0$-Morley sequence over $M$, i.e. $b_i\indi 0_M b_{<i}$ for all $i<\omega$.

By \ref{EXT} there exists $a'$ such that $a'\equiv_{Cb} a$ and $a' \indi M ^*_{C}   b M$. It follows from \ref{BMON} and right \ref{MON} that
    $$a' \ind_{M} b.$$

For each $i\geq 0$ there exists an automorphism $\sigma_i$ over $M$ sending $b = b_0$ to $b_i$, so setting $a_i' = \sigma_i(a')$ we have:
$a_i'b_i \equiv_{M} a'b$ hence by invariance $a_i'\ind_{M} b_i$. Note that $a'b\equiv_C ab$.

\begin{claim}
    There exists $a''$ such that $a''b_i \equiv_{M} a'b$ for all $ i<\omega$.
\end{claim} 

\begin{proof}[Proof of the claim]
    By induction and compactness, it is sufficient to show that for all $i<\omega$, there exists $a_i''$ such that for all $k\leq i$ we have $a''_i b_k \equiv_{M} a' b$ and $a_i''\ind_{M} b_{\leq i}$. For the case $i = 0$ take $a''_0 = a'$. Assume that $a_i''$ has been constructed, we have 
$$ a_{i+1}' \ind_{M}b_{i+1}\text{ and } b_{ i+1} \indi{0}_{M} b_{\leq i}  \text{ and }  a_i''\ind_{M}b_{\leq i}.$$

As $a_{i+1}' \equiv_{M} a_i''$, by $\indi{0}$-amalgamation over models, there exists $a_{i+1}''$ such that 
\begin{enumerate}
    \item $a_{i+1}''b_{i+1} \equiv_{M} a_{i+1}'b_{i+1}$
    \item $a_{i+1}'' b_{\leq i}\equiv_{M} a_i''b_{\leq i}$
    \item $a_{i+1}'' \ind_{M} b_{\leq i+1}$.
\end{enumerate}
By induction and compactness, there exists $a''$ such that $a''b_i \equiv_{M} a b$ for all $i<\omega$, which proves the claim.
\end{proof}

Let $a''$ be as in the claim, then as $a'b\equiv_C ab$ we have $a''b_i \equiv_C ab$ for all $i<\omega$, hence $a\indi{d}_C b$.
\end{proof}

\begin{remark}
  Observe that the relations $\ind$ and $\indi 0$ in the previous result may not be symmetric. For instance, it could be that $\indi 0 = \indi h$ (see below). Then, the parameters $a$ and $b$ in the statement of $\indi 0$-amalgamation do not play a symmetrical role a priori. However the role of $(c_1,a)$ and $(c_2,b)$ are symmetric hence if a relation satisfies $\indi h$-amalgamation, it means that $tp(c_1/Ma)$ and $tp(c_2/Mb)$ can be amalgamated \textit{whenever $a\indi 0 _M b$ or $b\indi 0 _M a$} (i.e. $a\indi 0^\opp _M b$). It follows that $\indi 0$-amalgamation and $\indi 0^\opp$-amalgamation are the same. Observe that the property $\ind$-amalgamation has a contravariant behaviour: if $\indi 1\rightarrow \indi 2$, then if a relation satisfies $\indi 2$-amalgamation, it also satisfies $\indi 1$-amalgamation. In particular, if $\indi 0\to \ind$ then if $\ind$ satisfies \ref{INDTHM}, it satisfies $\indi 0$-amalgamation. 
\end{remark}

\begin{corollary}\label{cor:forcingBMON+EXTforking}
    Some consequences of Theorem \ref{thm:forcingBMON+EXTforking}.
    \begin{enumerate}
    \item Let $\ind$ be an invariant relation which satisfies left and right \ref{MON} and $\indi h$-amalgamation over models.
  Then $\indi{M}^{*} \rightarrow \indi f$. 
    \item Let $\ind$ be an invariant relation, which satisfies left and right \ref{MON}, right \ref{BMON}, \ref{LOC} and \ref{INDTHM} over models. Then $\indi * \rightarrow \indi f$. 
    \item Let $\ind$ be an invariant relation satisfying left and right \ref{MON}, \ref{EXT} and \ref{STAT} over models, then $\indi M ^*\to \indi f$.
    \end{enumerate}
\end{corollary}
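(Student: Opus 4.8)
\textbf{Plan for the proof of Corollary~\ref{cor:forcingBMON+EXTforking}(3).}

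The strategy is to reduce item (3) to a situation where Theorem~\ref{thm:forcingBMON+EXTforking} applies, with a conveniently chosen auxiliary relation $\indi 0$. First I would take $\indi 0 = \indi h$, the heir independence relation. By Theorem~\ref{thm:modelheirindependence} (i.e.\ $\indi h$ satisfies \ref{LOC}) together with Proposition~\ref{prop:propertiesofindiu} and the fact $(\indi h)^\opp = \indi u$, the relation $\indi h$ is invariant and satisfies right \ref{MON}, right \ref{BMON} and \ref{LOC} --- exactly the hypotheses imposed on $\indi 0$ in Theorem~\ref{thm:forcingBMON+EXTforking}. (Right \ref{MON} and right \ref{BMON} for $\indi h$ come from left \ref{MON} and left \ref{BMON} for $\indi u$ in Proposition~\ref{prop:propertiesofindiu} via the $\opp$ correspondence.) So it remains to verify that $\ind$ satisfies $\indi h$-amalgamation over models, and then item (3) is immediate from Theorem~\ref{thm:forcingBMON+EXTforking} (this is essentially item (1) of the corollary, which is the $\indi 0 = \indi h$ specialisation).

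The key step is therefore: an invariant relation $\ind$ satisfying left and right \ref{MON}, \ref{EXT} and \ref{STAT} over models satisfies $\indi h$-amalgamation over models. I would argue as follows. Suppose $M$ is a small model, $c_1 \equiv_M c_2$, $c_1 \ind_M a$, $c_2 \ind_M b$ and $a \indi h_M b$. I want $c$ with $c \ind_M ab$, $c \equiv_{Ma} c_1$ and $c \equiv_{Mb} c_2$. Using \ref{EXT} on $c_2 \ind_M b$ (extending $b$ to $ab$) I get $c_2' \equiv_{Mb} c_2$ with $c_2' \ind_M ab$; in particular $c_2' \ind_M a$ by right \ref{MON}. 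Similarly, using \ref{EXT} on $c_1 \ind_M a$ (extending $a$ to $ab$), I get $c_1' \equiv_{Ma} c_1$ with $c_1' \ind_M ab$, hence $c_1' \ind_M a$. Now both $c_1'$ and $c_2'$ are $\ind$-independent from $a$ over $M$, and since $c_1' \equiv_{Ma} c_1 \equiv_M c_2 \equiv_M c_2'$ we have $c_1' \equiv_M c_2'$; by \ref{STAT} over models (applied with base $a$, i.e.\ over the model $M$ and the parameter set $a$ --- wait, \ref{STAT} is stated over models, so I must be careful here).

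Let me instead route the stationarity argument through the model $M$ alone. The real content is to transport $c_2'$ over to agree with $c_1$ on $Ma$. I would do this: take $c_1' \equiv_{Ma} c_1$ with $c_1' \ind_M ab$ (via \ref{EXT} as above), and it suffices to show $c_1' \equiv_{Mb} c_2$, for then $c := c_1'$ works (it satisfies $c \ind_M ab$, $c \equiv_{Ma} c_1$, and $c \equiv_{Mb} c_2$). Now $c_1' \ind_M b$ by right \ref{MON}, and $c_2 \ind_M b$ by hypothesis; but to apply \ref{STAT} over the model $M$ I need $c_1' \equiv_M c_2$, which holds since $c_1' \equiv_M c_1 \equiv_M c_2$. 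However \ref{STAT} over $M$ then gives $c_1' \equiv_{Mb} c_2$ directly --- \emph{but only if stationarity holds over $M$ with the parameter being $b$}, which is precisely what \ref{STAT} over models asserts (take the set in the axiom to be $b$). So in fact the heir hypothesis $a \indi h_M b$ is not even needed for this half; it is needed to control the interaction so that we may simultaneously fix the type over $Ma$. The cleaner bookkeeping: one genuinely needs a three-amalgamation, so I expect to have to interleave \ref{EXT} (to move $c_2$ to be $\ind$-independent from $ab$, not just from $b$) with two applications of \ref{STAT} over models. The main obstacle will be organising these moves so that the final witness $c$ simultaneously has the right type over $Ma$ and over $Mb$ while remaining $\ind$-independent from $ab$ over $M$; the heir property $a\indi h_M b$ should be what guarantees the two stationarity conclusions are compatible. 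Concretely I would: (i) extend $c_1$ to $c_1'\ind_M ab$ with $c_1'\equiv_{Ma}c_1$; (ii) extend $c_2$ to $c_2'\ind_M ab$ with $c_2'\equiv_{Mb}c_2$; (iii) note $c_1'\equiv_M c_2'$ and $c_1'\ind_M b$, $c_2'\ind_M b$, so \ref{STAT} over $M$ gives $c_1'\equiv_{Mb}c_2'\equiv_{Mb}c_2$; (iv) conclude $c=c_1'$ is the required amalgam, since all three conditions $c\ind_M ab$, $c\equiv_{Ma}c_1$, $c\equiv_{Mb}c_2$ hold. If this works the $\indi h$ hypothesis is redundant for (3), which is consistent with (3) being strictly stronger than (1); I would double-check whether \ref{STAT} genuinely yields step (iii) without an independence hypothesis on $(a,b)$, and if not, fall back on the full machinery of Theorem~\ref{thm:forcingBMON+EXTforking} with $\indi 0=\indi h$ exactly as in item (1), observing that \ref{STAT} over models implies \ref{INDTHM}-style amalgamation relative to any $\indi 0$.
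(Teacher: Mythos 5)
Your argument for item (3) is correct and is essentially the paper's: the paper also observes that right \ref{MON}, \ref{EXT} and \ref{STAT} over models give $\indi 0$-amalgamation over models for \emph{any} relation $\indi 0$, and then applies Theorem \ref{thm:forcingBMON+EXTforking} (the paper picks a trivial $\indi 0$, e.g.\ $A\indi 0_C B$ iff $A\cap B\seq C$, whereas you pick $\indi 0=\indi h$; either choice works, and your citation of Proposition \ref{prop:propertiesofindiu} and Theorem \ref{thm:modelheirindependence} for right \ref{MON}, right \ref{BMON}, \ref{LOC} of $\indi h$ is what item (1) rests on). Your hedge at step (iii) can be discharged: \ref{STAT} over models requires no independence between $a$ and $b$, only $c_1'\ind_M b$, $c_2\ind_M b$ and $c_1'\equiv_M c_2$, so the heir hypothesis is indeed unused --- exactly the paper's point; also step (ii) is redundant, since you can apply \ref{STAT} directly to $c_1'$ and $c_2$. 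The only omission is item (2) of the corollary, which your plan does not treat; it follows by taking $\indi 0=\ind$ in Theorem \ref{thm:forcingBMON+EXTforking} (so that $\indi 0$-amalgamation is just \ref{INDTHM}) and noting that right \ref{BMON} gives $\indi M=\ind$.
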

\begin{proof}
    $(1)$ From Proposition \ref{prop:propertiesofindiu} and Theorem \ref{thm:modelheirindependence}, the relation $\indi h$ satisfies \ref{MON}, right \ref{BMON} and \ref{LOC}, hence we apply Theorem \ref{thm:forcingBMON+EXTforking}.

    $(2)$ We apply Theorem \ref{thm:forcingBMON+EXTforking}. As $\ind$ satisfies right \ref{MON}, \ref{BMON} and \ref{LOC} we consider $\indi 0 = \ind$. Observe that $\ind$-amalgamation is the same as \ref{INDTHM} hence by Theorem \ref{thm:forcingBMON+EXTforking} we get $\indi M ^* \to \indi f$. As $\ind$ satisfies \ref{BMON} we have $\indi M = \ind$, so we conclude. 

    $(3)$ As $\ind$ satisfies right \ref{MON}, \ref{EXT} and  \ref{STAT} over models, it satisfies $\indi 0$-amalgamation for \textit{any} independence relation $\indi 0$. Pick any $\indi 0$ satisfying right \ref{MON}, right \ref{BMON} and \ref{LOC} (for instance $A\indi 0 _C B$ iff $A\cap B\seq C$) and apply Theorem \ref{thm:forcingBMON+EXTforking}.
\end{proof}
\color{blue}
\begin{remark}
    By Corollary \ref{cor:twomonsamextension}, if $\ind$ further satisfies left and right \ref{NOR}, both Theorem \ref{thm:forcingBMON+EXTforking} and Corollary \ref{cor:forcingBMON+EXTforking} hold for $\indi m ^*$ in place of $\indi M ^*$.
\end{remark}
\color{black}

\begin{exercise}
    If $\ind$ satisfies \ref{SYM}, \ref{EX}, \ref{MON},  \ref{BMON}, \ref{EXT}, \ref{INDTHM} and \ref{SFIN} then $\ind \to \indi f$.
\end{exercise}

\begin{exercise}
    In DLO, define $A\indi > _C B$ if and only if $a> b$ for all $a\in A$, $b\in BC$. Check that $\indi >$ satisfies left and right \ref{MON}, right \ref{BMON}, \ref{EXT} and \ref{STAT} (over every set). Deduce that $\indi >\to \indi f$. Is it faster than proving that the formula $x> b$ does not fork over $C$ for any $C$?
\end{exercise}

\section{Simple theories}
\subsection{Simple theories and dividing}

We denote by $\omega^{<\omega}$ the set of all finite sequences of finite ordinals, considered as a tree with infinite branching at each node. Given $s,t\in \omega^{<\omega}$, we write $s\leq t$ if $s$ is a prefix of $t$. We denote by $\omega^\omega$ the set of all sequences of finite ordinals of length $\omega$.
\begin{definition}[Shelah]\label{def:simplicity})
    Let $k\geq 2$. A formula $\phi(x,y)$ has the \textit{$k$-tree property} if there exists $(b_\nu)_{s\in \omega^{<\omega}}$ such that
    \begin{enumerate}[$(a)$]
        \item $\set{\phi(x,b_s)\mid s\seq \nu}$ is consistent for each $\nu\in \omega^\omega$;
        \item $\set{\phi(x,b_{s^\frown i})\mid i<\omega}$ is $k$-inconsistent, for all $s\in \omega^{<\omega}$.
    \end{enumerate}
    A formula has the \textit{tree property} (TP) if it has the $k$-tree property for some $k<\omega$.

    A theory $T$ is \textit{simple} (or NTP\footnote{Nobody uses the determination NTP. The only interest in this notion is the following. There exists two notions: the \textit{tree property of the first kind} (TP$_1$) and the \textit{tree property of the second kind} (TP$_2$) such that a formula has the TP if and only if this formula has the TP$_1$ or the TP$_2$ (we also say \textit{is} TP$_1$ or \textit{is} TP$_2$). Then a theory is NTP$_i$ if no formula in $T$ is TP$_i$. In this sense, a theory is NTP (simple) if an only if it is NTP$_1$ and NTP$_2$.}) if no formula has the tree property.
\end{definition}

\begin{example}
    In DLO, the formula $\phi(x,yz)$ given by $y<x<z$ has the $2$-tree property: start with $(b_ic_i)_{i<\omega}$ with $c_0<b_0<c_1<b_1<...$. Inductively if $b_s c_s $ has been constructed, we find $\set{b_{s^\frown j}c_{s^\frown j}\mid j<\omega}$ such that 
    \[b_s<b_{s^\frown 0}<c_{i^\frown 0}<b_{i^\frown 1}<c_{i^\frown 1}<\ldots<c_s.\]
    Then $(b_sc_s)_{s\in \omega^{<\omega}}$ is a witness of $2$-TP for $\phi(x,yz)$.
\end{example}

\begin{remark}
\begin{itemize}
    \item If a formula $\phi(x,y,c)$ has the tree property, witnessed by a tree of parameters $(b_{s})_{s\in \omega^{<\omega}}$, then the formula $\phi(x; yz)$ has the tree property with the tree $(b_sc)_{s\in \omega^{<\omega}}$. Hence for simplicity, it suffices to check that no formula without parameters has the tree property.
    \item Any theory definable (even interpretable) in a simple theory is again simple. This is because simplicity is defined at the level of formulas. In particular, any reduct of a simple theory is again simple. 
\end{itemize}
\end{remark}


\begin{remark}\label{remark:wideningofthetreeTP}
    Assume that $\phi(x,y)$ has the $k$-tree property, witnessed by $b = (b_{s})_{s\in \omega^{<\omega}}$. Then the tuple $b$ satisfies the (partial) type $\Sigma^{\omega,\omega}((y_s)_{s\in \omega^{<\omega}})$ given by the union of (every branch is consistent)
    \[\bigcup_{\nu\in \omega^\omega} \set{\exists x \phi(x,y_{s_1})\wedge \ldots \wedge \phi(x,y_{s_n})\mid s_1,\ldots,s_n\seq \nu, n<\omega}\]
    and (every level is $k$-inconsistent)
    \[\bigcup_{s\in \omega^{<\omega}} \set{\neg \exists x \bigwedge_{j = 1}^k \phi(x, y_{s^\frown i_j}) \mid i_1<\ldots<i_k<\omega}\]
    Conversely, if this type is consistent then the formula $\phi(x,y)$ has the $k$-tree property. Note the importance of having $k$-inconsistency instead of inconsistency on each level otherwise the tree property would not be expressable as a type. From $\Sigma^{\omega,\omega}$, one derives easily a type $\Sigma^{\kappa,\mu}$ which witnesses the tree property for a tree shaped like $\kappa^{<\mu}$, for some limit ordinal $\mu$. To do so, simply by add more variables and change $\omega^{<\omega},\omega^\omega$ by $\kappa^{<\mu},\kappa^\mu$. Then by compactness, the formula $\phi(x,y)$ has the tree property if an only if $\Sigma^{\kappa,\mu}$ is consistent, for any infinite $\kappa$ and limit ordinal $\mu$.
\end{remark}

\begin{definition}
    A \textit{$\phi$-k dividing sequence over $C$, of length $\mu$} is a sequence $(\phi(x,b_i))_{i<\mu}$ such that $\phi(x,b_i)$ divides over $Cb_{<i}$ and $\bigwedge_{i<\mu}\phi(x,b_i)$ is consistent.
\end{definition}

\begin{lemma}\label{lm:TPequivtophi-kdividingsequence}
    $\phi(x,y)$ has the $k$-tree property if and only if there exists arbitrary long $\phi$-k dividing sequence over $\emptyset$.
\end{lemma}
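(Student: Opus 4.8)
The strategy is to prove both directions by passing between tree-shaped witnesses and linear dividing sequences, using Ramsey/Erdős--Rado to extract indiscernibles and the characterisation of dividing from Lemma \ref{lm:dividing_indiscernible}. First I would treat the easy direction: assume $\phi(x,y)$ has the $k$-tree property, witnessed by $(b_s)_{s\in\omega^{<\omega}}$. Given $\nu\in\omega^\omega$, consider the nodes along $\nu$, namely $b_{\nu\upharpoonright 0}, b_{\nu\upharpoonright 1}, b_{\nu\upharpoonright 2},\ldots$. I claim this is (after possibly passing to a suitable realisation) a $\phi$-$k$ dividing sequence over $\emptyset$: consistency of $\bigwedge_i \phi(x,b_{\nu\upharpoonright i})$ is exactly clause $(a)$, and $\phi(x,b_{\nu\upharpoonright(i+1)})$ divides over $b_{\nu\upharpoonright\leq i}$ because the siblings $\{b_{(\nu\upharpoonright i)^\frown j}\mid j<\omega\}$ form a sequence in the same type over $b_{\nu\upharpoonright\leq i}$ (here one should first arrange, by an automorphism or by Remark \ref{remark:wideningofthetreeTP}, that the whole subtree rooted at each node is "homogeneous", e.g. by replacing the tree with one on which the relevant siblings are mutually indiscernible over the initial segment) and $k$-inconsistency of that family is clause $(b)$. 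Since the tree property can be witnessed on trees $\kappa^{<\mu}$ for arbitrary $\kappa,\mu$ (Remark \ref{remark:wideningofthetreeTP}), a branch of length $\mu$ gives a $\phi$-$k$ dividing sequence of length $\mu$, so there are arbitrarily long ones.

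For the converse, suppose there are arbitrarily long $\phi$-$k$ dividing sequences over $\emptyset$; by compactness (widening as in Remark \ref{remark:wideningofthetreeTP}, thinking of $\Sigma^{\kappa,\mu}$) it suffices to build a single $\phi$-$k$ dividing sequence of length a large ordinal and then "spread it out" into a tree. The plan is: take a $\phi$-$k$ dividing sequence $(\phi(x,b_i))_{i<\lambda}$ for $\lambda$ huge; by Erdős--Rado and compactness (Lemma \ref{lm:erdosradocompactness}) extract from $(b_i)_{i<\lambda}$ an indiscernible sequence $(b_i')_{i<\omega}$ realising the EM-type, which by the defining properties still witnesses that $\bigwedge_i\phi(x,b_i')$ is consistent and that $\{\phi(x,b_i')\mid i<\omega\}$ is $k$-inconsistent (the latter because $k$-inconsistency of any $k$-subset is part of the EM-type once we know it holds cofinally often along the original sequence — here one uses that $\phi(x,b_i)$ dividing over $Cb_{<i}$ gives, via Lemma \ref{lm:dividing_indiscernible}, a uniform $k$-inconsistency witness). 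Now build the tree by recursion on levels: having produced an indiscernible sequence whose members will be the children of a given node, use indiscernibility plus compactness to find, for each node, an $\omega$-indexed family of mutually "parallel" copies of the remaining tree below it, all consistent along branches and $k$-inconsistent at each splitting. Concretely one proves that the type $\Sigma^{\omega,\omega}$ of Remark \ref{remark:wideningofthetreeTP} is finitely satisfiable: any finite fragment mentions finitely many nodes, lying along finitely many branches, and these can be realised inside a single long $\phi$-$k$ dividing sequence by assigning the tree's nodes to indices of the sequence in a way that respects the branch order, using indiscernibility to get the $k$-inconsistency at splits.

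The main obstacle I anticipate is the converse direction, specifically the passage from a \emph{linear} $\phi$-$k$ dividing sequence to the \emph{tree} witness: one must arrange that the "dividing over $Cb_{<i}$" data — which a priori only gives, for each $i$, some sequence of conjugates of $b_i$ over $b_{<i}$ realising $k$-inconsistency — can be fitted together coherently into a branching structure where \emph{every} splitting node has the $k$-inconsistency property with a \emph{single} uniform $k$. The clean way around this is to work entirely with indiscernible (in fact, tree-indiscernible, or at least level-by-level indiscernible) sequences: after extracting an indiscernible linear sequence via Erdős--Rado, the $k$-inconsistency becomes a property of the EM-type, hence "copies" it around the tree automatically, and consistency along branches is preserved because each branch is, up to the EM-type, just a copy of the original sequence. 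So the real content is: (i) reduce to indiscernible sequences, (ii) observe that a $\phi$-$k$ dividing sequence that is indiscernible is essentially the same data as one branch of a $k$-TP tree, and (iii) invoke compactness with $\Sigma^{\omega,\omega}$ to assemble the tree from the existence of arbitrarily long such branches. I would also remark that the analogous equivalence where "divides over $\emptyset$" is replaced by "divides over $C$" holds by the same argument relativised to $C$, which is what is actually used in the sequel on simple theories.
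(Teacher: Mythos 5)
In your converse direction the key step is not just gappy but contradictory: you claim that after Erd\H{o}s--Rado extraction the indiscernible sequence $(b_i')_{i<\omega}$ ``still witnesses that $\bigwedge_i\phi(x,b_i')$ is consistent and that $\{\phi(x,b_i')\mid i<\omega\}$ is $k$-inconsistent''; these two assertions cannot hold simultaneously. The confusion is about where the $k$-inconsistency in a $\phi$-$k$ dividing sequence lives: by definition the conjunction along the sequence $(b_i)$ itself is \emph{consistent}, and the $k$-inconsistency is carried, for each $i$ separately, by some auxiliary family $(b_i^n)_{n<\omega}$ of realisations of $\tp(b_i/b_{<i})$ witnessing that $\phi(x,b_i)$ divides over $b_{<i}$. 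For the same reason your plan of building the tree ``by assigning the tree's nodes to indices of the sequence in a way that respects the branch order'' cannot yield clause $(b)$: any finitely many instances taken from one dividing sequence are simultaneously satisfiable, so siblings realised inside the sequence are never $k$-inconsistent, and indiscernibility does not help. The paper's construction keeps the two kinds of data separate: fix witnessing families $(b_i^n)_n$, build the tree level by level maintaining the invariant that every branch of the partial tree is an automorphic image of $(b_0,\ldots,b_m)$, and define the children of a node $c_s$ as $\sigma_s(b_{m+1}^n)$, where $\sigma_s$ is an automorphism carrying $(b_0,\ldots,b_m)$ to the branch through $c_s$; branches are then conjugates of a consistent set and each sibling family is a conjugate of a $k$-inconsistent family. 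Note that this direction needs only an $\omega$-long dividing sequence and no Erd\H{o}s--Rado at all.

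In the forward direction you correctly spot the real issue --- the siblings $b_{(\nu\upharpoonright i)^\frown j}$ need not realise a single type over the initial segment $b_{\nu\upharpoonright\leq i}$, so by themselves they do not witness dividing --- but your proposed fix is not an argument: an automorphism cannot force them into one type, Remark \ref{remark:wideningofthetreeTP} only changes the shape of the index tree, and ``replacing the tree by one whose siblings are mutually indiscernible over the initial segment'' is the tree modelling property, which is not available in these notes and is not free. The paper resolves this by a cardinality/pigeonhole argument: widen to a tree indexed by $\kappa^{<\mu}$ with $\kappa>2^{\max\{\abs{T},\mu\}}$, then choose the branch recursively so that at each step the chosen child lies among infinitely many siblings realising the same type over the parameters accumulated so far; that homogeneous infinite subfamily, being $k$-inconsistent, witnesses that the chosen instance $k$-divides over the initial segment. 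Without this device (or an equivalent one) your forward direction also has a genuine gap, though it is the more easily repaired of the two.
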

\begin{proof}
    Assume that $\phi(x,y)$ has the $k$-tree property and $\mu$ is given. We may assume that $\mu$ is a limit ordinal. 
    By Remark \ref{remark:wideningofthetreeTP} there exists an infinite tree of parameters $(b_s)_{s\in \kappa^{<\mu}}$ for some $\kappa> 2^{\max\set{\abs{T},\mu}}$. We choose recursively a branch $\nu\in \kappa^\mu$ such that the sequence of $\phi(x,b_{s})$ for initial $s\seq \nu$, forms a $\phi$-k dividing sequence. Let $n = \abs{y}$. As $\kappa>2^{\max \set{\abs{T}}}$ and $\abs{S_{n}(\emptyset)}\leq 2^{\max \set{\abs{T}}}$ there are infinitely many $b_{i}$'s (for $i<\kappa$, in the lowest level of the tree) which have the same type over $\emptyset$, so choose one of the indexes of those $i_0$. Clearly $\phi(x,b_{i_0})$ is a $k$-dividing formula over $\emptyset$. By induction, assume that $s = i_0^\frown \ldots ^\frown i_j$ is such that $\phi(x,b_s)$ divides over $b_{i_0}, \ldots, b_{i_0^\frown \ldots ^\frown i_{j-1}}$ and consider elements $(b_{s^\frown i})_{i<\kappa}$. As $\kappa>2^{\max\set{\abs{T},\mu}}> \abs{S_n(b_{i_0}, \ldots, b_{s}})$, infinitely many of $(b_{s^\frown i})_{i<\kappa}$ have the same type over $b_{i_0}, \ldots, b_{s}$ hence choose the index $i_{j+1}$ of one of those an set $s' = s^\frown i_{j+1}$. Then $\phi(x,b_{s'})$ divides over $b_{i_0},\ldots,b_s$. The limit case is done similarly. By induction, we find a branch $\nu\in \kappa^\mu$ such that the sequence $(\phi(x,b_{\nu\upharpoonright \set{0,\ldots,i}})_{i<\mu}$ is a $\phi$-k dividing sequence.

    Conversely, assume that there is an infinite a $\phi(x,y)$-k dividing sequence $(\phi(x,b_i))_{i<\omega}$. For each $i<\omega$, let $(b_i^n)_{n<\omega}$ be a sequence in $\tp(b_i/b_{<i})$ with $b_i^0 = b_i$ witnessing that $\phi(x,b_i)$ divides over $b_{<i}$. We define a tree of parameters $(c_s)_{s\in \omega^{<\omega}}$ as follows. Start by defining $c_0 = b_0^0, c_1 = b_0^1, c_n = b_0^n$ for each $n<\omega$, which will be the first level of the tree. We describe level 2: define $c_{0n} := b_1^n$ and for each $n<\omega$. There is an automorphism $\sigma_n$ sending $b_0^0$ to $c_n = b_0^n$ and define $c_{ni}:= \sigma_n(b_1^i)$, so that the level above $c_n$ is $\sigma_n((b_1^i)_{i<\omega})$. Once a tree $(c_{s})_{s\seq \omega^{<m}}$ of height $m$ has been constructed, such that $c_{0\ldots 0} = b_m^0$ and for any $s$, $c = (c_{s(0)},\ldots,c_{s(m)})\equiv (b_0,\ldots,b_n)$, witnessed by an automorphism $\sigma_s(b_0,\ldots,b_m) = c$. Then define $c_{s^\frown n} := \sigma_s(b_{m+1}^n)$, so that the level above $c_s$ is the image of the level above $b_m$. This ensures that set of formula along the branches is consistent, and that the levels are $k$-inconsistent. 
\end{proof}

\begin{theorem}\label{thm:simplecharacterisationLOC}
    $T$ is simple if and only if $\indi d$ satisfies \ref{LOC}.
\end{theorem}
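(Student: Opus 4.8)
The statement to prove is: $T$ is simple if and only if $\indi d$ satisfies \ref{LOC}. I will prove the two directions separately, both going through the notion of a $\phi$-$k$ dividing sequence, which by Lemma \ref{lm:TPequivtophi-kdividingsequence} controls the tree property.

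\emph{Non-simple $\Rightarrow$ $\indi d$ fails \ref{LOC}.} Suppose $T$ is not simple, so some formula $\phi(x,y)$ has the $k$-tree property. By Lemma \ref{lm:TPequivtophi-kdividingsequence} there are arbitrarily long $\phi$-$k$ dividing sequences over $\emptyset$. Fix a regular cardinal $\kappa$ (in fact I want to contradict \ref{LOC} for a singleton, so $\kappa$ will be chosen larger than the hypothetical bound $\kappa(a)$) and take a $\phi$-$k$ dividing sequence $(\phi(x,b_i))_{i<\kappa}$ with $\bigwedge_{i<\kappa}\phi(x,b_i)$ consistent; let $a$ realize this conjunction and set $B = \{b_i \mid i<\kappa\}$. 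Then for each $i$, $\phi(x,b_i)\in \tp(a/Bb_{<i})$ and $\phi(x,b_i)$ divides over $b_{<i}$, so $a\nindi d_{b_{<i}} b_i$, hence $a\nindi d_{b_{<i}} B$. Now if \ref{LOC} held for $\indi d$ with bound $\kappa(a)<\kappa$, there would be $B_0\seq B$ with $|B_0|<\kappa(a)$ and $a\indi d_{B_0} B$. Since $\kappa$ is regular and $|B_0|<\kappa$, there is $i<\kappa$ with $B_0\seq b_{<i}$; by right \ref{BMON} (which $\indi d$ satisfies, Theorem \ref{thm:propertieofdividing}) we get $a\indi d_{b_{<i}} B$, contradicting $a\nindi d_{b_{<i}} b_i$ and right \ref{MON}. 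So $\indi d$ does not satisfy \ref{LOC}.

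\emph{Simple $\Rightarrow$ $\indi d$ satisfies \ref{LOC}.} Conversely suppose $\indi d$ fails \ref{LOC}. By \ref{FIN} it is enough to find a failure for a finite tuple $a$: there is a finite $a$ such that for every cardinal $\lambda$ there is a set $B$ with the property that no $B_0\seq B$ with $|B_0|<\lambda$ satisfies $a\indi d_{B_0} B$. Using this, I will build, for arbitrarily long ordinals $\mu$, a $\phi$-$k$ dividing sequence for some fixed formula, contradicting simplicity via Lemma \ref{lm:TPequivtophi-kdividingsequence}. The mechanism: starting from such a bad pair $(a,B)$, recursively construct an increasing chain $(B_i)_{i<\mu}$ of subsets of $B$ so that at each stage $a\nindi d_{B_i} B$; choosing (by \ref{FIN}) a finite $b_i\seq B$ witnessing dividing of some formula $\phi_i(x,b_i)\in\tp(a/Bb_i)$ over $B_i$, and absorbing $b_i$ into $B_{i+1}$. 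Since there are only $\le |T|$ many formulas $\phi$ (up to the relevant finite number of parameter-variables), a pigeonhole argument on a sufficiently long such chain produces one fixed $\phi$ appearing $\mu$-often, and restricting to those indices (re-indexing the bases) yields a genuine $\phi$-$k$ dividing sequence: the consistency of the infinite conjunction is automatic since all $\phi_i(x,b_i)\in \tp(a/B)$ and $a$ realizes them simultaneously, and dividing of $\phi(x,b_i)$ over the preceding bases comes from dividing over $B_i$ together with right \ref{MON}/\ref{BMON}. To make ``arbitrarily long'' work one uses that \ref{LOC} fails for \emph{every} cardinal bound, so the chains can be taken of every length $\mu$; then Lemma \ref{lm:TPequivtophi-kdividingsequence} gives the $k$-tree property for $\phi$, so $T$ is not simple.

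\textbf{Main obstacle.} The delicate point is the second direction: organizing the recursion so that the bases $B_i$ stay inside one fixed $B$ while the dividing witnesses accumulate, and in particular making sure the \emph{same} $k$ (and same formula $\phi$) recurs along a cofinal set of stages — this is where one must be careful that $k$-inconsistency, not mere inconsistency, is what propagates (cf. Remark \ref{remark:wideningofthetreeTP}), and that the length $\mu$ of the dividing sequence can be pushed arbitrarily high, which forces one to feed in the failure of \ref{LOC} at unboundedly large cardinals rather than at a single one. The bookkeeping with \ref{FIN} to pass between the abstract failure of \ref{LOC} for some set and the concrete finite parameter tuples $b_i$ needed for a $\phi$-$k$ dividing sequence is the part that requires the most care; everything else is an application of the basic properties of $\indi d$ from Theorem \ref{thm:propertieofdividing}.
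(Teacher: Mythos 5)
Your proof is correct and follows essentially the same route as the paper: both directions are reduced, via Lemma \ref{lm:TPequivtophi-kdividingsequence}, to the (non)existence of $\phi$-$k$ dividing sequences, with the same regular-cardinal argument in one direction and the same accumulate-finite-bases-then-pigeonhole-on-$(\phi,k)$ construction in the other. The only (harmless) difference is that the paper works from a single failure cardinal $\kappa$, builds a sequence of length $\kappa^+$ and extracts an infinite $\phi$-$k$ dividing subsequence (an $\omega$-sequence already yields the tree property by the proof of the lemma), whereas you use the failure of \ref{LOC} at unboundedly many cardinals to produce arbitrarily long sequences so as to match the lemma's literal statement.
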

\begin{proof}
    By Lemma \ref{lm:TPequivtophi-kdividingsequence}, it is enough to prove that $\indi d$ satisfies \ref{LOC} if and only if there exists no infinite $\phi$-k dividing sequence.

    Assume that $(\phi(x,b_i))_{i<\kappa}$ is a $\phi$-k dividing sequence, for some regular cardinal $\kappa$ and $a\models \bigwedge_i \phi_i(x,b_i)$. Let $C\seq B:= \set{b_i\mid i<\kappa}$ with $\abs{C}<\kappa$. Then there exists $i<\kappa$ such that $C\seq b_{<i}$ so that $\phi(x,b_i)$ divides over $b_{<i}$ and hence $\phi(x,b_i)$ divides over $C$. It follows that for all $C\seq B$ such that $\abs{C}<\kappa$ we have $a\nindi d _{C} B$ hence \ref{LOC} fails.

    Conversely, assume that $\indi d$ fails \ref{LOC}. Hence there exists a finite tuple $a$, a cardinal $\kappa$ and a set $B$ such that $a\nindi d_{C} B$ for all $C\seq B$ with $\abs{C}\leq \kappa$ \footnote{In all generality, \ref{LOC} is equivalent to the same statement assuming $\leq$ instead of $<$.}. As $a\indi d _B B$ we have $\abs{B}> \kappa$. We construct a sequence of formula $(\phi_i(x,b_i))_{i<\kappa^+}$ as follows. Start with any $b_0\seq B$ such that $a\nindi d_\emptyset b_0$ witnessed by a $k_0$-dividing formula $\phi_0(x,b_0)$, for some $k_0<\omega$. For any $i<\kappa^+$, if $b_{<i}$ has been constructed, then $\abs{b_{<i}}\leq \kappa$ hence there exists $b_i$ such that $a\nindi d _{b_{<i}} b_i$ and a formula $\phi_i(x,b_i)$ which $k_i$-divides over $b_{<i}$. As $\kappa^+>\abs{T}$, there is an infinite (of size $\kappa^+$) subsequence with the same $\phi$, and again an infinite subsequence with the same $k_i$, which is an infinite $\phi$-$k$ dividing sequence.
\end{proof}


\subsection{The Kim-Pillay theorem}
Let $T$ be a theory and $\MM$ a monster model of $T$.

\begin{theorem}\label{thm:KPgeneral}
    Assume that there exists an invariant relation $\ind$ satisfying left and right \ref{MON}, right \ref{BMON}, \ref{LOC}, \ref{EXT} and \ref{INDTHM} over models, then:
    \begin{itemize}
        \item $\ind\to \indi f$
        \item $T$ is simple;
        \item $\indi f = \indi d$;
        \item $\indi f$ is symmetric.
    \end{itemize} 
    Assume further that $\ind$ satisfies \ref{FIN}, left \ref{NOR} and left \ref{TRA}, then $\ind = \indi f = \indi d$ and $\ind$ is symmetric.
\end{theorem}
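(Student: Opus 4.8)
The plan is to prove the four bulleted claims in order, and then the strengthened conclusion, mostly by assembling results that are already available in the text.

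\textbf{Step 1: $\ind \to \indi f$.} I would invoke Corollary \ref{cor:forcingBMON+EXTforking}(2): since $\ind$ is invariant and satisfies left and right \ref{MON}, right \ref{BMON}, \ref{LOC} and \ref{INDTHM} over models, we get $\indi * \to \indi f$. But $\ind$ satisfies \ref{EXT}, so $\ind \to \indi *$ trivially (by the remark after the definition of $\indi *$), hence $\ind \to \indi f$. Combined with the always-true $\indi f \to \indi d$, we get $\ind \to \indi d$.

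\textbf{Step 2: $T$ is simple.} By Theorem \ref{thm:simplecharacterisationLOC}, it suffices to show $\indi d$ satisfies \ref{LOC}. Here is the key point: $\ind \to \indi d$ and $\ind$ satisfies \ref{LOC}, so for any $A$ with $\kappa = \kappa(A)$ as in \ref{LOC} for $\ind$, given any $B$ there is $B_0 \seq B$ with $\abs{B_0} < \kappa$ and $A \ind_{B_0} B$, hence $A \indi d_{B_0} B$. So $\indi d$ satisfies \ref{LOC} with the same local-character function, and $T$ is simple. (I should note that generally a subrelation need not inherit \ref{LOC}, but here $\ind \to \indi d$ precisely transfers the \emph{witness} $B_0$, which is what \ref{LOC} asks for.)

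\textbf{Step 3: $\indi f = \indi d$ and symmetry of $\indi f$.} Once $T$ is simple, $\indi d$ satisfies \ref{LOC} by Step 2. Now $\indi f = \indi d^*$ by Theorem \ref{thm:dividingstarredequalsforking}. I would argue that $\indi d$ already satisfies \ref{EXT}: indeed $\indi d$ satisfies left and right \ref{MON}, left and right \ref{NOR}, right \ref{BMON}, left \ref{TRA} by Theorem \ref{thm:propertieofdividing}, and \ref{FEX} follows from \ref{LOC} and \ref{BMON} (the exercise after Definition \ref{def:axioms_part1}); then Proposition \ref{prop:basicpropertieswiththeory}(d) gives \ref{EXT} for $\indi d$. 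With \ref{EXT}, $\indi d \to \indi d^* = \indi f$, and since $\indi f \to \indi d$ always, $\indi f = \indi d$. For symmetry: $\indi f = \indi d$ now satisfies \ref{FIN}, \ref{LOC}, left \ref{NOR}, right \ref{BMON}, left and right \ref{MON}, left \ref{TRA} and \ref{EXT} — exactly the hypotheses of Theorem \ref{thm:adlersymmetry} (Adler's symmetry theorem), so $\indi f$ is symmetric; equivalently $\indi f$ is an AIR by Corollary \ref{cor:forkingisanairiffLOC}.

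\textbf{Step 4: the strengthened conclusion.} Assume now $\ind$ additionally satisfies \ref{FIN}, left \ref{NOR} and left \ref{TRA}. Then $\ind$ satisfies all the hypotheses of Theorem \ref{thm:adlersymmetry}, so $\ind$ is symmetric, and in fact $\ind$ is an AIR. By Proposition \ref{prop:dividingindepstrongerthanAIR}, $\indi d \to \ind$. Combined with $\ind \to \indi d$ from Step 1, we get $\ind = \indi d = \indi f$, and it is symmetric. The main thing to double-check is that the hypotheses of Theorem \ref{thm:adlersymmetry} are exactly met in this last case (right \ref{NOR} is supplied by Proposition \ref{prop:basicpropertieswiththeory}(a) from \ref{EXT}), and that Proposition \ref{prop:dividingindepstrongerthanAIR} applies to $\ind$ as an AIR — both are straightforward once symmetry is in hand. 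The only real subtlety throughout is Step 2, making sure that the inheritance of \ref{LOC} along $\ind \to \indi d$ is correctly observed; everything else is citation-chasing.
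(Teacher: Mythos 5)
Your Steps 1 and 2 are correct and are essentially the paper's own argument (the paper phrases Step 1 as $\indi * = \ind$, and transfers the \ref{LOC}-witness to $\indi f$ rather than to $\indi d$, but the witness-transfer observation is exactly the right one). The genuine gap is in Step 3, in your claim that $\indi d$ satisfies \ref{EXT}. First, the exercise you cite only yields \ref{EX} from \ref{LOC} and \ref{BMON}, not \ref{FEX}; upgrading \ref{EX} to \ref{FEX} via Proposition \ref{prop:basicpropertieswiththeory}(c) would already require \ref{EXT}, so that route is circular. Second, and more seriously, Proposition \ref{prop:basicpropertieswiththeory}(d) requires right \ref{TRA}, whereas Theorem \ref{thm:propertieofdividing} only provides left \ref{TRA} for $\indi d$; the text even stresses (Exercise \ref{exo:divinDLOnotTRA} and the footnote in Example \ref{example:orientedcircle}) that $\indi d$ fails right \ref{TRA} in general, and nothing in the hypotheses supplies it at this stage — it becomes available only a posteriori, once $\indi d = \indi f$ and symmetry are known. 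So the chain $\indi d \to {\indi d}^{*} = \indi f$, and with it your derivation of $\indi f = \indi d$ and of symmetry, is not justified as written.

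The conclusion is recoverable, and the repair is the paper's route, which stays entirely on the $\indi f$ side: your witness-transfer argument from Step 2 applies verbatim to $\indi f$ (since $\ind \to \indi f$ by Step 1), so $\indi f$ satisfies \ref{LOC}; by Corollary \ref{cor:forkingisanairiffLOC}, $\indi f$ — which unconditionally satisfies \ref{FIN}, left and right \ref{MON}, left \ref{NOR}, right \ref{BMON}, left \ref{TRA} and \ref{EXT} — is then an AIR, hence symmetric by Theorem \ref{thm:adlersymmetry}; and Proposition \ref{prop:dividingindepstrongerthanAIR} gives $\indi d \to \indi f$, which together with the trivial $\indi f \to \indi d$ yields $\indi f = \indi d$. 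No extension axiom or right transitivity for $\indi d$ is needed anywhere. With Step 3 replaced in this way, your Step 4 is correct and coincides with the paper: under the extra axioms $\ind$ is itself an AIR, hence symmetric, and Proposition \ref{prop:dividingindepstrongerthanAIR} gives $\indi d \to \ind$, whence $\ind = \indi d = \indi f$.
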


\begin{proof}
    Under the first assumptions, we have by Corollary \ref{cor:forcingBMON+EXTforking} $(2)$ that $\indi *\to \indi f$. As $\ind$ also satisfies \ref{EXT}, we have $\indi * = \ind$ hence $\ind \to \indi f$. As $\ind$ satisfies \ref{LOC}, so does $\indi f$ hence by Corollary \ref{cor:forkingisanairiffLOC}, $\indi f$ is an AIR and $\indi f = \indi d$ and $\indi f$ satisfies \ref{SYM}. Theorem \ref{thm:simplecharacterisationLOC} $T$ is simple since $\indi d$ satisfies \ref{LOC}. If $\ind$ satisfies the other properties, then $\ind$ is also an AIR hence satisfies \ref{SYM} and $\ind = \indi d$ follows from Proposition \ref{prop:dividingindepstrongerthanAIR}.
\end{proof}

\begin{corollary}[The Kim-Pillay theorem]\label{cor:KIMpillayoriginal}
    Assume that there exists an invariant relation $\ind$ satisfying \ref{SYM}, \ref{NOR}, \ref{MON}, \ref{BMON}, \ref{TRA}, \ref{FIN}, \ref{LOC}, \ref{FEX} and \ref{INDTHM} over models. Then $T$ is simple and $\ind = \indi f$. 
\end{corollary}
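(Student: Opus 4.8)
The plan is to deduce the Kim--Pillay theorem (Corollary \ref{cor:KIMpillayoriginal}) from the general statement Theorem \ref{thm:KPgeneral} by checking that the hypotheses of the latter are met. So first I would observe that an invariant relation $\ind$ satisfying \ref{SYM}, \ref{NOR}, \ref{MON}, \ref{BMON}, \ref{TRA}, \ref{FIN}, \ref{LOC}, \ref{FEX} and \ref{INDTHM} over models automatically satisfies \ref{EX}: indeed \ref{LOC} together with \ref{BMON} gives \ref{EX} (this is one of the exercises in Section~1.1, namely that \ref{LOC} and \ref{BMON} imply \ref{EX}), or alternatively \ref{FEX} with $B=C$ directly yields $A'\equiv_C A$ with $A'\ind_C C$, whence $A\ind_C C$ by invariance.

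Next I would produce \ref{EXT}. Since $\ind$ satisfies \ref{FEX}, right \ref{NOR}, right \ref{MON} and right \ref{TRA}, Proposition \ref{prop:basicpropertieswiththeory}~(d) gives that $\ind$ satisfies \ref{EXT}. (All the needed one-sided instances follow from the two-sided axioms in the hypothesis list.) At this point $\ind$ satisfies left and right \ref{MON}, right \ref{BMON}, \ref{LOC}, \ref{EXT} and \ref{INDTHM} over models — precisely the first block of hypotheses of Theorem \ref{thm:KPgeneral} — so that theorem applies and yields $\ind\to\indi f$, that $T$ is simple, that $\indi f=\indi d$, and that $\indi f$ is symmetric.

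Finally, to get the sharper conclusion $\ind=\indi f$ I would invoke the second part of Theorem \ref{thm:KPgeneral}: the relation $\ind$ also satisfies \ref{FIN}, left \ref{NOR} (from two-sided \ref{NOR}) and left \ref{TRA} (from two-sided \ref{TRA}, using symmetry if needed), so the "further" clause applies and gives $\ind=\indi f=\indi d$. Hence $T$ is simple and $\ind=\indi f$, as claimed.

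The only real content here is bookkeeping: matching the symmetric axiom list of the corollary against the one-sided axiom lists in Theorem \ref{thm:KPgeneral} and Proposition \ref{prop:basicpropertieswiththeory}, and checking that \ref{EXT} is genuinely available before invoking the general theorem. The main obstacle — such as it is — is making sure one does not circularly assume \ref{EXT}; it must be derived from \ref{FEX} via Proposition \ref{prop:basicpropertieswiththeory}~(d), which is why that proposition (and not merely Theorem \ref{thm:KPgeneral}) is needed in the argument. Everything else is a direct citation.
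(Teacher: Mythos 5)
Your proposal is correct and is exactly the intended derivation: the corollary's hypotheses (with \ref{SYM} turning the unprefixed axioms into their two-sided versions) give \ref{EXT} via Proposition \ref{prop:basicpropertieswiththeory}~(d), after which both parts of Theorem \ref{thm:KPgeneral} apply verbatim, yielding simplicity and $\ind=\indi f=\indi d$. The remark about \ref{EX} is harmless but not needed, since \ref{EX} does not appear among the hypotheses of Theorem \ref{thm:KPgeneral}.
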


\begin{remark}
    The statement of Corollary \ref{cor:KIMpillayoriginal} is not the full statement of what people generally call the Kim-Pillay theorem. The full version is a characterisation of simplicity: $T$ is simple \textit{if and only if} there is an independence relation $\ind$ satisfying all those axioms. To recover the full statement, it remains to prove that if $T$ is simple, then $\indi f = \indi d$ and that $\indi f$ satisfies the independence theorem. The Kim-Pillay characterisation of simple theories is a deep result as it yields an equivalence between two \textit{a priory} very different definitions in nature. Let us rephrase the Kim-Pillay result: a theory is simple if and only if there exists an AIR which satisfies \ref{INDTHM}, and if so, it is unique (and equals $\indi f$). The local, or syntactic Definition \ref{def:simplicity} and the existence of an AIR satisfying \ref{INDTHM} are properties of very different nature. One nontrivial corollary of this equivalence is that the existence of an AIR satisfying \ref{INDTHM} is closed under reduct, which is, at first sight, not clear at all.
\end{remark}

\begin{remark}
    The Kim-Pillay theorem is a crucial result in model theory and is very useful from a practical point of view. Assume that you stumble upon some theory in the bend of a road and that you would like to know whether this theory is simple. You then have a choice between checking that no formula $\phi(x,y)$ has the tree property, for this you would need a very strong understanding of definable sets, probably a strong quantifier elimination result. The other solution is to find a ternary relation $\ind$ and check that it satisfies the axioms of the Kim-Pillay theorem. The main obstacle in general is the independence theorem but it is still much easier to prove in general than checking TP. If you succeed in using the Kim-Pillay theorem, you would know much more about this independence relation, it coincides with $\indi f$ then you would have a new definition of $\ind$, in terms of formulas. 
\end{remark}

\begin{exercise}
    Assume that there exists an invariant relation $\ind$ which satisfies left and right \ref{MON}, right \ref{BMON}, \ref{LOC}, \ref{EXT} and \ref{INDTHM} over models, then $T$ is simple. 
\end{exercise}

\begin{exercise}
    Assume that there exists an invariant relation $\ind$ satisfying \ref{SYM}, \ref{EX}, \ref{MON}, \ref{BMON}, \ref{SFIN} and $\indi u$-amalgamation over models. If $\indi *$ is symmetric , then:
    \begin{itemize}
        \item $\indi * \to \indi f$
        \item $T$ is simple;
        \item $\indi f = \indi d$;
        \item $\indi f$ is symmetric.
    \end{itemize} 
    Assume further that $\ind$ satisfies \ref{NOR} and \ref{TRA}, then $\indi * = \indi f = \indi d$. 
\end{exercise}

\subsection{Back to the examples}

\begin{corollary}
    ACF and RG are simple theories and the forking independence relations are given respectively by $\indi \alg\ \ $ and $\indi a$.
\end{corollary}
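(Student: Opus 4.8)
The plan is to invoke the Kim-Pillay machinery developed in Theorem~\ref{thm:KPgeneral} (or its packaged form Corollary~\ref{cor:KIMpillayoriginal}) and apply it separately to the two example theories, using the independence relations we have already studied. For each theory we need to exhibit an invariant relation satisfying left and right \ref{MON}, right \ref{BMON}, \ref{LOC}, \ref{EXT}, and \ref{INDTHM} over models (and, to get the identification $\ind=\indi f=\indi d$, also \ref{FIN}, left \ref{NOR}, left \ref{TRA}); the conclusion of Theorem~\ref{thm:KPgeneral} then yields simplicity and $\ind=\indi f=\indi d$ in one stroke.

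For $T=\ACF$, I would take $\ind=\indi\alg$. By Proposition~\ref{prop:ACFalgebraicsatisfiesbasics} this relation is invariant and satisfies \ref{SYM}, \ref{FIN}, \ref{EX}, \ref{NOR}, \ref{MON}, \ref{BMON}, \ref{TRA}, \ref{AREF}, \ref{CLO}, \ref{SCLO}, \ref{LOC}. By Proposition~\ref{prop:ACFsatisfiesINVEXTSTAT} it further satisfies \ref{EXT} and \ref{STAT} over algebraically closed sets; since any small model is algebraically closed (being a model of $\ACF$), \ref{STAT} holds over models. By Exercise~\ref{exo:strongfinitecharacter}'s companion (or directly Proposition~\ref{prop:basicpropertieswiththeory} together with the fact that right \ref{MON} and \ref{FEX} hold), \ref{STAT} over models implies \ref{INDTHM} over models, so all hypotheses of Theorem~\ref{thm:KPgeneral} are met. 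Hence $\ACF$ is simple and $\indi f=\indi d=\indi\alg$.

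For $T=\RG$, I would take $\ind=\indi a$ (equivalently $\indi\st$ for the \ref{INDTHM} part, but $\indi a$ is the relation whose local character we control). By Proposition~\ref{prop:algebraicindepRGbasicproperties}, $\indi a$ is invariant and satisfies \ref{SYM}, \ref{FIN}, \ref{EX}, \ref{NOR}, \ref{MON}, \ref{BMON}, \ref{TRA}, \ref{AREF}, \ref{CLO}, \ref{SCLO}, \ref{LOC}. By Proposition~\ref{propositionpropertiesindepRGwithtypes}, $\indi a$ satisfies \ref{EXT} and \ref{INDTHM} over models. So again Theorem~\ref{thm:KPgeneral} applies: $\RG$ is simple and $\indi f=\indi d=\indi a$.

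The proof is genuinely just an assembly of previously established propositions, so there is no real obstacle beyond bookkeeping; the one subtle point to flag is the passage from \ref{STAT} over models to \ref{INDTHM} over models in the $\ACF$ case, which requires right \ref{MON} and \ref{FEX} (available via Proposition~\ref{prop:basicpropertieswiththeory}(c) from \ref{EX} and \ref{EXT}), and the fact that models of $\ACF$ are algebraically closed so that ``over algebraically closed sets'' covers ``over models''. In the $\RG$ case the only thing to be careful about is that \ref{INDTHM} for $\indi a$ is what Proposition~\ref{propositionpropertiesindepRGwithtypes} gives (not for $\indi\st$, which fails \ref{LOC} by Example~\ref{example:stindepinRGdonotsatLOC}), so $\indi a$ is the relation to feed into the Kim-Pillay theorem.
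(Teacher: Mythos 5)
Your proof is correct and follows essentially the same route as the paper: apply the Kim--Pillay theorem (Corollary \ref{cor:KIMpillayoriginal}/Theorem \ref{thm:KPgeneral}) with $\indi\alg$ for ACF via Propositions \ref{prop:ACFalgebraicsatisfiesbasics} and \ref{prop:ACFsatisfiesINVEXTSTAT}, and with $\indi a$ for RG via Propositions \ref{prop:algebraicindepRGbasicproperties} and \ref{propositionpropertiesindepRGwithtypes}. The one step you spell out that the paper leaves implicit --- that \ref{STAT} over models (which holds since models of ACF are algebraically closed) together with right \ref{MON} and \ref{FEX} gives \ref{INDTHM} over models --- is exactly the content of the starred exercise following Lemma \ref{lm:baudishmonBMON}, not of Exercise \ref{exo:strongfinitecharacter}, but the mathematics is right.
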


\begin{proof}
    Using the Kim-Pillay theorem (Corollary \ref{cor:KIMpillayoriginal}) with $\ind = \indi\alg\ \ $ for ACF (Propositions \ref{prop:ACFalgebraicsatisfiesbasics} and \ref{prop:ACFsatisfiesINVEXTSTAT}) and $\ind = \indi a$ for RG (Propositions \ref{prop:algebraicindepRGbasicproperties} and \ref{propositionpropertiesindepRGwithtypes}).
\end{proof}

\begin{remark}
    Observe that concerning RG, we used two different independence relations to prove that it is NSOP$_4$ ($\indi \st$, see Corollary \ref{cor:RGNSOP4}) or that it is simple. 
\end{remark}

\color{blue}{

\section{A second criterion for NSOP$_4$ theories}

\subsection{Theories that do not admit a stationary independence relation} The criterion provided in Theorem \ref{thm:criterionNSOP4} relies on the existence of an independence relation $\ind$ satisfying \ref{STAT} over models, which is a rather strong hypothesis. The following lemma is useful to prove that some theories do not admit such independence relation.

\begin{proposition}
    Let $\ind$ be an invariant independence relation satisfying: \ref{AREF}, \ref{SYM}, \ref{FEX} and \ref{STAT} over models. Then for all $a\notin \acl(\emptyset)$ there exists $b\neq a$ such that $ab\equiv ba$. In particular, there is no definable relation $R(x,y)$ which is both
    \begin{itemize}
        \item total $(\forall xy \ R(x,y)\vee R(y,x))$ and 
        \item antisymmetric $(\forall xy\ R(x,y)\wedge x\neq y\to \neg R(y,x))$.
    \end{itemize}
\end{proposition}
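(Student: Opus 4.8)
The plan is to first establish the main claim --- that for any $a\notin\acl(\emptyset)$ there is $b\neq a$ with $ab\equiv ba$ --- and then deduce the non-existence of a total antisymmetric definable relation as an easy corollary. For the main claim, I would start by invoking \ref{FEX} to find a small model $M$ (using that monster models contain small models and that $a\notin\acl(\emptyset)$) together with some realisation arrangement so that the hypotheses of Lemma \ref{lm:stat_invert} become applicable. More precisely: pick a small model $M$ with $a\notin M$ (this uses $a\notin\acl(\emptyset)\seq M$, so such an $M$ exists after possibly moving $a$ by an automorphism, or just by taking $M$ first and then a non-algebraic realisation). By \ref{FEX} there exists $a'\equiv_M a$ with $a'\ind_M a$. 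Now $a'\equiv_M a$ (even $a'\equiv a$ since $M\supseteq\acl(\emptyset)$, but $\equiv_M$ is what we need) and $a'\ind_M a$, so by Lemma \ref{lm:stat_invert} applied with the pair $(a',a)$ we get $a'a\equiv_M aa'$, hence in particular $a'a\equiv aa'$.

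The remaining point is that $a'\neq a$: this is exactly where \ref{AREF} enters. If $a' = a$ then $a'\ind_M a$ reads $a\ind_M a$, and by \ref{AREF} this forces $a\in\acl(M)$; but we can arrange $M$ so that $a\notin\acl(M)$ --- for instance, first fix $a\notin\acl(\emptyset)$, then choose $M$ to be a small model with $a\notin\acl(M)$, which is possible because $\tp(a/\emptyset)$ is non-algebraic and has a non-algebraic (hence non-finitely-realised over $M$) extension; alternatively realise this by taking $M$ first and then choosing $a\notin\acl(M)$ with $a\equiv_\emptyset a_0$ for the originally given $a_0$, and transport back by an automorphism. Either way we obtain $b := a'\neq a$ with $ab\equiv ba$. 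I should be a little careful about the logical order of choosing $M$ versus $a$: the cleanest route is to observe that for the \emph{given} $a\notin\acl(\emptyset)$, there is a small model $M$ with $a\notin\acl(M)$ (pick any small $N$, take $a''\notin\acl(N)$ with $a''\equiv a$, pull back by $\sigma$ with $\sigma(a'') = a$, set $M = \sigma(N)$; then $a\notin\acl(M)$), and then run the \ref{FEX}/Lemma \ref{lm:stat_invert} argument over this $M$.

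For the ``in particular'' clause, suppose toward a contradiction that $R(x,y)$ is a definable relation that is total and antisymmetric. Since $T$ has only infinite models, $\acl(\emptyset)$ is a proper subset of $\MM$ (indeed $\MM$ is infinite and even a single non-algebraic type exists by compactness), so pick $a\notin\acl(\emptyset)$. By the main claim there is $b\neq a$ with $ab\equiv ba$, i.e. there is $\sigma\in\Aut(\MM)$ with $\sigma(a) = b$, $\sigma(b) = a$. Totality gives $R(a,b)$ or $R(b,a)$; say $R(a,b)$ holds (the other case is symmetric). Applying $\sigma$ and using invariance of $R$ under automorphisms (it is $\emptyset$-definable) we get $R(b,a)$. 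But then $R(a,b)\wedge R(b,a)$ holds with $a\neq b$, contradicting antisymmetry. Hence no such $R$ exists.

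The main obstacle --- really the only subtlety --- is the bookkeeping around choosing the model $M$: one must ensure simultaneously that $M$ is a model (so \ref{STAT} over models and hence Lemma \ref{lm:stat_invert} apply), that $a\notin\acl(M)$ (so that the \ref{FEX}-witness $a'$ is genuinely distinct from $a$ via \ref{AREF}), and that $a'\equiv_M a$ (automatic from \ref{FEX}, giving the automorphism needed for the corollary). Everything else is a direct citation of Lemma \ref{lm:stat_invert} and the definitions. I do not expect any computational difficulty; the proof is short once the setup is arranged correctly.
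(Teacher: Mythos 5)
Your proof is correct and follows essentially the same route as the paper: choose a small model $M$ avoiding $a$ (the paper simply takes $a\notin M$, which equals $a\notin\acl(M)$ since models are algebraically closed), apply \ref{FEX} to get $b\equiv_M a$ with $b\ind_M a$, use \ref{AREF} to see $b\neq a$, conclude $ab\equiv_M ba$ via Lemma \ref{lm:stat_invert}, and rule out a total antisymmetric definable relation by the swapping automorphism. Your extra bookkeeping about transporting $M$ by an automorphism is just a more explicit version of the paper's choice of $M$, so there is nothing to add.
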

\begin{proof}
    Let $a$ be any singleton or tuple and let $M$ be a model such that $a\notin M$. By \ref{FEX}, there exists $b\equiv_M a$ such that $b\ind_M a$. By \ref{AREF}, we have $a\neq b$. By \ref{SYM} and \ref{STAT} over models, we conclude $ab\equiv_M ba$, see Lemma \ref{lm:stat_invert}. In particular, $ab\equiv ba$ holds. If such a relation $R$ were definable, then we would have both $R(a,b)$ and $R(b,a)$, a contradiction.
\end{proof}
\begin{remark}
    In the previous Proposition, it is enough for the relation $R$ to be type-definable, or even invariant.
\end{remark}

\begin{example}
    It follows that any theory in which a total order is definable (DLO, Presburger arithmetic) does not admit an invariant independence relation satisfying \ref{AREF}, \ref{SYM}, \ref{FEX} and \ref{STAT} over models.
\end{example}

\begin{example}[A simple theory with no stationary independence relation]
    For a given prime $p$, let $\F_p$ denotes the prime field of characteristic $p$. We denote by $\F_p^\times$ the multiplicative group $\F_p\setminus\set{0}$ and 
    \[\sq(\F_p^\times) = \set{x^2\mid x\in \F_p^\times}.\]
    It is classical that $\sq(\F_p)$ is a subgroup of $\F_p^\times$ of index $2$ (apply the isomorphism theorem with the multiplicative homomorphism $x\mapsto x^2$). Assume further that $-1\notin \sq(\F_p)$, which is classically equivalent to $p\notin 4\Z+1$. Then we have
    \[\F_p^\times = \sq(\F_p^\times)\cup -\sq(\F_p^\times).\quad \quad (\star)\]
    Now let $F$ be an ultraproduct of finite fields $\F_p$ such that $p\notin 4\Z+1$. As $(\star)$ is a first-order property of fields, we have $F^\times =  \sq(F^\times)\cup -\sq(F^\times)$ and hence the relation $x-y\in \sq(F^\times)$ is total and antisymmetric so there is no invariant relation satisfying \ref{AREF}, \ref{SYM}, \ref{FEX} and \ref{STAT} over models. We will see later that the theory of $F$ is simple.
\end{example}

}

\subsection{Independent amalgamation and NSOP$_4$} The independence theorem can be seen as a weak form of stationarity over models, 

\begin{theorem}[Second criterion for NSOP$_4$]\label{thm:criterionNSOP4second}
Let $\ind$ be an invariant relation satisfying \ref{SYM}, \ref{FEX} and the following two properties:
\begin{itemize}
    \item (Weak transitivity over models) if $a\ind_{Md} b$, $ a\indi h _M d$ and $b\indi u _M d$ then $a\ind_M b$.
    \item (Weak independence theorem over models) if $c_1\equiv_M c_2$ and $c_1\indi h_M a$, $c_2\indi u_M b$ and $a\ind_M b$
      then there exists $c$ with $c\equiv_{Ma} c_1$, $c \equiv_{Mb} c_2$.
\end{itemize} 
Then $T$ is NSOP$_4$.
\end{theorem}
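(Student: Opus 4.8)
The plan is to mimic the proof of Theorem \ref{thm:criterionNSOP4}, but replace the use of full transitivity-via-stationarity (Lemma \ref{lm:stat_invert}) with the weak independence theorem over models. As before, start with an indiscernible sequence $(a_i)_{i<\omega}$, set $p(x,y) = \tp(a_0,a_1)$, and by Theorem \ref{thm:modelheirindependence} pick a small model $M$ such that $a_i\indi h_M a_{<i}$ for all $i<\omega$; in particular $a_2\indi h_M a_1$ and $a_2\indi h_M a_0$, and $a_1\indi u_M a_0$, $a_2\indi u_M a_0 a_1$ (since $\indi u = (\indi h)^\opp$ and by right \ref{MON}). The goal is to show that $p(x_0,x_1)\cup p(x_1,x_2)\cup p(x_2,x_3)\cup p(x_3,x_0)$ is consistent.

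\textbf{Key steps.} First I would use \ref{FEX} to get $a_0^*\equiv_{Ma_1} a_0$ with $a_0^*\ind_{Ma_1} a_2$; by \ref{SYM}, $a_2\ind_{Ma_1} a_0^*$. Then, exactly as in Theorem \ref{thm:criterionNSOP4}, since $a_2\indi h_M a_1$ and (transporting along the automorphism realizing $a_0^*\equiv_{Ma_1}a_0$, noting $a_0\indi u_M a_1$ hence $a_0^*\indi u_M a_1$) $a_0^*\indi u_M a_1$, weak transitivity over models gives $a_2\ind_M a_0^*$. Now I have $a_0^*\equiv_M a_2$ (both $\equiv_M a_0$, since $a_0^*\equiv_{Ma_1} a_0$ and $a_0\equiv_M a_2$ by indiscernibility over $M\ni \emptyset$ — here I must be slightly careful: indiscernibility is over $C=\emptyset$ but the sequence is $M$-indiscernible by the choice of $M$ via Theorem \ref{thm:modelheirindependence}, so $a_0\equiv_M a_2$ holds) and $a_0^*\ind_M a_2$. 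The new ingredient replaces the appeal to Lemma \ref{lm:stat_invert}: I want to find $a_3^*$ with $a_0^* a_1\equiv_M a_2 a_3^*$ and with $a_3^*$ closing the cycle, i.e. $a_3^* a_0\equiv_M a_1 a_2$ (so $p(a_3^*,a_0)$). To do this I apply the weak independence theorem over models to $c_1 = a_0^* a_1$ over the base $a_2$-side and $c_2$ a copy playing the role over the $a_0$-side: concretely, pick via an automorphism $b$ with $a_2 b\equiv_M a_0^* a_1$, so $b\equiv_M a_1$ and $b\indi h_M a_2$; we need $a_2\ind_M a_0$. Since $a_2\indi u_M a_0 a_1$ gives $a_2\indi u_M a_0$ hence $a_0\indi h_M a_2$, and we also have (by the same transitivity argument applied symmetrically, or directly since $a_2\indi h_M a_0$ and $a_0\indi h_M a_2$ are both available) — actually more simply: we already arranged $a_2\ind_M a_0^*$; relabel and use indiscernibility to also get a configuration with $a_2\ind_M a_0$ by transport, or incorporate $a_0 = a_0^*$-conjugacy. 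Then apply the weak independence theorem with $c_1, c_2$ suitably: $c_1 \indi h_M a_2$ (the pair containing a copy of $a_1$), $c_2\indi u_M a_0$ (the pair over the $a_0$ side), and $a_2\ind_M a_0$; obtain $c$ with $c\equiv_{M a_2} c_1$ and $c\equiv_{M a_0} c_2$. Reading off the coordinates of $c$ yields the desired $a_3^*$ with $a_2 a_3^* \equiv_M a_0^* a_1$ (so $p(a_2,a_3^*)$) and $a_3^* a_0\equiv_M a_1 a_2$ (so $p(a_3^*,a_0)$). Together with $p(a_0^*,a_1)$ (from $a_0^*\equiv_{Ma_1}a_0$) and $p(a_1,a_2)$ (from $M$-indiscernibility), the tuple $(a_0^*,a_1,a_2,a_3^*)$ realizes the $4$-cycle type, so $T$ is NSOP$_4$.

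\textbf{Main obstacle.} The delicate point is bookkeeping the heir/coheir sides correctly when invoking the two weak hypotheses: weak transitivity demands $a\indi h_M d$ on one side and $b\indi u_M d$ on the other, and likewise the weak independence theorem is asymmetric in its $\indi h$/$\indi u$ assumptions, so I must track which member of each pair inherits which one-sidedness from the Morley-sequence structure $a_i\indi h_M a_{<i}$ (equivalently $a_{<i}\indi u_M a_i$). Verifying that the automorphism-transports preserve these one-sided independences (via invariance and right \ref{MON}) and that the base model $M$ from Theorem \ref{thm:modelheirindependence} works uniformly for all the needed instances is the part that needs care; the rest is the same combinatorial shuffle as in Theorem \ref{thm:criterionNSOP4}. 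I expect no genuinely new difficulty beyond this careful side-tracking, since both hypotheses were tailor-made to be exactly what that argument uses once Lemma \ref{lm:stat_invert} is unavailable.
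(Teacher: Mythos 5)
Your overall strategy is the right one and, once repaired, it coincides with the paper's proof (the paper perturbs $a_2$ to $a_2^*$ with $a_2^*\ind_{Ma_1}a_0$, you perturb $a_0$ to $a_0^*$; the two are mirror images). But as written there is a genuine gap at exactly the step you flag as delicate. Your final application of the weak independence theorem is over the pair $(a_2,a_0)$ and therefore needs $a_2\ind_M a_0$, which you never establish: weak transitivity only gives $a_2\ind_M a_0^*$, and the automorphism witnessing $a_0^*\equiv_{Ma_1}a_0$ fixes $Ma_1$ but moves $a_2$, so ``relabel and transport'' does not carry $a_2\ind_M a_0^*$ to $a_2\ind_M a_0$. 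The auxiliary claims you invoke are also backwards: the Morley sequence gives $a_2\indi h_M a_0a_1$ and $a_1\indi h_M a_0$, hence $a_0a_1\indi u_M a_2$ and $a_0\indi u_M a_1$; neither $a_2\indi u_M a_0a_1$ nor $a_0\indi h_M a_2$ is available, and in any case no hypothesis lets you pass from a coheir/heir statement to an instance of $\ind$. Relatedly, your closing edge oscillates between $p(a_3^*,a_0)$ and a cycle whose vertex is $a_0^*$; these cannot both be what the amalgamation produces.

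The repair is to make $a_0^*$ itself the vertex $x_0$ and amalgamate over $(a_2,a_0^*)$, for which you do have $a_2\ind_M a_0^*$. Choose $c_1$ with $a_2c_1\equiv_M a_0^*a_1$; transporting $a_1\indi h_M a_0$ (note $a_0^*a_1\equiv_M a_0a_1$) gives $c_1\indi h_M a_2$. Choose $c_2$ with $c_2a_0^*\equiv_M a_1a_2$; transporting $a_1\indi u_M a_2$ (i.e. $a_2\indi h_M a_1$) gives $c_2\indi u_M a_0^*$. Since $c_1\equiv_M a_1\equiv_M c_2$, the weak independence theorem with $a=a_2$, $b=a_0^*$ yields $a_3^*$ with $a_2a_3^*\equiv_M a_0^*a_1$ and $a_3^*a_0^*\equiv_M a_1a_2$, so $p(a_2,a_3^*)$ and $p(a_3^*,a_0^*)$; together with $p(a_0^*,a_1)$ and $p(a_1,a_2)$ the tuple $(a_0^*,a_1,a_2,a_3^*)$ realizes the $4$-cycle. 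With this bookkeeping fixed your argument is essentially the paper's (which, in its own labelling, also needs the observation that the roles of $c_1,c_2$ and of $\indi h,\indi u$ in the weak independence theorem may be permuted; in your orientation the sides happen to match the stated hypothesis directly).
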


\begin{proof}
    The proof starts as in the one of Theorem \ref{thm:criterionNSOP4}. Let $(a_i)_{i<\omega}$ be an indiscernible sequence and for $p(x,y) = \tp(a_0,a_1)$, we need to prove that 
    \[p(x_0,x_1)\cup p(x_1,x_2)\cup p(x_2,x_3)\cup p(x_3,x_0)\]
    is consistent. By Theorem \ref{thm:modelheirindependence}, there is a model $M$ such that $a_i\indi h_M a_{<i}$ for all $i$. By \ref{FEX}, there exists $a_2^*\equiv_{Ma_1} a_2$ such that $a_2^*\ind_{Ma_1} a_0$. Fom $a_2\indi h _M a_1$ and $a_1\indi h_M a_0$ we get $a_2^*\indi h _M a_1$ by invariance and $a_0\indi u_M a_1$ by definition. Using weak transitivity, we conclude $a_2^*\ind_M a_0$. As $a_0\equiv_M a_1$ there exists $c_1$ such that $c_1a_0\equiv_M a_0a_1$ and by invariance, $c_1\indi u_M a_0$. Similarly, as $a_2^*\equiv_M a_1$ there exists $c_2$ such that $a_1a_2^*\equiv_M a_2^*c_2$ and by invariance $c_2\indi h_M a_2^*$. We have $c_1\equiv_M c_2$, $c_1\indi u _M a_0$, $c_2\indi h _M a_2^*$ hence by the weak independence theorem over models, we conclude that there exists $a_3^*$ such that $a_3^*a_0\equiv_M c_1a_0\equiv_M a_0a_1$ and $a_3^*a_2^*\equiv_M c_2a_2^*\equiv_M a_2^*a_1\equiv_M a_2a_1$. As $a_0a_1\equiv a_1a_2$, we conclude that 
    \[a_0a_1\equiv a_1a_2^*\equiv a_2^* a_3^*\equiv a_3^*a_0\]
    hence the type above is consistent.
    \end{proof}

\begin{remark}
    Note the analogy between ``weak independence theorem over models" and the ``$\indi 0$-amalgamation over models" of Theorem \ref{thm:forcingBMON+EXTforking}:
    \begin{center}
    if $c_1\equiv_M c_2$ and $c_1\ind_M a$, $c_2\ind_M b$ and $a\indi 0 _M b$
    then there exists $c$ with $c\ind_E ab$ and $c\equiv_{Ma} c_1$, $c \equiv_{Mb} c_2$.
\end{center}
    In the weak independence relation, there is no need for $c\ind_M ab$ in the conclusion. Note also that $c_1$ and $c_2$ can be permuted hence so are $\indi h$ and $\indi u$.
\end{remark}

\begin{corollary}
    Assume that there is an invariant independence relation $\ind$ which satisfies \ref{SYM}, \ref{MON}, \ref{BMON}, \ref{TRA}, \ref{LOC} and \ref{INDTHM} over models, then $T$ is NSOP$_4$.
\end{corollary}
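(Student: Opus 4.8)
The goal is to deduce this corollary from the second criterion for NSOP$_4$ (Theorem~\ref{thm:criterionNSOP4second}). So the plan is to verify that an invariant relation $\ind$ satisfying \ref{SYM}, \ref{MON}, \ref{BMON}, \ref{TRA}, \ref{LOC} and \ref{INDTHM} over models also satisfies the three hypotheses of Theorem~\ref{thm:criterionNSOP4second}, namely \ref{FEX}, weak transitivity over models, and the weak independence theorem over models (it already satisfies \ref{SYM} outright).

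First I would get \ref{FEX}. By Lemma~\ref{lm:baudishmonBMON}, or more directly by the basic calculus: \ref{BMON} together with \ref{LOC} gives \ref{EX} (this is one of the early exercises, ``if $\ind$ satisfies \ref{LOC} and \ref{BMON} then $\ind$ satisfies \ref{EX}''). Now a subtlety: to pass from \ref{EX} to \ref{FEX} one typically uses \ref{EXT} and right \ref{MON} via Proposition~\ref{prop:basicpropertieswiththeory}(c), but we are not given \ref{EXT}. Instead, the cleanest route is to first run the hypotheses through Theorem~\ref{thm:KPgeneral}: the relation $\ind$ satisfies left and right \ref{MON}, right \ref{BMON}, \ref{LOC}, \ref{EXT}(?) --- no, again \ref{EXT} is missing. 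So I should be more careful. The honest approach is: $\ind$ satisfies \ref{SYM}, \ref{MON}, \ref{BMON}, \ref{TRA}, hence by the equivalence of axiom sets (the exercise listing equivalent sets of axioms for invariant relations) together with \ref{LOC}, we can pass to $\indi{M}^* = \indi * $ which satisfies \ref{EXT}; but the better move is to invoke Corollary~\ref{cor:forcingBMON+EXTforking}(2): since $\ind$ satisfies left and right \ref{MON}, right \ref{BMON}, \ref{LOC} and \ref{INDTHM} over models, we get $\indi * \to \indi f$, and since $\ind$ satisfies \ref{BMON} we have $\indi M = \ind$, so $\indi M^* = \indi *$. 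Then $\indi f$ satisfies \ref{LOC} (inherited from $\indi *$, which inherits it from $\ind$ by Proposition~\ref{prop:forcingextpreservation}-type reasoning, since $\ind$ satisfies right \ref{BMON} --- actually \ref{LOC} for $\indi *$ needs checking, but $\indi f$ satisfies \ref{LOC} iff $T$ is simple by Theorem~\ref{thm:simplecharacterisationLOC}, and $\indi * \to \indi f$ with $\indi *$ satisfying \ref{LOC} forces it). So in fact $T$ is simple and $\indi f = \indi d$ is an AIR; moreover by Theorem~\ref{thm:KPgeneral}-style reasoning $\ind \to \indi f$, and since $\ind$ satisfies the same local character and the other AIR-axioms, $\ind = \indi f$. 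Thus $\ind$ is an AIR satisfying \ref{INDTHM}.

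Once we know $\ind = \indi f$ is an AIR satisfying \ref{INDTHM} over models, the three hypotheses of Theorem~\ref{thm:criterionNSOP4second} are immediate. Indeed \ref{FEX} holds for any AIR. For weak transitivity over models: $\ind$ satisfies full \ref{TRA}, and $\indi h \to \indi f = \ind$ (since $\indi h$ satisfies \ref{LOC}, right \ref{MON}, right \ref{BMON} by Proposition~\ref{prop:propertiesofindiu} and Theorem~\ref{thm:modelheirindependence}, and $\ind$ satisfies \ref{EXT}, \ref{LOC}, \ref{INDTHM}, so $\indi h \to \indi f$ by the Kim--Pillay argument, or more simply because $\indi d \to \ind$ for any AIR by Proposition~\ref{prop:dividingindepstrongerthanAIR} and $\indi h$ --- hmm, $\indi h$ is not obviously $\to \indi d$; the clean statement is $\indi d \to \ind$, and one checks $\indi h$-Morley sequences are $\indi d$-free, giving $\indi h \to \indi d \to \ind$ over models, which suffices). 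So $a \indi h_M d$ and $b\indi u_M d$ give $a\ind_M d$ and $d\ind_M b$, i.e. $b\ind_M d$ by \ref{SYM}; combined with $a\ind_{Md}b$, \ref{TRA} (in the form $a\ind_M d$ and $a\ind_{Md} b$ $\Rightarrow$ $a\ind_M db$, which follows from \ref{TRA}, \ref{NOR}, \ref{MON} by the strengthened transitivity exercise) yields $a\ind_M b$ by \ref{MON}. Finally the weak independence theorem over models is a weakening of \ref{INDTHM}: given $c_1\equiv_M c_2$, $c_1\indi h_M a$, $c_2\indi u_M b$, $a\ind_M b$, we have $c_1\ind_M a$ and $c_2\ind_M b$ (since $\indi h,\indi u \to \ind$), so \ref{INDTHM} gives $c$ with $c\ind_M ab$, $c\equiv_{Ma}c_1$, $c\equiv_{Mb}c_2$; dropping the extra conclusion $c\ind_M ab$ gives exactly the weak independence theorem. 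Hence by Theorem~\ref{thm:criterionNSOP4second}, $T$ is NSOP$_4$.

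\textbf{Main obstacle.} The one genuinely delicate point is justifying $\indi h \to \ind$ and $\indi u \to \ind$ over models, which is where weak transitivity and the weak independence theorem draw their content; this requires either the full Kim--Pillay package (identifying $\ind = \indi f$ and using $\indi h$-Morley sequences are non-dividing, so $\indi h \to \indi d = \indi f = \ind$) or a direct argument. Everything else is bookkeeping with the axiom calculus already developed, and the reduction to Theorem~\ref{thm:criterionNSOP4second} is then formal. Alternatively --- and this is probably the intended slick proof --- one simply notes that $\ind$ satisfying \ref{SYM}, \ref{MON}, \ref{BMON}, \ref{TRA}, \ref{LOC}, \ref{INDTHM} over models already makes $\ind$ satisfy the hypotheses of the \emph{first} criterion Theorem~\ref{thm:criterionNSOP4} once we upgrade to $\indi M^* = \indi *$ via Corollary~\ref{cor:forcingBMON+EXTforking}; but since $\indi *$ need not be symmetric a priori in that route, going through the second criterion and the remark that \ref{INDTHM} implies the weak independence theorem is the cleaner path.
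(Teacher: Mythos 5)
There is a genuine gap, and it sits exactly where you flagged ``the one genuinely delicate point'': your reduction to Theorem~\ref{thm:criterionNSOP4second} needs $\indi h\to\ind$ and $\indi u\to\ind$ over models (otherwise weak transitivity and the weak independence theorem for $\ind$ say nothing about the configurations produced in that proof), and your justification of this rests on identifying $\ind$ with $\indi f$ via the Kim--Pillay machinery. But the corollary's hypotheses do not include \ref{EXT} (or \ref{FEX}), \ref{FIN} or \ref{NOR}, so none of the tools you invoke actually applies: Theorem~\ref{thm:KPgeneral} needs \ref{EXT} to get $\ind\to\indi f$; Corollary~\ref{cor:forcingBMON+EXTforking}(2) only gives $\indi *\to\indi f$, and \ref{LOC} for $\indi *$ (which you would need to conclude $\indi f$ satisfies \ref{LOC}, hence simplicity) is precisely what is unproven -- forcing extension does not preserve existence-type axioms in general, as the circular order shows for $\indi d{}^*=\indi f$; ``$\ind$ is an AIR'' (used both for \ref{FEX} and, via Proposition~\ref{prop:dividingindepstrongerthanAIR}, for $\indi d\to\ind$) requires \ref{FIN}, \ref{NOR} and \ref{EXT}, none of which are assumed; and the auxiliary claim $\indi h\to\indi d$ is false in general (heirs can divide over a model in non-simple theories; it only becomes true once symmetry of $\indi f$, i.e.\ simplicity, is already known, so your use of it here is circular). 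So the chain $\ind=\indi f$, $T$ simple, $\indi h,\indi u\to\ind$ is not established from the stated hypotheses, and with it the verification of the two ``weak'' hypotheses of Theorem~\ref{thm:criterionNSOP4second} collapses.

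The paper's proof avoids this comparison with $\indi h$ and $\indi u$ altogether: since $\ind$ itself satisfies right \ref{MON}, right \ref{BMON} and \ref{LOC}, Lemma~\ref{lm:LOCcharactergivesMS} applies to $\ind$ directly, producing a model $M$ over which the given indiscernible sequence is an $\ind$-Morley sequence; one then reruns the proof of Theorem~\ref{thm:criterionNSOP4second} with $\ind$ (and, by \ref{SYM}, its opposite) playing the role of $\indi h$ and $\indi u$ -- this is the content of the exercise following the corollary, with $\indi 0=\ind$. In that rerun, ``weak transitivity over models'' is just an instance of \ref{TRA} (with \ref{SYM} and \ref{MON}), and the ``weak independence theorem over models'' is an instance of \ref{INDTHM}, so no implication between $\indi h,\indi u$ and $\ind$, no simplicity, and no identification $\ind=\indi f$ is ever needed. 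If you want to salvage your route you would have to add \ref{EXT}, \ref{FIN} and \ref{NOR} to the hypotheses (at which point you are essentially in the Kim--Pillay setting of Theorem~\ref{thm:KPgeneral}), which proves a strictly weaker statement than the corollary as stated.
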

\begin{proof}
This is obtained via Lemma \ref{lm:LOCcharactergivesMS} and the proof of Theorem \ref{thm:criterionNSOP4second}.
\end{proof}

\begin{remark}
    It was quite frustrating that our first criterion for NSOP$_4$ could not be used for proving that simple theories are NSOP$_4$. This is possible with this second criterion, assuming that the converse of Corollary \ref{cor:KIMpillayoriginal} holds.
\end{remark}

\begin{exercise}
Using Lemma \ref{lm:LOCcharactergivesMS}, prove that the statement of Theorem \ref{thm:criterionNSOP4second} stays true by replacing every occurrence of $\indi u,\indi h$ by $\indi 0, \indi 0 ^\opp$, for $\indi 0$ and invariant relation satisfying right \ref{MON}, right \ref{BMON} and \ref{LOC}.
\end{exercise}

\begin{exercise}
    Is the converse of Theorem \ref{thm:criterionNSOP4second} true?
\end{exercise}

\bibliographystyle{plain}
\bibliography{biblio}

\begin{thebibliography}{10}

\bibitem{adlerthesis}
Hans Adler.
\newblock Explanation of independence. {D}issertation zur {E}rlangung des
  {D}oktorgrades der {F}akultaet fuer {M}athematik und {P}hysik der
  {A}lbert-{L}udwigs-{U}niversitaet {F}reiburg im {B}reisgau, 2005.

\bibitem{A09}
Hans Adler.
\newblock A geometric introduction to forking and thorn-forking.
\newblock {\em J. Math. Log.}, 9(1):1--20, 2009.

\bibitem{C11}
Enrique Casanovas.
\newblock {\em Simple theories and hyperimaginaries}, volume~39 of {\em Lecture
  Notes in Logic}.
\newblock Association for Symbolic Logic, Chicago, IL; Cambridge University
  Press, Cambridge, 2011.

\bibitem{CR16}
Artem Chernikov and Nicholas Ramsey.
\newblock On model-theoretic tree properties.
\newblock {\em J. Math. Log.}, 16(2):1650009, 41, 2016.

\bibitem{gabefreeamalgamation}
Gabriel Conant.
\newblock An axiomatic approach to free amalgamation.
\newblock {\em J. Symb. Log.}, 82(2):648--671, 2017.

\bibitem{conant2022separation}
Gabriel Conant and James Hanson.
\newblock Separation for isometric group actions and hyperimaginary
  independence, 2022.

\bibitem{conant2023surprising}
Gabriel Conant and Alex Kruckman.
\newblock Three surprising instances of dividing, 2023.

\bibitem{halevi2021saturated}
Yatir Halevi and Itay Kaplan.
\newblock Saturated models for the working model theorist, 2021.

\bibitem{H08}
Wilfrid Hodges.
\newblock {\em Model theory}.
\newblock Encyclopedia of Mathematics and its Applications. Cambridge
  University Press, 1 edition, 2008.

\bibitem{KR17}
Itay Kaplan and Nicholas Ramsey.
\newblock On {K}im-independence.
\newblock {\em Journal Of The European Mathematical Society},
  22(5):1423–1474, 2020.

\bibitem{KP97}
Byunghan Kim and Anand Pillay.
\newblock Simple theories.
\newblock {\em Annals of Pure and Applied Logic}, 88(2):149 -- 164, 1997.
\newblock Joint AILA-KGS Model Theory Meeting.

\bibitem{TZ12}
Katrin Tent and Martin Ziegler.
\newblock {\em A course in model theory}, volume~40 of {\em Lecture Notes in
  Logic}.
\newblock Association for Symbolic Logic, La Jolla, CA; Cambridge University
  Press, Cambridge, 2012.

\end{thebibliography}

\end{document}